\documentclass[11pt]{article} 
\usepackage{babel}
\usepackage{amsmath}
\usepackage{lmodern}
\usepackage[T1]{fontenc}
\usepackage[babel=true]{microtype}
\usepackage{amsxtra}%
\usepackage{amsfonts}%
\usepackage{amssymb}%
\usepackage{amsthm}
\usepackage{graphicx}
\usepackage{epstopdf} 
\usepackage{color,hyperref}
\hypersetup{colorlinks,breaklinks,
	linkcolor=blue,urlcolor=blue,
	anchorcolor=blue,citecolor=blue}

\usepackage{xcolor}

\usepackage{subcaption}
\usepackage{appendix}
\usepackage{enumerate}
\usepackage[font=footnotesize]{caption}
\usepackage{calc}
\usepackage{cite}
\usepackage{accents}

\usepackage[margin=0.75in]{geometry}

\newcommand{\eps}{\varepsilon}
\newcommand{\Z}{\mathbb{Z}}
\newcommand{\N}{\mathbb{N}}
\newcommand{\R}{\mathbb{R}}

\newcommand{\C}{\mathbb{C}}

\newcommand{\F}{\mathbb{F}}

\newcommand{\mcl}{\mathcal{L}}

\renewcommand{\Re}{\mathrm{Re} \,}
\renewcommand{\Im}{\mathrm{Im} \,}

\def\XXint#1#2#3{{\setbox0=\hbox{$#1{#2#3}{\int}$ }
		\vcenter{\hbox{$#2#3$ }}\kern-.6\wd0}}

\newtheorem*{thm*}{Theorem}

\newtheorem{prop}{Proposition}
\newtheorem{lemma}[prop]{Lemma}
\newtheorem{corollary}[prop]{Corollary}

\newtheorem*{thminf}{Main result (informal summary)}
\newtheorem{thmlocal}[prop]{Theorem}

\newtheorem{hyp}{Hypothesis}

\newtheorem{remark}[prop]{Remark}

\numberwithin{equation}{section}
\numberwithin{prop}{section}

\newcommand{\clin}{{c_\mathrm{lin}}}

\newcommand{\etalin}{\eta_\mathrm{lin}}
\newcommand{\uwt}{\mathbf{u}_\mathrm{wt}}

\newcommand{\lwt}{\mathcal{L}_\mathrm{wt}}

\renewcommand{\u}{\mathbf{u}}
\newcommand{\g}{\mathbf{g}}

\newcommand{\nuwt}{\nu_\mathrm{wt}}
\newcommand{\lambdawt}{\lambda_\mathrm{wt}}

\newcommand{\e}{\mathbf{e}}
\newcommand{\w}{\mathbf{w}}
\newcommand{\q}{\mathbf{q}}
\renewcommand{\v}{\mathbf{v}}

\newcommand{\re}{\mathrm{e}}
\newcommand{\ri}{\mathrm{i}}
\newcommand{\de}{\mathrm{d}}
\newcommand{\NT}{\widetilde{\mathcal{N}}}

\newcommand{\vt}{\widetilde{\mathbf{v}}}
\newcommand{\vf}{\mathring{\mathbf{v}}}
\newcommand{\uf}{\mathring{\mathbf{u}}}

\newcommand{\z}{\mathbf{z}}

\newcommand{\kwt}{k_\mathrm{wt}}

\newcommand{\cps}{c_\mathrm{ps}} 
\newcommand{\etaps}{\eta_\mathrm{ps}}
\newcommand{\Ups}{\mathbf{u}_\mathrm{ps}}
\newcommand{\Aps}{\mathcal{A}_\mathrm{ps}}
\newcommand{\Lps}{\mathcal{L}_\mathrm{ps}}
\newcommand{\Lwthat}{\check{\mathcal{L}}_\mathrm{wt}}
\newcommand{\psiad}{\mathbf{\psi}_\mathrm{ad}}
\newcommand{\tg}{\tilde{\mathbf{g}}}
\newcommand{\f}{\mathbf{f}}
\newcommand{\Ptr}{P_\mathrm{tr}}

\newcommand{\Deff}{D_\mathrm{eff}}

\newcommand\extrafootertext[1]{%
	\bgroup
	\renewcommand\thefootnote{\fnsymbol{footnote}}%
	\renewcommand\thempfootnote{\fnsymbol{mpfootnote}}%
	\footnotetext[0]{#1}%
	\egroup
}

\newcommand{\erf}{\mathrm{erf}}

\begin{document}
	\begin{center}
		{\fontsize{15}{15}\fontseries{b}\selectfont{Selection of pushed pattern-forming fronts in the FitzHugh-Nagumo system}}\\[0.2in] 
		Montie Avery$^1$, Paul Carter$^2$, and Bj\"orn de Rijk$^3$ \\[0.1in]

	\textit{\footnotesize 
		$^1$Department of Mathematics, Emory University, 400 Dowman Drive, Atlanta, GA 30322, USA \\
		$^2$Department of Mathematics, University of California, Irvine, 340 Rowland Hall, Irvine, CA 92697, USA \\
		$^3$Department of Mathematics, Karlsruhe Institute of Technology, Englerstra{\ss}e 2, 76131 Karlsruhe, Germany \\
	}
	\end{center}
	
	\begin{abstract}
		We establish nonlinear stability of fronts that describe the creation of a periodic pattern through the invasion of an unstable state. Our results concern pushed fronts, that is, fronts whose propagation is driven by a localized mode at the front interface. We prove that these pushed pattern-forming fronts attract initial data supported on a half-line, and therefore determine both propagation speeds and selected wave numbers for invasion from localized initial conditions. This provides to our knowledge the first proof of the \emph{marginal stability conjecture} for pattern-forming fronts, thereby confirming universal wave number selection laws widely used in the physics literature. We present our analysis in the specific setting of the FitzHugh--Nagumo system, but our methods can be applied to general dissipative PDE models which exhibit pattern formation. The main technical challenge is to control the interaction between the localized mode driving the propagation and outgoing diffusive modes in the wake of the front. Through a subtle far-field/core decomposition of the linearized evolution, we resolve this interaction and describe the nonlinear response of the front to perturbations as a dynamically driven phase mixing problem for the pattern in the wake. The methods we develop are generally useful in any setting involving the interaction of localized modes and outward diffusive transport, such as in the nonlinear stability of undercompressive viscous shock waves or source defects. \medskip

\noindent\textbf{Keywords.} Pattern formation, wavenumber selection, marginal stability conjecture, pushed fronts, FitzHugh--Nagumo system

\noindent\textbf{Mathematics Subject Classification (2020).} 35B35, 35B36, 35K57
	\end{abstract}

	\extrafootertext{The authors gratefully acknowledge support from the US National Science Foundation, through DMS-2510541 \& DMS-2202714 (M.A.) and NSF-DMS-2238127 (P.C.). B.dR. is funded by the Deutsche Forschungsgemeinschaft (DFG, German Research Foundation) - Project-ID 491897824 and Project-ID 258734477 - SFB 1173.}  

	\section{Introduction}
	
	Invasion into unstable states plays a key role in describing the formation of complex coherent structures in many physical systems. Unstable states may be observed as transients, for instance after a change in system parameters or the introduction of an external destabilizing agent, such as an invasive species or novel disease. In large spatial domains, localized perturbations to the unstable state typically grow, saturate at finite amplitude, and spread into the unstable state, creating a new stable state in the wake of a propagating \emph{invasion front}. A fundamental question is then to predict both the speed of this invasion front as well as which new state it selects in its wake. 
	
	The study of invasion fronts in the mathematical literature has historically been limited to systems with comparison principles; see e.g.~\cite{Kolmogorov, Bramson1, Bramson2, Lau, AronsonWeinberger, Roquejoffre, Weinberger, HamelNRR, NRR1, NRR2, HamelNRR2, berestyckinirenberg}. Comparison principles are inherently incompatible with complex pattern formation, and so fronts in these systems typically select a new spatially constant state in their wake. On the other hand, complex pattern formation is frequently observed in experiments and simulations of invasion processes in a large variety of physical systems; see for instance the comprehensive review~\cite{vanSaarloos}. Moreover, there is interest in harnessing this self-organization of coherent patterns through invasion for manufacturing technologies in materials science~\cite{NatureMaterials, Bradley}.  Pattern-forming systems generically admit families of periodic patterns parameterized by the wave number. When the pattern is grown via invasion, from a compactly supported perturbation to the unstable state, a distinguished wave number is typically selected out of this family, independent of precise details of the initial condition~\cite{deelanger}. Such wave number selection laws have been widely observed across the sciences~\cite{vanSaarloos}, but beyond heuristics and matched asymptotics~\cite{deelanger}, there appear to be no mathematical results describing this selection mechanism. 
	
	In a first approach, to rigorously confirm these selection laws, one might hope to identify (within a given PDE model) a unique propagation speed and wave number for which there exists a stable traveling front solution connecting the unstable state to the periodic pattern in the wake. Such a front would then be expected to attract all sufficiently steep nearby initial data. Pattern-forming fronts have been constructed close to the onset of a Turing instability~\cite{ColletEckmann, EckmannWayne1, HaragusSchneider, Hexagons, Hilder2025}, and at large amplitude in phase separation problems~\cite{ScheelCoarsening1, ScheelCoarsening2}. However, generically such fronts exist and are stable for an open range of speeds and wave numbers~\cite{Eckmann2000,Eckmann2002}. Moreover, stability typically holds against perturbations which do not alter the tail decay rate, prohibiting steep (e.g.~supported on a half-line) initial conditions. So, merely searching for stable front solutions does not answer the question of which front is selected in invasion from steep initial data. 
	   
	The \emph{marginal stability conjecture}~\cite{AveryScheelSelection, colleteckmannbook, bers1983handbook, brevdo, deelanger, vanSaarloosMarginalstability, vanSaarloos} asserts that speeds and wave numbers selected by propagation of steep initial data are determined by the distinguished front solutions which are \emph{marginally spectrally stable} in an appropriate sense. There are two separate scenarios for marginal spectral stability: marginal stability may arise from marginal pointwise stability of the unstable state in the leading edge of the front in an appropriate moving frame, or from marginally stable point spectrum of the entire front solution. In the former case, the propagation is driven by the decaying tail in the leading edge of the front, and so these fronts are said to be \emph{pulled}. In the latter case, the propagation is driven by a mode localized near the front interface, and the fronts are called \emph{pushed}. We refer to~\cite{AHSReview, vanSaarloos, pp} for further details on pushed and pulled front propagation and the marginal stability conjecture. 
	
	For pushed fronts which select constant states in their wake, the marginal stability conjecture can be established through classical semigroup methods~\cite{Sattinger, Henry}, exploiting a spectral gap in the linearization about the front in an appropriate norm. For pulled fronts selecting constants states, the marginal stability conjecture was only established recently~\cite{AveryScheelSelection, AverySelectionRD}. Sharp nonlinear stability results for pulled pattern-forming fronts, which in light of~\cite{AveryScheelSelection, AverySelectionRD} appear to be a key ingredient in a proof of the marginal stability conjecture in this setting, were subsequently proven in~\cite{FHNpulled}. However, prior to the present work, there are no results establishing selection of any pattern-forming front from steep initial conditions. Our main result establishes that pushed pattern-forming fronts are indeed selected by steep initial data and provides a detailed description of the resulting convergence to the front.

	Our methods are broadly applicable to pushed pattern-forming fronts in general dissipative evolution problems, but for concreteness we present our analysis for the FitzHugh--Nagumo system,
	\begin{align*}
		u_t &= u_{xx} + u (u+a)(1-u-a) - w, \\
		w_t &= \eps (u - \gamma w), 
	\end{align*}
	which we write as a degenerate reaction-diffusion system for $\u = (u,w)^\top$, 
	\begin{align}
		\u_t = D \u_{xx} + F(\u), \qquad D = \begin{pmatrix}
			1 & 0 \\
			0 & 0 
		\end{pmatrix},
		\qquad F(\u) = \begin{pmatrix}
			u(u+a)(1-u-a) - w \\
			\eps (u - \gamma w)
		\end{pmatrix}. \label{e: FHN}
	\end{align}
	The FitzHugh--Nagumo system originally arose as a simplification of the Hodgkin--Huxley model for signal propagation in nerve fibers. However, it has since been recognized across the sciences as a paradigmatic model for excitable and oscillatory media far from equilibrium. Together with slight variations, it has been used to model, for instance, the onset of turbulence in fluids~\cite{Barkley}, oxidation on platinum surfaces~\cite{Oxidation1, Oxidation2}, and cardiac arrythmias~\cite{Cardiac}. Mathematically,~\eqref{e: FHN} is one of the simplest models which could, and does, exhibit complex spatio-temporal pattern formation. 

    \medskip
    
	\noindent \textbf{Existence of pushed fronts}. We consider~\eqref{e: FHN} in the oscillatory regime, $0 < a < \frac{1}{2}$ and $0 < \gamma < 4$, with $0 < \eps \ll 1$. In this regime, the system exhibits spatially homogeneous oscillations and the unique spatially constant equilibrium     $\u = (0,0)^\top$ is unstable. When the initial condition is sufficiently localized, these temporal oscillations are modulated by spatial spreading, leading to the formation of spatially periodic wave trains in the wake of a propagating invasion front; see Figure~\ref{fig: spacetime and sketch}. The modulation of temporal oscillations by spatial spreading has been identified as a ubiquitous mechanism for pattern formation in invasion processes, and appropriately combining the spatial spreading speed and temporal frequency of oscillations leads to a universal prediction scheme for the selected wave number~\cite{deelanger, vanSaarloos, AHSReview}. In the setting of the FitzHugh-Nagumo system~\eqref{e: FHN}, pushed pattern-forming fronts were constructed in~\cite{CarterScheel} using geometric singular perturbation theory. 

    \begin{figure}
		\centering
		\includegraphics[width=0.72\textwidth]{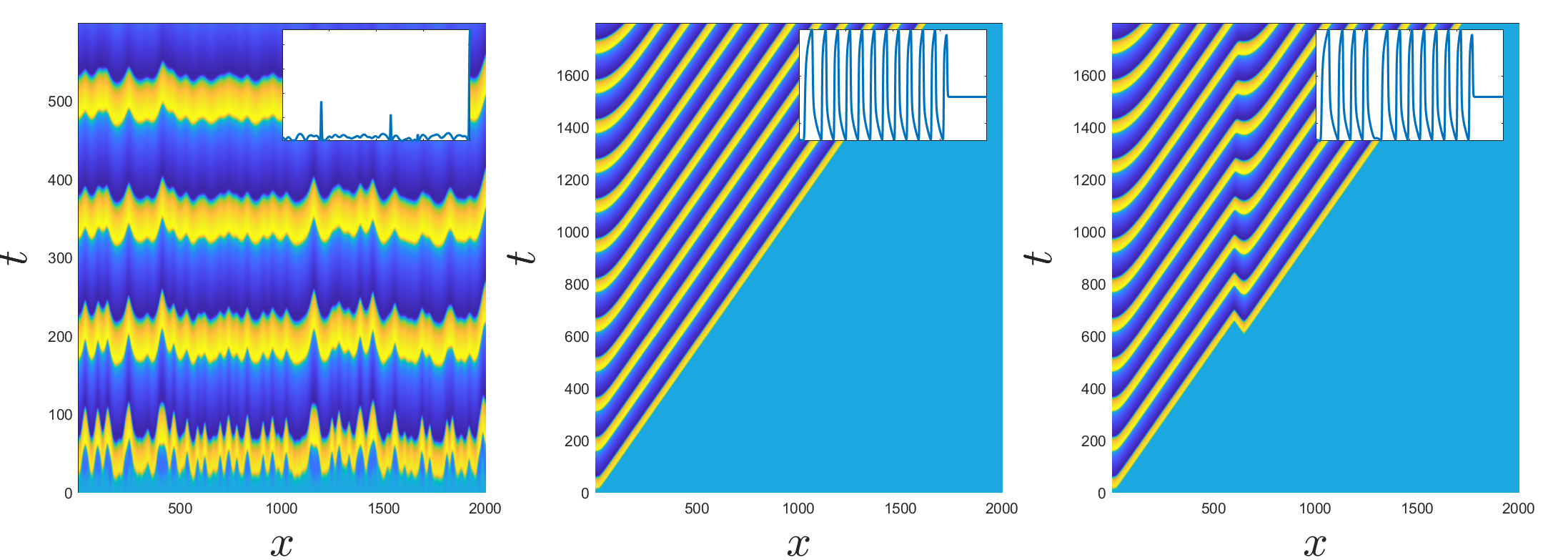}
		\includegraphics[width=0.27\textwidth]{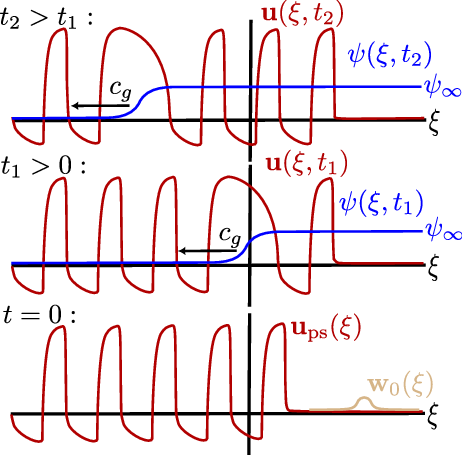}
		\caption{Left three panels: spacetime diagrams of numerical solutions to~\eqref{e: FHN}; insets show the graph of $u(x,t)$ against $x$ at 100 time units before the final time. Far left: simulation with initial condition consisting of the unstable equilibrium $\u \equiv (0,0)^\top$ perturbed with low amplitude white noise. Second from left: simulation with initial condition consisting of a small compactly supported perturbation of $\u \equiv (0,0)^\top$ near the left boundary. Third from left: initially the same simulation as the previous panel, but at $t = 600$ we pause the simulation and add a perturbation ahead of the front; the front adjusts its position and continues to propagate, leaving behind a phase defect. Right: sketch of the dynamics of the phase defect, in the frame $\xi = x - \cps t$ in which $\Ups(\xi)$ is stationary, with time increasing from bottom to top. Bottom right: front profile $\Ups(\xi)$ (red) and initial perturbation $\w_0(\xi)$ (tan). The plots at subsequent times show the response to the perturbation. The front adjusts its position, forming a phase defect $\psi(x,t)$ (blue) which propagates to the left (in this frame) with the group velocity.}
		\label{fig: spacetime and sketch}
	\end{figure}
    
	\begin{thmlocal}[{Existence of pushed pattern-forming fronts~\cite[Theorem 1.3]{CarterScheel}}]\label{t: existence}
		Fix $0 < a < \frac{1}{3}$ and $0 < \gamma < 4$. Then, there exist constants $C_0,\eps_0 > 0$ such that for all $\eps \in (0,\eps_0)$, there exist a speed $\cps > 0$, a spatial decay rate $\etaps > 0$, and solutions $\u(x,t) = \Ups (x - \cps t)$ and $\u(x,t) = \uwt(x - \cps t)$ to~\eqref{e: FHN} with smooth profiles $\Ups,\uwt \colon \R \to \R^2$ such that $\uwt$ is $L$-periodic, and there exist constants $C,\eta_* > 0$ and a vector $\Ups^0 \in \R^2 \setminus \{0\}$ such that	\begin{align}
			\left|\Ups(\xi) - \Ups^0 \re^{-\etaps \xi}\right| &\leq C\re^{-(\etaps + \eta_*) \xi}, & & \xi \geq 0, \label{e: front right asymptotics} \\
		\left|\Ups(\xi) - \uwt(\xi)\right| &\leq C\re^{\eta_*\xi}, & & \xi \leq 0. \label{e: front left asymptotics}
		\end{align}
        Moreover, we have the approximations
        \begin{align} \label{e:approx_eps}
        \left|\cps - \frac{1+a}{\sqrt{2}}\right|, \left|\etaps - \frac{1-a}{\sqrt{2}}\right| \leq C_0\eps, \qquad \left|\eps L - L_- - L_+\right| \leq C_0 \eps^{\frac13},
        \end{align}
        where
        \begin{align*} L_\pm = \int_{u_{1,\pm}}^{u_{2,\pm}} \frac{(1+a) f'(u)}{\sqrt{2}\, (\gamma f(u)-u)} \de u, \qquad u_{j,\pm} = \frac13 \left(1-2a \pm j \sqrt{1+a+a^2}\right), \qquad j = 1,2.\end{align*}
	\end{thmlocal}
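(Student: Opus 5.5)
The plan is to rebuild the argument of~\cite{CarterScheel} with geometric singular perturbation theory, treating fronts as heteroclinic orbits of the traveling-wave ODE. Substituting $\u(x,t)=\u(\xi)$, $\xi = x - ct$, into~\eqref{e: FHN} and writing $f(u) = u(u+a)(1-u-a)$ gives
\begin{align*}
u' = v, \qquad v' = -cv - f(u) + w, \qquad w' = -\frac{\eps}{c}(u-\gamma w),
\end{align*}
which is a slow--fast system with two fast variables $(u,v)$ and one slow variable $w$. The pattern $\uwt$ will be an $L$-periodic orbit of this system. The front $\Ups$ will be a heteroclinic orbit that leaves the periodic orbit (as $\xi\to-\infty$) and reaches the origin (as $\xi\to+\infty$) along a strong stable direction. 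That last feature is what makes the front pushed.

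\emph{Singular limit.} At $\eps=0$ the fast layer problem is the Nagumo front equation with $w$ frozen. For $w=0$ the bistable Nagumo equation has the explicit front $v = -\frac{1}{\sqrt2}u(u+a)$ (shifted appropriately), with speed $c=\frac{1+a}{\sqrt2}$. Its linearization at the origin in the leading edge has eigenvalues $-a\sqrt2$ and $-\frac{1-a}{\sqrt2}$. For $a<\frac13$ the front converges along the faster, strong stable direction, whose rate is $\frac{1-a}{\sqrt2}$; this is the origin of $\etaps$ and gives the leading-order formulas in~\eqref{e:approx_eps}. The critical manifold $w = f(u)$ has two attracting branches, the outer ones, on which $f'(u)<0$. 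The reduced flow on them is $\dot w = -\frac{1}{c}(u-\gamma w)$, which in the $u$ variable reads $f'(u)\,\dot u = -\frac1c(u-\gamma f(u))$.

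Next I would build a singular relaxation-oscillation orbit. Its two jumps are Nagumo fronts/backs at the values of $w$ where the layer speed equals $c$, and these jumps connect points $u_{1,\pm}$ to $u_{2,\pm}$. These endpoints come from the explicit heteroclinic condition for the cubic at speed $\frac{1+a}{\sqrt2}$, obtained by solving for the shifted cubic roots; this yields $u_{j,\pm}=\frac13(1-2a\pm j\sqrt{1+a+a^2})$. Between the jumps are two slow segments. Integrating $\de\xi_{\rm slow} = \frac{c f'(u)}{\gamma f(u)-u}\,\de u$ along each slow segment gives exactly $L_\pm$, with $c$ set to $\frac{1+a}{\sqrt2}$. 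The singular front is the concatenation of the initial $w=0$ Nagumo front leaving the origin with the subsequent transition onto this periodic orbit.

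\emph{Persistence.} For $0<\eps\ll1$ I would apply Fenichel theory to the normally hyperbolic slow manifolds, plus exchange-lemma / transversality arguments at the jumps, to get a nearby periodic orbit. Its period satisfies $\eps L = L_-+L_+ + \mathcal O(\eps^{1/3})$. The $\eps^{1/3}$ error comes from passing the orbit near the fold points of the critical manifold (or their vicinity), which needs blow-up or Krupa--Szmolyan fold estimates. The front is then obtained as a transverse intersection of two manifolds in the three-dimensional phase space: the one-dimensional strong stable manifold of the origin, and the two-dimensional center-unstable manifold of the periodic orbit. Counting dimensions, the intersection is generic only after adjusting the parameter $c$, which is the selected speed $\cps$. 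I would carry this out with a Melnikov computation for the Nagumo front: the condition of landing in the strong stable direction is an isolated, transversally crossed condition in $c$ at $\eps=0$. The implicit function theorem then gives $|\cps-\frac{1+a}{\sqrt2}|\le C_0\eps$, and $\etaps$ is the smooth continuation of the strong eigenvalue.

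\emph{Asymptotics.} The expansion~\eqref{e: front right asymptotics} follows from linearizing at the origin: the orbit lies in the strong stable manifold, and the spectral gap to the remaining eigenvalue gives the rate $\eta_*$. The estimate~\eqref{e: front left asymptotics} follows from Floquet hyperbolicity of the periodic orbit, namely exponential convergence of the orbit to it in backward time with asymptotic phase, after fixing the phase of $\uwt$.

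\emph{Main obstacle.} I expect the hardest step to be tracking the front orbit as it leaves the first slow segment near the fold and locks onto the periodic orbit. This is a canard-free fold passage that needs blow-up to control the $\eps^{1/3}$-type corrections, and it must be done while keeping transversality uniform in $c$. My first attempt would be the standard fold blow-up together with the exchange lemma, taking the transversality input from the Nagumo Melnikov integral.
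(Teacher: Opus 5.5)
The paper does not prove this statement; it imports it from~\cite{CarterScheel}, where the fronts are built by geometric singular perturbation theory. Your overall framework is of that type, and several pieces are sound in outline:
\begin{itemize}
\item selecting $c$ by requiring the $w=0$ layer front from $1-a$ to $0$ to enter the strong stable direction of the origin, which is transverse in $c$;
\item the resulting bounds on $\cps$ and $\etaps$;
\item the asymptotics~\eqref{e: front right asymptotics}--\eqref{e: front left asymptotics}, from the one-dimensional strong stable manifold and from hyperbolicity of the periodic orbit.
\end{itemize}
One small slip: that front is $v=-\tfrac{1}{\sqrt2}u(1-a-u)$. Your formula $v=-\tfrac{1}{\sqrt2}u(u+a)$ would decay at rate $a/\sqrt2$, contradicting the strong rate you state.

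The genuine gap is your singular wave train.
\begin{itemize}
\item \emph{No outer-to-outer jumps exist at this speed.} In the layer problem $u''+cu'+f(u)-w=0$, write $f(u)-w=-(u-r_1)(u-r_2)(u-r_3)$ with $r_1+r_2+r_3=1-2a=:3m$. A heteroclinic between the outer roots exists only for $|c|=|r_1+r_3-2r_2|/\sqrt2=3|m-r_2|/\sqrt2$. On the middle branch $|r_2-m|<\tfrac13\sqrt{1-a+a^2}$, so every such speed is below $\sqrt{(1-a+a^2)/2}<\tfrac{1+a}{\sqrt2}$. Hence at $c\approx\cps$ there are no ``jumps at the values of $w$ where the layer speed equals $c$'', and no speed condition can produce the $u_{j,\pm}$.
\item \emph{The branch stability is reversed.} In the traveling-wave ODE the outer branches are of saddle type, and for $c>0$ the middle branch is the attracting one. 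A fast fiber leaving an outer branch at an interior $w$-level therefore lands on the middle branch. The slow flow there (using $\gamma<4$) carries it to the origin, so the orbit never closes up.
\item \emph{The actual singular orbit is of phase-wave (relaxation-oscillation) type.} Each slow segment runs from a fold point $u_{1,\pm}$ along the outer branch to the point $u_{2,\pm}$ lying at the opposite fold's $w$-level. Each fast piece connects $u_{2,\pm}$ to the fold point $u_{1,\mp}$. These are saddle-to-saddle-node connections, which exist for every $c$ above a threshold, so $c$ does not fix where they occur. This is what $u_{j,\pm}=m\pm jr$ encodes: the fold/drop-point geometry of a cubic with inflection point $m$, not a speed condition.
\end{itemize}
(For $f(u)=u(u+a)(1-u-a)$ the folds sit at $m\pm\tfrac13\sqrt{1-a+a^2}$. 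So the radicand $1+a+a^2$ in the displayed formula seems to reflect a different normalization of the cubic. Either way, it is not the output of your claimed heteroclinic computation.)

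Consequently, passage through the folds is the core of the construction, both for the periodic orbit and for the front's approach to it in backward $\xi$, and it is the natural source of the $\eps^{1/3}$ term. In your skeleton no orbit ever reaches a fold, so the blow-up you invoke is disconnected from the orbit you build. You need to rebuild the singular periodic orbit, and the front's backward tail, around these fold-terminating jumps before the Fenichel, exchange-lemma and blow-up machinery can be applied.
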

	For the remainder of this paper, we fix the parameters $a, \gamma$, and $ \eps$ in~\eqref{e: FHN} such that Theorem~\ref{t: existence} holds. We emphasize that the periodic wave train $\uwt$ generated in the wake of $\Ups$ is a ``far-from-equilibrium pattern'' in the sense that it has large amplitude and is highly nonlinear, in contrast to weakly nonlinear, low-amplitude patterns selected by fronts near a Turing instability~\cite{ColletEckmann, EckmannWayne1}. In addition, the estimate~\eqref{e:approx_eps} provides explicit leading-order approximations of the propagation speed $\cps$ and exponential decay rate $\etaps$ of the front, as well as for the selected wave number $\kwt = \frac{2\pi}{L}$ of the wave train in its wake.  
	
	We illustrate the role of invasion fronts in selecting coherent patterns in~\eqref{e: FHN} in Figure~\ref{fig: spacetime and sketch}: the leftmost panel shows a solution arising from a perturbation of the unstable state $\u \equiv (0,0)^\top$ by low-amplitude white noise. This solution exhibits temporal oscillations, but there is initially no spatial coherence, although the oscillations begin to synchronize on large time scales. In stark contrast, the second panel shows the highly coherent spatially periodic pattern generated through invasion of the unstable state by a small perturbation that is supported near the left boundary. 
	    
    \noindent\textbf{Nonlinear stability and selection of pushed fronts.} We pass to the frame $\xi = x - \cps t$, so that $\Ups$ is a stationary solution to the transformed PDE
	\begin{align}
		\u_t = D\u_{\xi \xi} + \cps \u_\xi + F(\u). \label{e: FHN comoving}
	\end{align}
	Our main result establishes the nonlinear stability of $\Ups$ as a solution to~\eqref{e: FHN comoving} and its selection by initial data which are supported on a half line. We provide an informal summary of the result below, and discuss the relevant notions, such as group velocities and linear spreading speeds, in the following paragraph. Precise statements of the spectral hypotheses and the main theorem are given in Section~\ref{s: setup}.
	\begin{thminf}
		Let $\Ups$ be a stationary pattern-forming front solution to~\eqref{e: FHN comoving}, as established in Theorem~\ref{t: existence}. Assume that $\Ups$ satisfies the following conditions:
		\begin{itemize}
			\item The wave train $\uwt$ in the wake of the front is \emph{diffusively spectrally stable}, that is, the spectrum of the linearization of~\eqref{e: FHN comoving} about $\uwt$ touches the imaginary axis in a single quadratic tangency at the origin and is otherwise stable;
			\item The \emph{group velocity} of the wave train $\uwt$ points to the left, away from the front interface, when measured in the frame co-moving with the front;
			\item The propagation speed $\cps$ is greater than the \emph{linear spreading speed} associated to the unstable rest state $\u = (0,0)^\top$. As a result, $\u = (0,0)^\top$ is pointwise exponentially stable as a solution to~\eqref{e: FHN comoving};
			\item The linearization of~\eqref{e: FHN comoving} about $\Ups$ has a simple eigenvalue at the origin (embedded in the essential spectrum) with associated eigenfunction $\Ups'$, and the point spectrum is otherwise stable. 
		\end{itemize}
		Then, $\Ups$ is nonlinearly stable as a solution to~\eqref{e: FHN comoving} against sufficiently localized perturbations. The perturbed solution converges to a fixed spatial translate of $\Ups$, locally uniformly in space. Finally, the basin of attraction of $\Ups$ includes some initial data which are supported on a half line $(- \infty, \xi_0]$. Hence, $\Ups$ is a \emph{selected front} in the sense of~\cite[Definition 1]{AveryScheelSelection}. 
	\end{thminf}
	Our diffusive spectral stability assumption for $\uwt$ is generic for stable wave trains in reaction-diffusion systems and a standard assumption in nonlinear stability analyses, cf.~\cite{SchneiderAbstract, SSSU, JONZ, JNRZ_13_1}. The group velocity of the wave train measures the speed of transport of small perturbations, at the linear level. Assuming that the group velocity points away from the front interface naturally characterizes the front as a \emph{source} of patterns in the sense of~\cite{SandstedeScheelDefects}. We are not aware of any examples of pattern-forming invasion fronts for which the group velocity does not point away from the front interface. The linear spreading speed characterizes the speed of propagation of small disturbances in~\eqref{e: FHN}, linearized about $\u = (0,0)^\top$. Pushed fronts occur when the nonlinearity enhances propagation, so that selected fronts travel faster than the linear spreading speed. To summarize, we expect the above assumptions to be generic features for pushed pattern-forming fronts in general dissipative evolution problems. The assumptions are verified for the pushed fronts of Theorem~\ref{t: existence} in the companion work~\cite{spectral, SpectralFronts}. 
	
	\subsection{Overview, challenges, and related work}\label{s: overview}

    \noindent \textbf{Nonlinear stability of traveling waves: general strategy.} Consider a traveling-wave solution $\u(x,t) = \u_*(x-ct)$ to a general reaction-diffusion system
    \begin{align}
        \u_t = D\u_{xx} + F(\u), \quad \u (x,t) \in \mathbb{R}^d,\quad x \in \mathbb{R}, \quad t> 0. \label{e: general RD}
    \end{align}
    Passing to the comoving coordinate $\xi = x-ct$, the profile $\u_*$ becomes a stationary solution to
    \begin{align}
      \u_t = D \u_{\xi \xi} + c \u_\xi + F(\u). \label{e:RDsys}
    \end{align}
    To study stability of the traveling wave, one considers perturbed solutions to~\eqref{e:RDsys} of the form $\u(\xi, t) = \u_*(\xi) + \w(\xi, t)$. This leads to an evolution equation for the perturbation $\w$ of the form
    \begin{align*}
        \w_t = \mathcal{L} \w + N(\w), 
    \end{align*}
    where $\mathcal{L} = D \partial_{\xi \xi} + c \partial_\xi + F'(\u_*(\xi))$ is the linearization of~\eqref{e:RDsys} about $\u_*(\xi)$, and $N(\w) = F(\u_* + \w) - F(\w) - F'(\u_*) \w = \mathrm{O}(\w^2)$ is the nonlinear remainder. A first hope would be to prove that, in an appropriate norm, we have $\w(\cdot,t) \to 0$ as $t \to \infty$, so that the perturbed solution $\u(\cdot, t)$ converges back to the traveling wave $\u_*$ as $t \to \infty$. 

    In the presence of spectral information (and absence of additional variational or order-preserving structure), a standard approach to establish temporal decay of $\w(t)$ is to analyze its variation-of-constants (or Duhamel) formula
    \begin{align*}
        \w(\cdot, t) = \re^{\mcl t} \w_0 + \int_0^t \re^{\mcl (t-s)} N(\w(\cdot, s)) \, \de s,
    \end{align*}
    where $\re^{\mcl t}$ is the semigroup generated by $\mcl$. The strategy is then to convert spectral information on $\mcl$ into estimates on the linearized evolution $\re^{\mcl t}$ via its inverse Laplace representation. Provided these estimates are sufficiently strong, they can be passed to the nonlinear level through a perturbative argument, ultimately yielding temporal decay of $\w(\cdot, t)$ at rates inherited from the linearized evolution. In the present setting, however, there are several obstacles obstructing this approach, which we now explore.

    \textbf{Nonlinear stability of traveling waves: scalar pushed fronts.} As a basic illustrative example, consider the scalar Nagumo equation 
    \begin{align*}
    u_t = u_{xx} + u(u+a)(1-u-a),
    \end{align*}
    which for $0 < a < \frac{1}{3}$ admits a pushed front solution $u(x,t) = u_\mathrm{ps} (x- \cps^0 t)$ with speed $\cps^0 > 0$ and smooth profile $u_{\mathrm{ps}} \colon \R \to \R$, connecting the spatially constant stable state $u \equiv 1-a$ at $-\infty$ to the unstable state $u \equiv 0$ at $\infty$.   One can attempt to adopt the strategy outlined above to control perturbations $w(\xi, t)$ to this pushed front. 
    
    By restricting the allowed localization of perturbations, the essential spectrum of $\mcl$ may fully be stabilized. However, $\re^{\mcl t}$ does not decay in time due to the presence of a neutral eigenvalue at the origin with eigenfunction $u_\mathrm{ps}'$. This eigenvalue arises from translational invariance of the original equation, together with the steep tail decay of $u_{\mathrm{ps}}$. Since there is, however, a gap between this isolated eigenvalue and the essential spectrum, one can use a spectral projection to separate this neutral behavior from the rest of the linearized dynamics, resulting in the decomposition 
	\begin{align}
	\re^{\mcl t} = u_\mathrm{ps}' \Ptr + \mathrm{O}(\re^{-\mu t}), \qquad t \geq 0, \label{e: nagumo linear estimate}
	\end{align}
    where $\mu > 0$ is a fixed constant and the functional $\Ptr \colon L^2(\R) \to \R$ extracts the coefficient from the spectral projection. The leading-order linearized dynamics of the perturbed solution then have the form
    \begin{align*}
        u_\mathrm{ps} + u_\mathrm{ps}'P_\mathrm{tr}w \approx u_\mathrm{ps} (\cdot + P_\mathrm{tr} w),
    \end{align*}
    suggesting that the critical behavior of $\re^{\mcl t}$ leads to convergence to a phase shifted front. This observation can be leveraged at the nonlinear level by allowing for a temporal phase function to capture the excitation caused by the neutral translational eigenmode of $\mcl$. Ultimately, one then obtains convergence, which is exponential in time and uniform in space, to a fixed spatial translate $u_\mathrm{ps}(\xi + \xi_\infty)$ of the front, where the shift $\xi_\infty$ depends on the initial perturbation $w_0$. This program is carried out, for instance, in~\cite{Sattinger}; see also~\cite[Chapter~4]{KapitulaPromislow} for further details and references. 
	
	\medskip
    
    \noindent\textbf{Dynamics of perturbations to pushed pattern-forming fronts.} 
	  Our main result concerns pushed fronts $\Ups$ that select a periodic wave train $\uwt$ in their wake. For such fronts there is still a neutral translational eigenvalue with eigenfunction $\Ups'$, but this eigenvalue is now embedded in the marginally stable essential spectrum originating from the wave train; see Figure~\ref{fig: front and spectrum}. Guided by the above analysis for the simpler, non-pattern-forming pushed fronts, we may still expect that perturbations initially excite this neutral eigenmode, causing the front interface to adjust its position. This then alters the phase of the periodic wave train near the front interface, producing a ``phase defect''. That is, supposing the front interface is located near $\xi = 0$, the solution to the left of the front interface now resembles $\uwt(\xi + \psi_0(\xi))$, where
	\begin{align*}
		\lim_{\xi \to -\infty} \psi_0 (\xi) = 0 \neq \psi_0(0). 
	\end{align*}
	Thus, the adjustment of the front interface in response to the perturbation creates a \emph{phase mixing} problem for the pattern in the wake. Phase mixing problems for wave trains have been extensively studied~\cite{DSSS, SSSU, JNRZ_13_1, JNRZWhitham, JNRZInventiones, IyerSandstede}, but here the analysis is complicated by two facts: (i) the phase offset is driven by internal interactions in the system, rather than externally prescribed; (ii) the underlying coherent structure $\Ups$ is not spatially periodic, so tools such as the Floquet--Bloch transform are not available to study the linearized dynamics. 
	
	Overcoming these issues, we ultimately obtain an analogous picture to~\cite{DSSS,SSSU, JNRZ_13_1, JNRZWhitham, JNRZInventiones, IyerSandstede} for the phase dynamics in the wake:  the solution to the left of the front interface resembles $\uwt(\xi + \psi(\xi, t))$ for large times, where the \emph{phase modulation} $\psi(\xi, t)$ (approximately) solves the eikonal equation
	\begin{align}
		\psi_t = \Deff \psi_{\xi \xi} - c_g \psi_\xi + \beta \psi_\xi^2, \label{e: eikonal}
	\end{align}
	where $c_g < 0$ is the group velocity of the wave train, and the effective diffusivity $\Deff > 0$ and nonlinear coefficient $\beta \in \R$ may be computed via a Lyapunov--Schmidt reduction argument as outlined in~\cite{DSSS}. To the right of the front interface, $\psi(\xi,t)$ is driven by the translational eigenmode and converges to a fixed asymptotic phase $\psi_\infty \in \R$ as $t \to \infty$. See the right two panels of Figure~\ref{fig: spacetime and sketch} for numerical simulations and sketches illustrating the dynamics of this phase function.
	
	To obtain such a description analytically, we rely on a careful decomposition of the linearized dynamics into localized modes arising from the translational eigenvalue at $0$, diffusive modes associated with the essential spectrum, and interactions between them. We obtain this description using far-field/core decompositions to analyze the associated resolvent problem, expanding upon techniques developed for the study of pulled fronts in~\cite{AveryScheelSIMA, AveryScheelGL, AveryScheelSelection, AverySelectionRD, FHNpulled}. 
	
	The resulting nonlinear dynamics are then roughly as follows: (i) the initial perturbation excites the translational mode, adjusting the position of the front interface; (ii) the phase of the pattern in the wake responds to this shift in the interface according to~\eqref{e: eikonal} with the negative group velocity $c_g < 0$ transporting the phase defect to the left; see Figure~\ref{fig: spacetime and sketch}. One challenge is then to rule out significant effects of back-coupling, that is, the phase response in the wake having a significant influence on the position of the front interface, leading to a further response of the phase. Such a cascade would make it difficult to close a nonlinear argument. The key observation which rules out this cascade is that the translational mode only responds very weakly to the dynamics in the wake. This is ultimately an effect of the outward pointing group velocity, and is manifested in the exponential localization of the adjoint eigenfunction associated with the translational eigenvalue at $0$. 
	
	\begin{figure}
		\centering
		\includegraphics[width=0.6\textwidth]{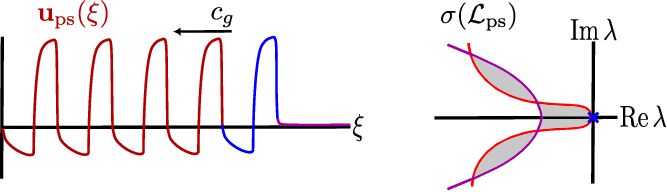}
		\caption{Left: schematic of a pushed pattern-forming front $\Ups(\xi)$. Pattern formation is driven by the front interface (blue) which creates wave trains in its wake with outward pointing group velocity $c_g < 0$. Right: spectrum of the linearization about the pushed pattern-forming front $\Ups(\xi)$, after stabilization with an exponential weight. Red and purple curves denote the spectra of $u \equiv 0$ and $\uwt$, which form the boundaries of the essential spectrum of $\Lps$ (shaded in grey). The blue cross at $\lambda = 0$ denotes the simple translational eigenvalue embedded in the essential spectrum. }

		\label{fig: front and spectrum}
	\end{figure}

    \medskip

    \noindent \textbf{Beyond pattern-forming fronts.} The technical tools we develop here are well-suited to any stability problem involving the interaction of (embedded) neutral eigenvalues with outgoing diffusive modes. Such problems naturally arise, for instance, in the stability of shock waves in viscous conservation laws~\cite{ZumbrunHoward,BeckSandstedeZumbrun} and of source defects in reaction-diffusion systems~\cite{BNSZ1, BNSZ2}. While nonlinear stability problems for viscous shocks are well-studied, the analysis there benefits from the conservation law structure which improves the behavior of nonlinear remainders and allows for some additional flexibility in the nonlinear argument. Likewise, the stability analysis for source defects in the complex Ginzburg--Landau equation in~\cite{BNSZ1} has benefited from an additional gauge symmetry. Our analysis here develops a framework which does not rely on these additional structures.

    A key step in all of these problems is to obtain detailed information on the linearized evolution through an analysis of the resolvent equation near neutral eigenvalues embedded in the essential spectrum. Previous approaches~\cite{ZumbrunHoward, BNSZ1, BNSZ2, BeckSandstedeZumbrun} analyze the resolvent via a spatial dynamics approach, recasting the resolvent equation as a first-order evolutionary system and then pasting together solutions which decay as $\xi \to \pm \infty$ to construct a Green's kernel for this system. We instead analyze the resolvent through a functional analytic approach, using partitions of unity to decompose into different spatial regions, a far-field/core ansatz to capture critical terms arising from the essential spectrum, and Fredholm properties of the linearization to solve for far-field parameters and core corrections. An advantage of our method over~\cite{ZumbrunHoward, BNSZ1, BNSZ2, BeckSandstedeZumbrun} is that it allows us to bound the linearized dynamics in fixed spatially weighted norms, avoiding the need for delicately chosen norms whose weights mix space and time dependence and must be carefully propagated through a nonlinear argument. Further perspectives on applying our approach to other diffusive nonlinear stability problems are provided in Section~\ref{sec:discussion}.

    \subsection{Notation} \label{sec:notation} We introduce (nonstandard) notation used throughout the manuscript. 

\medskip

    \noindent \textbf{Exponentially weighted Sobolev spaces.} To quantify spatial localization of perturbations, we employ exponentially weighted Sobolev spaces. Fix $\eta_\pm \in \R$, and define a smooth positive two-sided weight $\omega_{\eta_-, \eta_+}$ satisfying
	\begin{align*}
		\omega_{\eta_-, \eta_+}(\xi) = \begin{cases}
			\re^{\eta_- \xi}, & \xi \leq -1, \\
			\re^{\eta_+ \xi}, & \xi \geq 1.
		\end{cases}
	\end{align*}
	We choose the weight such that $\omega_{\eta_-,\eta_+}$ is non-decreasing for $\eta_\pm \geq 0$. Given $1 \leq p \leq \infty$, $\eta_\pm \in \R$, a field $\F \in \{\R, \C \}$ and integers $k \geq 0$ and $m \geq 1$, we define the exponentially weighted Sobolev space $\smash{W^{k,p}_{\mathrm{exp}, \eta_-, \eta_+} (\R, \F^m)}$ as
	\begin{align*}
		W^{k,p}_{\mathrm{exp}, \eta_-, \eta_+} (\R, \F^m) = \Big\{ \u \in W^{k,p}_\mathrm{loc}(\R, \F^m) : \| \u \|_{W^{k,p}_{\mathrm{exp}, \eta_-, \eta_+}} < \infty \Big\},
	\end{align*}
	where 
	\begin{align*}
		\| \u \|_{W^{k,p}_{\mathrm{exp}, \eta_-, \eta_+}} = \| \omega_{\eta_-, \eta_+} \u \|_{W^{k,p}}. 
	\end{align*}
	When $p = 2$, we write $\smash{W^{k,2}_{\mathrm{exp}, \eta_-, \eta_+} (\R, \F^m)} = \smash{H^k_{\mathrm{exp}, \eta_-, \eta_+} (\R, \F^m)}$. When $k = 0$, we write $\smash{W^{0,p}_{\mathrm{exp},\eta_-, \eta_+}(\R, \F^m)} = \smash{L^p_{\mathrm{exp}, \eta_-, \eta_+} (\R, \F^m)}$. When the ambient vector space is clear from context, we drop $(\R, \F^m)$ from our notation and simply write $\smash{W^{k,p}_{\mathrm{exp}, \eta_-, \eta_+}, H^k_{\mathrm{exp}, \eta_-, \eta_+}}$, or $\smash{L^p_{\mathrm{exp}, \eta_-, \eta_+}}$. 

    \medskip
 
    \noindent \textbf{Spaces of bounded functions.} We write $\mathcal{B}(X,Y)$ for the space of linear bounded operators between Banach spaces $X$ and $Y$. Often we simply write $\mathcal{B}(X)$ for $\mathcal{B}(X,X)$. Moreover, for integers $k \geq 0$ and $m \geq 1$, and a field $\F \in \{\R, \C \}$, we write $C_{\mathrm{ub}}^k(\R,\F^m)$, or simply $C_{\mathrm{ub}}^k(\R)$, for the space of bounded and uniformly continuous functions $u \colon \R \to \F^m$, which are $k$ times differentiable and whose $k$ derivatives are also bounded and uniformly continuous. We equip $C_{\mathrm{ub}}^k(\R,\F^m)$ with the standard $W^{k,\infty}$-norm, so that it is a Banach space. 

    \medskip

    \noindent \textbf{Partition of unity.} To separate the dynamics in the wake from those in the leading edge of the front, we introduce a smooth partition of unity $\chi_\pm \colon \R \to [0,1]$ such that $\chi_-$ is monotonically decreasing, $\chi_-(\xi) = 1$ for $\xi \leq -1$ and $\chi_-(\xi) = 0$ for $\xi \geq 0$. 

    \medskip

   	\noindent \textbf{Suppression of constants.} For a given set $S$ and functions $A,B : S \to \R$, the expression $A(x) \lesssim B(x)$ for $x \in S$ means that there exists a constant $C > 0$ independent of $x \in S$ such that $A(x) \leq C B(x)$ for all $x \in S$. 

    \medskip
    
	\noindent \textbf{Additional notation.} For a metric space $M$, $x \in M$, and $\delta > 0$, we denote by $B(x,\delta)$ the open ball of radius $\delta > 0$ centered at $x \in M$. We will often abuse notation by writing a function $\u(\xi, t)$ of space and time as $\u(t)$, viewing it as a function of time with values in a given Banach space. Similarly, we will often write a function $\u(\xi; \lambda)$ of a spectral parameter $\lambda$ as $\u(\lambda)$. 

    \subsection{Outline of paper}

    In Section~\ref{s: setup}, we precisely formulate our main result under general spectral stability assumptions, which are satisfied by pushed pattern-forming fronts in the FitzHugh--Nagumo system. Section~\ref{sec:techniques} outlines the proof strategy and provides a roadmap of the techniques used to obtain our main result. In Section~\ref{s: resolvent}, we carry out a far-field/core decomposition of the resolvent of the linearization about the front and establish corresponding estimates. These results are converted into a decomposition of the associated semigroup, together with suitable estimates, in Section~\ref{s: linear estimates}. We subsequently develop the nonlinear iteration scheme in Section~\ref{sec: nonlinear iteration}. Section~\ref{sec:proof} contains the nonlinear stability argument and yields the proof of the main result. A discussion and outlook on future research directions are provided in Section~\ref{sec:discussion}. Finally, Appendix~\ref{app: exp weight} collects several exponentially weighted mean-value-type estimates needed for our analysis.
 
    \section{Main result}\label{s: setup}

    In this section, we precisely formulate our main result on the nonlinear stability and selection of pushed pattern-forming fronts. The result is stated under general spectral stability assumptions, which indeed hold for the fronts of Theorem~\ref{t: existence}, by Theorem \ref{thm: spectral hyp}. 
	
	\subsection{Spectral stability assumptions}
	
    Pushed invasion fronts are characterized by marginally stable point spectrum, spectral stability of the selected state in the wake, and pointwise exponential stability of the unstable state in the leading edge of the front. We state explicit spectral conditions that capture these properties.

    \medskip

    \noindent \textbf{Spectral stability of wave train.} We begin with the spectral stability of the selected state in the wake which, in the present setting of a pushed pattern-forming front, is an $L$-periodic wave train $\uwt$. The linearization of~\eqref{e: FHN comoving} about $\uwt$ is given by the $L$-periodic differential operator
	\begin{align}
		\lwt = D \partial_\xi^2 + \cps \partial_\xi + F'(\uwt),
	\end{align}    
    acting on $L^2(\R) \times L^2(\R)$ with domain $H^2(\R) \times H^1(\R)$.  By Floquet--Bloch theory, see e.g.~\cite{ReedSimon}, the spectrum of   
    $\lwt$ is determined by the family of Bloch operators 
    \begin{align*}
		\Lwthat (\nu) = D (\partial_\xi + \nu)^2 + \cps (\partial_\xi + \nu) + F'(\uwt), \qquad \nu \in \C,
	\end{align*}
    acting on $L^2 (\R/L\Z) \times L^2 (\R/L\Z)$ with domain $H^2(\R / L \Z) \times H^1 (\R / L\Z)$. Since $\smash{\Lwthat(\nu)}$ has compact resolvent for each $\nu \in \C$, its spectrum consists of isolated eigenvalues of finite algebraic multiplicities only. In contrast, the spectrum of $\lwt$ is purely esssential and arises from the union of the spectra of $\lwt(\nu)$ for purely imaginary values of $\nu$. That is, we have the spectral relation
	\begin{align*}
		\Sigma(\lwt) = \bigcup_{k \in \big[- \tfrac{\kwt}{2}, \tfrac{\kwt}{2}\big)} \Sigma (\Lwthat(\ri k)), 
	\end{align*}
    where $\smash{\kwt = \frac{2\pi}{L}}$ is the wave number. By translational invariance, $0$ is an eigenvalue of $\smash{\Lwthat(0)}$ with eigenfunction $\smash{\uwt'}$. Consequently, the spectrum of the $\lwt$ must touch the imaginary axis at the origin. The following \emph{diffusive spectral stability} assumption, which is standard in the stability theory of periodic wave trains~\cite{SchneiderAbstract, JONZ, SSSU}, ensures that this touching is nondegenerate and the remainder of the spectrum is confined to the open left-half plane.
    
    \begin{hyp}[Diffusive spectral stability of wave train]\label{hyp: wave train stability}
    There exists $\theta > 0$ such that the following conditions hold:
		\begin{enumerate}
			\item $\Sigma(\lwt) \subset \{ \lambda \in \C: \Re \lambda < 0 \} \cup \{ 0\}$. 
			\item For any $k \in [-\kwt/2, \kwt/2)$, we have $\Re \Sigma(\Lwthat(\ri k)) \leq -\theta k^2$. 
			\item $\lambda = 0$ is a simple eigenvalue of $\Lwthat(0)$. 
		\end{enumerate}
	\end{hyp}

    By analytic perturbation theory~\cite{Kato1995Perturbation}, the simple $0$-eigenvalue of $\smash{\Lwthat}(0)$ can be continued analytically in $\nu$, resulting in a simple eigenvalue 
    \begin{align}
		\lambdawt(\nu) = - c_g \nu + \Deff \nu^2 + \mathrm{O}(\nu^3) \label{e: wt lambda expansion}
	\end{align}
    of $\smash{\Lwthat(\nu)}$ for $|\nu| \ll 1$. The coefficients $c_g \in \R$ and $\Deff > 0$ in the expansion~\eqref{e: wt lambda expansion} represent the group velocity and effective diffusivity of the wave train, respectively; see~\cite{DSSS} for details. Via a standard Lyapunov--Schmidt reduction argument, one obtains
    \begin{align} \label{e: group velocity}
    c_g &= 2\left\langle \u_{\mathrm{ad}}, D \partial_{\xi\xi} \uwt\right\rangle_{L^2(\R/L\Z,\C^2)} + \cps = 2\left\langle u_{\mathrm{ad},1}, \partial_{\xi\xi} u_{\mathrm{wt},1}\right\rangle_{L^2(\R/L\Z,\C)} + \cps, \end{align}
    where $\u_{\mathrm{ad}}$ spans the kernel of the adjoint operator $\smash{\Lwthat(0)^*}$ normalized by $\langle\uwt',\u_\mathrm{ad}\rangle_{L^2(\R/L\Z,\C^2)} = 1$. We assume that the group velocity,  which measures the speed of transport of small perturbations along the wave train, is negative, so that the front acts as a source of patterns. Note that we are measuring the group velocity in the frame \eqref{e: FHN comoving} already co-moving with the front speed $\cps$. 
	\begin{hyp}[Outgoing group velocity for the wave train]\label{hyp: negative group velocity}
		Given that Hypothesis~\ref{hyp: wave train stability} holds, the group velocity $c_g$, given by~\eqref{e: group velocity}, is negative.
	\end{hyp}

    \medskip
   
    \noindent \textbf{Pointwise exponential stability of leading edge.} Next, we turn to the state $\u \equiv 0$ in the leading edge of the front. Although this state is unstable, with perturbations growing in any translation-invariant norm, we require it to be \emph{pointwise exponentially stable}, that is, sufficiently localized perturbations decay exponentially at each \emph{fixed} point $\xi \in \R$. On the spectral level, this is reflected by the fact that the $L^2$-spectrum of the linearization 
   \begin{align*}
    \mathcal{A}_+ = D \partial_\xi^2 + \cps \partial_\xi + F'(0)
   \end{align*}    
   of~\eqref{e: FHN comoving} about $\u \equiv 0$ can be fully stabilized by introducing an exponential weight; see~\cite{Sandstede2000absolute}. The resulting exponentially weighted linearization is given by the constant-coefficient operator
   \begin{align*}
    \mcl_+ = D (\partial_\xi - \eta_0)^2 + \cps (\partial_\xi - \eta_0) + F'(0),
   \end{align*}
   posed on $L^2(\R) \times L^2(\R)$ with domain $H^2(\R) \times L^2(\R)$, where $\eta_0 > 0$ denotes the weight. Taking $\eta_0 < \etaps$ permits perturbations that decay slower than the tail of the front as $\xi \to \infty$; see~\eqref{e: front right asymptotics}. Allowing such perturbations in the nonlinear stability analysis of the front makes it possible to cut off the front tail and thereby admit initial data supported on a half-line $(-\infty,\xi_0]$ with $\xi_0 > 0$. We therefore impose the following assumption.
   \begin{hyp} [Pointwise exponential stability of leading edge]\label{hyp: leading edge}
    There exists $0 < \eta_0 < \etaps$ such that $\Sigma(\mcl_+) \subset \{\lambda \in \C : \Re \lambda < 0\}$. 
    \end{hyp}

    \begin{remark}{\upshape
    We note that \emph{pulled} fronts are characterized by the fact that the spectrum of $\mathcal{A}_+$ can only be \emph{marginally} stabilized by an exponential weight~\cite{AHSReview,pp}. More precisely, for a choice of exponential weight $\eta_0$, the spectrum of $\mcl_+$ is contained in the closed left-half plane, but Hypothesis~\ref{hyp: leading edge} is violated for any $\eta_0 > 0$. Ultimately, this is a consequence of the fact that pulled fronts propagate with the \emph{linear spreading speed} $\clin$ and have spatial tail decay rate $\etalin$, while pushed fronts travel with speed $\cps > \clin$ and exhibit steeper decay at rate $\etaps > \etalin$ in the leading edge; see~\cite{AHSReview,FHNpulled}. Here, the linear spreading speed $\clin > 0$ describes the rate at which disturbances of the unstable state $\u \equiv 0$ spread in the linearized equation $\u_t = D \u_{xx} + F'(0)\u$, whereas $\etalin$ corresponds to their pointwise exponential decay rate. The speed $\clin$ and decay rate $\etalin$ can be obtained by locating pinched double roots of the \emph{linear dispersion relation}
    \begin{align} \label{e: linear dispersion relation}
	d_c(\lambda,\nu) = \det\big[ D \nu^2 + c \nu I + F'(0) - \lambda I \big].
    \end{align}
    This computation has been carried out in the FitzHugh--Nagumo system in~\cite{CarterScheel}.}
    \end{remark}

    \noindent \textbf{Marginal stability of point spectrum.} The linearization of~\eqref{e: FHN comoving} about the pushed front $\Ups$ is given by the operator
    $$\Aps = D \partial_\xi^2 + \cps \partial_\xi + F'(\Ups),$$ 
    acting on on $L^2(\R) \times L^2(\R)$ with domain $H^2(\R) \times L^2(\R)$. By the Weyl essential spectrum theorem, the right boundary of the essential spectrum of $\Aps$ is given by the right boundary of $\Sigma(\lwt) \cup \Sigma(\mathcal{A}_+)$, which lies in the open right-half plane thanks to the instability of the rest state $\u \equiv 0$ in the leading edge of the front. To recover stability, we use Hypothesis~\ref{hyp: leading edge} to define a smooth positive exponential weight $\omega_0$ satisfying
	\begin{align*}
		\omega_0 (\xi) = \begin{cases}
			\re^{\eta_0 \xi}, &\xi \geq 1, \\
			1, & \xi \leq -1, 
		\end{cases}
	\end{align*}
    which restricts the allowed tail behavior of perturbations. The spectrum of $\Aps$ acting on this class of perturbations equals the spectrum of the conjugate operator 
    \begin{align} \label{e:defLps}
    \Lps = \omega_0 \Aps \left( \frac{\cdot}{\omega_0} \right) 
    \end{align}
    posed on $L^2(\R) \times L^2(\R)$ with domain $H^2(\R) \times H^1(\R)$. Hypotheses~\ref{hyp: wave train stability} and~\ref{hyp: leading edge} now guarantee that the right boundary of the essential spectrum of $\Lps$, which equals the right boundary of $\Sigma(\lwt) \cup \Sigma(\mcl_+)$, is confined to the open left-half plane, except for the quadratic touching of $\Sigma(\lwt)$ at $0$; see Figure~\ref{fig: front and spectrum}. This reflects that the dynamics of sufficiently localized perturbations of pushed fronts are dominated by localized effects in the front interface and diffusive behavior in the wake, rather than by the tail dynamics of the front.

    For pushed pattern-forming fronts, the linearization $\Lps$ possesses a marginal translational eigenvalue embedded in the essential spectrum of the wave train in the wake. The following spectral stability condition guarantees that this neutral eigenvalue is simple and that no additional critical point spectrum is present. To measure multiplicity, we use the exponentially weighted spaces introduced in~\S\ref{sec:notation}, which shift the essential spectrum of $\Lps$ into the open left-half plane and thereby isolate the translational eigenvalue.

   \begin{hyp}[Marginally stable point spectrum]\label{hyp: point spectrum} For any $\eta > 0$ sufficiently small, we have:
   \begin{enumerate}   
   \item[1.] There are no eigenvalues $\lambda \in \C \setminus \{0\}$ of $\Lps$ with $\Re \lambda \geq 0$.
   \item[2.] $0$ is a simple isolated eigenvalue of $\Lps$, considered as an operator on the exponentially weighted space $L^2_{\mathrm{exp}, \eta, 0} \times L^2_{\mathrm{exp}, \eta, 0} $ with domain $H^2_{\mathrm{exp}, \eta, 0} \times H^1_{\mathrm{exp}, \eta, 0} $. The range of the associated spectral projection is spanned by $\omega_0 \Ups'$.
   \end{enumerate}
	\end{hyp}

  A direct consequence of Hypothesis~\ref{hyp: point spectrum} is that $0$ is, for each $\eta > 0$ sufficiently small, likewise a simple eigenvalue of the $L^2$-adjoint operator $\Lps^*$, acting on the dual space $L^2_{\mathrm{exp}, -\eta, 0} \times L^2_{\mathrm{exp}, -\eta, 0}$ with domain $H^2_{\mathrm{exp}, -\eta, 0} \times H^1_{\mathrm{exp}, -\eta, 0}$. Let $\psiad \in H^2_{\mathrm{exp}, -\eta, 0} \times H^1_{\mathrm{exp}, -\eta, 0}$ denote the corresponding eigenfunction, normalized by
  \begin{align} \label{e:pairing_adjoint}
  \langle \omega_0 \Ups', \psiad \rangle_{L^2} = 1.
   \end{align}
  Clearly, the eigenfunction $\psiad$ is exponentially localized on the left. On the other hand, it follows from the Fredholm properties of $\Lps$ as an operator on $\smash{L^2_{\mathrm{exp}, \eta, 0} \times L^2_{\mathrm{exp}, \eta, 0}}$ that $\psiad$ is also exponentially localized on the right; see Proposition \ref{prop: fredholm properties}. The functional, given by
   \begin{align} \label{e: spectral functional}
    \Ptr \f = \langle \f, \psiad \rangle_{L^2},
  \end{align}
  then extracts the coefficient of the associated spectral projection. This functional plays a central role in our stability analysis, as it measures the excitation of the translational mode by perturbations. 
   
    \begin{remark} \label{rem:exp weight wake}
    { \upshape
	The characterization of the translational eigenvalue in Hypothesis~\ref{hyp: point spectrum} relies on the fact that the marginally stable essential spectrum of $\Lps$, associated with the wave train in the wake, can be fully stabilized by introducing an exponential weight. Such a weight permits perturbations that grow exponentially as $\xi \to -\infty$, in contrast to the weight $\omega_0$, which restricts to exponentially decaying perturbations. The necessity of a weight that admits exponentially growing perturbations is a consequence of the outgoing group velocity in the wake. We emphasize that such weights must be avoided in the \emph{nonlinear} stability analysis: allowing exponentially growing perturbations introduces exponentially growing coefficients in the nonlinear terms of the perturbation equation, thereby preventing the closure of a nonlinear argument.
    }
    \end{remark}

	\noindent \textbf{Verification of spectral stability assumptions.} We formulated the Hypotheses~\ref{hyp: wave train stability} through~\ref{hyp: point spectrum} for pushed pattern-forming fronts in general reaction-diffusion systems. We emphasize that these hypotheses can be naturally extended to \emph{modulated} pushed pattern-forming fronts, which are time-periodic in a co-moving frame and occur for instance near the onset of a Turing instability; see~\cite[Definition~6.6]{AHSReview}. Hypotheses~\ref{hyp: wave train stability} through~\ref{hyp: point spectrum} are validated for the pushed pattern-forming fronts in the FitzHugh--Nagumo system from Theorem~\ref{t: existence} in the companion papers~\cite{spectral,SpectralFronts,CarterScheel}, leading to the following result.
    
    \begin{thmlocal}[{\!\!~\cite{SpectralFronts}}] \label{thm: spectral hyp}
   Fix $0 < a < \frac13$ and $0 < \gamma < 4$. Then, there exist constants $C_0, \eps_0 > 0$ such that, for all $\eps \in (0,\eps_0)$, the pushed pattern-forming front solution $\u(x,t) = \Ups(x-\cps t)$ to~\eqref{e: FHN}, established in Theorem~\ref{t: existence}, fulfills Hypotheses~\ref{hyp: wave train stability} through~\ref{hyp: point spectrum}. In particular, Hypothesis~\ref{hyp: leading edge} holds for any $\eta_0$ in the open interval $(\etalin,\etaps)$, where $\etalin \in (0,\etaps)$ satisfies
   \begin{align*}
   \left|\etalin - \sqrt{a(1-a)}\right| \leq C_0\eps.
   \end{align*}
   \end{thmlocal}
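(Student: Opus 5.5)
The proof consists of checking Hypotheses~\ref{hyp: wave train stability} through~\ref{hyp: point spectrum} one at a time, each in the singular limit $\eps \to 0$. Throughout, I use the geometric singular perturbation description of $\Ups$ and $\uwt$ from~\cite{CarterScheel} underlying Theorem~\ref{t: existence}. For $\eps=0$ the profile decomposes into three pieces: a fast pushed Nagumo front connecting $u=1-a$ to $u=0$ with speed $\frac{1+a}{\sqrt2}$; slow passages along the outer branches of the cubic critical manifold; and fast jumps that assemble the relaxation-oscillation wave train of period $L \sim \eps^{-1}(L_-+L_+)$. The plan is to transfer spectral information from these limiting pieces to $\eps>0$ by perturbation arguments, with constants uniform in $\eps$.

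\textbf{Leading edge (Hypothesis~\ref{hyp: leading edge}).} This step is explicit. Conjugating with the weight, the symbol of $\mcl_+$ at $\nu=\ri k - \eta_0$ is a $2\times 2$ matrix whose $u$-entry has real part $\eta_0^2 - \cps\eta_0 + a(1-a) - k^2$. Its $w$-entry is $-\eps\gamma + \cps(\ri k - \eta_0)$, and the off-diagonal coupling is $\mathrm{O}(1)$ in one entry and $\mathrm{O}(\eps)$ in the other. For $\eps=0$ the quadratic $\eta^2 - \cps\eta + a(1-a)$ has the roots $\sqrt2 a$ and $\frac{1-a}{\sqrt2}\approx\etaps$, and is negative in between. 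The $w$-part is strictly stable for $\eta_0>0$. I would then handle the $\mathrm{O}(\eps)$ coupling by studying $d_c(\lambda,\nu)$ from~\eqref{e: linear dispersion relation} directly. I would locate its pinched double root near $\eta=\sqrt{a(1-a)}$ and show that $\Re\lambda<0$ on the weighted Fourier curves for every $\eta_0\in(\etalin,\etaps)$. The delicate regime is $|\lambda| = \mathrm{O}(\eps)$, where the oscillatory instability of $F'(0)$ lives; I would treat it by rescaling $\lambda = \eps\Lambda$ and expanding $d_c$.

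\textbf{Wave train (Hypotheses~\ref{hyp: wave train stability} and~\ref{hyp: negative group velocity}).} I would analyze the Bloch problem for $\Lwthat(\ri k)$ with exponential dichotomies and Lin's method along the singular periodic orbit. The eigenvalue problem splits into two fast jump problems, each a translated Nagumo front with simple eigenvalue $0$ and the rest of its spectrum in $\Re\lambda<0$, together with slow passages that are uniformly contracting. This reduces the small-$|\lambda|$ spectrum to a $2\times 2$ matrix problem whose Floquet parametrization gives the critical curve $\lambdawt(\ri k)$, with $\Re\lambdawt \le -\theta k^2$ and simplicity at $0$. For large $|\lambda|$ in the closed right half-plane I would use uniform a priori bounds. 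The group velocity I would compute from the nonlinear dispersion relation $c(k)$ of the singular wave trains, or equivalently from~\eqref{e: group velocity} with $\u_{\mathrm{ad}}$ concentrated near the jumps. To leading order $c_g = \cps - \kwt\, \partial_k(\text{wave-train speed})$, where the wave-train speed is evaluated in the comoving frame at the selected wave number. The sign then follows from monotonicity of the slow-passage times with respect to the jump speed.

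\textbf{Point spectrum (Hypothesis~\ref{hyp: point spectrum}) — the main obstacle.} I expect this step to be the hardest. Here one must show that, in $L^2_{\mathrm{exp},\eta,0}$ with small $\eta>0$, the only eigenvalue with $\Re\lambda\ge0$ is a simple $0$ spanned by $\omega_0\Ups'$, uniformly as $\eps\to0$. The weight $\eta$ moves the wave-train spectrum left by roughly $c_g\eta<0$ by Hypothesis~\ref{hyp: negative group velocity}, but only by $\mathrm{O}(\eta)$. So near $\lambda=0$ one must resolve the interaction of fast, slow and Floquet scales. I would again use Lin's method, gluing the leading fast Nagumo front, which is pushed, so that its weighted linearization has a simple $0$ and a spectral gap by a Sturm--Liouville argument, to the weighted wave-train problem on $(-\infty,0]$. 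Exponential dichotomies of the weighted periodic problem then give a reduced Evans-type function. Its only root in a neighbourhood of $\Re\lambda\ge0$ would be shown to be a simple zero at $0$, with derivative computed through the Nagumo Melnikov integral. Away from $0$, uniform resolvent bounds exclude further eigenvalues.
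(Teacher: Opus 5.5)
The paper does not prove this statement. It is imported from the companion work \cite{SpectralFronts}, building on \cite{spectral} and the construction in \cite{CarterScheel}, so there is no in-paper argument to compare with, and your outline has to stand on its own. Its architecture is the natural one: singular limit, exponential dichotomies and Lin's method, a reduced problem near $\lambda=0$, and uniform bounds elsewhere. However, the decisive steps for Hypotheses~\ref{hyp: wave train stability} and~\ref{hyp: point spectrum} are asserted rather than derived, and you put the difficulty at the wrong scale.

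The rescaling $\lambda=\eps\Lambda$, which you reserve for the leading edge where it is not needed, is exactly where this work lies. The slow passages are not ``uniformly contracting''. They carry the slow direction $\cps w'=(\lambda+\eps\gamma)w-\eps u$, which is neutral at leading order when $\lambda=\mathrm{O}(\eps)$, and the translation modes of the fast jumps couple through it at order $\eps$. Because these passages have length $\mathrm{O}(1/\eps)$, the entries of your reduced $2\times2$ problem carry transport factors $\re^{\Lambda\,\mathrm{O}(1)}$. That problem is therefore transcendental in $\Lambda$ and, for each Floquet parameter, has many roots besides the critical one. All of them, apart from $\lambda=0$ at $k=0$, must be placed in $\{\Re\lambda<0\}$ with the $-\theta k^2$ bound; the expansion of $\lambdawt$ only covers small $k$. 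The same structure governs the front's reduced Evans function. There you must rule out zeros in $\{\Re\lambda\geq0\}$ other than the simple one at $0$, including near $\lambda=0$, where the wave-train spectrum touches and the Evans function is only defined through the weight $\eta$ or a gap-lemma continuation.

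There are also concrete errors.

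\textbf{Group velocity.} Write $\omega(k)=k\,c(k)$, with $c(k)$ the speed of the wave train of wavenumber $k$ and $L(c)$ the wavelength at speed $c$. The comoving group velocity is then $c_g=\omega'(\kwt)-\cps=\kwt\,c'(\kwt)=-L/\partial_c L$. Your expression $\cps-\kwt\,\partial_k c$ equals $\cps-c_g$, so it tests the wrong sign. The correct criterion is $\partial_cL>0$, to leading order $\partial_c(L_-+L_+)>0$, with $c$ entering both the prefactor and the jump points. This also needs the $c$-derivative of the $\mathrm{O}(\eps^{1/3})$ remainder in $\eps L$ to be small; alternatively, evaluate~\eqref{e: group velocity} directly.

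\textbf{Leading edge.} Your symbol computation and the roots $\sqrt2a$, $(1-a)/\sqrt2$ are right, but several of the surrounding claims are not.
\begin{itemize}
\item For small $\eps$, $F'(0)$ has two positive real eigenvalues, not an oscillatory pair. After conjugation the uncoupled $w$-branch sits at $\Re\lambda=-\cps\eta_0-\eps\gamma$. Since $|(\ri k-\eta_0)^2+a(1-a)|$ is bounded below uniformly in $k$, the coupling moves both branches by $\mathrm{O}(\eps)$ uniformly, and no rescaling is needed.
\item At $c=\cps$ the pinched double root lies near $\nu=-\cps/2$, with $\lambda\approx-(1-3a)^2/8$. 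The rate $\etalin$ instead comes from the double root at $c=\clin$. You obtain it by the implicit function theorem from $(\nu,\lambda,c)=(-\sqrt{a(1-a)},0,2\sqrt{a(1-a)})$, after checking that the $\eps=0$ double roots where the two factors of $d_c$ meet ($\nu=\pm\ri\sqrt{a(1-a)}$, $\lambda=c\nu$) are not pinched.
\item What actually needs an argument is the endpoint $\etaps$. Since $d_{\cps}(0,-\etaps)=0$, we have $0\in\Sigma(\mcl_+)$ at $\eta_0=\etaps$, so perturbing from $\eps=0$ only reaches $\eta_0\leq\etaps-C\eps$. Near $\etaps$ you need the transversal crossing of the $k=0$ root, which at $\eps=0$ is $\partial_{\eta_0}\lambda=2\etaps-\cps=(1-3a)/\sqrt2>0$. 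You also need that $k=0$ maximizes $\Re\lambda$.
\item The lower end is harmless: the $\eps=0$ stable window $(\sqrt2a,(1-a)/\sqrt2)$ has geometric mean $\sqrt{a(1-a)}$.
\end{itemize}
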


   	\subsection{Statement of main result}
	
    We are now in position to state our main result precisely. We assume that Hypotheses~\ref{hyp: wave train stability}-\ref{hyp: point spectrum} hold and will be in force throughout the remainder of this paper.
   
	\begin{thmlocal}[Nonlinear stability of pushed pattern-forming fronts]\label{t: main}
 		Let $\u(x,t) = \Ups(x - \cps t)$ be a pushed pattern-forming front solution to~\eqref{e: FHN} obtained in Theorem~\ref{t: existence}, and suppose that Hypotheses~\ref{hyp: wave train stability} through~\ref{hyp: point spectrum} are satisfied. Fix constants $K,\delta_c > 0$ such that $c_g + \delta_c < 0$. Then, there exist $M, \delta_0, \mu > 0$ such that, whenever $\w_0 \in H^3(\R) \times H^2 (\R)$ satisfies 
		\begin{align*}
			E_0 := \| \omega_0 \w_0 \|_{H^3 \times H^2} < \delta_0,
		\end{align*}
        the following assertions hold:
        \begin{itemize}
        \item[(i)] \emph{(Global existence of nearby data).} There exists a unique global classical solution 
        \begin{align*}\u \in C\big([0, \infty), C_{\mathrm{ub}}^2(\R) \times C_{\mathrm{ub}}^1(\R) \big) \cap C^1\big([0,\infty),C_{\mathrm{ub}}(\R) \times C_{\mathrm{ub}}(\R)\big)\end{align*} to~\eqref{e: FHN comoving} with initial condition $\u(0) = \Ups + \w_0$.
        \item[(ii)] \emph{(Lyapunov stability).} We have
         \begin{align} \label{e:maindecaybounds}
        \begin{split}
            \|\omega_0 [\u(t) - \Ups]\|_{L^\infty} &\leq ME_0
       \end{split}
       \end{align}
        for all $t \geq 0$.
        \item[(iii)] \emph{(Locally uniform asymptotic orbital stability).} There exists an asymptotic phase shift $\psi_\infty \in \R$ with 
        \begin{align} \label{e:maindecaybounds21}
        |\psi_\infty| \leq ME_0, \qquad |\psi_\infty - \Ptr(\omega_0 \w_0)| \leq ME_0^2,
        \end{align}
        such that
        \begin{align} \label{e:maindecaybounds22}
        \sup_{\xi \in [-K,\infty)} \omega_0(\xi) |\u(\xi,t) - \Ups(\xi + \psi_\infty)| \leq ME_0 \re^{-\mu t}
        \end{align}
        for each $t \geq 0$, where $\Ptr$ is given by~\eqref{e: spectral functional}.
        \item[(iv)] \emph{(Asymptotic modulational stability).} There exists a smooth function $\psi \colon [0, \infty) \times \R \to \R$ such that
		\begin{align} \label{e:maindecaybounds3}
       \begin{split}
			\big\| \omega_0 [ \u(t) - \Ups(\cdot + \psi(t)) ]\big\|_{H^3 \times H^2} + \| \nabla \psi(t)\|_{H^3} &\leq M E_0(1+t)^{-\frac14}, \\
            \big\| \omega_0 [ \u(t) - \Ups(\cdot + \psi(t)) ]\big\|_{L^\infty} + \| \nabla \psi(t)\|_{L^\infty} &\leq M E_0 (1+t)^{-\frac12},\\
            \|\psi(t)\|_{L^\infty} &\leq ME_0
      \end{split}
      \end{align}
       for $t \geq 0$, where $\nabla \psi(t) = (\psi_\xi(t),\psi_t(t))^\top$ denotes the spacetime gradient.
       \item[(v)] \emph{(Light cone estimates).} The refined bounds
   	\begin{align} \label{e:refineddecaybounds}
  \begin{split}
			\sup_{\xi \geq (c_g + \delta_c) t} \Big[ \omega_0 (\xi) | \u(\xi, t) - \Ups(\xi + \psi_\infty)| + |\psi(\xi,t) - \psi_\infty|\Big] &\leq M E_0 \re^{-\mu t}, \\
			\sup_{\xi \leq (c_g - \delta_c) t} \Big[\omega_0 (\xi) | \u(\xi, t) - \Ups(\xi) | + |\psi(\xi,t)|\Big] &\leq M E_0 (1+t)^{-\frac{1}{4}}
   \end{split}
		\end{align}
    hold for all $t \geq 0$.
       \end{itemize}
   \end{thmlocal}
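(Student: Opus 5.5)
Our plan is to prove Theorem~\ref{t: main} through a nonlinear iteration built on a modulated perturbation, in the spirit of the diffusive stability theory for wave trains~\cite{JNRZ_13_1,JNRZInventiones}, combined with a far-field/core decomposition of the linearization $\Lps$.

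\textbf{Linear theory.} We first decompose the resolvent $(\Lps-\lambda)^{-1}$ near $\lambda=0$. The weighted operator is Fredholm on $L^2_{\mathrm{exp},\eta,0}$ (Hypothesis~\ref{hyp: point spectrum}), but in unweighted $L^2$ on the left the essential spectrum of $\lwt$ touches the origin. We therefore make the ansatz
\[
\v=\chi_-\,\alpha(\lambda)\,\e^{\nu(\lambda)\xi}\,\mathbf{p}(\xi;\nu(\lambda))+\v_c ,
\]
where $\lambdawt(\nu(\lambda))=\lambda$ selects the branch $\nu(\lambda)\approx -\lambda/c_g$, which is outgoing to the left since $c_g<0$, and $\mathbf{p}$ is the Bloch eigenfunction. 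The core correction $\v_c$ is localized. We solve for $(\alpha,\v_c)$ by Lyapunov--Schmidt reduction using the Fredholm properties, which isolates a pole $\lambda^{-1}\omega_0\Ups'\,\Ptr$. Because $\psiad$ is exponentially localized on both sides, the back-coupling from the wake into the translational mode is exponentially weak.

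Inverting the Laplace transform then yields a semigroup decomposition of the form
\[
\re^{\Lps t}=\omega_0\Ups'\,\Ptr+\chi_-\,\omega_0\uwt'\,s_p(t)+\tilde S(t).
\]
Here $s_p(t)$ is a phase propagator behaving like a convected heat kernel $\tfrac12\erf$-front moving at speed $c_g$ plus Gaussian terms, with $\partial_\xi s_p,\partial_t s_p$ decaying like $t^{-1/2}$ in $L^\infty$ and $t^{-1/4}$ in $L^2$. The remainder $\tilde S(t)$ satisfies $t^{-1/2}$-type (or better) bounds and exponential decay in fixed weights for $\xi\ge -K$.

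\textbf{Nonlinear argument.} We introduce the phase $\psi(\xi,t)$ through
\[
\u(\xi,t)=\Ups(\xi+\psi(\xi,t))+\v(\xi,t).
\]
The phase is defined implicitly, with $\psi(t)$ given by the translational-plus-phase part of the Duhamel formula for the unmodulated perturbation, so that the linear terms $\omega_0\Ups'\Ptr$ and $\uwt' s_p$ are absorbed into $\psi$. This yields a quasilinear equation for $\v$ whose nonlinearity contains only $\v^2$, $\v\nabla\psi$, $\psi_\xi^2$ and derivative terms. The terms $\psi_\xi^2$ come with the $\Deff$-smoothing of the eikonal structure~\eqref{e: eikonal}.

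We control the template function
\[
\eta(t)=\sup_{s\le t}\Big[(1+s)^{1/2}\big(\|\omega_0\v\|_{L^\infty}+\|\nabla\psi\|_{L^\infty}\big)+(1+s)^{1/4}\big(\|\omega_0\v\|_{H^3\times H^2}+\|\nabla\psi\|_{H^3}\big)+\|\psi\|_{L^\infty}\Big].
\]
Loss of derivatives from the quasilinear modulation is recovered by a nonlinear damping energy estimate for $\omega_0\v$; here the degenerate $w$-component is handled by the $\eps$-damping, and this is why we work in $H^3\times H^2$. Critical integrals such as $\int_0^t(t-s)^{-1/2}(1+s)^{-1}\,\de s$ have a logarithmic borderline. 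We expect to avoid the logarithm by using that nonlinear terms of the form $\psi_\xi^2$ hit $\partial_\xi s_p$ only, rather than $s_p$ itself, together with the light-cone localization.

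\textbf{Refined estimates.} For (v), we refine the propagator bounds with Gaussian pointwise estimates on $s_p$ and $\tilde S$. In the region $\xi\ge(c_g+\delta_c)t$, these are exponentially small, so $\psi\to\psi_\infty:=\Ptr(\omega_0\w_0)+\int_0^\infty \Ptr(\text{nonlinearity})\,\de s$, with the quadratic correction bounded by $ME_0^2$. This gives (iii) and the first line of~\eqref{e:refineddecaybounds}. Behind the cone the phase has not yet arrived, which gives the second line.

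The main obstacle is the resolvent analysis near the embedded eigenvalue. One must show that the interaction between the pole at $0$ and the branch point from $\lambdawt$ produces no $\lambda^{-1}$ growth in the wake beyond the pure phase term $\uwt' s_p$. Equivalently, after inversion, the translational shift $\Ptr$ must feed only a bounded outgoing phase defect, so that the nonlinear iteration closes without a back-coupling cascade.
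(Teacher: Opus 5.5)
Your overall architecture matches the paper: the far-field/core resolvent analysis, a phase $\psi$ that absorbs the translational and outgoing phase modes, forward-modulated damping for regularity, and $\psi_\infty=\Ptr\v_0+\int_0^\infty\Ptr\mathcal{N}\,\de s$. But the nonlinear iteration as you set it up does not close, and your proposed fix for the logarithm targets the wrong terms.

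\textbf{The excited terms do not close.} The obstruction comes from the \emph{excited} contributions $\int_0^t s_{p,1}(t-s)\mathcal{N}(s)\,\de s$ and the analogous $s_{c,1}$ terms. Here $s_{p,1}(t-s)\mathcal{N}(s)$ is $\Ptr\mathcal{N}(s)$ times a bounded error-function profile. With your template $\eta(t)$, the best available bound is
\[
|\Ptr\mathcal{N}(s)|\lesssim\|v_1\|_{L^\infty}^2+\|\nabla\psi\|_{W^{2,\infty}}\big(\|\v\|_{W^{2,\infty}\times W^{1,\infty}}+\|\psi_\xi\|_{L^\infty}\big)\lesssim(1+s)^{-1}\eta(s)^2 .
\]
Pairing in $L^2$ or $L^1$ is even worse. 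Consequences:
\begin{enumerate}
\item[(a)] You only get $\|\psi(t)\|_{L^\infty}\lesssim E_0+\eta(t)^2\log(2+t)$, so the $\|\psi\|_{L^\infty}$ part of your template does not close.
\item[(b)] $\|\psi_\xi(t)\|_{L^\infty}$ produces exactly your borderline integral $\int_0^t(1+t-s)^{-1/2}(1+s)^{-1}\,\de s$.
\item[(c)] The integral defining $\psi_\infty$ is not known to converge.
\end{enumerate}
Derivative structure cannot help. $\Ptr$ is a pairing with $\psiad$, so moving $\partial_\xi$ off $\mathcal{N}$ puts it on $\psiad$ with no gain, and $\mathcal{N}$ also contains non-derivative terms such as $v_1^2$. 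Gaussian light-cone bounds on the propagators do not help either, since the forcing at the interface is itself only algebraically small.

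\textbf{The missing idea.} You must make your heuristic ``back-coupling is exponentially weak'' quantitative inside the bootstrap. You even note the weighted exponential decay at the linear level, but your template does not carry it. Two facts drive this:
\begin{enumerate}
\item[(a)] Since $\psiad$ is localized, $|\Ptr\g|\lesssim\|\omega_{\kappa,0}\g\|_{L^\infty}$.
\item[(b)] Apart from the constant mode $\omega_0\Ups'\Ptr$, the $\omega_{\kappa,0}$-weighted linear dynamics decay like $\re^{\kappa(c_g+\Delta\mu)t}$, because the group velocity is outgoing.
\end{enumerate}
The paper therefore adds $\re^{-\kappa_0(c_g+\frac14\delta_c)s}\big(\|\omega_{\kappa_0,0}\v(s)\|_{L^\infty}+\sum_{j=0}^2\|\omega_{\kappa_0,0}\partial_\xi^j\nabla\psi(s)\|_{L^\infty}\big)$ to the template. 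Because $\omega_{\kappa,0}^{-1}$ is unbounded on the left, $\omega_{\kappa,0}\mathcal{N}$ can only be bounded by products of one weighted and one unweighted factor. So this weighted iteration must be coupled to the unweighted one.

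The coupling gives $|\Ptr\mathcal{N}(s)|\lesssim\re^{\kappa_0(c_g+\frac14\delta_c)s}$ times the square of the enlarged template. That is what makes $\psi$ bounded, gives $|\psi_\infty-\Ptr\v_0|\lesssim E_0^2$, and yields the exponential rates in (iii) and in the right light cone.

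\textbf{The scattering terms.} Once the excited terms are handled, the genuinely marginal terms are $\int_0^t\partial_\xi^\ell s_{p,2}(t-s)\mathcal{N}\,\de s$ and $\int_0^t s_{c,2}(t-s)\mathcal{N}\,\de s$. The paper closes these not by derivative structure but by estimating $\mathcal{N}$ in $L^2$, where it decays like $(1+s)^{-3/4}$, against the $L^2\to L^p$ bounds of these propagators. This avoids the logarithm.

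\textbf{Left light cone.} The second line of (v) does not follow from ``the phase has not yet arrived''. There $\psi$ is genuinely of size $(1+t)^{-1/4}$. Extracting this from $\int_0^t\chi_-(\cdot-(c_g-\Delta c)t)\,s_{p,2}(t-s)\mathcal{N}(s)\,\de s$ requires the nonlinearity restricted to the left cone to decay faster than $(1+s)^{-3/4}$ in $L^2$. Otherwise $\int_0^t(1+t-s)^{-1/4}(1+s)^{-3/4}\,\de s=\mathrm{O}(1)$ gives no decay. The paper therefore also bootstraps left-cone-localized norms of $\v$ and $\nabla\psi$ with improved rates. This gives the bound $(1+s)^{-13/12}$, which is then combined with the left light-cone propagator estimates.

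\textbf{Resolvent ansatz.} Your ansatz $\chi_-\alpha\,\re^{\nu\xi}\mathbf{p}+\v_c$ with $\v_c$ localized only covers data that are exponentially localized on the left. For data merely in $L^2$ or $L^\infty$, you must first subtract the far-field solution $\chi_-\u_-(\lambda)$ of $(\lambda-\lwt)\u_-=\chi_-\g$. This makes the core data exponentially localized uniformly in $\lambda$, so the Fredholm inversion on $X^p_\eta$ applies. It is also the source of the scattering terms $s_{p,2}$ and $s_{c,2}$.
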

    
    Roughly speaking, Theorem~\ref{t: main} asserts that solutions to~\eqref{e: FHN comoving} arising from small perturbations to $\Ups$, which decay at least as fast as $\smash{\re^{-\eta_0 \xi}}$ as $\xi \to \infty$, converge to a fixed spatial translate $\Ups(\cdot + \psi_\infty)$ of the front as $t \to \infty$, locally uniformly on $\R$. Recall from Theorem~\ref{t: existence} that the front itself has asymptotics~\eqref{e: front right asymptotics} with $\etaps > \eta_0$. That is, the front decays to zero at a faster exponential rate than $\re^{-\eta_0 \xi}$ as $\xi \to \infty$. Hence, the initial conditions allowed in Theorem~\ref{t: main} include initial data which are compactly supported on the right. 
	
	\begin{corollary}[Selection of pushed pattern-forming fronts]
		The basin of attraction in~\eqref{e: FHN comoving} of the family of translates $\{\Ups(\cdot + \psi_0) : \psi_0 \in \R \}$ of the front includes initial conditions which vanish on a half line $[\xi_0,\infty)$ for some $\xi_0 > 0$. In particular, $\Ups$ is a selected front in the sense of~\cite[Definition 1]{AveryScheelSelection}.
	\end{corollary}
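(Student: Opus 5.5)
The corollary follows from Theorem~\ref{t: main} once we exhibit initial data that vanish on a half line and satisfy the smallness condition $E_0 < \delta_0$. The plan is to cut off the tail of the front far to the right. Fix a smooth cutoff $\chi \colon \R \to [0,1]$ with $\chi(\xi) = 1$ for $\xi \leq 0$ and $\chi(\xi) = 0$ for $\xi \geq 1$. For $\xi_0 > 0$ large, consider the initial condition $\u_0 = \chi(\cdot - \xi_0 + 1)\Ups$, which vanishes on $[\xi_0,\infty)$. The corresponding perturbation is
\begin{align*}
\w_0 = \u_0 - \Ups = -\big(1-\chi(\cdot - \xi_0 + 1)\big)\Ups,
\end{align*}
and it is supported in $[\xi_0 - 1, \infty)$.

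The key step is to estimate $E_0 = \|\omega_0 \w_0\|_{H^3 \times H^2}$. On the support of $\w_0$ we have $\xi \geq \xi_0 - 1 \geq 1$, so $\omega_0(\xi) = \re^{\eta_0 \xi}$. By the asymptotics~\eqref{e: front right asymptotics}, $|\Ups(\xi)| \lesssim \re^{-\etaps \xi}$ for $\xi \geq 0$. We also need the same exponential bound for the derivatives $\Ups^{(j)}$ with $j \leq 3$. This follows from smoothness of the profile together with standard stable-manifold theory for the traveling-wave ODE at the hyperbolic (in the weighted sense) rest state $0$. Alternatively, one can bootstrap it from the profile equation $D\Ups'' + \cps \Ups' + F(\Ups) = 0$, whose second component is a first-order ODE $\cps w' + \eps(u - \gamma w) = 0$ that can be integrated from $+\infty$.

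Combining these bounds, and noting that derivatives of the cutoff are bounded uniformly in $\xi_0$, each term in the $H^3 \times H^2$ norm of $\omega_0 \w_0$ is bounded by a constant multiple of $\re^{-(\etaps - \eta_0)\xi}$ on $[\xi_0 - 1,\infty)$. Hence
\begin{align*}
E_0 \lesssim \re^{-(\etaps-\eta_0)\xi_0},
\end{align*}
and this tends to $0$ as $\xi_0 \to \infty$ because $\eta_0 < \etaps$ by Hypothesis~\ref{hyp: leading edge}. Choosing $\xi_0$ large enough gives $E_0 < \delta_0$. We also have $\w_0 \in H^3 \times H^2$, since $\w_0$ is smooth, decays exponentially, and $\omega_0 \geq$ const $>0$.

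Theorem~\ref{t: main}(iii) then yields convergence, locally uniformly on $[-K,\infty)$ and even with the weight $\omega_0$, to the translate $\Ups(\cdot + \psi_\infty)$. So $\u_0$ lies in the basin of attraction of the family of translates. Since Theorem~\ref{t: main} also gives stability against $\omega_0$-weighted perturbations, which include compactly supported ones, $\Ups$ is a selected front in the sense of~\cite[Definition 1]{AveryScheelSelection}.

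The only step requiring care is the derivative decay of $\Ups$. This is routine, so I expect no real obstacle: the essential input is the strict inequality $\eta_0 < \etaps$.
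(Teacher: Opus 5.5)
Your proposal is correct and follows the paper's argument: the paper also deduces the corollary directly from Theorem~\ref{t: main}, observing that $\Ups$ decays at rate $\etaps > \eta_0$, so cutting off its tail far to the right gives admissible initial data with arbitrarily small $\omega_0$-weighted $H^3 \times H^2$ norm. The decay of $\omega_0 \partial_\xi^j \Ups$ that you need is the same fact recorded in Appendix~\ref{app: exp weight} via the profile equation, and your explicit cutoff and the bound $E_0 \lesssim \re^{-(\etaps-\eta_0)\xi_0}$ just make the paper's brief remark quantitative.
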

	To our knowledge, this is the first result which establishes selection of a pattern-forming front from steep initial conditions. 
 
    The refined estimates~\eqref{e:refineddecaybounds} in Theorem~\ref{t: main} give further details of the convergence to $\Ups$, in particular explaining why convergence to the translated front is only locally uniform. Recall from Hypothesis~\ref{hyp: negative group velocity} that $c_g < 0$, where $c_g$ is the group velocity describing the speed of transport of small perturbations along the wave train. The light cone estimates~\eqref{e:refineddecaybounds} say that the convergence to the spatial translate $\Ups(\cdot + \psi_\infty)$ of the front is mediated by a ``phase front'', connecting the two phases $\psi_0 = 0$ and $\psi_\infty$, which propagates to the left with speed approximately equal to $c_g$, thereby rigorously confirming the intuition discussed in Section~\ref{s: overview} and depicted in Figure~\ref{fig: spacetime and sketch}. We emphasize that estimate~\eqref{e:maindecaybounds21} implies that generically $\psi_\infty \neq 0$, since the kernel of the functional $\Ptr$ has codimension 1.

    The temporal convergence rates towards the modulated front solution in~\eqref{e:maindecaybounds3} coincide with those obtained in~\cite{JNRZ_13_1,SSSU} for phase-mixing problems of wave trains in reaction-diffusion systems. As explained in~\cite[Section~6.1]{deRijknonlocalized}, these rates reflect the diffusive behavior of solutions to the viscous eikonal equation~\eqref{e: eikonal}, which governs the leading-order phase dynamics of the wave train~\cite{JNRZWhitham,SSSU,DSSS}, and are therefore sharp. 
 
    We present the proof of Theorem~\ref{t: main} in Section~\ref{sec:proof}. The proof strategy and an overview of used techniques is provided in the upcoming Section~\ref{sec:techniques}.

    \begin{remark}{ \upshape
    Light cone estimates of the form~\eqref{e:refineddecaybounds} have also been obtained in the nonlinear stability analysis of source defects in~\cite{BNSZ2}. The left light cone estimate in~\eqref{e:refineddecaybounds} corresponds to cutting off the portion of the front where the phase defect is present and recovers the standard diffusive decay rates for $L^2$-localized perturbations of wave trains. However, the corresponding outer light cone estimates for source defects in~\cite{BNSZ2} yield \emph{exponential} convergence in time. This is a consequence of the Gaussian localization assumptions imposed on perturbations in~\cite{BNSZ2}. Under analogous Gaussian localization hypotheses, one would likewise expect exponential decay in the left light cone estimate in~\eqref{e:refineddecaybounds}. We do not pursue this direction here.
    }\end{remark}

    \section{Overview of techniques} \label{sec:techniques}

    In this section, we present our strategy to prove Theorem~\ref{t: main} and provide an overview of the main techniques.

    \medskip
    
	\noindent\textbf{Inverse Laplace representation of linear evolution.} Key to the proof of Theorem~\ref{t: main} is a detailed description of the linearized dynamics of~\eqref{e: FHN comoving} near the pushed pattern-forming front $\Ups$. To this end, we represent the semigroup $\re^{\Lps t}$ generated by the weighted linearization~\eqref{e:defLps} via the inverse Laplace formula, so that we can decompose and eventually control the linearized dynamics through a careful analysis of the resolvent operator $(\lambda - \Lps)^{-1}$. 
	
	\begin{prop}[Inverse Laplace representation of semigroup]\label{p: C0 semigroup}
	Fix $k \in \N_0$. The operator $\Lps$, acting on $H^k(\R) \times H^k(\R)$ with domain $D(\Lps) = H^{k+2} (\R) \times H^{k+1}(\R)$, generates a strongly continuous semigroup $\re^{\Lps t}$. Moreover, there exists $\Lambda > 0$ such that every $\lambda \in \C$ with $\Re(\lambda) \geq \Lambda$ lies in the resolvent set $\rho(\Lps)$ and the inverse Laplace representation
			\begin{align}
				\re^{\Lps t} \u = \frac{1}{2 \pi \ri} \lim_{R\to \infty} \int_{\Lambda - \ri R}^{\Lambda + \ri R} \re^{\lambda t} (\lambda - \Lps)^{-1} \u \, \de \lambda \label{e: contour rep}
			\end{align}
	holds for any $\u \in D(\Lps)$ and $t > 0$, where the limit in~\eqref{e: contour rep} is taken with respect to the $H^k$-norm.
	\end{prop}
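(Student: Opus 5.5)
The plan is to treat $\Lps$ as a bounded perturbation of a simple, explicitly understood generator, and then obtain the inverse Laplace formula from a resolvent-decay estimate.

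\emph{Splitting of the operator.} Conjugation with $\omega_0$ gives
\begin{align*}
\Lps = D\partial_\xi^2 + \cps \partial_\xi + B_1(\xi)\partial_\xi + B_0(\xi),
\end{align*}
where the coefficients $B_1 = -2D\omega_0'/\omega_0$ and $B_0 = D\big(2(\omega_0'/\omega_0)^2 - \omega_0''/\omega_0\big) - \cps\,\omega_0'/\omega_0 + F'(\Ups)$ are smooth and bounded with all derivatives. Since $D = \mathrm{diag}(1,0)$, the term $B_1\partial_\xi$ acts only on the first component. In the $w$-component the operator reads $\cps\partial_\xi + (\text{zeroth order})$, because the entry $\eps$ in $F'$ couples $u$ in with no derivatives. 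So I will write $\Lps = \mathcal{L}_0 + \mathcal{B}$ with
\begin{align*}
\mathcal{L}_0 = \mathrm{diag}\big(\partial_\xi^2 + (\cps + b_{11})\partial_\xi,\ \cps\partial_\xi\big),
\end{align*}
and $\mathcal{B}$ a bounded multiplication operator on $H^k\times H^k$, since its coefficients lie in $C^\infty_{\mathrm b}$. The first diagonal entry is a uniformly parabolic operator with $C^\infty_{\mathrm b}$ coefficients, so it generates an analytic semigroup on $H^k$ with domain $H^{k+2}$. The second entry is the transport operator, which generates the translation group on $H^k$ with domain $H^{k+1}$. Hence $\mathcal{L}_0$ generates a $C_0$-semigroup on $H^k\times H^k$ with domain $H^{k+2}\times H^{k+1}$. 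The bounded perturbation theorem then gives generation for $\Lps$ with the same domain.

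\emph{Resolvent on a right half-plane.} By Hille--Yosida, $\|\re^{\Lps t}\|\le M\re^{\omega t}$ for some $M,\omega$, so every $\lambda$ with $\Re\lambda>\omega$ lies in $\rho(\Lps)$. Choose $\Lambda>\omega$.

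\emph{Inverse Laplace formula.} The semigroup is not analytic, because the $w$-component is pure transport. The standard result I will invoke is the complex inversion formula for $C_0$-semigroups, e.g.\ Pazy, Ch.~1, Cor.~7.5, or Arendt--Batty--Hieber--Neubrander, Thm.~3.12.2:
\begin{align*}
\re^{\Lps t}\u = \frac{1}{2\pi\ri}\lim_{R\to\infty}\int_{\Lambda-\ri R}^{\Lambda+\ri R}\re^{\lambda t}(\lambda-\Lps)^{-1}\u\,\de\lambda,
\end{align*}
valid for $\u\in D(\Lps)$, $t>0$ and $\Lambda>\omega$, with the limit taken in the norm of the underlying space, here $H^k\times H^k$. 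That is exactly the assertion.

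If a self-contained argument is preferred, I would use the identity
\begin{align*}
(\lambda-\Lps)^{-1}\u = \lambda^{-1}\u + \lambda^{-1}(\lambda-\Lps)^{-1}\Lps\u.
\end{align*}
The first term integrates to $0$ for $t>0$ as a symmetric principal-value limit. The second is $\mathrm{O}(|\lambda|^{-2})$ in norm, hence absolutely integrable. Identifying the resulting integral with $\re^{\Lps t}\u$ then follows by taking the Laplace transform and applying uniqueness.

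\emph{Main obstacle.} The only genuine subtlety is the degenerate diffusion: the operator is not sectorial, so analytic-semigroup arguments cannot be used. This is handled by the decoupled splitting above, together with the general inversion theorem for $C_0$-semigroups, which needs only $\u\in D(\Lps)$ and no resolvent decay on vertical lines beyond the $\mathrm{O}(|\lambda|^{-1})$ bound that Hille--Yosida supplies. A secondary point is to check that $\mathcal{B}$ is bounded on $H^k$: this holds because $\omega_0'/\omega_0$ is eventually constant and smooth, and $\Ups$ is smooth with bounded derivatives.
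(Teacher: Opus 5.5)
Your main argument is correct and is essentially the paper's. The paper gives no independent proof: it defers to standard semigroup theory (generation, then the complex inversion theorem for $C_0$-semigroups, with details in the cited works). Your splitting $\Lps=\mathcal{L}_0+\mathcal{B}$, the bounded perturbation theorem, and the citation of Pazy / Arendt--Batty--Hieber--Neubrander accomplish exactly this.

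Your optional ``self-contained'' argument, however, contains two errors, and you should not use it as written.

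\emph{The first term does not vanish.} For $t>0$ and $\Lambda>0$ one has $\frac{1}{2\pi\ri}\lim_{R\to\infty}\int_{\Lambda-\ri R}^{\Lambda+\ri R}\re^{\lambda t}\lambda^{-1}\,\de\lambda=1$, not $0$, because the pole at $\lambda=0$ lies to the left of the line. So the term $\lambda^{-1}\u$ contributes $\u$, and the second term must reproduce $\re^{\Lps t}\u-\u=\int_0^t\re^{\Lps s}\Lps\u\,\de s$.

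\emph{Hille--Yosida gives no decay on vertical lines.} It only yields $\|(\lambda-\Lps)^{-1}\|\le M/(\Re\lambda-\omega)$, which is bounded but not decaying along $\Re\lambda=\Lambda$. For this operator no $\mathrm{O}(|\lambda|^{-1})$ bound holds there, precisely because of the pure transport in the $w$-component. Take $\lambda=\Lambda+\ri\cps m$ and $\v=(0,\re^{\ri m\xi}\phi)$ for a fixed bump $\phi$. One checks $\|(\lambda-\Lps)\v\|\lesssim\|\v\|$ uniformly in $m\in\R$, so $\|(\lambda-\Lps)^{-1}\|\gtrsim1$ on the whole line.

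\emph{Consequence.} The term $\lambda^{-1}(\lambda-\Lps)^{-1}\Lps\u$ is only $\mathrm{O}(|\lambda|^{-1})$ and is not absolutely integrable for a general $\u\in D(\Lps)$. The limit $R\to\infty$ exists only conditionally, and proving that is exactly the nontrivial content of the inversion theorem you cite. For example, apply complex inversion for Laplace transforms of Lipschitz functions to $t\mapsto\int_0^t\re^{\Lps s}\Lps\u\,\de s$, whose transform is $\lambda^{-1}(\lambda-\Lps)^{-1}\Lps\u$. An absolutely convergent argument works only for $\u\in D(\Lps^2)$, via $(\lambda-\Lps)^{-1}\u=\lambda^{-1}\u+\lambda^{-2}\Lps\u+\lambda^{-2}(\lambda-\Lps)^{-1}\Lps^2\u$.

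Finally, take $\Lambda>\max\{0,\omega\}$, as both the statement and the inversion theorem require.
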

	Proposition~\ref{p: C0 semigroup} follows from standard semigroup theory~\cite{EngelNagel,ArendtBattyHieber2011,Pazy}. Its proof is identical to that of~\cite[Proposition~3.1]{Alexopoulos2026} and~\cite[Proposition~4.1 and Corollary~4.2]{FHNpulled}. 

    \medskip
    	
	\noindent\textbf{Resolvent analysis.} Since $\Lps$ only generates a $C^0$-semigroup, spectral mapping identities of the form $\Sigma(\re^{\Lps t}) \setminus \{0\} = \re^{\Sigma(\Lps) t}$, converting spectral bounds into temporal growth or decay bounds, are not automatic. However, in the stability analysis of pulled fronts in~\eqref{e: FHN}, spectral mapping properties were established directly by analyzing the high-frequency behavior of the resolvent~\cite{FHNpulled}. This analysis, depending only on the basic structure of the equation rather than on the particular solution we are linearizing about, carries over to the current setting without modification. Since the spectrum of $\Lps$ away from a small neighborhood of the origin is contained in the open left-half plane, the contour in~\eqref{e: contour rep} can be shifted into the left-half plane except in this neighborhood; see Figure~\ref{fig: contour shifting}. Hence, the leading-order linear dynamics arise from the contour integral near the marginally stable spectrum of $\Lps$ in the vicinity of $\lambda = 0$, and all other contributions decay exponentially in time. 

    A key step, then, is to understand the behavior of the resolvent $(\lambda - \Lps)^{-1}$ near $\lambda = 0$, and translate this understanding to temporal linear estimates via the inverse Laplace representation~\eqref{e: contour rep}. The challenge is that the resolvent operator is not defined on, say, $L^2(\R)$ in a neighborhood of $\lambda = 0$ due to the presence of essential spectrum. To study the resolvent problem $(\lambda - \Lps) \u = \g$ near $\lambda = 0$, we use the partition of unity $(\chi_-, \chi_+)$ defined in~\S\ref{sec:notation} to decompose the data as $\g = \chi_-\g + \chi_+ \g =: \g_- + \g_c$. We will construct $\u$ via the far-field/core ansatz $\u = \chi_- \u_- + \u_c$, where we  choose $\u_-$ as a solution to the far-field problem $(\lambda - \lwt) \u_- = \g_-$. Since $\lwt$ has periodic coefficients, we can use Floquet--Bloch theory to obtain a (relatively) explicit Green's kernel for this far-field resolvent problem, and thereby identify the unique solution $\u_-$ that is bounded for $\lambda$ to the right of the essential spectrum and admits a pointwise analytic continuation past the essential spectrum. It then remains to solve for the center piece $\u_c$, which satisfies $(\lambda - \Lps) \u_c = \tg$, where the modified data $\tg(\lambda)$ is exponentially localized with rate uniform in $\lambda$. This exponential localization recovers Fredholm properties of the operator $\lambda-\Lps$, so that we can solve for $\u_c$ using Lyapunov--Schmidt reduction. We carry out this procedure, which is inspired by related work on the stability of pulled fronts~\cite{AveryScheelSIMA, AveryScheelGL, AveryScheelSelection, AverySelectionRD, FHNpulled}, in Section~\ref{s: resolvent}. This ultimately yields a meaningful pointwise representation $\u(\xi; \lambda)$ of the solution to the resolvent problem $(\lambda - \Lps) \u = \g$ near $\lambda = 0$. 

    As in the linear stability analyses~\cite{ZumbrunHoward, BNSZ2} of viscous shock waves and source defects, the embedded eigenvalue at $\lambda = 0$ gives rise to a (simple) pole of the pointwise resolvent. We find, to leading order, 
    \begin{align}
        \u(\xi; \lambda) \approx \omega_0 (\xi) \Ups' (\xi) \frac{1}{\lambda} \left[ \chi_-(\xi) \re^{\nuwt(\lambda) \xi} + 1 - \chi_-(\xi)\right] \Ptr \g \label{e: intro transl mode}
    \end{align}
    near $\lambda = 0$, where $\Ptr$ is the functional given by~\eqref{e: spectral functional} and $\nuwt(\lambda)$ is the critical Floquet exponent of the periodic far-field problem $(\lambda - \lwt) \u = 0$ with $\nuwt(0) = 0$. The corresponding leading-order \emph{excited term} in~\eqref{e: contour rep} does not decay in time, but instead captures the error-function dynamics of the phase $\psi(\xi, t)$ pictured in Figure~\ref{fig: spacetime and sketch}. Terms in~\eqref{e: contour rep} which only ``see'' the effects of the essential spectrum are faster decaying, and are referred to as \emph{scattering terms} (adopting terminology from~\cite{ZumbrunHoward}).

    \medskip

    \noindent\textbf {Linearized dynamics --- intuition.} The slowest decaying terms in the inverse Laplace representation~\eqref{e: contour rep} of the semigroup thus correspond to the translational mode~\eqref{e: intro transl mode}, which gives rise to a pole of the resolvent. The coefficient of this term is determined by the functional $\Ptr \g$, representing the pairing of $\g$ with the adjoint eigenfunction $\psiad$. Since $\psiad$ is exponentially localized, we expect that although these leading-order modes do not decay in time, they only weakly respond to behavior on the left, gaining us spatial localization in the form of a factor $\omega_{\kappa, 0}$ with $\kappa > 0$ in our estimates. 

    The next-order terms in~\eqref{e: contour rep} correspond to the neutrally stable curve of essential spectrum of $\lwt$. The expansion~\eqref{e: wt lambda expansion} of this curve formally encodes dynamics of the form
    \begin{align}
        \u_t = \Deff \u_{\xi \xi} - c_g \u_{\xi} \label{e: intro scattering terms}
    \end{align}
    with effective diffusivity $\Deff > 0$. These dynamics are pointwise exponentially stable due to the outward transport arising from the group velocity $c_g < 0$. That is, sufficiently localized solutions to~\eqref{e: intro scattering terms} decay exponentially at any fixed point $\xi \in \R$, though not in any translation-invariant norm. We can quantify this pointwise decay by measuring in the weighted norm $\|\omega_{\kappa, 0} \u\|_{L^\infty}$, which gives less weight to components of the solution as they are advected to the left. Conjugating the model equation~\eqref{e: intro scattering terms} with a weight $\re^{\kappa \xi}$ and inspecting the resulting spectral gap, we expect decay rates 
    \begin{align*}
        \| \omega_{\kappa, 0} \u(t) \|_{L^\infty} \sim \re^{(\kappa c_g + \mathrm{O}(\kappa^2)) t} \| \omega_{\kappa, 0} \u(0) \|_{L^\infty}
    \end{align*}
    for $t\geq0$ and $\kappa > 0$. Recall that $c_g < 0$, so this estimate measures decay with a rate slightly slower than $\kappa c_g$. Quantifying this decay rate will be crucial in obtaining exponential convergence to the phase-shifted front to the right of the characteristic $\xi \approx c_g t$; see the right light cone estimate in~\eqref{e:refineddecaybounds}. 

    \medskip
    
    \noindent\textbf{Linearized dynamics --- estimates.} In Section~\ref{s: linear estimates}, we combine the contour integral representation~\eqref{e: contour rep} of the semigroup with the resolvent analysis developed in Section~\ref{s: resolvent}. This yields a detailed description of the linearized dynamics and rigorously confirms the heuristics outlined above. The resulting semigroup decomposition, together with the associated linear estimates, are summarized in the following result. 
  
	\begin{thmlocal}[Semigroup decomposition and linear estimates] \label{t: linear estimates}
    Fix $\Delta \mu > 0$ such that $c_g + \Delta \mu < 0$. Let $j,k \in \N_0$. The semigroup $\re^{\Lps t}$ generated by $\Lps$ may be extended to a bounded linear operator on $L^2(\R)$, or on $C_0(\R)$, and admits the decomposition
		\begin{align} \label{e: semigroup decomposition}
  			\re^{\Lps t} &= \omega_0 \Ups' s_p(t) + s_c(t) + s_e(t), \qquad s_j(t) = s_{j,1} (t) + s_{j,2} (t), \qquad j = p,c
		\end{align}        
	      for $t \geq 0$, where $s_p(t)$ vanishes identically for $t \in [0,1]$. Moreover, there exist constants $C,D_0,\kappa,\mu > 0$ such that the following estimates hold for all $t \geq 0$, $\xi \in \R$, $\u \in L^2(\R) \cap C_0(\R)$:
		\begin{enumerate}[(i)]
			\item \emph{(Estimates on leading-order excited terms).}
            \begin{align} \label{e:error_est}
                \big|[\partial_\xi^j \partial_t^k s_{p,1}(t) \u] (\xi)\big| \leq C \left( \left| \partial_\xi^j \partial_t^k \erf \left( \frac{\xi - c_g t}{\sqrt{D_0 (1+t)}} \right) \right| + \re^{-\mu t} \re^{-\mu |\xi|}\right) \| \omega_{\kappa, 0} \u\|_{L^\infty},
            \end{align} 
         where $\erf(z) = \int_{-\infty}^z \re^{-w^2} \, \de w$ is the error function. Hence, if $j + k \geq 1$, we have
			\begin{align*}
				\| s_{p,1} (t) \u\|_{L^\infty} &\leq C \| \omega_{\kappa, 0} \u \|_{L^\infty}, \\
				\| \partial_\xi^j \partial_t^k s_{p,1} (t) \u \|_{L^\infty} &\leq C (1+t)^{-1/2} \| \omega_{\kappa, 0} \u \|_{L^\infty}, \\
                \|\chi_- s_{p,1} (t) \u \|_{L^2} & \leq C (1+t)^{1/2} \| \omega_{\kappa, 0} \u \|_{L^\infty}, \\
				\| \partial_\xi^j \partial_t^k s_{p,1} (t) \u \|_{L^2} &\leq C (1+t)^{-1/4} \| \omega_{\kappa, 0} \u \|_{L^\infty}, \\
				\| \omega_{\kappa, 0} \partial_\xi^j \partial_t^k s_{p,1}(t) \u \|_{L^\infty} &\leq C \re^{ \kappa (c_g + \Delta \mu) t} \| \omega_{\kappa, 0} \u \|_{L^\infty}.
			\end{align*}
			\item \emph{(Asymptotics for leading-order excited terms).}
			\begin{align*}
				\| \omega_{\kappa, 0} [s_{p,1} (t) - \Ptr] \u \|_{L^\infty} &\leq C \re^{ \kappa ( c_g + \Delta \mu) t} \| \omega_{\kappa, 0} \u \|_{L^\infty}, \\
				| \Ptr \u | &\leq C \| \omega_{\kappa, 0} \u \|_{L^\infty}. 
			\end{align*}
			
			\item \emph{(Estimates on leading-order scattering terms).}
			\begin{align*}
                \| s_{p,2} (t) \u \|_{L^2} &\leq C\| \u \|_{L^2}, \\
				\| s_{p,2} (t) \u \|_{L^\infty} &\leq C (1+t)^{-1/4} \| \u \|_{L^2}, \\
                \| \omega_{\kappa, 0} \partial_\xi^j \partial_t^k s_{p,2}(t) \u \|_{L^\infty} &\leq C \re^{ \kappa ( c_g + \Delta \mu) t} \| \omega_{\kappa, 0} \u \|_{L^\infty}.
            \end{align*}
            Additionally, if $j+k \geq 1$, we have
            \begin{align*}
				\| \partial_\xi^j \partial_t^k s_{p,2} (t) \u \|_{L^\infty} &\leq C (1+t)^{-3/4} \| \u \|_{L^2}, \\
				\| \partial_\xi^j \partial_t^k s_{p,2} (t) \u \|_{L^2} &\leq C (1+t)^{-1/2} \| \u \|_{L^2}.
            \end{align*}
			\item \emph{(Estimates on residual excited terms).}
			\begin{align*}
				\| s_{c,1}(t) \u \|_{L^\infty} &\leq C (1+t)^{-1/2} \| \omega_{\kappa, 0} \u \|_{L^\infty}, \\
				\| s_{c,1}(t) \u \|_{L^2} &\leq C (1+t)^{-1/4} \| \omega_{\kappa, 0} \u \|_{L^\infty}, \\
				\| \omega_{\kappa, 0} s_{c,1} (t) \u \|_{L^\infty} &\leq C \re^{ \kappa ( c_g + \Delta \mu) t} \| \omega_{\kappa, 0} \u \|_{L^\infty}. 
			\end{align*}
			\item \emph{(Estimates on residual scattering terms).} 
			\begin{align*}
				\| s_{c,2} (t) \u \|_{L^\infty} &\leq C (1+t)^{-3/4} \| \u \|_{L^2}, \\
				\| s_{c,2} (t) \u \|_{L^2}  &\leq C (1+t)^{-1/2} \| \u \|_{L^2}, \\
				\| \omega_{\kappa, 0} s_{c,2} (t) \u \|_{L^\infty} &\leq C \re^{ \kappa (c_g + \Delta \mu) t} \| \omega_{\kappa, 0} \u \|_{L^\infty}. 
			\end{align*}
			\item \emph{(Estimates on exponentially damped terms).}
			\begin{align*}
				\| s_e (t) \u \|_{L^2} &\leq C \re^{-\mu t} \| \u \|_{L^2}, \\
				\| s_e (t) \u \|_{L^\infty} &\leq C \re^{-\mu t} \| \u \|_{L^\infty}, \\
				\| \omega_{\kappa, 0} s_e(t) \|_{L^\infty} &\leq C \re^{- \mu t} \| \omega_{\kappa, 0} \u \|_{L^\infty}.
			\end{align*}
		\end{enumerate}
	\end{thmlocal}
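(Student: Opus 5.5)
We obtain the decomposition from the inverse Laplace representation of Proposition~\ref{p: C0 semigroup}, combined with the far-field/core resolvent expansion near $\lambda = 0$ previewed in Section~\ref{sec:techniques}. First we fix a small radius $\delta>0$ and deform the contour in~\eqref{e: contour rep}. Using the high-frequency resolvent bounds carried over from~\cite{FHNpulled} and Hypotheses~\ref{hyp: wave train stability}--\ref{hyp: point spectrum}, we shift it to $\Re\lambda = -\mu_0$ everywhere except inside $B(0,\delta)$. There we use a contour hugging the essential spectrum of $\lwt$, parametrized through the Floquet exponent, $\lambda = \lambdawt(\nu)$ with $\nu$ near $\ri\R$. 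All contributions from the shifted part of the contour are collected into $s_e(t)$, together with a smooth cutoff in time. The $L^2$, $L^\infty$ and $\omega_{\kappa,0}$-weighted exponential bounds in (vi) then follow from the uniform resolvent bounds on the shifted contour; note that $\omega_{\kappa,0}$ with small $\kappa$ only perturbs $\Lps$ by $\mathrm{O}(\kappa)$ and leaves a spectral gap. The cutoff also produces a piece that vanishes for $t\in[0,1]$, which accounts for $s_p$ vanishing on $[0,1]$.

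Inside $B(0,\delta)$ we insert the pointwise resolvent expansion. Write $\g = \g_- + \g_c$ and $\u = \chi_-\u_- + \u_c$, where $\u_-$ is given by the Bloch-wave Green's kernel of $\lambda-\lwt$ and $\u_c$ is obtained by Lyapunov--Schmidt reduction on the exponentially localized remainder. This splits the resolvent into four parts.
\begin{itemize}
\item The pole part $\frac{1}{\lambda}\omega_0\Ups'[\chi_-\re^{\nuwt(\lambda)\xi}+1-\chi_-]\Ptr\g$, which feeds $s_{p,1}$.
\item The leading far-field part $\chi_-\u_-$, coming from the critical Bloch branch, which feeds $s_{p,2}$.
\item Regular corrections of the pole part, each carrying an extra factor $\lambda$ or $\nu$, which feed $s_{c,1}$.
\item Regular corrections of the far-field part, which feed $s_{c,2}$.
\end{itemize}
Only $\Ptr\g$ enters the excited terms. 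Since $\psiad$ is exponentially localized on both sides, the excited terms are controlled by $\|\omega_{\kappa,0}\g\|_{L^\infty}$ and obey $|\Ptr\u|\lesssim\|\omega_{\kappa,0}\u\|_{L^\infty}$, which is the second estimate in (ii).

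For the pole term we change variables $\lambda \mapsto \nu$ and deform to a steepest-descent contour through the saddle near $\nu \approx (\xi-c_g t)/(2\Deff t)$. The Laplace integral $\frac{1}{2\pi\ri}\int \re^{\lambda t+\nuwt(\lambda)\xi}\lambda^{-1}\,\de\lambda$ then equals, to leading order, $\tfrac12\operatorname{erfc}$-type profiles in $(\xi-c_gt)/\sqrt{4\Deff t}$, with Gaussian remainders. This yields the pointwise bound~\eqref{e:error_est}, and differentiating in $\xi$ or $t$ produces Gaussian factors, which give the stated $L^\infty$ and $L^2$ decay rates. For the weighted estimates and the asymptotics in (ii), we observe that $\omega_{\kappa,0}$ restricts attention to $\xi \lesssim c_g t$. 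There the error function is within $\re^{-c(\xi-c_gt)^2/t}$ of $0$, while to the right it is within that distance of $1$. Optimizing $\re^{\kappa\xi}\re^{-(\xi-c_gt)^2/(Ct)}$ gives $\re^{\kappa(c_g+\mathrm{O}(\kappa))t}$, and this is at most $\re^{\kappa(c_g+\Delta\mu)t}$ once $\kappa$ is small. The scattering terms are handled in the same way, as in the wave-train literature~\cite{JNRZ_13_1}. Without the $1/\lambda$ pole, the kernel is Gaussian of the form $t^{-1/2}\re^{-(\xi-\zeta-c_gt)^2/(Ct)}$ up to periodic factors. Young's inequality gives the $L^2\to L^\infty$ rate $t^{-1/4}$, and derivatives and each extra power of $\lambda$ or $\nu$ gain $t^{-1/2}$, which gives (iii)--(v). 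The weighted bounds follow by estimating the Gaussian against the weight as above, where the outgoing group velocity is essential.

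The main obstacle is the rigorous justification of the pole term. The pointwise resolvent must be continued analytically across the essential spectrum in the variable $\nu$, uniformly in $\xi$ and in the data, so that the deformation to the $\nu$-steepest-descent contour is legitimate. In addition, the residual at the pole, including the term $1-\chi_-$ in the core, must reduce to exactly $\omega_0\Ups'\Ptr$ up to errors that are $\mathrm{O}(\lambda)$ or $\mathrm{O}(\nu)$ in $\omega_{\kappa,0}$-weighted norms. I would first establish the analytic continuation of the core solution by solving the Lyapunov--Schmidt problem on $L^2_{\mathrm{exp},\eta,0}$. In that space $0$ is isolated by Hypothesis~\ref{hyp: point spectrum}, and the coefficient of the kernel direction is $\lambda^{-1}\langle\tg(\lambda),\psiad\rangle$, with $\tg(0)$ chosen so that $\langle\tg(0),\psiad\rangle=\Ptr\g$.
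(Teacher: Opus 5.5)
Your overall architecture matches the paper's: contour shift outside $B(0,\delta)$ via the high-frequency bounds, far-field/core splitting $\u=\chi_-\u_-+\u_c$ with the Bloch-kernel far field, pointwise Gaussian bounds on $\Gamma_{c,\delta}$, and weighted decay from the outgoing group velocity. The gap is in the step you yourself single out as the main obstacle: as planned, it cannot deliver the unweighted estimates.

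Lyapunov--Schmidt on $L^2_{\mathrm{exp},\eta,0}$, where $0$ is isolated, does give a simple pole with principal part $\lambda^{-1}\Ptr\tg(\lambda)\,\omega_0\Ups'$. However, the regular part is then controlled only in a norm that allows growth like $\re^{-\eta\xi}$ as $\xi\to-\infty$. That regular part contains $\lambda^{-1}\Ptr\tg(\lambda)\,\chi_-\big(\q(\lambda)\re^{\nuwt(\lambda)\cdot}-\uwt'\big)$. This term is pointwise analytic at $\lambda=0$, yet its inverse Laplace transform does not decay: behind the phase front $\xi<c_gt$ it is approximately $-\Ptr\g\,\uwt'$, cancelling the residue. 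So this formulation only yields weighted statements of the type $\omega_{\eta,0}\re^{\Lps t}\g\approx\omega_{\eta,0}\omega_0\Ups'\Ptr\g$. It cannot show that what remains after subtracting your pole term $\lambda^{-1}\Ptr\g\,\omega_0\Ups'[\chi_-\re^{\nuwt(\lambda)\cdot}+1-\chi_-]$ gives an $s_{c,1}$ with the unweighted rates $(1+t)^{-1/2}$ in $L^\infty$ and $(1+t)^{-1/4}$ in $L^2$ required in (iv). Errors that are $\mathrm{O}(\lambda)$ ``in $\omega_{\kappa,0}$-weighted norms'' are not enough. Asking instead for $\u_c$ to be left-localized also fails, since $\Lps\colon Y^p_\eta\to X^p_\eta$ has index $-1$.

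The missing idea is a second far-field piece inside the core: the ansatz $\u_c=\alpha_-\chi_-\e_-(\lambda)+\w$. Here $\e_-(\lambda)=\q(\lambda)\re^{\nuwt(\lambda)\cdot}$ is the pointwise-analytic outgoing solution of $(\lambda-\lwt)\u=0$, and $\w\in Y^p_\eta$. The extra scalar $\alpha_-$ borders the index $-1$ operator to index $0$, and the analytic Fredholm theorem gives a meromorphic inverse. The Laurent coefficients are then forced:
\begin{itemize}
\item $\Lps\u_{-1}=0$ gives $\u_{-1}=\beta_{-1}\omega_0\Ups'$;
\item pairing the next order with $\psiad$ gives $\beta_{-1}=\Ptr\f$;
\item left-localization of $\w_{-1}$ forces $\alpha_{-1}=\Ptr\f$.
\end{itemize}
This is what produces the factor $\chi_-\re^{\nuwt(\lambda)\xi}$ in the pole term. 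It also writes every regular correction as $\chi_-\cdot(\text{bounded analytic coefficient})\cdot\re^{\nuwt(\lambda)\xi}$ plus a left-localized analytic remainder, which is exactly the form to which the pointwise Gaussian bounds apply. Combined with $\|\tg(\lambda)\|_{X^p_\eta}\lesssim\|\omega_{\kappa,0}\g\|_{L^p}$, it controls $s_{c,1}$, which involves $\tilde\alpha_-(\lambda)\tg(\lambda)$ and not only $\Ptr\g$. Note also that $\Ptr\tg(0)=\Ptr\g$ is automatic from $\Lps^*\psiad=0$; it is not a choice.

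Once this is in place, your steepest-descent evaluation of the pole term is a valid alternative. You must, however, treat the coalescence of saddle and pole for $|\xi-c_gt|\lesssim\sqrt{t}$; note the saddle sits at $\nu\approx-(\xi-c_gt)/(2\Deff t)$, not at the sign you wrote. The paper avoids this issue entirely. Since $\Re\nuwt(\lambda)>0$ on $\Gamma_{c,\delta}$, it writes $\lambda^{-1}\re^{\nuwt(\lambda)\xi}=\frac{\nuwt(\lambda)}{\lambda}\int_{-\infty}^{\xi}\re^{\nuwt(\lambda)\zeta}\,\de\zeta$. It then applies the standard pole-free pointwise bound and integrates the Gaussian in $\zeta$ to obtain the erf. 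The $1-\chi_-$ part is handled by the residue theorem, and the analogous identity for $\lambda^{-1}(\re^{\nuwt(\lambda)\xi}-1)$ gives (ii).
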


    \medskip

\noindent\textbf{Nonlinear iteration.} We now turn to the strategy for the nonlinear stability argument that leads to the proof of Theorem~\ref{t: main}. Let $\u(t)$ denote the solution to~\eqref{e: FHN comoving} with initial condition $\u(0) = \Ups + \w_0$, where $\w_0$ is sufficiently small and localized. Our objective is to control the long-time behavior of the perturbation $\w(t) = \u(t) - \Ups$. To stabilize the unstable state in the leading edge of the front, we impose exponential localization of $\w_0$ on $[0,\infty)$. This motivates the introduction of the weighted perturbation $\vt(t) = \omega_0 \w(t)$, which has the initial condition $\vt(0) = \v_0 := \omega_0 \w_0$ and obeys the semilinear equation
    \begin{align}
     \vt_t = \Lps \vt + \NT(\vt), \label{e:umodpert intro}
     \end{align}
with quadratic nonlinear remainder
\begin{align*}
\NT(\vt) = \omega_0\widetilde{N}\left(\omega_0^{-1} \vt\right), \qquad 
\widetilde{N}(\w) = F(\Ups+\w) - F(\Ups) - F'(\Ups)\w = \mathrm{O}(\w^2).
\end{align*}
As outlined in Section~\ref{s: overview}, a standard approach is to iteratively estimate the associated Duhamel formula
\begin{align*}
    \vt(t) = \re^{\Lps t} \v_0 + \int_0^t \re^{\Lps (t-s)} \NT(\vt(s)) \de s.
\end{align*}
The strategy is then to show that, if $\vt(t)$ decays as prescribed by the linearized dynamics $\re^{\Lps t} \v_0$, then the nonlinear term $\smash{\int_0^t \re^{\Lps (t-s)} \NT(\vt(s)) \, \de s}$ obeys the same temporal decay rate. By Theorem~\ref{t: linear estimates} the linearized dynamics of $\vt(t)$ are given by 
\begin{align} \label{e: linearized dynamics}
\re^{\Lps t} \v_0 &= \omega_0 \Ups' s_p(t) \v_0 + \mathrm{O}\left(\frac{\|\omega_{\kappa,0} \v_0\|_{L^\infty}}{\sqrt{1+t}}\right) + \mathrm{O}\left(\frac{\|\v_0\|_{L^2}}{(1+t)^{\frac34}}\right) \\
&= \omega_0 \Ups' s_{p,1}(t) \v_0 + 
\mathrm{O}\left(\frac{\|\omega_{\kappa,0} \v_0\|_{L^\infty}}{\sqrt{1+t}}\right) + \mathrm{O}\left(\frac{\|\v_0\|_{L^2}}{(1+t)^{\frac14}}\right) \label{e: linearized dynamics 2}
\end{align} 

From~\eqref{e: linearized dynamics 2} we observe that, similar to the case of pushed fronts selecting constant states considered in~\S\ref{s: overview}, decay is obstructed by the principal part $s_{p,1}(t)\v_0$ of the semigroup. Yet, comparing~\eqref{e: linearized dynamics 2} with the corresponding decomposition~\eqref{e: nagumo linear estimate} of the linearized dynamics for pushed fronts selecting constant states, we identify two key differences. The first is that $s_{p,1}(t) \v_0$ does not equal (and is in fact not even uniformly converging to) the constant phase shift $\Ptr \v_0$; rather, $s_{p,1}(t)$ resembles a leftward-propagating error function; see Theorem~\ref{t: linear estimates}(i). Second, the residual term in~\eqref{e: linearized dynamics 2} only exhibits diffusive decay, which in general is too weak to close a nonlinear argument in the presence of quadratic nonlinearities. For instance, in the case of the nonlinear heat equation $u_t = u_{xx} + u^2$ all nonnegative nontrivial initial data blow up in finite time~\cite{FUJI}. The diffusive decay of the residual term in~\eqref{e: linearized dynamics 2} stems from the scattering terms in~\eqref{e: semigroup decomposition}, which reflect the diffusive stability of the wave train in the wake.

A natural attempt to address both of these challenges would be to shift the essential spectrum associated with the wave train into the open left-half plane by applying the exponential weight $\omega_{\kappa,0}$ with $\kappa > 0$. Then one recovers the spectral setting of pushed fronts selecting constant states, where the translational $0$-eigenvalue is separated from the remainder of the spectrum. Multiplying~\eqref{e: linearized dynamics 2} with $\omega_{\kappa,0}$ and recalling the estimates from Theorem~\ref{t: linear estimates}, we obtain
\begin{align} \label{e: weighted linearized dynamics}
\omega_{\kappa,0} \re^{\Lps t} \v_0 = \omega_{\kappa,0} \omega_0 \Ups' \Ptr \v_0 + \mathrm{O}\left(\re^{-\mu t}\|\omega_{\kappa,0} \v_0\|_{L^\infty}\right).
\end{align}
The weighted linearized dynamics~\eqref{e: weighted linearized dynamics} indeed closely resembles that of the pushed fronts selecting constant states, cf.~\eqref{e: nagumo linear estimate}. However, the weight $\omega_{\kappa,0}$ is incompatible with closing a nonlinear argument, since the weighted nonlinear remainder $\smash{\omega_{\kappa,0} \widetilde{\mathcal{N}}(\omega_{\kappa,0}^{-1} \mathbf{y})}$ has exponentially growing coefficients and hence cannot be effectively bounded in terms of $\mathbf{y}$.

We resolve this issue by \emph{coupling} the nonlinear system for the weighted perturbation $\omega_{\kappa,0} \vt(t)$ to that of $\vt(t)$ itself. This allows us to bound the weighted nonlinear remainder $\omega_{\kappa,0} \widetilde{\mathcal{N}}(\vt)$ in terms of $\omega_{\kappa,0} \vt$ and $\vt$. We can now proceed as in the nonlinear stability analysis~\cite{Sattinger} for pushed fronts selecting constant states to control the weighted perturbation $\omega_{\kappa,0}\vt(t)$. That is, we introduce the \emph{inverse-modulated perturbation}
\begin{align} \label{e: def inverse mod pert}
\v(t) = \omega_0 \left(\u(\cdot - \psi(t),t) - \Ups\right),
\end{align}
so that the phase function $\psi(t)$ can accommodate the excitation caused by the translational mode. Subsequently, an iterative argument for the weighted perturbation $\z(t) := \omega_{\kappa,0} \v(t)$ with exponential temporal decay rates can be closed \emph{as long as} $\v(t)$ stays bounded. 

It remains to close a nonlinear argument for $\v(t)$ itself, where we can now exploit that $\z(t)$ decays exponentially in time. Due to the exponential localization of the adjoint eigenfunction $\psiad$ associated with the translational mode, we gain a factor $\omega_{\kappa,0}$ upon applying the ``excited'' terms $s_{p,1}(t)$ and $s_{c,1}(t)$ in the semigroup decomposition~\eqref{e: semigroup decomposition} to nonlinearities in the Duhamel formula. These nonlinearities thus inherit exponential decay in time from $\z(t)$. The obtained exponential decay is sufficient to control the excited terms in the nonlinear argument, allowing us to focus on the scattering terms. 

The diffusive scattering terms $s_{p,2}(t)$ and $s_{c,2}(t)$ in~\eqref{e: semigroup decomposition} are a direct manifestation of the critical curve of essential spectrum associated with the diffusive spectral stability of the wave train in the wake. Such scattering terms also arise in the decomposition of the semigroup $\re^{\lwt t}$ generated by the linearization of~\eqref{e: FHN comoving} about the wave train itself. As mentioned before, an important obstruction is that the weak diffusive decay exhibited by $s_{p,2}(t)$ is insufficient to close a nonlinear argument. To address this issue we take inspiration from the nonlinear stability analyses~\cite{JONZ,SSSU} of wave trains in reaction-diffusion systems, which rely on the observation that the leading-order diffusive dynamics of perturbations of wave trains can be captured by a spatio-temporal phase modulation~\cite{DSSS} as suggested by the linearized scattering dynamics, cf.~\eqref{e: linearized dynamics}, of the perturbed solution
$$\Ups + \Ups' s_{p,2}(t) \v_0 \approx \Ups(\cdot + s_{p,2}(t) \v_0).$$ 

Thus, we allow the phase function $\psi(t)$ in~\eqref{e: def inverse mod pert} to depend on both time and space such that it can accommodate the most critical diffusive dynamics exhibited by the scattering terms. One then obtains that the spatio-temporally modulated perturbation $\v(t)$ obeys a quasilinear equation, whose nonlinearity depends on $\v(t)$ and \emph{derivatives} of $\psi(t)$ only, which are expected to exhibit better decay rates due to diffusive smoothing; see Theorem~\ref{t: linear estimates}. In fact, in the nonlinear stability analyses~\cite{JONZ,SSSU} of wave trains against localized perturbations it turns out that all nonlinear terms in a scheme consisting of the variables $\v(t),\psi_\xi(t)$ and $\psi_t(t)$ are \emph{irrelevant} and, thus, a nonlinear argument can be closed. In the current setting the phase function $\psi(t)$ also accounts for the excitation attributed to the translational eigenvalue at $0$. This causes the decay rates of $\psi(t)$ and its derivatives to worsen by a factor $\smash{t^{-\frac12}}$ rendering the worst terms in the nonlinearity \emph{marginal}. 

The key observation allowing us to handle these marginal (and all other) nonlinear terms in the iteration argument is that we can distribute localization between $s_{p,2}(t)$ (or $s_{c,2}(t)$) and the nonlinearity. An optimal distribution is reached by employing an  $L^2$-$L^\infty$-iteration scheme, which is, as far as the authors are aware, nonstandard in the current nonlinear stability literature. We refer to Remark~\ref{rem: motivation L2 Linfty scheme} for more details.

A last remaining challenge is that the apparent loss of derivatives in the quasilinear equation for $\v(t)$ needs to be addressed. We tackle this issue by following the same strategy as in the case of pulled pattern-forming fronts~\cite{FHNpulled}, which employs forward-modulated damping estimates~\cite{ZUM22} to control regularity. We refer to~\cite[Section~2.2]{FHNpulled} for an overview of this approach.  
	
\section{Resolvent analysis near the origin}\label{s: resolvent}
        In this section, we decompose the resolvent $(\lambda - \Lps)^{-1}$ near the origin using a far-field/core ansatz. In Section~\ref{s: linear estimates}, this decomposition and the associated bounds are then transferred to the semigroup $\re^{\Lps t}$ via the inverse Laplace transform, yielding the proof of Theorem~\ref{t: linear estimates}. We therefore consider the resolvent equation
        \begin{align}
            (\lambda - \Lps) \u = \g \label{e: full resolvent eqn}
        \end{align}
        near $\lambda = 0$ and decompose the data $\g$ as $\g = \chi_- \g + \chi_+ \g  =: \g_- + \g_+$. To obtain a far-field/core decomposition for the solution $\u$, we let $\u_-$ solve
        \begin{align}
        (\lambda - \lwt) \u_- = \g_-. \label{e: left resolvent eqn}
        \end{align}
        If the data $\g$ belongs to, say, $L^2(\R)$ or $L^\infty(\R)$, then~\eqref{e: left resolvent eqn} is uniquely solvable in that space for $\lambda$ to the right of the essential spectrum of $\lwt$, and the solutions are analytic in $\lambda$ in this region. So, we may use this solution in an ansatz $\u = \chi_- \u_- + \u_c$ for the full resolvent problem~\eqref{e: full resolvent eqn}. We then find that the center component $\u_c$ solves 
        \begin{align}
        (\lambda - \Lps) \u_c = \tg (\lambda), \label{e: center resolvent eqn}
        \end{align}
        where $\tg(\lambda) = \g - (\lambda - \Lps) (\chi_- \u_-(\lambda))$.

        Our goal is to extend the solution $\u(\lambda)$ to~\eqref{e: full resolvent eqn} into the essential spectrum near $\lambda = 0$. We find that it is (pointwise) meromorphic in a neighborhood of the origin, with the embedded translational eigenvalue of $\Lps$ contributing a simple pole at $\lambda = 0$. We first analyze extensions of the far-field component $\u_- (\lambda)$ through the essential spectrum. We then show that in a neighborhood of the origin $\tg(\lambda)$ is exponentially localized in space, with rate uniform in $\lambda$. This exponential localization recovers Fredholm properties of $\Lps$, which we ultimately use to solve~\eqref{e: center resolvent eqn} via a Lyapunov--Schmidt type reduction procedure. 

\medskip

        \noindent \textbf{Fredholm properties.} Recall that an operator $\mcl: X \to Y$ between Banach spaces $X$ and $Y$ is \emph{Fredholm} if
        \begin{enumerate}[(i)]
            \item the range of $\mcl$ is closed in $Y$; 
            \item the kernel $\ker (\mcl)$ is finite-dimensional; 
            \item the cokernel $\mathrm{coker}(\mcl)$ is finite-dimensional,
        \end{enumerate}
        and the \emph{Fredholm index} of $\mcl$ is given by
        \begin{align*}
        \mathrm{ind}(\mcl) = \dim \ker(\mcl) - \dim \mathrm{coker}(\mcl).
        \end{align*}
        Fredholm properties of the linearization near traveling waves may be characterized via Palmer's theorem~\cite{Palmer2}; see for instance~\cite{KapitulaPromislow, SandstedeReview, FiedlerScheel} for a review. In particular, the Fredholm index is determined solely by the linearizations about the asymptotic end states of the traveling wave at $\xi=\pm\infty$ and may thus be affected by the choice of exponential weights.

        In the present setting, the relevant exponentially weighted spaces are
        \begin{align*}
            X_\eta^p := L^p_{\mathrm{exp}, -\eta, 0}  \times L^p_{\mathrm{exp}, -\eta, 0} , \qquad Y_\eta^p := W^{2, p}_{\mathrm{exp}, -\eta, 0} \times W^{1,p}_{\mathrm{exp}, -\eta, 0} 
        \end{align*}
        for $\eta \in \R$ and $1 \leq p \leq \infty$; see Section~\ref{sec:notation} for notation. The following result characterizes the Fredholm properties of $\Lps$ acting on these spaces. These properties are fully determined by the assumptions on the operators $\lwt$, $\mcl_+$, and $\Lps$ specified in Hypotheses~\ref{hyp: wave train stability} through~\ref{hyp: point spectrum}.
        \begin{prop} \label{prop: fredholm properties}
            Let $1 \leq p \leq \infty$. For any $\eta>0$ sufficiently small, the following assertions hold:
            \begin{enumerate}[(i)]
                \item Consider $\Lps$ as an operator $\Lps \colon Y_{-\eta}^p \to X_{-\eta}^p$. Then, $\Lps$ is Fredholm of index $0$, its kernel is spanned by $\omega_0 \Ups'$, and $\omega_0 \Ups'$ does not lie in the range of $\Lps$. 
                \item Consider $\Lps$ as an operator $\Lps \colon Y_{\eta}^p \to X_{\eta}^p$. Then, $\Lps$ is Fredholm of index $-1$, has trivial kernel, and its cokernel is spanned by $\psiad$, which is exponentially localized.
            \end{enumerate}
        \end{prop}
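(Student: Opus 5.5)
The plan is to recast $\Lps$ as a first-order nonautonomous ODE system and apply Palmer's theorem on exponential dichotomies. Because $\cps \neq 0$, the degenerate $w$-equation $\cps w' = \lambda w - \eps(u - \gamma w) + \dots$ can be solved for $w'$, so $(\lambda - \Lps)\u = \g$ becomes a three-dimensional system $\mathbf{U}' = A(\xi;\lambda)\mathbf{U} + G$. Its coefficients converge exponentially as $\xi \to \infty$ to the constant-coefficient system of $\mcl_+$, and as $\xi \to -\infty$ to the $L$-periodic system of $\lwt$. This uses $\omega_0 \equiv 1$ on the left, \eqref{e: front left asymptotics}, and \eqref{e: front right asymptotics}.

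Conjugating by the weight $\re^{\mp\eta\xi}$ on $\xi \le -1$ replaces the Bloch parameter $\nu$ by $\nu \mp \eta$. With this, Palmer's theorem gives Fredholmness of $\Lps\colon Y^p_{\pm\eta} \to X^p_{\pm\eta}$ for every $1 \le p \le \infty$. It also gives the index as the difference of Morse indices of the asymptotic systems, which is independent of $p$. Moreover, kernel elements and adjoint kernel elements are exponentially decaying solutions of the ODE, so kernel and cokernel are the same for all $p$. It therefore suffices to identify them for $p=2$, where Hypothesis~\ref{hyp: point spectrum} applies.

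\textbf{Hyperbolicity at both ends.} At $+\infty$, Hypothesis~\ref{hyp: leading edge} says $0 \notin \Sigma(\mcl_+)$. Hence the asymptotic system has no purely imaginary spatial eigenvalues, and it has an exponential dichotomy on $[0,\infty)$.

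At $-\infty$ we use Hypothesis~\ref{hyp: wave train stability}. For $\eta>0$ small, continuity of the Bloch spectrum and the bound $\Re\Sigma(\Lwthat(\ri k)) \le -\theta k^2$ confine everything except the critical curve to the open left half-plane. The critical curve is $\lambdawt(\ri k \mp \eta)$, with $\lambdawt$ as in \eqref{e: wt lambda expansion}. For the space $X_{-\eta}^p$, which corresponds to $\nu \mapsto \nu - \eta$, we get $\Re\lambdawt(\ri k - \eta) = c_g\eta - \Deff k^2 + \mathrm{O}(\eta^2 + |k|^3) < 0$ by Hypothesis~\ref{hyp: negative group velocity}. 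For $X^p_{\eta}$ we get instead $\lambdawt(-\eta) = -c_g\eta + \mathrm{O}(\eta^2) > 0$. In both cases $0$ is not in the weighted Bloch spectrum, so exponential dichotomies exist on $(-\infty,0]$.

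\textbf{Index counts.} In case (i) the weighted essential spectrum lies in $\{\Re\lambda<0\}$. Thus $0$ belongs to the unbounded component of the Fredholm set, which contains $\lambda \gg 1$ where $\lambda - \Lps$ is invertible, so the index is $0$.

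Passing from weight $-\eta$ to $+\eta$ crosses exactly one Floquet exponent of the periodic far-field system at $\lambda=0$, namely the simple exponent $\nuwt(0)=0$. It is simple because $\Lwthat(0)$ has a simple kernel and $\lambdawt'(0) = -c_g \ne 0$. This crossing changes the Morse index at $-\infty$ by exactly one. Since $Y_\eta \subset Y_{-\eta}$, the kernel can only shrink and the cokernel can only grow, so the index in case (ii) is $-1$.

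\textbf{Kernel, range and cokernel.} In case (i), Hypothesis~\ref{hyp: point spectrum} for $p=2$ says the kernel is spanned by $\omega_0\Ups'$, and this function lies in $Y^p_{-\eta}$ for all $p$. Algebraic simplicity of the eigenvalue is exactly the statement $\omega_0\Ups' \notin \mathrm{Ran}(\Lps)$.

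In case (ii), any kernel element would lie in the kernel from case (i), hence be a multiple of $\omega_0\Ups'$. But $\Ups'$ approaches the nonzero periodic function $\uwt'$ as $\xi\to-\infty$, so it is not in $Y^p_\eta$. Thus the kernel is trivial and the cokernel is one-dimensional. The cokernel is represented by the kernel of $\Lps^*$ on the dual space $L^{p'}_{\mathrm{exp},\eta,0}$, which for $p=2$ is spanned by $\psiad$ by the discussion after Hypothesis~\ref{hyp: point spectrum}.

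Exponential localization of $\psiad$ would then follow from the adjoint dichotomies. On the left, the weight together with the spectral gap of the adjoint periodic system forces decay at a positive rate. On the right, the adjoint of the hyperbolic system at $+\infty$ means that any bounded solution decays exponentially.

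\textbf{Expected main obstacle.} I expect the hardest step to be the left end: making rigorous that the Morse index jump between the weights $\mp\eta$ is exactly one, and that exponential dichotomies hold in the degenerate periodic setting. I would handle this by using the Floquet theory of the three-dimensional periodic ODE. The key facts are that $\nu=0$ is an algebraically simple Floquet exponent at $\lambda=0$, by Hypothesis~\ref{hyp: wave train stability}(3) and $c_g \neq 0$, and that no other Floquet exponents have real part in $[-\eta,\eta]$.
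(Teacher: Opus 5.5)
Your proposal is correct and is essentially the argument the paper intends. The paper gives no separate proof beyond invoking Palmer's theorem together with Hypotheses~\ref{hyp: wave train stability}--\ref{hyp: point spectrum}, and your Morse-index count across the algebraically simple Floquet exponent $\nuwt(0)=0$ (simple since $c_g \neq 0$), combined with reading off kernel, range and cokernel at $p=2$ from Hypothesis~\ref{hyp: point spectrum}, fills in exactly that.

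Two small slips to fix. First, for $X^p_\eta$ the relevant value is $\lambdawt(\eta) = -c_g\eta + \mathrm{O}(\eta^2) > 0$, not $\lambdawt(-\eta)$. Second, the case-(ii) adjoint kernel lives in $L^2_{\mathrm{exp},\eta,0}$, which admits the bounded neutral periodic adjoint mode, so the weight alone does not force left decay. Left decay of $\psiad$ should instead come from $\psiad \in L^2_{\mathrm{exp},-\eta,0} \subset L^2_{\mathrm{exp},\eta,0}$ together with the one-dimensionality of that cokernel.
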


        \subsection{Resolvent in the wake}\label{s: wake resolvent}

        The resolvent operator for the linearization of the FitzHugh--Nagumo system about a diffusively spectrally stable wave train was analyzed in~\cite[Section~6.1]{FHNpulled}. This was done by recasting the resolvent problem~\eqref{e: left resolvent eqn} as a first-order $L$-periodic system, using Floquet theory to transform to a constant-coefficient system, and writing down an explicit Green's kernel for this system. A crucial role in extending the solution $\u_-(\lambda)$ of~\eqref{e: left resolvent eqn} through the spectrum of $\lwt$ is played by the \emph{critical spatial Floquet exponent} $\nuwt(\lambda)$ of this first-order periodic system, which is analytic in $\lambda$ and arises by inverting~\eqref{e: wt lambda expansion} around $\lambda = 0$. This yields the expansion
        \begin{align} \label{e:nuwt_expansion}
        \nuwt(\lambda) = -c_g^{-1} \lambda + \mathrm{O}(\lambda^2)
        \end{align}
        for $|\lambda| \ll 1$. In an open disk $B(0, \delta)$ of radius $\delta > 0$ centered at the origin, the spectrum of $\lwt$ may then equivalently be described as
       \begin{align} \label{e:spectral_id}
         \Sigma(\lwt) \cap B(0, \delta) = \{ \lambda \in B(0, \delta) : \nuwt(\lambda) \in \ri \R \}. 
       \end{align}
       Thus, crossing the spectrum of $\lwt$ near the origin corresponds to a sign change of $\Re \nuwt(\lambda)$, which leads to a loss of localization of $\u_-(\lambda)$. Despite this loss of localization, $\u_-(\lambda)$ remains pointwise analytic near the origin. The decomposition of $\u_-(\lambda)$ into pointwise analytic terms and an $L^p$-analytic remainder was carried out in~\cite[Section~6.1]{FHNpulled}. We summarize the result here.
    
        \begin{lemma} \label{lem: summary wave train resolvent}
        	Fix $1 \leq p \leq \infty$. There exist constants $C,\delta> 0$ such that for all $\lambda \in B(0,\delta)$ and $\g \in L^p(\R)$ the resolvent problem~\eqref{e: left resolvent eqn} possesses a solution $\u_-(\lambda) \in W^{2,p}_{\mathrm{loc}}(\R) \times W^{1,p}_\mathrm{loc}(\R)$ of the form
        	\begin{align}
        		\u_- (\lambda) = \uwt' \bar{s}_p^\mathrm{wt}(\lambda) \g_- + \bar{s}_c^\mathrm{wt}(\lambda) \g_- + \bar{s}_e^\mathrm{wt}(\lambda) \g_-, \label{e: u minus}
        	\end{align}
        	where:
        	\begin{enumerate}
                \item The residual $\bar{s}_c^\mathrm{wt}(\lambda) \g_-$ has the form
        		\begin{align} \begin{split}
        			[\bar{s}_c^\mathrm{wt}(\lambda) \g_-] (\xi) &= Q_2(\xi, \lambda) \int_\R \re^{\nuwt(\lambda) (\xi - \zeta)} \chi_- (\xi - \zeta) P_\mathrm{wt}^\mathrm{cu}(\lambda) Q_1(\zeta, \lambda) \g_- (\zeta) \, \de \zeta \\
        			&\qquad -\, Q_2(\xi, 0) \int_\R \re^{\nuwt(\lambda) (\xi -\zeta)} \chi_- (\xi - \zeta) P_\mathrm{wt}^\mathrm{cu}(0) Q_1(\zeta, 0) \g_-(\zeta) \, \de \zeta, \end{split}\label{e: scwt def}
        		\end{align} 
        		where $P_\mathrm{wt}^\mathrm{cu} \colon B(0,\delta) \to \C^{3\times3}$, $Q_1 \colon \R \times B(0,\delta) \to \C^{3\times 2}$, and $Q_2 \colon \R \times B(0,\delta) \to \C^{2\times3}$ are such that $P_\mathrm{wt}^\mathrm{cu}$ and $Q_{1,2}(\zeta,\cdot)$ are analytic for each $\zeta \in \R$, and $Q_{1,2}(\cdot,\lambda)$ is smooth and $L$-periodic for each $\lambda \in B(0,\delta)$. 
        		\item The leading-order part $\bar{s}_p^\mathrm{wt} (\lambda) \g_-$ is given by 
        		\begin{align}
        			[\bar{s}_p^\mathrm{wt}(\lambda) \g_-] (\xi) = \phi \left( \int_\R \re^{\nuwt(\lambda) (\xi - \zeta)} \chi_- (\xi-\zeta) P_\mathrm{wt}^\mathrm{cu}(0) Q_1(\zeta, 0) \g_- (\zeta) \, \de \zeta \right), \label{e: spwt def}
        		\end{align}
        		where $\phi \colon \C^3 \to \C$ is a linear map. 
        		\item The residual $\bar{s}^\mathrm{wt}_e \colon B(0, \delta) \to \mathcal{B}(L^p(\R))$ is analytic and we have
                \begin{align*}
                   \|\bar{s}^\mathrm{wt}_e(\lambda) \g \|_{L^p} \leq C \| \g \|_{L^p}.
                \end{align*} 
        	\end{enumerate}
            Moreover, if $\lambda \in B(0,\delta)$ lies to the right of $\Sigma(\lwt)$, then it holds $\u_-(\lambda) \in W^{2,p}(\R) \times W^{1,p}(\R)$.
        \end{lemma}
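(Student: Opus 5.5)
We plan to construct $\u_-(\lambda)$ through the first-order formulation of the resolvent problem~\eqref{e: left resolvent eqn}. Writing $\u = (u,w)^\top$, the equation $(\lambda - \lwt)\u = \g_-$ becomes a three-dimensional first-order system $U' = A(\xi,\lambda)U + B(\xi)\g_-$ in $U = (u,u_\xi,w)$. The coefficient matrix $A(\cdot,\lambda)$ is $L$-periodic and analytic in $\lambda$; the $w$-equation is first order because $D$ is degenerate, so it contributes one dimension. By Floquet theory we write the evolution as $\Phi(\xi,\lambda) = P(\xi,\lambda)\re^{M(\lambda)\xi}$, with $P(\cdot,\lambda)$ periodic and analytic in $\lambda$. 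Hypothesis~\ref{hyp: wave train stability} and~\eqref{e:spectral_id} then give a spectral splitting of $M(\lambda)$ for $\lambda\in B(0,\delta)$:
\begin{itemize}
\item[(a)] a one-dimensional critical part with eigenvalue $\nuwt(\lambda)$, where $\nuwt(0)=0$ and $\nuwt$ has the expansion~\eqref{e:nuwt_expansion};
\item[(b)] a stable and an unstable part that are uniformly hyperbolic.
\end{itemize}
Together with the center-unstable projection $P_\mathrm{wt}^\mathrm{cu}(\lambda)$, this produces an explicit Green's kernel. It consists of a hyperbolic part with uniform exponential decay $\re^{-\eta|\xi-\zeta|}$ and a critical part that is only pointwise analytic. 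We choose the critical part to be integrated from the right, i.e.\ supported on $\xi - \zeta \le 0$ through the cutoff $\chi_-(\xi-\zeta)$. This is the choice that is bounded to the right of $\Sigma(\lwt)$, because there $\Re\nuwt(\lambda) > 0$ since $c_g<0$. We take $Q_1$ and $Q_2$ to be the periodic factors $P^{-1}B$ and (the $\u$-components of) $P$.

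Next we split the critical part into the three pieces of~\eqref{e: u minus}.
\begin{itemize}
\item[(i)] The hyperbolic contributions, together with the smooth correction from replacing the sharp Heaviside by $\chi_-$, have kernels bounded by $\re^{-\eta|\xi-\zeta|}$ uniformly in $\lambda$. Young's inequality then gives an analytic $\mathcal{B}(L^p)$-valued map; this is $\bar s^\mathrm{wt}_e$.
\item[(ii)] In the critical kernel we freeze $\lambda=0$ in $Q_1$, $Q_2$ and $P^\mathrm{cu}_\mathrm{wt}$, keeping $\re^{\nuwt(\lambda)(\xi-\zeta)}$. The difference between the full kernel and this frozen one is $\bar s_c^\mathrm{wt}$ as in~\eqref{e: scwt def}.
\item[(iii)] In the frozen term we use that the critical Floquet eigenvector at $\lambda=0$ is $(\uwt',\uwt'',w_\mathrm{wt}')$. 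Then $Q_2(\xi,0)$ restricted to the critical direction is $\uwt'(\xi)$ times a scalar, and $\phi$ is the linear map extracting that coefficient. This yields $\uwt'\bar s_p^\mathrm{wt}$ as in~\eqref{e: spwt def}.
\end{itemize}
The remaining $\lambda=0$ non-critical parts of the frozen $Q_2$ are absorbed into $\bar s_e^\mathrm{wt}$ or $\bar s_c^\mathrm{wt}$; we would arrange the decomposition so that no such leftover kernel is non-decaying. We have to take care that $P^\mathrm{cu}_\mathrm{wt}(0)$ projects onto the critical direction in the frozen term, so that the only non-decaying kernel is rank one.

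Finally, for $\lambda$ to the right of $\Sigma(\lwt)$ we have $\Re\nuwt>0$. The kernel $\re^{\nuwt(\xi-\zeta)}$ on $\xi\le\zeta$ is then integrable, so every piece maps $L^p$ into $L^p$. Regularity follows from the equation, giving $\u_-\in W^{2,p}\times W^{1,p}$.

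The main obstacle is the bookkeeping in (ii)–(iii). We must make the splitting at $\lambda=0$ produce exactly a rank-one leading term proportional to $\uwt'$, with all other non-decaying parts vanishing at $\lambda=0$ so that they land in $\bar s^\mathrm{wt}_c$. Since this is carried out in~\cite[Section~6.1]{FHNpulled} for the same FitzHugh–Nagumo linearization about a diffusively stable wave train, we would primarily verify that the argument there uses only Hypothesis~\ref{hyp: wave train stability} and the sign of $c_g$, and then cite it.
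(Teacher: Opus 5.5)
Your proposal is correct and follows the same route as the paper. The paper does not reprove the lemma; it imports it from \cite[Section~6.1]{FHNpulled}, where exactly this construction is carried out: the first-order $L$-periodic reformulation, the Floquet reduction to constant coefficients, and the explicit Green's kernel with the critical direction integrated from the right. One point would tidy your bookkeeping in (ii)--(iii): take $P^\mathrm{cu}_\mathrm{wt}(\lambda)$ to be the rank-one Riesz projection onto the $\nuwt(\lambda)$-eigenspace of $M(\lambda)$, with the strongly unstable direction placed in $\bar s^\mathrm{wt}_e$. Then $Q_2(\xi,0)P^\mathrm{cu}_\mathrm{wt}(0) = \uwt'(\xi)\,\phi$ holds exactly, by simplicity of the zero Floquet exponent (Hypothesis~\ref{hyp: wave train stability} together with $c_g \neq 0$), so there are no leftover frozen terms to absorb.
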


        Localization of the pointwise analytic solution~\eqref{e: u minus} to the far-field resolvent problem~\eqref{e: left resolvent eqn} can be recovered by measuring in exponentially weighted spaces for $\lambda$ near the origin, which is reflected by the fact that the spectrum of $\lwt$ can be shifted away from the origin by introducing an exponential weight. This observation leads to the following weighted $L^p$-estimates.
                
        \begin{lemma}\label{l: left resolvent exponentially localized estimates} Fix $1 \leq p \leq \infty$. For any $\kappa_0 > 0$ sufficiently small, there exist constants $C, \delta, \kappa > 0$ such that
        	\begin{align}
            \begin{split}
        		\left\| \re^{\kappa_0 \cdot} \chi_- \u_- (\cdot; \lambda) \right\|_{L^p} &\leq C \| \re^{\kappa \cdot} \g_- \|_{L^p}, \\
        		\left\| \re^{\kappa_0 \cdot} \chi_- [ \u_- (\cdot; \lambda) - \u_- (\cdot; 0)] \right\|_{L^p} &\leq C |\lambda| \| \re^{\kappa \cdot} \g_- \|_{L^p}\end{split} \label{e: left resolvent exponentially localized estimates}
        	\end{align}
            for all $\lambda \in B(0,\delta)$ and $\g \in L^p(\R)$, where $\u_-(\lambda)$ is the solution to~\eqref{e: left resolvent eqn}, established in Lemma~\ref{lem: summary wave train resolvent}.
        \end{lemma}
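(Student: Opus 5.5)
We use the decomposition~\eqref{e: u minus} from Lemma~\ref{lem: summary wave train resolvent} and estimate each of the three pieces separately in the weighted norm $\|\re^{\kappa_0 \cdot}\chi_-\,\cdot\,\|_{L^p}$. The parameter $\kappa>0$ will satisfy $\kappa > \kappa_0$, with the gap $\kappa-\kappa_0$ chosen to dominate $\sup_{\lambda\in B(0,\delta)}|\Re\nuwt(\lambda)|$. Recall that $\g_-=\chi_-\g$ is supported on $(-\infty,0]$. On this half-line $\re^{\kappa\zeta}\le \re^{\kappa_0\zeta}$, so the weight $\re^{\kappa\cdot}$ on $\g_-$ is the \emph{weaker} requirement there. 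The relevant relation is pointwise: we want $\re^{\kappa_0\xi}\lesssim \re^{\kappa_0(\xi-\zeta)}\re^{\kappa_0\zeta}$, and the kernels are supported where $\xi-\zeta\le 0$. The gain comes from the factor $\chi_-(\xi-\zeta)$, which restricts to $\xi\le\zeta$.

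\textbf{Step 1: kernel estimates for $\bar s_p^{\mathrm{wt}}$ and $\bar s_c^{\mathrm{wt}}$.} Both are integrals against kernels of the form $K(\xi,\zeta)\re^{\nuwt(\lambda)(\xi-\zeta)}\chi_-(\xi-\zeta)$, where $K$ is bounded (periodic, analytic in $\lambda$). Multiplying by $\re^{\kappa_0\xi}$ and writing $\g_-(\zeta)=\re^{-\kappa\zeta}[\re^{\kappa\zeta}\g_-(\zeta)]$ gives the weighted kernel
\[
\re^{\kappa_0\xi}\re^{\nuwt(\lambda)(\xi-\zeta)}\chi_-(\xi-\zeta)\re^{-\kappa\zeta}
=\re^{(\kappa_0+\nuwt(\lambda))(\xi-\zeta)}\chi_-(\xi-\zeta)\,\re^{(\kappa_0-\kappa)\zeta}.
\]
Take $\delta$ small so that $|\nuwt(\lambda)|\le\kappa_0/2$ on $B(0,\delta)$, using $\nuwt(0)=0$ and~\eqref{e:nuwt_expansion}. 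Then on the support $\xi-\zeta\le 0$ the first factor is bounded by $\re^{\frac{\kappa_0}{2}(\xi-\zeta)}$, an integrable convolution kernel in $\xi-\zeta$.

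The second factor $\re^{(\kappa_0-\kappa)\zeta}$ with $\zeta\le 0$ is problematic if $\kappa>\kappa_0$. So we instead take $\kappa\le\kappa_0$, for instance $\kappa=\kappa_0$. Then $\re^{(\kappa_0-\kappa)\zeta}\le 1$ for $\zeta\le0$, and Young's inequality gives the $L^p\to L^p$ bound. Hence the intended choice is $\kappa\in(0,\kappa_0]$, and the statement's existence of some $\kappa$ is satisfied.

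\textbf{Step 2: the remainder $\bar s_e^{\mathrm{wt}}$.} This piece is only known to be bounded on unweighted $L^p$. We handle it by conjugation: $\re^{\kappa_0\cdot}\bar s_e^{\mathrm{wt}}(\lambda)\re^{-\kappa_0\cdot}$ is bounded on $L^p$ for $\kappa_0$ small, because the construction in~\cite[Section 6.1]{FHNpulled} produces $\bar s_e^{\mathrm{wt}}$ from exponentially decaying (hyperbolic) kernels. Alternatively, we argue directly: the weighted operator $\re^{\kappa_0\cdot}(\lambda-\lwt)\re^{-\kappa_0\cdot}$ is invertible for $\lambda\in B(0,\delta)$, since the weight moves $\Sigma(\lwt)$ off the origin, with $\Re\Sigma$ shifted by about $\kappa_0 c_g<0$. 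By uniqueness of the pointwise analytic continuation, $\u_-(\lambda)$ coincides with this weighted resolvent solution applied to $\g_-$. This second viewpoint actually gives the first estimate in~\eqref{e: left resolvent exponentially localized estimates} in one stroke; we would present it as the main argument and use the kernel computation of Step 1 only to justify the identification.

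\textbf{Step 3: Lipschitz dependence in $\lambda$.} Analyticity of the weighted resolvent $\lambda\mapsto \re^{\kappa_0\cdot}(\lambda-\lwt)^{-1}\re^{-\kappa_0\cdot}$ on a slightly larger disk, together with the Cauchy estimate, gives $\|\cdot(\lambda)-\cdot(0)\|\le C|\lambda|$. The same conclusion can be read off termwise in Step 1 using $|\re^{\nuwt(\lambda)y}-1|\le |\nuwt(\lambda)||y|\re^{|\nuwt||y|}$. The extra factor $|y|$ is absorbed by sacrificing part of the exponential margin, replacing $\kappa_0/2$ by $\kappa_0/4$.

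\textbf{Main obstacle.} The delicate point is the identification in Step 2, namely showing that the pointwise-analytic continuation $\u_-(\lambda)$ equals the weighted-space resolvent solution for $\lambda$ to the left of $\Sigma(\lwt)$. We would establish it for $\lambda$ to the right of $\Sigma(\lwt)$, where both solutions lie in $L^p\cap$ weighted $L^p$ and uniqueness holds. The identity then extends to all of $B(0,\delta)$ by analytic continuation, since both sides are pointwise analytic.
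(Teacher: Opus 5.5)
Your argument is correct, but the route you say you would present differs from the paper's.

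\textbf{The paper's route.} The paper works only with kernel estimates on the decomposition of Lemma~\ref{lem: summary wave train resolvent}. It bounds the $\uwt'\bar{s}_p^{\mathrm{wt}}$ term pointwise, using $\xi\le\zeta\le 0$ on the support to split the weight and $|\nuwt(\lambda)|\ll\kappa_0$ to absorb $\re^{\nuwt(\lambda)(\xi-\zeta)}$. It then applies H\"older, ending with $\kappa=\kappa_0/4$. The $|\lambda|$-bound comes from Taylor expanding the exponential, and the remaining pieces are declared analogous or easier.

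Your Step~1 together with the first option of Step~2 is this argument. Young's inequality even gives $\kappa=\kappa_0$, which is slightly stronger because $\g_-$ lives on $(-\infty,0]$. Your opening claim that $\kappa>\kappa_0$ is backwards, as you notice yourself in Step~1.

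\textbf{Your preferred route.} You invert $\re^{\kappa_0\cdot}(\lambda-\lwt)\re^{-\kappa_0\cdot}$ on $B(0,\delta)$ and identify $\u_-(\lambda)$ with the weighted resolvent solution by analytic continuation. This makes rigorous the heuristic stated just before the lemma. What it buys:
\begin{itemize}
\item It does not need the kernel structure of $\bar{s}_e^{\mathrm{wt}}$. Lemma~\ref{lem: summary wave train resolvent} only gives unweighted bounds for that piece, so the paper's ``strictly easier'' tacitly relies on its hyperbolic kernel from~\cite{FHNpulled}.
\item It gives the $\mathrm{O}(|\lambda|)$ estimate for free from analyticity.
\end{itemize}

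What it costs:
\begin{itemize}
\item Invertibility is needed on every $L^p$, $1\le p\le\infty$. This is best obtained from exponential dichotomies: conjugation shifts all spatial Floquet exponents by $\kappa_0$, so $\nuwt(\lambda)+\kappa_0$ stays off the imaginary axis once $\delta\lesssim|c_g|\kappa_0$.
\item The identification requires that $\u_-(\lambda)$ lie in the weighted space for $\lambda$ to the right of $\Sigma(\lwt)$. The reverse inclusion you assert is not needed.
\end{itemize}

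On $(-\infty,0]$ that inclusion is trivial. On $(0,\infty)$ it does not follow from your Step~1 computation, which never touches $\bar{s}_e^{\mathrm{wt}}$. The right argument is this: there $\u_-(\lambda)$ is an $L^p$ solution of the homogeneous equation. Because $\Re\nuwt(\lambda)>0$, it must lie in the strong-stable Floquet subspace. Hence it decays faster than $\re^{-\kappa_0\xi}$ once $\kappa_0$ is below that uniform rate.
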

        \begin{proof}
        	We focus on the term $\uwt' \bar{s}_p^\mathrm{wt}(\lambda) \g_-$ in the decomposition~\eqref{e: u minus} of $\u_-(\lambda)$. The estimates on the other terms are either analogous or strictly easier. For this term, we have 
        	\begin{align*}
        		\left| \re^{\kappa_0 \xi} \chi_-(\xi) \uwt' (\xi) [\bar{s}_p^\mathrm{wt}(\lambda) \g_-] (\xi) \right| \leq C \chi_-(\xi) \int_\R \re^{\kappa_0 \xi} \left| \re^{\nuwt(\lambda) (\xi - \zeta)} \right| \chi_- (\xi - \zeta) |\g_- (\zeta)| \, \de \zeta
        	\end{align*}
        	for $\xi \in \R$. Since $\chi_-$ is supported on $(-\infty,0]$, we have $\xi \leq \zeta$ on the region of integration, and hence we may estimate $\smash{\re^{\kappa_0 \xi/2} \leq \re^{\kappa_0 \xi/4} \re^{\kappa_0 \zeta/4 }}$. Using these exponentially decaying factors and~\eqref{e:nuwt_expansion} to control $\re^{\nuwt(\lambda) (\xi - \zeta)}$ for $|\lambda|$ sufficiently small and applying H\"older's inequality, we obtain 
        	\begin{align*}
        		\left| \re^{\kappa_0 \xi} \chi_-(\xi) \uwt' (\xi) [\bar{s}_p^\mathrm{wt}(\lambda) \g_-] (\xi) \right| \leq C \re^{\frac{\kappa_0}{4} \xi} \chi_-(\xi) \left\| \re^{ \frac{\kappa_0}{4} \cdot } \g_- \right\|_{L^p} \left\|\re^{\frac{\kappa_0}{4} \cdot} \chi_-\right\|_{L^{p'}}
        	\end{align*}
            for $\xi \in \R$, where $p' \in [1,\infty]$ is the H\"older conjugate of $p$. Taking the $L^p$-norm then gives the first estimate in~\eqref{e: left resolvent exponentially localized estimates}. The proof of the second estimate in~\eqref{e: left resolvent exponentially localized estimates} is analogous, with the exponential localization in both $\xi$ and $\zeta$ allowing us to Taylor expand the exponential to extract a factor of $|\nuwt(\lambda)| \sim |\lambda|$. 
        \end{proof} 
        In solving the center resolvent equation~\eqref{e: center resolvent eqn}, we will also make use of the pointwise analytic solution $\e_-(\lambda)$ to the
        homogeneous problem $(\lambda - \lwt) \u = 0$ with $\e_-(0) = \uwt'$, which loses localization as $\lambda$ passes through the spectrum of $\lwt$. We characterize this solution in the following result from~\cite{FHNpulled}.
        
       \begin{lemma}[\!{\!\cite[Lemma~6.10]{FHNpulled}}]\label{l: left homogeneous solution}
        	For any $\delta> 0$ sufficiently small, there exists a solution $\e_- (\lambda) = \q (\lambda) \re^{\nuwt(\lambda) \cdot }$ to $(\lambda - \lwt) \u = 0$, where $\q \colon B(0,\delta) \to H^\ell_{\mathrm{per}}(\R)$ is analytic for any $\ell \in \N$. Moreover, we have $\q(0) = \uwt'$. 
        \end{lemma}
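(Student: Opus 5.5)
The plan is to construct $\e_-(\lambda)$ through Floquet--Bloch theory for the Bloch family $\Lwthat(\nu)$, using the simple eigenvalue branch $\lambdawt(\nu)$ from~\eqref{e: wt lambda expansion}. By Hypothesis~\ref{hyp: wave train stability}(iii), $0$ is a simple eigenvalue of $\Lwthat(0)$ with eigenfunction $\uwt'$.

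First, by analytic perturbation theory~\cite{Kato1995Perturbation}, for $|\nu|$ small there is a Riesz projection $P(\nu)$ onto the eigenspace of the continued eigenvalue $\lambdawt(\nu)$. This projection is analytic in $\nu$ as a map into $\mathcal{B}(L^2_{\mathrm{per}})$. Because $\Lwthat(\nu)$ has compact resolvent, the range of $P(\nu)$ lies in the domain, and iterating the elliptic (parabolic-part) regularity gives analyticity into $H^\ell_{\mathrm{per}}$ for every $\ell$. The degenerate $w$-component is handled by solving $(\lambda-\eps\cdot)$ algebraically, with smooth periodic coefficients. Set $\tilde\q(\nu) = P(\nu)\uwt'$. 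Then $\tilde\q(0)=\uwt'$, and $\tilde\q(\nu)$ is an eigenfunction of $\Lwthat(\nu)$ with eigenvalue $\lambdawt(\nu)$.

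Second, invert the relation $\lambda=\lambdawt(\nu)$. Since $\lambdawt'(0)=-c_g\neq 0$ by Hypothesis~\ref{hyp: negative group velocity}, the analytic inverse function theorem yields an analytic $\nuwt(\lambda)$ on $B(0,\delta)$ with $\nuwt(0)=0$ and expansion~\eqref{e:nuwt_expansion}. Define $\q(\lambda)=\tilde\q(\nuwt(\lambda))$. This composition is analytic into $H^\ell_{\mathrm{per}}$, and $\q(0)=\uwt'$.

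Third, verify that $\e_-=\q\re^{\nuwt\cdot}$ solves the homogeneous equation. The identity $\partial_\xi(\re^{\nu\xi}\q)=\re^{\nu\xi}(\partial_\xi+\nu)\q$ gives $\lwt(\re^{\nu\xi}\q)=\re^{\nu\xi}\Lwthat(\nu)\q$. Hence $(\lambda-\lwt)\e_-(\lambda)=\re^{\nuwt\xi}(\lambda-\lambdawt(\nuwt(\lambda)))\q=0$.

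The only delicate point is the regularity of $P(\nu)$ in $H^\ell$, because the operator is only partially parabolic. If needed, an alternative route is to take the Floquet normal form of the first-order periodic system (as in~\cite{FHNpulled}) and follow the simple Floquet multiplier $\re^{\nu L}$ near $1$, which gives smooth periodic coefficients directly.
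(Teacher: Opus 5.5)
Your argument is correct, but it takes a different route from the one the paper relies on. The paper, through \cite[Lemma~6.10]{FHNpulled}, proceeds as follows:
\begin{itemize}
\item it recasts $(\lambda-\lwt)\u=0$ as a first-order $L$-periodic ODE system for $(u,u_\xi,w)\in\C^3$ and passes to a Floquet normal form;
\item it follows the Floquet multiplier $\re^{\nu L}$ near $1$ and takes $\e_-(\lambda)$ to be the associated Floquet solution;
\item the simplicity of that multiplier is where $c_g\neq 0$ and the simplicity assumption in Hypothesis~\ref{hyp: wave train stability} enter.
\end{itemize}
In that approach, smoothness and analyticity in $\lambda$ of the periodic factor come automatically from analytic parameter dependence of ODE solutions. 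The same normal form also produces the objects $P^{\mathrm{cu}}_{\mathrm{wt}}$ and $Q_{1,2}$ used in Lemma~\ref{lem: summary wave train resolvent}.

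Your Bloch/Kato argument instead works directly with $\Lwthat(\nu)$. This makes $\nuwt$ literally the inverse of $\lambdawt$ from~\eqref{e: wt lambda expansion}, and it avoids a separate multiplicity check for the monodromy. The price is that you must upgrade the $L^2$-valued Riesz projection to analyticity into every $H^\ell_{\mathrm{per}}$.

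That upgrade works, but your stated reason for it is slightly off. In the comoving frame, $\lwt$ contains $\cps\partial_\xi$ in both rows. So the $w$-row of the eigenvalue problem is the transport ODE $\cps(\partial_\xi+\nu)w=(\lambda+\eps\gamma)w-\eps u$, not an algebraic relation; this is why the domain is $H^2\times H^1$.

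A clean way to finish is the following. Fix $\mu_0$ in the resolvent set and write $\tilde{\q}(\nu)=(\mu_0-\Lwthat(\nu))^{-1}(\mu_0-\lambdawt(\nu))\tilde{\q}(\nu)$. This resolvent depends analytically on $\nu$ as a map from $H^k_{\mathrm{per}}\times H^k_{\mathrm{per}}$ into $H^{k+1}_{\mathrm{per}}\times H^{k+1}_{\mathrm{per}}$, since the $u$-row gains two derivatives and the $w$-row gains one. Induction on $k$ then gives analyticity of $\tilde{\q}$ into every $H^\ell_{\mathrm{per}}$.
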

        
        \subsection{Far-field/core decomposition}
        We now aim to solve the center resolvent equation~\eqref{e: center resolvent eqn} for $\u_c$. To do this, we leverage the fact that we have captured the far-field behavior of $\g(\xi)$ for $\xi \ll -1$ with the asymptotic solution $\u_-$ to~\eqref{e: left resolvent eqn}, and so we gain exponential localization of the data $\tg(\lambda)$ in~\eqref{e: center resolvent eqn} as $\xi \to -\infty$.
        
        \begin{lemma}[Control of center data]\label{l: control of center data}
    	Fix $1 \leq p \leq \infty$. There exists $\kappa > 0$ such that, for any $\eta > 0$ sufficiently small, there exist constants $C,\delta > 0$ such that
    	\begin{align}
    		\left\| \tg (\lambda) \right\|_{X^p_\eta} \leq C \left\| \omega_{\kappa,0} \g \right\|_{L^p}, \qquad  \left\|\tg(\lambda) - \tg(0) \right\|_{X^p_\eta} \leq C | \lambda| \left\|\omega_{\kappa,0} \g \right\|_{L^p}\label{e: center data exponentially localized estimates}
    	\end{align}
        for all $\lambda \in B(0,\delta)$ and $\g \in L^p(\R)$, where we denote $\tg(\lambda) = \g - (\lambda - \Lps) (\chi_- \u_-(\lambda))$ and $\u_-(\lambda)$ is the solution to~\eqref{e: left resolvent eqn}, established in Lemma~\ref{lem: summary wave train resolvent}. 
        \end{lemma}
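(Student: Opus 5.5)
The plan is to compute $\tg(\lambda)$ explicitly and observe that the far-field equation cancels everything except commutator and coefficient-mismatch terms, all of which are exponentially localized. Since $\g = \chi_-\g + \chi_+\g$ and $(\lambda-\lwt)\u_- = \g_-$, we have
\begin{align*}
\tg(\lambda) = \chi_+\g - [\Lps - \lwt](\chi_-\u_-(\lambda)) + [\lwt,\chi_-]\,\u_-(\lambda).
\end{align*}
We must also account for the weight: on the support of $\chi_-$, where $\xi\le 0$, the weight $\omega_0$ equals $1$ for $\xi\le -1$. On $[-1,0]$ the conjugation by $\omega_0$ only contributes bounded terms supported in a compact set, so it can be absorbed into the commutator term. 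The three contributions are then handled separately.

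\textbf{The term $\chi_+\g$.} This term is supported on $[-1,\infty)$. There the weight defining $X^p_\eta$ is bounded, and so is $\omega_{\kappa,0}^{-1}$. Hence $\|\chi_+\g\|_{X^p_\eta}\lesssim\|\omega_{\kappa,0}\g\|_{L^p}$, and this term is independent of $\lambda$.

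\textbf{The commutator term.} The commutator $[\lwt,\chi_-]\u_- = -D(2\chi_-'\partial_\xi + \chi_-'')\u_- - \cps\chi_-'\u_-$ is supported in $[-1,0]$ and involves at most one derivative of $\u_-$. We bound it by local $W^{1,p}$ norms of $\u_-$ on $[-1,0]$. These are controlled through Lemma~\ref{l: left resolvent exponentially localized estimates}, upgraded to derivatives via local elliptic regularity for the $u$-component of the resolvent equation (the $w$-component has no derivative in the commutator since $D$ only acts on $u$). The required local bound is $\|\u_-\|_{W^{1,p}(-2,1)}\lesssim\|\re^{\kappa\cdot}\g_-\|_{L^p}$. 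Since $\g_-$ is supported on $\xi\le0$, this is $\lesssim\|\omega_{\kappa,0}\g\|_{L^p}$. The $\lambda$-Lipschitz version follows identically from the second estimate in~\eqref{e: left resolvent exponentially localized estimates}.

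\textbf{The mismatch term.} We write $[\Lps-\lwt](\chi_-\u_-) = [F'(\Ups)-F'(\uwt)]\chi_-\u_-$ on $\xi\le-1$. By~\eqref{e: front left asymptotics} its coefficient is $\mathrm{O}(\re^{\eta_*\xi})$. For $\eta<\eta_*$ we therefore estimate
\begin{align*}
\|\re^{-\eta\xi}[F'(\Ups)-F'(\uwt)]\chi_-\u_-\|_{L^p} \lesssim \|\re^{(\eta_*-\eta)\xi}\chi_-\u_-\|_{L^p}.
\end{align*}
We then apply Lemma~\ref{l: left resolvent exponentially localized estimates} with $\kappa_0 = \eta_*-\eta$, or with any smaller $\kappa_0$, which is allowed since $\re^{\kappa_0\xi}$ decreases in $\kappa_0$ for $\xi\le0$. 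This produces the fixed rate $\kappa$ on the right-hand side, and the $\lambda$-difference bound follows from the second estimate of that lemma in the same way.

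\textbf{Main obstacle.} The delicate point is the order of constants. We fix $\kappa_0$ small, independent of $\eta$, which fixes $\kappa$. Then we take $\eta\le\eta_*-\kappa_0$. This gives one $\kappa$ valid for all sufficiently small $\eta$, which is what the statement asserts. A second, minor obstacle is the derivative in the commutator, which requires the local regularity upgrade of Lemma~\ref{l: left resolvent exponentially localized estimates}. This can alternatively be read off from the explicit kernel representation in Lemma~\ref{lem: summary wave train resolvent}, since differentiating in $\xi$ only brings down smooth periodic factors and $\nuwt(\lambda)$.
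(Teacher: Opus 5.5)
Your argument is the paper's. You use the far-field equation to reduce $\tg(\lambda)$ to three parts: a $\lambda$-independent piece supported on $[-1,\infty)$, the commutator $[\lwt,\chi_-]\u_-$, and the mismatch $(\Lps-\lwt)(\chi_-\u_-)$, whose coefficients are supported on $(-\infty,0]$ and exponentially localized. You then apply Lemma~\ref{l: left resolvent exponentially localized estimates} with $\kappa_0$ fixed before $\eta$, so that $\kappa$ does not depend on $\eta$.

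Two harmless corrections. First, since $\chi_-(\lambda-\lwt)\u_-=\chi_-^2\g$, the $\lambda$-independent piece is really $\g-\chi_-^2\g=\chi_+\g+\chi_-\chi_+\g$, and a few signs are off, none of which affects the estimates. Second, the local $W^{1,p}$ bound on $\u_-$ near $[-1,0]$ that the derivative in the commutator requires should come from the kernel representation in Lemma~\ref{lem: summary wave train resolvent}. Lemma~\ref{l: left resolvent exponentially localized estimates} only controls $\chi_-\u_-$, which degenerates near $\xi=0$ where $\chi_-'$ need not vanish. The paper itself glosses over this point.
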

        \begin{proof}
        To take advantage of the fact that $\u_-$ solves the far-field resolvent equation~\eqref{e: left resolvent eqn}, we first rewrite $\tg(\lambda)$ as 
        \begin{align*}
        	\tg(\lambda) = \g - (\lambda-\lwt) (\chi_- \u_-) + (\Lps - \lwt) (\chi_- \u_-).
        \end{align*}
        Then, we use~\eqref{e: left resolvent eqn} to obtain
        \begin{align*}
        	\tg(\lambda) = \g - \chi_-^2 \g + [\lwt, \chi_-] \u_- + (\Lps - \lwt) (\chi_- \u_-),
        \end{align*}	
        where $[A,B] = AB - BA$ denotes the commutator of operators $A$ and $B$. The first term, $\g - \chi_-^2 \g$, is supported on $[-1,\infty)$ and is independent of $\lambda$, and so readily satisfies the desired estimates. For the terms involving $\u_-$, note that the operators $[\lwt, \chi_-]$ and $(\Lps - \lwt) [\chi_- \cdot]$ each have coefficients which are supported on $(-\infty,0]$ and are exponentially localized by Theorem~\ref{t: existence}. Hence, by Lemma~\ref{l: left resolvent exponentially localized estimates}, there exist $\kappa,\kappa_0 > 0$ such that, for any $\eta > 0$ sufficiently small, there exist $C,\delta > 0$ such that
        \begin{align*}
        	\left\| [\lwt, \chi_-] \u_- \right\|_{X^p_\eta} + \left\| (\Lps - \lwt) (\chi_- \u_-) \right\|_{X^p_\eta} \leq C \left\| \re^{\kappa_0 \cdot} \chi_- \u_- \right\|_{L^p} \leq C \left\| \re^{\kappa \cdot} \g_-\right\|_{L^\infty} \leq C \left\| \omega_{\kappa,0} \g\right\|_{L^\infty}
        \end{align*}
         for any $\lambda \in B(0,\delta)$ and $\g \in L^\infty(\R)$. The proof of the second estimate in~\eqref{e: center data exponentially localized estimates} is analogous. 
        \end{proof}

        Even though $\tg$ is exponentially localized on the left, the solution $\u_c$ to the center resolvent equation~\eqref{e: center resolvent eqn} will still lose localization as $\lambda$ approaches the essential spectrum of $\Lps$, which is an obstacle to inverting $\lambda-\Lps$ using the Fredholm properties stated in Proposition~\ref{prop: fredholm properties}. To overcome this, we explicitly capture the loss of localization through the \emph{far-field/core ansatz}
        \begin{align}
        	\u_c (\lambda) = \alpha_- \chi_- \e_- (\lambda) + \w, \label{e: ff core ansatz}
        \end{align}
        where we take $\alpha_- \in \C$, $\e_-(\lambda)$ the solution to $(\lambda - \lwt) \u = 0$ captured in Lemma~\ref{l: left homogeneous solution}, and $\w$ exponentially localized. Inserting the ansatz~\eqref{e: ff core ansatz} into~\eqref{e: center resolvent eqn} leads to an equation
        \begin{align*}
        	G (\w, \alpha_-; \lambda) = \tg(\lambda), 
        \end{align*}
        where 
        \begin{align}
        	G(\w, \alpha_-; \lambda) = (\lambda - \Lps)[\alpha_- \chi_- \e_- (\lambda) + \w]. \label{e:defG}
        \end{align}
        Since $\tg$ is exponentially localized on the left, we consider $G$ as an operator between exponentially weighted spaces. 
        
        \begin{lemma}
        	Fix $1 \leq p \leq \infty$. For all sufficiently small $\eta,\delta > 0$, the map 
        	\begin{align*}
        		G : Y^p_\eta \times \C \times B(0,\delta) \to X^p_\eta,
        	\end{align*}
        	given by~\eqref{e:defG}, is well-defined and analytic in its last component. 
        \end{lemma}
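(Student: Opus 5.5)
We have to show two things. First, for $\w \in Y^p_\eta$, $\alpha_- \in \C$ and $\lambda \in B(0,\delta)$, the expression $(\lambda - \Lps)[\alpha_- \chi_- \e_-(\lambda) + \w]$ lies in $X^p_\eta$. Second, it depends analytically on $\lambda$. Since $G$ is affine in $(\w,\alpha_-)$, the plan is to split $G = G_1 + G_2$ with
\begin{align*}
G_1(\w;\lambda) = (\lambda - \Lps)\w, \qquad G_2(\alpha_-;\lambda) = \alpha_- (\lambda - \Lps)(\chi_- \e_-(\lambda)),
\end{align*}
and treat the two pieces separately.

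\textbf{The term $G_1$.} The operator $\Lps = \omega_0 \Aps(\cdot/\omega_0)$ is a second-order differential operator in the $u$-component and zeroth-order in the $w$-component. Its coefficients are bounded and smooth: the coefficients of $F'(\Ups)$ are bounded by Theorem~\ref{t: existence}, and $\omega_0'/\omega_0$, $\omega_0''/\omega_0$ are bounded. Hence $\Lps \colon Y^p_\eta \to X^p_\eta$ is bounded. To see this, conjugate with the weight $\omega_{-\eta,0}$; this produces another operator with bounded coefficients, since $\omega_{-\eta,0}'/\omega_{-\eta,0}$ is bounded. The map $\lambda \mapsto \lambda - \Lps$ is then affine in $\lambda$, hence analytic, and $G_1$ is bounded in $\w$.

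\textbf{The term $G_2$.} The key is to use the equation satisfied by $\e_-(\lambda)$. Write
\begin{align*}
(\lambda - \Lps)(\chi_- \e_-) = \chi_-(\lambda - \lwt)\e_- - [\lwt,\chi_-]\e_- + (\lwt - \Lps)(\chi_-\e_-).
\end{align*}
The first term vanishes by Lemma~\ref{l: left homogeneous solution}.

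The commutator $[\lwt,\chi_-]$ has coefficients involving $\chi_-'$ and $\chi_-''$, which are supported in $[-1,0]$. So $[\lwt,\chi_-]\e_-$ is compactly supported.

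The last term carries the coefficient difference $\Lps - \lwt$. On the support of $\chi_-$ (where $\xi \le 0$), we have $\omega_0 \equiv 1$ for $\xi \le -1$, so there this difference reduces to $F'(\Ups) - F'(\uwt)$. By~\eqref{e: front left asymptotics} this is $\mathrm{O}(\re^{\eta_*\xi})$.

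Next we need growth bounds on $\e_-$. Lemma~\ref{l: left homogeneous solution} gives $\e_-(\lambda) = \q(\lambda)\re^{\nuwt(\lambda)\cdot}$, where $\q(\lambda)$ is periodic and smooth. By~\eqref{e:nuwt_expansion}, $|\Re\nuwt(\lambda)| \le C|\lambda| \le C\delta$. Therefore, for $\xi \le 0$,
\begin{align*}
|\partial_\xi^j \e_-(\xi;\lambda)| \lesssim \re^{C\delta|\xi|}, \qquad j = 0,1,2.
\end{align*}
Multiplying by the weight $\re^{-\eta\xi}$ on the left, the product is bounded by $\re^{(\eta_* - \eta - C\delta)\xi}$ for $\xi \le -1$. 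This is exponentially decaying provided $\eta + C\delta < \eta_*$. So $G_2(\alpha_-;\lambda) \in X^p_\eta$, with norm bounded by $C|\alpha_-|$ for all $p$, and it is linear in $\alpha_-$.

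\textbf{Analyticity and the main obstacle.} For analyticity, $\lambda \mapsto \q(\lambda) \in H^\ell_{\mathrm{per}}$ is analytic by Lemma~\ref{l: left homogeneous solution}, and $\lambda \mapsto \re^{\nuwt(\lambda)\xi}$ is analytic pointwise. The main obstacle is upgrading this pointwise analyticity to analyticity as an $X^p_\eta$-valued map, because the factor $\re^{\nuwt(\lambda)\xi}$ is unbounded in $\xi$. I would handle this by expanding in a power series,
\begin{align*}
\re^{\nuwt(\lambda)\xi} = \sum_n \nuwt(\lambda)^n \xi^n / n!,
\end{align*}
or by Cauchy estimates. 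These give uniform bounds on $\lambda$-derivatives of the form $|\xi|^n\re^{C\delta|\xi|}$, which are absorbed by the exponential localization $\re^{(\eta_*-\eta)\xi}$ after shrinking $\delta$. Alternatively, weak analyticity (pairing with a dual element) together with the local boundedness already established yields strong analyticity. In either case the polynomial and small exponential growth in $\xi$ is dominated, and $G$ is analytic in $\lambda$.
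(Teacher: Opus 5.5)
Your proposal is correct and follows essentially the same route as the paper. The paper also obtains the localization from the commutator splitting used in Lemma~\ref{l: control of center data}, relying on $(\lambda-\lwt)\e_-=0$ and on the exponential convergence of the coefficients of $\Lps$ to those of $\lwt$, and takes analyticity from Lemma~\ref{l: left homogeneous solution}; you add more detail, notably the Cauchy-estimate upgrade from pointwise to $X^p_\eta$-valued analyticity. One harmless slip: because of the term $\cps\partial_\xi$, $\Lps$ is first order (not zeroth order) in the $w$-component, which is why $Y^p_\eta$ carries $W^{1,p}$ there, and your boundedness argument for $G_1$ goes through unchanged.
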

        \begin{proof}
            Lemma~\ref{l: left homogeneous solution} yields that $G$ is analytic in its last component. The main issue is then to make sure that $G$ preserves the exponential localization  of the input $\w$. This follows in a manner similar to the proof of Lemma~\ref{l: control of center data}, using the facts that, by Theorem~\ref{t: existence}, the coefficients of $\Lps$ converge exponentially to those of $\lwt$ as $\xi \to -\infty$ and that $\e_-$ satisfies $(\lambda - \lwt) \e_- = 0$.
        \end{proof}

        Using the Fredholm properties of $\Lps$ on exponentially weighted spaces established in Proposition~\ref{prop: fredholm properties}, we can apply the analytic Fredholm theorem to invert the linear map $G(\cdot,\cdot\,;\lambda)$. This then leads to a solution to the center resolvent equation~\eqref{e: center resolvent eqn}, which is meromorphic in $\lambda$ in a small neighborhood of the origin.
        
        \begin{prop}\label{p: ff core inversion}
        	Fix $1 \leq p \leq \infty$. For any sufficiently small $\eta, \delta > 0$, the linear map
        	\begin{align} \label{e:lin1}
        	Y_\eta^p \times \C \to X_\eta^p, \qquad (\w, \alpha_-) \mapsto G(\w, \alpha_-; \lambda)
        	\end{align}
        	has an inverse 
            \begin{align*}
            X_\eta^p \to Y_\eta^p \times \C, \qquad \f \mapsto \left(T(\lambda) \f,A_-(\lambda) \f\right)
            \end{align*}
            for each $\lambda \in B(0,\delta) \setminus \{0\}$. Here, $T \colon B(0,\delta) \setminus \{0\} \to \smash{\mathcal{B}(X_\eta^p,Y_\eta^p)}$ and $A_- \colon B(0,\delta) \setminus \{0\} \to \smash{\mathcal{B}(X_\eta^p,\C)}$ are meromorphic on $B(0,\delta)$ with a simple pole at the origin. In particular, there exist analytic maps $\smash{\tilde{\w}} \colon B(0,\delta) \to \smash{\mathcal{B}(X_\eta^p,Y_\eta^p)}$ and $\smash{\tilde{\alpha}_-} \colon B(0,\delta) \to \smash{\mathcal{B}(X_\eta^p,\C)}$ such that 
            \begin{align} \label{e:laurent inversion}
        		T(\lambda) \f = \frac{\Ptr \f}{\lambda} [\omega_0 \Ups' - \chi_- \uwt'] + \tilde{\w}(\lambda)\f,\qquad A_-(\lambda) \f = \frac{1}{\lambda} \Ptr \f + \tilde{\alpha}_-(\lambda) \f 
        	\end{align}
        	  for $\f \in X_\eta^p$ and $\lambda \in B(0,\delta)\setminus\{0\}$, where $\Ptr$ is given by~\eqref{e: spectral functional}.
        \end{prop}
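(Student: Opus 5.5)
The plan is to view \eqref{e:lin1} as an analytic family of bounded operators $\mathcal{G}(\lambda) \colon Y_\eta^p \times \C \to X_\eta^p$. We first show that $\mathcal{G}(0)$ is Fredholm of index $0$ with a one-dimensional kernel. Then we carry out a Lyapunov--Schmidt reduction, which gives a scalar analytic function whose zero at $\lambda = 0$ is simple. The simple pole and the residue formula \eqref{e:laurent inversion} follow from this.

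\textbf{Step 1 (Fredholm structure at $\lambda=0$).} By Proposition~\ref{prop: fredholm properties}(ii), $\Lps \colon Y^p_\eta \to X^p_\eta$ is Fredholm of index $-1$ with trivial kernel. Its range is $\{\f : \Ptr \f = 0\}$, since the cokernel is spanned by the exponentially localized $\psiad$. Consequently $\lambda - \Lps$ is Fredholm of index $-1$ for all small $|\lambda|$, by stability of the index. The map $\mathcal{G}(\lambda)$ adds the single column $\alpha_- \mapsto \alpha_-(\lambda - \Lps)(\chi_-\e_-(\lambda))$. This element lies in $X^p_\eta$ because $(\lambda-\lwt)\e_-=0$, so the expression reduces to commutator terms and $(\lwt - \Lps)\chi_-\e_-$, whose coefficients are exponentially localized. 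A rank-one bordering changes the index by $+1$, so $\mathcal{G}(\lambda)$ has index $0$.

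At $\lambda=0$ we use $\e_-(0)=\uwt'$ and $\Lps(\omega_0\Ups')=0$. Since $\omega_0 = 1$ on the left, the function $\omega_0\Ups' - \chi_-\uwt'$ decays exponentially as $\xi\to-\infty$ by \eqref{e: front left asymptotics}, so it lies in $Y^p_\eta$. This gives
\[
\mathcal{G}(0)\big(\omega_0\Ups'-\chi_-\uwt',\,1\big) = -\Lps(\omega_0\Ups') = 0 .
\]
Hence $\ker\mathcal{G}(0)$ contains $\mathbf{k}_0 := (\omega_0\Ups'-\chi_-\uwt', 1)$. The kernel is exactly one-dimensional: if $\alpha_-=0$, then $\w=0$ because $\Lps$ is injective on $Y^p_\eta$. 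The range of $\mathcal{G}(0)$ equals $\mathrm{Rg}(\Lps)=\ker\Ptr$. Indeed, $\mathcal{G}(0)(\w,\alpha_-) = -\Lps(\w + \alpha_-\chi_-\uwt')$, and this lies in the range of $\Lps$ on $Y^p_{-\eta}$, which is annihilated by $\psiad$.

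\textbf{Step 2 (Lyapunov--Schmidt reduction).} Write $(\w,\alpha_-) = \beta \mathbf{k}_0 + (\w_\perp, \alpha_\perp)$, where $(\w_\perp,\alpha_\perp)$ ranges over a closed complement. We take the complement $\{\alpha_-=0\}$, i.e. $Y^p_\eta\times\{0\}$. Split $\f = (\f - \Ptr\f\,\mathbf{b}) + \Ptr\f\,\mathbf{b}$ with a fixed localized $\mathbf{b}$ satisfying $\Ptr\mathbf{b}=1$. The equation restricted to the complement and projected onto $\ker\Ptr$ is uniquely solvable for $\w_\perp$, analytically in $\lambda$, by the implicit function theorem or a Neumann series. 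Substituting the result leaves a scalar equation $\beta\, h(\lambda) = \Ptr\f + \mathrm{O}(\lambda)\f$, where
\[
h(\lambda) = \Ptr\big[\mathcal{G}(\lambda)\mathbf{k}_0\big] + \mathrm{O}(\lambda^2), \qquad h(0)=0 .
\]

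\textbf{Step 3 (nondegeneracy, main obstacle).} We must compute $h'(0)$. Differentiating gives
\[
\partial_\lambda\big[\mathcal{G}(\lambda)\mathbf{k}_0\big]\big|_{\lambda=0} = \omega_0\Ups' - \chi_-\uwt' + \chi_-\uwt' - \Lps\big(\chi_-\e_-'(0)\big) = \omega_0\Ups' - \Lps\big(\chi_-\e_-'(0)\big).
\]
Pairing with $\psiad$ and using \eqref{e:pairing_adjoint}, this becomes $1 - \langle \Lps(\chi_-\e_-'(0)), \psiad\rangle$.

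The delicate point is that $\e_-'(0) = \q'(0) + \nuwt'(0)\,\xi\,\uwt'$ grows linearly as $\xi \to -\infty$. However, it still lies in $Y^p_{-\eta}$, and $\psiad$ is exponentially localized on both sides. Integration by parts is therefore justified, and $\langle\Lps(\cdot),\psiad\rangle = \langle \cdot, \Lps^*\psiad\rangle = 0$. So $h'(0)=1$, and the zero of $h$ at the origin is simple.

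The remaining corrections in the reduction must be analytic of order $\mathrm{O}(\lambda)$, with $\lambda$-derivatives controlled in the weighted spaces. I expect this bookkeeping, together with the justification of the pairing against the growing $\e_-'(0)$, to be the hardest part.

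Consequently $\beta = \lambda^{-1}\Ptr\f + (\text{analytic})$. Since $\mathbf{k}_0$ has $\alpha_-$-component $1$ and $\w$-component $\omega_0\Ups'-\chi_-\uwt'$, this yields $A_-(\lambda)\f = \lambda^{-1}\Ptr\f + \tilde\alpha_-(\lambda)\f$ and the formula for $T(\lambda)\f$ in \eqref{e:laurent inversion}. Invertibility for $\lambda\in B(0,\delta)\setminus\{0\}$ follows from $h(\lambda)\neq0$ there, after shrinking $\delta$.
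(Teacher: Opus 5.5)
Your proof is correct, but it takes a different route from the paper.

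\textbf{The paper's route.} The paper applies the Fredholm bordering lemma and then invokes the analytic Fredholm theorem to obtain a meromorphic inverse on the punctured disk. It bounds the pole order by a Jordan-chain argument. The two leading Laurent coefficients $\u_{-n}$, $\u_{-n+1}$ grow at most linearly on the left, so they lie in the growth-permitting space $Y^p_{-\eta}$. There, Proposition~\ref{prop: fredholm properties}(i) (kernel spanned by $\omega_0\Ups'$, and $\omega_0\Ups'$ not in the range) forces $\u_{-n}=0$ whenever $n\geq 2$. Finally, it reads off the residue by pairing the order-$\lambda^0$ equation with $\psiad$.

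\textbf{Your route.} You carry out an explicit Lyapunov--Schmidt reduction that uses only Proposition~\ref{prop: fredholm properties}(ii) at $\lambda=0$. Choosing the complement $\{\alpha_-=0\}$ makes $\beta=\alpha_-$ exactly, and all the spectral information is packed into the single number $h'(0)=\Ptr(\omega_0\Ups')=1$. The integration by parts against the linearly growing $\chi_-\e_-'(0)$, which you flag as delicate, is the same one the paper needs, since its $\u_0$ contains $\alpha_{-1}\chi_-\partial_\lambda\e_-(0)$.

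\textbf{What each buys.} In your approach, invertibility on $B(0,\delta)\setminus\{0\}$, simplicity of the pole, the residue and explicit analytic remainders all follow from $h(\lambda)=\lambda(1+\mathrm{O}(\lambda))$. In particular, you never need the map to be invertible at some point of the disk. The analytic Fredholm theorem requires that, and the paper leaves it implicit; it would follow, e.g., by taking $\lambda>0$ and invoking Hypothesis~\ref{hyp: point spectrum}. The paper's route is shorter and more conceptual: it ties the pole order directly to the algebraic simplicity of the translational eigenvalue on the growth-permitting space, without choosing projections or complements.
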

        \begin{proof}
        	For $\eta > 0$ sufficiently small, Proposition~\ref{prop: fredholm properties} yields that the operator $\Lps \colon Y_{\eta}^p \to X_{\eta}^p$ is Fredholm of index $-1$. Hence, the Fredholm bordering lemma implies that the linear map $(\w, \alpha_-) \mapsto G(\w, \alpha_-; 0) : Y_\eta^p \times \C \to X_\eta^p, $ is Fredholm of index $0$. It then follows from the analytic Fredholm theorem that there exists $\delta > 0$ such that the linear map~\eqref{e:lin1} has an inverse for $\lambda \in B(0,\delta) \setminus \{0\}$, which is meromorphic in $\lambda$ on $B(0,\delta)$. This establishes the first part of the statement.
            
        	Let $\f \in X_\eta^p$. All that remains is to compute the leading-order coefficients of the Laurent series
            \begin{align*}
        		A_-(\lambda)\f = \sum_{k = -n}^\infty \alpha_{k} \lambda^k, \qquad T(\lambda) \f = \sum_{k = -n}^\infty \w_k \lambda^k,
        	\end{align*}
            where $n \in \N_0$ is the multiplicity of the pole at $0$. Arguing by contradiction, we asssume $n > 1$. Inserting $\u_c(\lambda) = [A_-(\lambda) \f]\chi_- \e_-(\lambda) + T(\lambda) \f$ into~\eqref{e: center resolvent eqn} and equating coefficients at order $\lambda^{-n}$ and $\lambda^{-n+1}$, we find
        	\begin{align}
        		\Lps \u_{-n} = 0, \qquad \Lps \u_{-n+1} = \u_{-n}, \label{e: laurent expansion}
        	\end{align}
        	where $\u_{-n} = \alpha_{-n} \chi_- \e_-(0) + \w_{-n}$, and $\u_{-n+1}= \alpha_{-n+1} \chi_- \e_- (0) + \alpha_{-n} \chi_- \partial_\lambda \e_- (0) + \w_{-n+1}$. Lemma~\ref{l: left homogeneous solution} implies that $\u_{-n}(\xi)$ and $\u_{-n+1}(\xi)$ grow at most linearly in $\xi$ as $\xi \to -\infty$ and are $L^p$-localized on $[0,\infty)$. Hence, they belong to $L^p_{\mathrm{exp}, \eta, 0}$ for $\eta > 0$ and we conclude from~\eqref{e: laurent expansion} and Proposition~\ref{prop: fredholm properties}(i) that we must have $\u_{-n} = \beta_{-n} \omega_0 \Ups'$ for some $\beta_{-n} \in \C$. The second equation in~\eqref{e: laurent expansion} then becomes 
        	\begin{align*}
        		\Lps \u_{-n+1} = \beta_{-n} \omega_0 \Ups'.
        	\end{align*}
        	However, again by Proposition~\ref{prop: fredholm properties}(i), $\omega_0 \Ups'$ is not in the range of $\Lps$ on $L^p_{\mathrm{exp}, \eta, 0} $, and so we must have $\beta_{-n} = 0$ and $\u_{-n} = 0$. Since $\e_-(0) = \uwt'$ is $L$-periodic and $\w_{-n}$ is localized on $(-\infty,0]$, this implies that $\alpha_{-n} = 0$ and $\w_{-n} = 0$, which contradicts that $n$ is the multiplicity of the pole at $0$. So, we have $n \leq 1$.
            
            Inserting $\u_c(\lambda) = [A_-(\lambda) \f]\chi_- \e_-(\lambda) + T(\lambda) \f$ into~\eqref{e: center resolvent eqn} and equating at order $\lambda^{-1}$ and $\lambda^0$ now yields the identities
        	\begin{align} \label{e: Laurent 2}
        		\Lps \u_{-1} = 0, \qquad \Lps \u_0 = \u_{-1} - \f. 
        	\end{align}
        	We still conclude that $\u_{-1} = \beta_{-1} \omega_0 \Ups'$ for some $\beta_{-1} \in \C$. Subsequently, we solve the second equation in~\eqref{e: Laurent 2} in order to determine the coefficient $\beta_{-1}\in \C$. Note that we have $\Lps \u_0 \in X_\eta^p$ and $\psiad$ is exponentially localized by Proposition~\ref{prop: fredholm properties}. Therefore, provided $\eta > 0$ is sufficiently small, all boundary terms vanish after pairing and integrating by parts. Consequently,
            \begin{align*}
            \langle \Lps \u_0, \psiad\rangle_{L^2} = \langle \u_0, \Lps^* \psiad\rangle_{L^2} = 0.
            \end{align*}
            Hence, taking the $L^2$-scalar product of the second equation in~\eqref{e: Laurent 2} with $\psiad$ and using~\eqref{e:pairing_adjoint} and~\eqref{e: spectral functional}, we find 
            \begin{align*}
                \beta_{-1} = \langle \f, \psiad \rangle_{L^2} = \Ptr \f.
            \end{align*}
            Using $\e_-(0) = \uwt'$, we infer
            \begin{align*}
                \alpha_{-1} \chi_- \uwt' + \w_{-1} = \u_{-1} = \beta_{-1} \omega_0 \Ups'.
            \end{align*}
            Since $\Ups'$ converges exponentially  to $\uwt$ as $\xi \to -\infty$ by Theorem~\ref{t: existence}, $\alpha_{-1} = \beta_{-1} = \Ptr \f$ is the only choice for $\alpha_{-1}$ which guarantees that
            \begin{align*}
            \beta_{-1} \omega_0 \Ups' - \alpha_{-1} \chi_- \uwt' = \w_{-1} \in Y_{\eta}^p
            \end{align*}
            is exponentially localized on the left.
            This yields that $\alpha_{-1} = \Ptr \f$, $\w_{-1} = \Ptr \f [\omega_0 \Ups' - \chi_- \uwt']$, and $n = 1$, as desired.
        \end{proof}

        With the aid of Proposition~\ref{p: ff core inversion}, we can solve the center resolvent problem~\eqref{e: center resolvent eqn}. The resulting solution $\u_c(\lambda)$ is meromorphic in $\lambda$ on a small neighborhood of the origin and possesses a simple pole at $\lambda = 0$. In the next result, we decompose $\u_c(\lambda)$ into a leading-order meromorphic part and an analytic remainder.
        
\begin{lemma} \label{lem: u c final}
Fix $1 \leq p \leq \infty$. There exists $\delta> 0$ such that for all $\lambda \in B(0,\delta) \setminus \{0\}$ and $\g \in L^p(\R)$ the center resolvent problem~\eqref{e: center resolvent eqn} possesses a solution $\u_c(\lambda) \in W^{2,p}_{\mathrm{loc}}(\R) \times W^{1,p}_\mathrm{loc}(\R)$ of the form
        \begin{align} \begin{split}
        	\u_c(\lambda) &= \frac{\Ptr \g}{\lambda} \omega_0 \Ups' \big[\chi_- \re^{\nuwt(\lambda) \cdot} + (1-\chi_-)\big] + \frac{\Ptr \tg(\lambda) - \Ptr \tg(0)}{\lambda} \omega_0 \Ups' \big[\chi_- \re^{\nuwt(\lambda) \cdot} + (1-\chi_-)\big]\\
            &\qquad + \, \frac{\Ptr \tg(\lambda)}{\lambda} \chi_- [\q(\lambda) - \q(0)] \re^{\nuwt(\lambda) \cdot} + \frac{\Ptr \tg(\lambda)}{\lambda} \chi_- (\uwt' - \omega_0 \Ups') \big(\re^{\nuwt(\lambda) \cdot}-1\big)\\
            &\qquad + \, \big[\tilde{\alpha}_-(\lambda)\tg(\lambda) \big] \chi_- \q(\lambda) \re^{\nuwt (\lambda) \cdot}  + \tilde{\w}(\lambda)\tg(\lambda).\end{split}\label{e: u c final}
        \end{align}
where we denote $\tg(\lambda) = \g - (\lambda - \Lps) (\chi_- \u_-(\lambda))$, where $\tilde{\alpha}_-(\lambda)$ and $\tilde{\w}(\lambda)$ are as in Proposition~\ref{p: ff core inversion}, where $\q(\lambda)$ is as in Lemma~\ref{l: left homogeneous solution}, and where $\u_-(\lambda)$ is the solution to~\eqref{e: left resolvent eqn}, established in Lemma~\ref{lem: summary wave train resolvent}. Moreover, if $\lambda \in B(0,\delta)$ lies to the right of $\Sigma(\lwt)$, then it holds $\u_c(\lambda) \in W^{2,p}(\R)\times W^{1,p}(\R)$.
\end{lemma}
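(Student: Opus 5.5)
The plan is to obtain $\u_c(\lambda)$ directly from Proposition~\ref{p: ff core inversion} and then rearrange its Laurent expansion algebraically. By Lemma~\ref{l: control of center data}, for $\eta>0$ small and $\lambda\in B(0,\delta)$ the modified data $\tg(\lambda)$ lies in $X^p_\eta$, with a bound in terms of $\|\omega_{\kappa,0}\g\|_{L^p}$. Strictly, this bound uses $\omega_{\kappa,0}\g\in L^p$. For general $\g\in L^p$, I would either work with that localized class or note that the estimate in the proof of Lemma~\ref{l: left resolvent exponentially localized estimates} only needs $\re^{\kappa\cdot}\g_-$ to be controlled by $\g$ on the support of $\chi_-$. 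This is harmless since $\re^{\kappa\xi}\le 1$ there. So $\tg(\lambda)\in X^p_\eta$ for every $\g\in L^p$.

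For $\lambda\neq0$ I then set
\[
\u_c(\lambda)=[A_-(\lambda)\tg(\lambda)]\,\chi_-\e_-(\lambda)+T(\lambda)\tg(\lambda).
\]
By the definition~\eqref{e:defG} of $G$, this solves~\eqref{e: center resolvent eqn}. Regularity $W^{2,p}_{\mathrm{loc}}\times W^{1,p}_{\mathrm{loc}}$ follows from $T(\lambda)\tg\in Y^p_\eta$ together with the smoothness of $\e_-(\lambda)=\q(\lambda)\re^{\nuwt(\lambda)\cdot}$ given by Lemma~\ref{l: left homogeneous solution}.

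Next I insert the Laurent forms~\eqref{e:laurent inversion}. This gives
\[
\u_c=\frac{\Ptr\tg(\lambda)}{\lambda}\Big[\chi_-\q(\lambda)\re^{\nuwt\cdot}+\omega_0\Ups'-\chi_-\uwt'\Big]+[\tilde\alpha_-(\lambda)\tg(\lambda)]\chi_-\q(\lambda)\re^{\nuwt\cdot}+\tilde\w(\lambda)\tg(\lambda).
\]
I rewrite the bracket using $\q(0)=\uwt'$:
\[
\chi_-[\q(\lambda)-\q(0)]\re^{\nuwt\cdot}+\chi_-\uwt'(\re^{\nuwt\cdot}-1)+\omega_0\Ups'.
\]
I then add and subtract $\chi_-\omega_0\Ups'(\re^{\nuwt\cdot}-1)$. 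This turns the last two pieces into
\[
\omega_0\Ups'\big[\chi_-\re^{\nuwt\cdot}+(1-\chi_-)\big]+\chi_-(\uwt'-\omega_0\Ups')(\re^{\nuwt\cdot}-1),
\]
which reproduces the third and fourth terms of~\eqref{e: u c final}.

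Finally I split $\Ptr\tg(\lambda)=\Ptr\tg(0)+[\Ptr\tg(\lambda)-\Ptr\tg(0)]$ in the coefficient of $\omega_0\Ups'[\cdots]$. The remaining task is to identify $\Ptr\tg(0)=\Ptr\g$. I expect this to be the main point. Here
\[
\tg(0)=\g+\Lps(\chi_-\u_-(0)),
\]
and $\chi_-\u_-(0)$ is not exponentially localized, since it is only bounded on the left. So $\Ptr\Lps(\chi_-\u_-(0))=0$ does not follow immediately. I would argue as in the proof of Proposition~\ref{p: ff core inversion}. The function $\chi_-\u_-(0)$ grows at most polynomially, so it lies in $Y^p_{-\eta}$ for every $\eta>0$. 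Moreover, $\psiad$ is exponentially localized on both sides by Proposition~\ref{prop: fredholm properties}(ii). Integrating by parts is therefore justified with vanishing boundary terms, which gives
\[
\langle\Lps(\chi_-\u_-(0)),\psiad\rangle=\langle\chi_-\u_-(0),\Lps^*\psiad\rangle=0.
\]
The growth bound on $\u_-(0)$ comes from the explicit kernels~\eqref{e: scwt def} and~\eqref{e: spwt def} with $\nuwt(0)=0$: both are bounded by $|\xi|\,\|\g\|_{L^p}$ or better, by H\"older.

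For the last claim, suppose $\lambda$ lies to the right of $\Sigma(\lwt)$. By~\eqref{e:spectral_id} and~\eqref{e:nuwt_expansion}, this means $\Re\nuwt(\lambda)>0$, since $c_g<0$. Hence every $\re^{\nuwt\cdot}\chi_-$ factor decays exponentially on the left. All other terms are either in $Y^p_\eta$ or supported on the right and localized there via $\omega_0\Ups'$. This gives $\u_c(\lambda)\in W^{2,p}\times W^{1,p}$.
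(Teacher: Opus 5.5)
Your proposal is correct and follows the paper's proof essentially step for step. You define $\u_c$ through Proposition~\ref{p: ff core inversion}, insert the Laurent expansions, use $\q(0)=\uwt'$ to regroup the terms, and identify $\Ptr\tg(0)=\Ptr\g$ by integrating by parts against the exponentially localized $\psiad$. Your remark that $\chi_-\u_-(0)$ may grow (at most linearly, via~\eqref{e: spwt def} with $\nuwt(0)=0$) rather than stay bounded is slightly more accurate than the paper's wording, and the argument goes through unchanged.
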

\begin{proof}
        By Lemma~\ref{l: left homogeneous solution} and Proposition~\ref{p: ff core inversion}, there exists $\delta > 0$ such that
        \begin{align} \label{e:uc_expr}
        	\u_c (\lambda) &= [A_- (\lambda) \tg(\lambda)] \chi_- \q(\lambda) \re^{\nuwt(\lambda) \cdot} + T(\lambda)\tg(\lambda)
        \end{align}
        lies in $W^{2,p}_{\mathrm{loc}}(\R)\times W^{1,p}_\mathrm{loc}(\R)$ and solves~\eqref{e: center resolvent eqn} for $\lambda \in B(0,\delta) \setminus \{0\}$. Moreover, as $\Re \nuwt(\lambda)$ is positive for $\lambda \in B(0,\delta)$ to the right of $\Sigma(\lwt)$ by~\eqref{e:nuwt_expansion} and~\eqref{e:spectral_id}, it follows that $\u_c(\lambda) \in W^{2,p}(\R)\times W^{1,p}(\R)$ for all such $\lambda$. 

        To capture the leading-order contribution in the Laurent expansion of $\u_c(\lambda)$, we first insert~\eqref{e:laurent inversion} into~\eqref{e:uc_expr} and write
        \begin{align} \label{e:uc_expr2}
        \begin{split}
        	\u_c(\lambda) &= \frac{\Ptr \tg(\lambda)}{\lambda}\big[\omega_0 \Ups' - \chi_- \uwt'\big] + \frac{\Ptr \tg(\lambda)}{\lambda} \chi_- \q(\lambda) \re^{\nuwt(\lambda) \cdot}
            \\
            &\qquad + \,\big[\tilde{\alpha}_-(\lambda)\tg(\lambda) \big] \chi_- \q(\lambda) \re^{\nuwt (\lambda) \cdot} + \tilde{\w}(\lambda)\tg(\lambda).
            \end{split}
        \end{align}
        We proceed with computing
            \begin{align*}
                \Ptr \tg(0) = \langle \tg(0), \psiad \rangle_{L^2} = \langle \g, \psiad \rangle_{L^2} - \langle \Lps (\chi_- \u_-(0)), \psiad \rangle_{L^2}. 
            \end{align*}
        Note that: i) $\chi_- \u_-(0)$ is supported on $(-\infty, 0]$ and bounded by Proposition~\ref{lem: summary wave train resolvent}; and ii) $\psiad$ is exponentially localized by Proposition~\ref{prop: fredholm properties}. All boundary terms upon integration by parts therefore vanish, and one obtains 
            \begin{align*}
                \langle \Lps (\chi_- \u_- (0)), \psiad \rangle_{L^2} = \langle \chi_- \u_-(0), \Lps^* \psiad \rangle_{L^2} = 0,
        \end{align*}
        which implies
        \begin{align*}
        \Ptr \tg(0) = \Ptr \g.
        \end{align*}
        Inserting the latter into~\eqref{e:uc_expr2}, using $\q(0) = \uwt'$, and rearranging terms, we finally arrive at~\eqref{e: u c final}.
        \end{proof}
        
        Since $\tg(\lambda)$, $\q(\lambda)$, $\nuwt(\lambda)$, $\tilde{\alpha}_-(\lambda) \tg(\lambda)$, and $\tilde{\w}(\lambda)\tg(\lambda)$ are analytic in $\lambda$ by Lemmas~\ref{lem: summary wave train resolvent} and~\ref{l: left homogeneous solution} and Proposition~\ref{p: ff core inversion}, we observe that only the first term in~\eqref{e: u c final} has a simple pole at $\lambda = 0$, whereas the remainder terms are pointwise analytic in $\lambda$ on $B(0,\delta)$. 

        \subsection{Final resolvent decomposition}\label{s: final resolvent decomposition}
        
        Fix $1 \leq p \leq \infty$. By Lemmas~\ref{lem: summary wave train resolvent} and~\ref{lem: u c final}, there exists $\delta > 0$ such that $\u(\lambda) = \chi_- \u_-(\lambda) + \u_c(\lambda) \in W^{2,p}_{\mathrm{loc}}(\R) \times W^{1,p}_\mathrm{loc}(\R)$ is a solution to the resolvent equation~\eqref{e: full resolvent eqn} for all $\lambda \in B(0,\delta) \setminus \{0\}$,  where $\u_-(\lambda)$ is given by~\eqref{e: u minus} and $\u_c(\lambda)$ is given by~\eqref{e: u c final}. Moreover, if $\lambda \in B(0,\delta)$ lies to the right of $\Sigma(\lwt)$, then $\u(\lambda) \in W^{2,p}(\R)\times W^{1,p}(\R)$.

        To group terms which will contribute to the dynamics in similar ways, we use~\eqref{e: u minus} and~\eqref{e: u c final} to rewrite this solution as 
        \begin{align}
        \begin{split}
        	\u (\lambda) &= \omega_0 \Ups' [\bar{s}_{p,1} (\lambda) + \bar{s}_{p,2} (\lambda)] \g + [\bar{s}_{c,1} (\lambda) + \bar{s}_{c,2} (\lambda)] \g + \bar{s}_e (\lambda) \g,  
            \end{split} 
            \label{e: final resolvent decomp}
        \end{align}
        where 
        \begin{align}
        	\bar{s}_{p,1} (\lambda) \g = \frac{\Ptr \g}{\lambda} \big[ \chi_- \re^{\nuwt(\lambda) \cdot} + (1-\chi_-)\big] \label{e: sp1 resolvent def} 
        \end{align}
        contributes to non-decaying dynamics excited by the translational mode,
        \begin{align}
        	\bar{s}_{p,2} (\lambda) \g = \chi_- \bar{s}_p^\mathrm{wt}(\lambda) \g_- \label{e: sp2 def}
        \end{align}
        contributes to diffusively decaying dynamics associated with the spectrum of the wave train in the wake,
        \begin{align}
        \begin{split}
         	\bar{s}_{c,1} (\lambda) \g &= \left(\frac{\Ptr \tg(\lambda) - \Ptr \tg(0)}{\lambda} \omega_0 \Ups'  + \frac{\Ptr \tg(\lambda)}{\lambda} [\q(\lambda) - \q(0)] + \big[\tilde{\alpha}_-(\lambda)\tg(\lambda) \big]  \q(\lambda)\right)\chi_- \re^{\nuwt (\lambda) \cdot} \label{e: sc1bar def}
                    \end{split}
        \end{align}
        contributes to dynamics excited by the translational mode but exhibiting improved decay,
        \begin{align}
        	\bar{s}_{c,2} (\lambda) \g = \chi_- \bar{s}_c^\mathrm{wt}(\lambda) \g_- \label{e: sc2 def}
        \end{align}
        contributes to dynamics associated with the spectrum of the wave train in the wake but exhibiting enhanced diffusive decay, and
        \begin{align}
        \begin{split}
        	\bar{s}_e(\lambda) \g &= \big[\bar{s}_p^{\mathrm{wt}}(\lambda) \g_-\big] \chi_-\left(\uwt' - \omega_0 \Ups'\right) + \chi_- \bar{s}_e^\mathrm{wt}(\lambda) \g_- + \frac{\Ptr \tg(\lambda) - \Ptr \tg(0)}{\lambda} \omega_0 \Ups' (1-\chi_-)\\
            &\qquad + \,  \frac{\Ptr \tg(\lambda)}{\lambda} \chi_- (\uwt' - \omega_0 \Ups') \big(\re^{\nuwt(\lambda) \cdot}-1\big) + \tilde{\w}(\lambda)\tg(\lambda)
        \end{split}
        \end{align}
        captures those terms which are analytic in $\lambda$ in a neighborhood of the origin in $L^p(\R)$, and hence contribute to exponentially decaying dynamics. 

    	\section{Semigroup decomposition and linear estimates}\label{s: linear estimates}

        \begin{figure}
    		\centering
    		\includegraphics[width=0.65\textwidth]{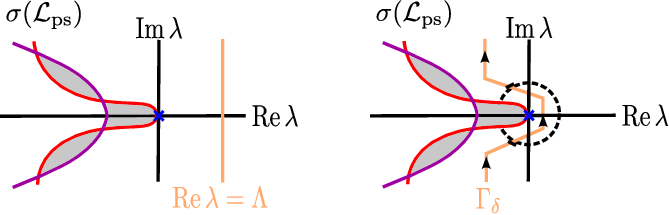}
    		\caption{Spectrum of $\Lps$ together with integration contours used in the inverse Laplace transform. Left: the initial contour $\Re \lambda = \Lambda > 0$ (tan) used in Proposition~\ref{p: C0 semigroup}. Right: the shifted contour $\Gamma_\delta$ (tan) used in Proposition~\ref{p: contour shifting}. The contour $\Gamma_\delta$ lies in the open left-half plane, except for a portion lying in the disk $B(0, \delta)$, whose boundary is denoted by the dashed curve.}
    		\label{fig: contour shifting}
        \end{figure}

       In this section, we obtain a decomposition of the $C_0$-semigroup $\re^{\Lps t}$ together with associated bounds. Our starting point is the inverse Laplace representation~\eqref{e: contour rep} of $\re^{\Lps t}$. The strategy is to shift the integration contour in~\eqref{e: contour rep} into the open left-half plane, except for a segment in a small neighborhood of the origin; see Figure~\ref{fig: contour shifting}. This naturally leads to a decomposition of $\re^{\Lps t}$ into an exponentially damped high-frequency contribution and a critical low-frequency component
        \begin{align} \label{e:lowfreq}
                \frac{1}{2\pi \ri} \int_{\Gamma_{c,\delta}} \re^{\lambda t} (\lambda - \Lps)^{-1} \g \, \de \lambda,
        \end{align}
        where $\Gamma_{c,\delta}$ lies in the disk $B(0,\delta)$. Thus, we may apply the resolvent decomposition~\eqref{e: final resolvent decomp} to split~\eqref{e:lowfreq} into components $s_{p,1/2}(t)\g$ associated with the leading- and higher-order dynamics excited by the translational mode, components $s_{c,1/2}(t)\g$ corresponding to the leading- and higher-order scattering dynamics of the outgoing diffusive modes in the wake, and an exponentially decaying remainder. This procedure ultimately leads to the proof of Theorem~\ref{t: linear estimates}.
        
        \medskip

        \noindent \textbf{Splitting off the high-frequency component.} It was shown in the linear stability analysis of pulled pattern-forming fronts in the FitzHugh--Nagumo system in~\cite[Corollary~4.4]{FHNpulled} that the integration contour in the inverse Laplace representation may be shifted into the left-half plane except for a piece lying in the disk $B(0,\delta)$. The proof uses a rescaling argument to analyze the high-frequency behavior of the resolvent for $|\Im(\lambda)| \gg 1$. This argument depends only on the basic structure of the original system, and in particular is independent of the profile about which we are linearizing, and so applies without modification in our setting to give the following result. 

        \begin{prop}[High-frequency damping]\label{p: contour shifting}
        Fix $1 \leq p \leq \infty$. For any $\delta > 0$, there exist a constant $\mu > 0$ and a contour $\Gamma_\delta \subset \C$, which is symmetric in the real axis, such that we have the representation 
            \begin{align*}
                \re^{\Lps t} \g = \frac{1}{2\pi \ri} \int_{\Gamma_\delta} \re^{\lambda t} (\lambda -\Lps)^{-1} \g \, \de \lambda 
            \end{align*}
            for all $\g \in C_c^\infty(\R)$ and $t > 0$, where the integral is interpreted in the principal value sense. Moreover, $\Gamma_\delta$ lies strictly to the right of $\Sigma(\Lps)$ and we have $\Gamma_\delta \cap \partial B(0,\delta) = \{\lambda_\delta,\overline{\lambda_\delta}\}$, where $\lambda_\delta \in \C$ satisfies $\Re \lambda_\delta < 0 < \Im \lambda_\delta$. Finally, $\Gamma_\delta \setminus B(0,\delta)$ lies in the open left-half plane and we have the estimate 
            \begin{align}
                \left\| \int_{\Gamma_\delta \setminus B(0,\delta)} \re^{\lambda t} (\lambda-\Lps)^{-1} \g \, \de \lambda \right\|_{L^p} \lesssim \re^{-\mu t} \| \g \|_{L^p} 
            \end{align}
            for all $\g \in C^\infty_c (\R)$ and $t > 0$.  
        \end{prop}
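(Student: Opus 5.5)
The proof follows the strategy of \cite[Corollary~4.4]{FHNpulled}: locate the spectrum of $\Lps$ away from the origin, control the resolvent uniformly for large $|\Im\lambda|$, and deform the vertical contour of Proposition~\ref{p: C0 semigroup} accordingly.

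\emph{Spectral location.} Hypotheses~\ref{hyp: wave train stability} and~\ref{hyp: leading edge}, together with Weyl's theorem (via Palmer's theorem), imply that the essential spectrum of $\Lps$ lies in $\{\Re\lambda<0\}\cup\{0\}$, with the only contact at $0$ being the quadratic tangency coming from $\Sigma(\lwt)$. By Hypothesis~\ref{hyp: point spectrum}, no eigenvalues lie in $\{\Re\lambda\geq 0\}\setminus\{0\}$. Fix $\delta>0$ and $R\gg1$. On the compact set $\{|\Im\lambda|\le R\}\setminus B(0,\delta)$ the spectrum therefore stays a positive distance to the left of the imaginary axis. Upper semicontinuity of the spectrum, together with continuity of the resolvent in $\lambda$, then gives a bound $\|(\lambda-\Lps)^{-1}\|_{\mathcal{B}(L^p)}\lesssim1$ on the region $-2\mu_0\le\Re\lambda$, $|\Im\lambda|\le R$, $\lambda\notin B(0,\delta)$, for some small $\mu_0>0$.

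\emph{High frequencies (main obstacle).} The system is degenerate because $D$ is singular, so the resolvent is not sectorial and there is no smoothing in the $w$-component. I would follow the rescaling argument of \cite{FHNpulled}. For $|\Im\lambda|\gg1$, eliminate the $w$-equation explicitly. This gives $w=\eps(\lambda+\eps\gamma)^{-1}\omega_0(\ldots)$ plus data, which is small relative to $\lambda$. Substituting into the first equation yields a scalar second-order problem in $u$ with a large parameter. A rescaling $\xi\mapsto|\lambda|^{1/2}\xi$ followed by a perturbation argument in $L^p$ then produces uniform resolvent bounds on a line $\Re\lambda=-\mu_0$, for $|\Im\lambda|\ge R$. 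The key point is that the $w$-component contributes the constant damping $-\eps\gamma$ at leading order. Since this argument depends only on the structure of \eqref{e: FHN} and on the boundedness and exponential convergence of the coefficients $F'(\Ups)$ and the weight $\omega_0$, I would simply verify that these ingredients hold in the present setting and cite the result.

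\emph{Contour deformation.} Define $\Gamma_\delta$ as follows. Take the vertical line $\Re\lambda=-\mu_0$ for $|\Im\lambda|$ large. Join it to the arc of $\partial B(0,\delta)$ lying to the right of the spectrum, which meets the circle at points $\lambda_\delta,\overline{\lambda_\delta}$ with $\Re\lambda_\delta<0$; such points exist because the spectral curve near $0$ is tangent to the imaginary axis and bends into the left half plane. Inside $B(0,\delta)$, connect these points by a curve lying to the right of $\Sigma(\Lps)$. For $\g\in C_c^\infty$, the resolvent is analytic between the original line $\Re\lambda=\Lambda$ and $\Gamma_\delta$. On horizontal segments at height $\pm R'$, the $\mathcal{O}(|\lambda|^{-1})$ decay of the resolvent applied to smooth data, obtained via $(\lambda-\Lps)^{-1}\g=\lambda^{-1}\g+\lambda^{-1}(\lambda-\Lps)^{-1}\Lps\g$, makes their contribution vanish as $R'\to\infty$. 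Cauchy's theorem then gives the representation as a principal value integral.

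\emph{Estimate off $B(0,\delta)$.} On $\Gamma_\delta\setminus B(0,\delta)$ we have $|\re^{\lambda t}|\le\re^{-\mu t}$ and uniform resolvent bounds, but the resolvent norm is not integrable in $\Im\lambda$. I would handle this using the same splitting: the $\lambda^{-1}\g$ part integrates exactly, as a principal value, to zero or to an exponentially small term along the shifted line. The remainder carries a factor $\lambda^{-2}$. The resulting bound in terms of $\|\Lps\g\|$ is then removed by using the high-frequency structure (the asymptotically explicit resolvent from the rescaling), exactly as in \cite{FHNpulled}. This produces $\lesssim\re^{-\mu t}\|\g\|_{L^p}$.
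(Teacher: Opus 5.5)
Your proposal takes essentially the paper's route: the paper proves this proposition solely by invoking the high-frequency rescaling analysis of \cite[Corollary~4.4 and Appendix~A]{FHNpulled}, observing that it depends only on the structure of~\eqref{e: FHN} and not on the profile. Your spectral-location and contour-deformation steps spell out the routine details around that citation, but one point in your sketch needs correcting. In the comoving frame the $w$-equation retains the transport term $\cps\partial_\xi w$, so $w$ is not obtained by dividing by $\lambda+\eps\gamma$ but via the transport resolvent $(\lambda+\eps\gamma+\cps\omega_0'/\omega_0-\cps\partial_\xi)^{-1}$; for $\Re\lambda>-\eps\gamma$ this is bounded by $(\Re\lambda+\eps\gamma)^{-1}$ but does not decay in $|\Im\lambda|$, which is where the rate $\mu=\eps\gamma/2$ comes from and why the final bound in $\|\g\|_{L^p}$ needs the structural argument you defer to \cite{FHNpulled} rather than plain resolvent decay.
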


        \medskip

        \noindent \textbf{Pointwise estimates.} The main remaining task is to use the resolvent decomposition of Section~\ref{s: final resolvent decomposition} to study the integral~\eqref{e:lowfreq} with $\Gamma_{c,\delta} := \Gamma_\delta \cap B(0,\delta)$, where $\Gamma_\delta$ is as in Proposition~\ref{p: contour shifting}. To estimate these low-frequency contributions, we will rely on pointwise estimates, originally developed in the stability theory of viscous shock waves~\cite{ZumbrunHoward}. These estimates translate information about the (pointwise) meromorphic continuation of the resolvent across the essential spectrum into estimates on the spatio-temporal dynamics of the linearized evolution. Our approach differs from the traditional pointwise Green's function approach in that we obtain the pointwise description of the resolvent through a far-field/core decomposition, while the traditional approach would construct a Green's kernel for the resolvent problem by gluing together exponential dichotomies. 

        We will use the following formulation of pointwise estimates from~\cite{FHNpulled}. A detailed proof, based on ideas from~\cite{ZumbrunHoward, MasciaZumbrun}, can be found in~\cite[Appendix~D]{FHNpulled}.

        \begin{prop}[Pointwise estimates]\label{p: pointwise estimates}
        Fix $\delta_1,\Delta \mu > 0$ and $j,\ell,m \in \N_0$. Let $g \colon \R^2 \times B(0, \delta_1) \to \C^2$ be a function such that
        \begin{align*}
        		B(0, \delta_1) \to \C, \qquad \lambda \mapsto g(\xi, \zeta; \lambda)
        \end{align*}
        is analytic for each $\xi,\zeta \in \R$. Suppose that there exists $C_0 > 0$ such that $\| g (\cdot, \cdot; \lambda) \|_{L^\infty} \leq C_0 |\lambda|^m$ for $\lambda \in B(0,\delta_1)$. Then, for any $\delta \in (0,\delta_1)$ sufficiently small, there exist a function $G^{j, \ell, m} \colon [0, \infty) \times \R\to \R$ and constants $C, D_0, \mu> 0$ such that, for each $(\xi, \zeta, t) \in \R^2 \times [0, \infty)$, there exists a contour $\Gamma_{\xi, \zeta, t,\delta}$, which lies in $B(0, \delta)$ and connects $\overline{\lambda_\delta}$ to $\lambda_\delta$ where $\lambda_\delta \in \C$ is as in Proposition~\ref{p: contour shifting}, such that we have the estimate
        	\begin{align}
        		\int_{\Gamma_{\xi, \zeta, t,\delta}} | \lambda|^j |\nuwt(\lambda)|^\ell \re^{\Re (\lambda t + \nuwt(\lambda) (\xi - \zeta))} | g(\xi, \zeta; \lambda)| \chi_- (\xi - \zeta) \, | \de \lambda | \leq G^{j, \ell, m} (t, \xi - \zeta),
        	\end{align}
        	where
            \begin{align} \begin{split}
                |G^{j, \ell, m} (t, \xi)| &\leq \frac{C}{(1+t)^{\frac{1}{2} + \frac{\ell + j + m}{2}}} \chi_-(\xi) \chi_+(\xi - (c_g - \Delta \mu) t) \chi_- (\xi - (c_g + \Delta \mu) t + 1) \re^{-\frac{(\xi - c_g t)^2}{D_0t} } \\ &\qquad \, + C\,\chi_-(\xi) \re^{-\mu t}  \re^{-\mu |\xi|} \end{split}\label{e: pointwise G estimate refined}
            \end{align}
            for all $\xi \in \R$ and $t > 0$. 
        \end{prop}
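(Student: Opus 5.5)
The plan is to follow the classical pointwise Green's function argument of Zumbrun--Howard, as refined in Mascia--Zumbrun. Set $\nu = \nuwt(\lambda)$ and $x = \xi-\zeta$. Only the region $x \leq 0$ matters because of the cutoff $\chi_-(x)$. Analyticity of $g$ and of the integrand lets us deform the contour freely inside $B(0,\delta)$, so for each $(x,t)$ we choose a contour adapted to the ratio $x/t$. By~\eqref{e:nuwt_expansion}, inverting~\eqref{e: wt lambda expansion} gives
\begin{align*}
\nuwt(\lambda) = -c_g^{-1}\lambda + d\lambda^2 + \mathrm{O}(\lambda^3), \qquad d = -\Deff c_g^{-3} > 0
\end{align*}
(up to the precise sign convention). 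Hence the exponent behaves like
\begin{align*}
\Re\big(\lambda t + \nu x\big) = \Re\big(\lambda(t - x/c_g) + d\lambda^2 x\big) + \mathrm{O}(|\lambda|^3|x|).
\end{align*}
Since $c_g<0$ and $x\le0$, we have $|x|/|c_g| = -x/c_g$, and the exponent becomes $\lambda(t-|x|/|c_g|) - d\lambda^2|x|$, which is the Laplace symbol of a convected heat kernel centered at $|x| = |c_g| t$, that is, $x = c_g t$.

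The key steps are as follows. First, assume the bound on $g$ and the factors $|\lambda|^j|\nu|^\ell \lesssim |\lambda|^{j+\ell}$, so the integrand is bounded by $C|\lambda|^{j+\ell+m}\re^{\Re(\lambda t+\nu x)}$. Second, treat the case $|x|$ and $t$ large with $x/t$ within $\Delta\mu$ of $c_g$. Here we take the saddle-point contour: the real crossing point $\lambda_* \approx \alpha/(2 d |x|)$, with $\alpha = |x|/|c_g| - t$ (the ratio $\alpha/|x|$ is small), followed by vertical rays $\lambda_*+\ri k$ until reaching $\partial B(0,\delta)$ and then along arcs that bend into the left half-plane to meet $\lambda_\delta,\overline{\lambda_\delta}$. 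On the vertical part the exponent is $\le -\alpha^2/(4d|x|) - d|x|k^2/2$, after absorbing the cubic error since $|\lambda|$ is small. Integrating $|\lambda|^{n}$, $n = j+\ell+m$, against $\re^{-d|x|k^2/2}$ gives $|x|^{-(1+n)/2}\re^{-\alpha^2/(4d|x|)}$. Because $|x|\sim |c_g| t$, this is $t^{-(1+n)/2}\re^{-(x-c_gt)^2/(D_0t)}$. On the connecting arcs (the portion outside a small fixed neighborhood of the saddle), $\Re\lambda \le -\mu'$ combined with the choice of contour gives $\re^{-\mu t}\re^{-\mu|x|}$. Third, handle the regimes away from the characteristic, namely $x/t > c_g+\Delta\mu$ or $x/t<c_g-\Delta\mu$, and the case where $t$ or $|x|$ is bounded. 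Here we take $\lambda_*$ to be a fixed small positive or negative real number $\pm\delta'$ (rather than the saddle), so that $\Re(\lambda t+\nu x) \le -\mu(t+|x|)$ along the contour, which yields the exponentially small term $\re^{-\mu t}\re^{-\mu|x|}$. The Gaussian term carries the cutoffs $\chi_+(x-(c_g-\Delta\mu)t)\chi_-(x-(c_g+\Delta\mu)t+1)$ precisely because it is only used in the near-characteristic regime. In the overlap and boundary cases one checks that the two bounds are compatible, and for $t\le 1$ the factor $(1+t)$ replaces $t$ since everything is bounded. Finally, define $G^{j,\ell,m}$ as the supremum of the resulting bounds over the possible choices; this makes it a function of $(t,x)$ alone, independent of $g$ beyond the constant $C_0$.

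I expect the main obstacle to be the uniform control of the higher-order terms $\mathrm{O}(|\lambda|^3|x|)$ in the exponent along the saddle contour, together with the matching of the saddle contour to the fixed endpoints $\lambda_\delta,\overline{\lambda_\delta}$ while keeping $\Re(\lambda t+\nu x)$ uniformly negative on the connecting arcs. I would first shrink $\delta$ so that the cubic error is at most half the quadratic term, restrict the saddle regime to $|\alpha|/|x|\le\epsilon$ (which is what the choice $\Delta\mu$ small enforces), and use the sign of $\Re(\nu)$ together with~\eqref{e:spectral_id} on the arcs, as in the cited Appendix~D of~\cite{FHNpulled}.
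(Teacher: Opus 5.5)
Your proposal is correct and is essentially the paper's route: the paper proves this by citing the Zumbrun--Howard/Mascia--Zumbrun saddle-point bounds of \cite[Lemmas~D.1--D.4]{FHNpulled}, which is the $(\xi,\zeta,t)$-dependent contour argument you reconstruct. It then obtains the cone cutoffs by noting that $\re^{-(\xi-c_gt)^2/(Mt)}$ is exponentially small in $t$ wherever $|\xi-c_gt+1|\geq \Delta\mu t$; this corresponds to your regime split, and it is also why you may use a saddle cone narrower than the given $\Delta\mu$, which is fixed rather than chosen small. One sign slip to fix: inverting~\eqref{e: wt lambda expansion} gives $\nuwt(\lambda) = -c_g^{-1}\lambda + \Deff c_g^{-3}\lambda^2 + \mathrm{O}(\lambda^3)$, so for $x=\xi-\zeta\leq 0$ the exponent is $\lambda(t-|x|/|c_g|) + d|x|\lambda^2 + \mathrm{O}(|\lambda|^3|x|)$ with $d=-\Deff c_g^{-3}>0$, which is the sign your vertical-line bound $-\alpha^2/(4d|x|) - d|x|k^2/2$ actually uses.
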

\begin{proof}
    This follows directly by applying~\cite[Lemma~D.1 through~D.4]{FHNpulled}, thereby using that
    \begin{align*}
    \re^{-\frac{(\xi - c_g t)^2}{Mt}} \leq \re^{-\frac{(\Delta \mu)^2 t}{M} + 2\frac{\Delta \mu}{M}}
    \end{align*}
    for $\xi \in \R$ and $M,t > 0$ with $|\xi - c_g t + 1| \geq t \Delta \mu$, and the fact that we have $\nuwt'(0) = -c_g^{-1}$ by~\eqref{e:nuwt_expansion}.
\end{proof}
        
The estimate~\eqref{e: pointwise G estimate refined} encodes a Gaussian wave packet which propagates to the left with speed $|c_g|$ and decays diffusively. In a weighted norm whose weight decays exponentially on the left, this leftward transport induces exponential decay in time. 
        \begin{lemma}\label{l: exp weighted pointwise bound}
            Fix $\kappa, \Delta \mu > 0$ with $c_g + \Delta \mu < 0$. Let $G \colon [0,\infty) \times \R \to \R$ be any function satisfying the pointwise bound
            \begin{align}
                |G(t, \xi)| \lesssim \chi_- (\xi - (c_g + \Delta \mu) t + 1). \label{e: G inequality}
            \end{align}
            for $\xi \in \R$ and $t \geq 0$. Then, there we have the temporal decay estimate
            \begin{align*}
                \| \omega_{\kappa, 0} G(t, \cdot) \|_{L^\infty} \lesssim \re^{\kappa (c_g + \Delta \mu) t}
            \end{align*}
            for all $t \geq 0$.
        \end{lemma}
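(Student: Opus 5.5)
The plan is to split the real line at the moving point $\xi_t := (c_g + \Delta\mu)t - 1$ and estimate the product $\omega_{\kappa,0}(\xi)|G(t,\xi)|$ separately on each side. We only need two facts. First, $\chi_-$ vanishes on $[0,\infty)$, so the hypothesis~\eqref{e: G inequality} forces $G(t,\xi) = 0$ whenever $\xi - (c_g+\Delta\mu)t + 1 \geq 0$, that is, whenever $\xi \geq \xi_t$. Second, $0 \leq \chi_- \leq 1$, so $|G(t,\xi)| \lesssim 1$ for all $(t,\xi)$. Hence it suffices to bound $\sup_{\xi \leq \xi_t} \omega_{\kappa,0}(\xi)$.

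Since $c_g + \Delta\mu < 0$, we have $\xi_t \leq -1$ for all $t \geq 0$. On the region $\xi \leq -1$ the weight is explicit, $\omega_{\kappa,0}(\xi) = \re^{\kappa\xi}$, and it is increasing in $\xi$ because $\kappa>0$. Therefore
\begin{align*}
\sup_{\xi \leq \xi_t} \omega_{\kappa,0}(\xi)|G(t,\xi)| \lesssim \re^{\kappa \xi_t} = \re^{-\kappa}\re^{\kappa(c_g+\Delta\mu)t} \lesssim \re^{\kappa(c_g+\Delta\mu)t}.
\end{align*}
Combined with $G(t,\xi)=0$ for $\xi \geq \xi_t$, this gives $\|\omega_{\kappa,0}G(t,\cdot)\|_{L^\infty} \lesssim \re^{\kappa(c_g+\Delta\mu)t}$ uniformly in $t\geq 0$, with implicit constant equal to that of~\eqref{e: G inequality}.

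The argument has no genuine obstacle. The only points needing care are that the support cutoff lands in the region where $\omega_{\kappa,0}$ equals the pure exponential $\re^{\kappa\xi}$, which is exactly what $c_g+\Delta\mu<0$ together with the shift by $1$ guarantees, and the monotonicity of the weight there. If the shift were absent, one would instead invoke that $\omega_{\kappa,0}$ is bounded on $[-1,0]$ and absorb this into the constant.
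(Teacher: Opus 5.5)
Your proposal is correct and follows essentially the same route as the paper. The paper's proof likewise observes that on the support of $\chi_-(\xi-(c_g+\Delta\mu)t+1)$ one has $\xi \leq (c_g+\Delta\mu)t-1 \leq -1$, where $\omega_{\kappa,0}(\xi)=\re^{\kappa\xi}$, and concludes immediately.
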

        \begin{proof}
           On the support of the right hand side of~\eqref{e: G inequality}, we have $\omega_{\kappa, 0} (\xi) = \re^{\kappa \xi}$, and $\xi \leq (c_g + \Delta \mu) t - 1$, from which the result follows. 
        \end{proof}

        \medskip

        \noindent\textbf{Semigroup decomposition.} By Proposition~\ref{p: contour shifting}, we can write the linearized evolution as 
        \begin{align}
            \re^{\Lps t} \g = \frac{1}{2\pi \ri} \int_{\Gamma_\delta} \re^{\lambda t} (\lambda - \Lps)^{-1} \g \, \de \lambda 
        \end{align}
        for $\g \in C_c^\infty(\R)$, where the contour $\Gamma_\delta$ is depicted in Figure~\ref{fig: contour shifting}, and the integral is interpreted in the principal value sense. To aid in separating small- and large-time behavior, we let $\tilde{\chi} \colon [0, \infty) \to [0,1]$ be a smooth temporal cutoff function satisfying $\tilde{\chi}(t) = 0$ for $0 \leq t \leq 1$ and $\tilde{\chi}(t) = 1$ for $t \geq 2$. We then define the exponentially damped part of the semigroup as 
        \begin{align}
        \begin{split}
            s_e(t) \g &= \frac{\tilde{\chi}(t)}{2\pi \ri} \int_{\Gamma_\delta \setminus B(0,\delta)} \re^{\lambda t} (\lambda-\Lps)^{-1} \g \, \de \lambda + (1 - \tilde{\chi}(t))\re^{\Lps t} \g +\frac{\tilde{\chi}(t)}{2\pi \ri} \int_{\Gamma_{c,\delta}} \re^{\lambda t} \bar{s}_e (\lambda) \g \, \de \lambda\end{split}\label{e: s e t}
        \end{align}
        for $\g \in C_c^\infty(\R)$, where we recall $\Gamma_{c,\delta} := \Gamma_\delta \cap B(0,\delta)$. The term $s_e(t)$ collects all exponentially decaying contributions: those arising from the stable portion $\Gamma_\delta \setminus B(0,\delta)$ of the contour, and the contribution of the remainder $\bar{s}_e(\lambda)$ in the resolvent decomposition~\eqref{e: final resolvent decomp} along the segment $\Gamma_{c,\delta}$.
        
        The remaining terms in our semigroup decomposition, which capture the slowest decaying large-time behavior, are then defined in accordance with the resolvent decomposition~\eqref{e: final resolvent decomp} as 
        \begin{align}
        \label{e:defspsc}
        \begin{split}
            s_{p,j}(t) \g =  \frac{\tilde{\chi}(t)}{2\pi \ri} \int_{\Gamma_{c,\delta}} \re^{\lambda t} \bar{s}_{p, j}(\lambda) \g \, \de \lambda, \qquad
            s_{c,j}(t) \g =  \frac{\tilde{\chi}(t)}{2\pi \ri} \int_{\Gamma_{c,\delta}} \re^{\lambda t} \bar{s}_{c, j}(\lambda) \g \, \de \lambda
            \end{split}
        \end{align}
        for $j = 1,2$ and $\g \in C_c^\infty(\R)$, where $\bar{s}_{p/c,j}(\lambda)$ are given by \eqref{e: sp1 resolvent def}-\eqref{e: sc2 def}. We may then decompose $\re^{\Lps t}$ as 
        \begin{align} \label{e:semgr_decomp}
            \re^{\Lps t} = \omega_0 \Ups' s_p(t) + s_c(t) + s_e(t) 
        \end{align}
        with
        \begin{align*}
        s_{p}(t) = s_{p,1}(t) + s_{p,2}(t), \qquad s_{c}(t) = s_{c,1}(t) + s_{c,2}(t), 
        \end{align*}
        where the propagators $s_p(t)$ and $s_c(t)$ vanish identically for $t \in [0,1]$. 

        \subsection{Proof of Theorem~\ref{t: linear estimates}}
        
        We now complete the proof of Theorem~\ref{t: linear estimates} by establishing sharp $L^2$- and $L^\infty$-bounds on each term in the decomposition~\eqref{e:semgr_decomp}. By density of $C_c^\infty(\R)$, these bounds imply that the operators $s_{p,j}(t)$, $s_{c,j}(t)$, and $s_e(t)$ can be extended to bounded linear operators with domain $L^2(\R)$ or $C_0(\R)$ for $j = 1,2$.
        
        We start with bounding the exponentially decaying term $s_e(t)$. 

        \begin{lemma}[Estimates on exponentially damped terms]\label{l: estimates on exponentially damped terms}
        For any sufficiently small $\delta > 0$, there exists $\mu > 0$ such that the term $s_e(t)$, given by~\eqref{e: s e t}, enjoys the bound            \begin{align} \label{e: set 1}
                \| s_e(t) \g \|_{L^\infty} &\lesssim \re^{-\mu t} \| \g \|_{L^\infty}, \qquad \| s_e(t) \u \|_{L^2} \lesssim \re^{-\mu t} \| \u \|_{L^2},
            \end{align}
        for $\g \in C_0(\R)$, $\u \in L^2(\R)$, and $t \geq 0$. Moreover, for any  sufficiently small $\kappa >0$, we have 
        \begin{align} \label{e: set 2}
        \| \omega_{\kappa, 0} s_e(t) \g \|_{L^\infty} &\lesssim \re^{-\mu t} \| \omega_{\kappa, 0} \g \|_{L^\infty} 
            \end{align}
        for $\g \in C_0(\R)$ and $t \geq 0$. 
        \end{lemma}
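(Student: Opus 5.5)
We estimate the three pieces of $s_e(t)$ in~\eqref{e: s e t} separately. The second piece, $(1-\tilde\chi(t))\re^{\Lps t}\g$, is supported in $t\in[0,2]$. By Proposition~\ref{p: C0 semigroup} and standard semigroup theory (extended to $C_0(\R)$ as in~\cite{FHNpulled}), it is bounded on $L^2$ and $L^\infty$ uniformly for $t \in [0,2]$, so it is trivially bounded by $\re^{-\mu t}$ times the norm of $\g$. For the weighted estimate~\eqref{e: set 2} we use that $\omega_{\kappa,0}\re^{\Lps t}\omega_{\kappa,0}^{-1}$ is the semigroup generated by the conjugated operator $\omega_{\kappa,0}\Lps(\omega_{\kappa,0}^{-1}\,\cdot)$. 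For $\kappa$ small this operator is a lower-order, bounded-coefficient perturbation of $\Lps$, since $\omega_{\kappa,0}'/\omega_{\kappa,0}$ and its derivative are bounded. It therefore again generates a $C_0$-semigroup on $L^\infty$ (or $C_0$), which is bounded on $[0,2]$.

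The first piece, the high-frequency part over $\Gamma_\delta\setminus B(0,\delta)$, is controlled in $L^p$ with $p=2,\infty$ directly by Proposition~\ref{p: contour shifting}, giving the $\re^{-\mu t}$ bound for $t\geq1$ where $\tilde\chi\neq0$.

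The weighted version is the main obstacle, since Proposition~\ref{p: contour shifting} is stated only in unweighted $L^p$. I would first try to rerun that proposition for the conjugated operator $\mcl_\kappa:=\omega_{\kappa,0}\Lps(\omega_{\kappa,0}^{-1}\cdot)$. Its resolvent satisfies $(\lambda-\mcl_\kappa)^{-1}=\omega_{\kappa,0}(\lambda-\Lps)^{-1}\omega_{\kappa,0}^{-1}$. Its essential spectrum is that of $\Lps$ shifted by $\mathrm O(\kappa)$, and in fact moved left near the origin by Hypothesis~\ref{hyp: negative group velocity}. The high-frequency rescaling argument of~\cite[Corollary~4.4]{FHNpulled} depends only on the structure of the equation, and so applies equally to this lower-order perturbation. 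Hence, for $\kappa$ small relative to the fixed $\delta$, the same contour $\Gamma_\delta\setminus B(0,\delta)$ lies in the resolvent set of $\mcl_\kappa$, with resolvent bounds of the same type, and we obtain $\|\omega_{\kappa,0}\int_{\Gamma_\delta\setminus B(0,\delta)}\cdots\|_{L^\infty}\lesssim\re^{-\mu t}\|\omega_{\kappa,0}\g\|_{L^\infty}$ after possibly shrinking $\mu$.

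For the third piece, $\int_{\Gamma_{c,\delta}}\re^{\lambda t}\bar s_e(\lambda)\g\,\de\lambda$, we use that $\bar s_e(\lambda)$ is analytic in $B(0,\delta)$ as a map into $\mathcal B(L^p)$, with uniform bounds on $\overline{B(0,\delta')}$. Analyticity allows us to deform $\Gamma_{c,\delta}$, keeping its endpoints $\lambda_\delta,\overline{\lambda_\delta}$ fixed, to a contour inside $B(0,\delta)\cap\{\Re\lambda\leq-\mu\}$, for instance the chord joining the endpoints. This immediately yields $\re^{-\mu t}\|\g\|_{L^p}$. We verify the uniform $L^p$ bound term by term:
\begin{itemize}
\item $\bar s_e^{\mathrm{wt}}$ is handled by Lemma~\ref{lem: summary wave train resolvent}.
\item $[\bar s_p^{\mathrm{wt}}\g_-]\chi_-(\uwt'-\omega_0\Ups')$ is fine because the exponential localization of $\uwt'-\omega_0\Ups'$ on the left, from Theorem~\ref{t: existence}, absorbs the growth of the pointwise kernel, as in Lemma~\ref{l: left resolvent exponentially localized estimates}.
\item The terms containing $(\Ptr\tg(\lambda)-\Ptr\tg(0))/\lambda$ and $\Ptr\tg(\lambda)(\re^{\nuwt(\lambda)\cdot}-1)/\lambda$ have removable singularities. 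They are bounded using Lemma~\ref{l: control of center data}, the localization of $\psiad$, and again localized prefactors.
\item $\tilde\w(\lambda)\tg(\lambda)$ is bounded via Proposition~\ref{p: ff core inversion}.
\end{itemize}
One subtlety is that these bounds are naturally in terms of $\|\omega_{\kappa,0}\g\|_{L^p}$ rather than $\|\g\|_{L^p}$. For the unweighted estimate~\eqref{e: set 1} we would instead apply Lemma~\ref{l: control of center data} with $\omega_{\kappa,0}\le C$ on the relevant region, or simply note that $\|\omega_{\kappa,0}\g\|\lesssim\|\g\|$ since $\omega_{\kappa,0}\le1$ on the left.

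For~\eqref{e: set 2} on this third piece, we observe that every term of $\bar s_e$ either carries an exponentially localized factor ($\uwt'-\omega_0\Ups'$, $1-\chi_-$) or lies in $Y_\eta^p$ (the term $\tilde\w\tg$). Each such term is therefore bounded by $\|\omega_{\kappa,0}\g\|$ even after multiplication by $\omega_{\kappa,0}$. The exception is $\chi_-\bar s_e^{\mathrm{wt}}$, for which we would use the weighted analogue of Lemma~\ref{l: left resolvent exponentially localized estimates}, based on the wave-train resolvent conjugated by $\re^{\kappa\xi}$.
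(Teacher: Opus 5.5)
Your proposal is correct and follows essentially the same route as the paper. You split $s_e(t)$ into the high-frequency contour piece, the short-time piece $(1-\tilde{\chi}(t))\re^{\Lps t}$, and the analytic low-frequency piece $\bar{s}_e(\lambda)$, shift the last contour into the open left half-plane, and get the weighted bound~\eqref{e: set 2} by conjugating with $\omega_{\kappa,0}$, that is, by rerunning the high-frequency argument of~\cite{FHNpulled} and using a weighted version of Lemma~\ref{l: left resolvent exponentially localized estimates}. The only difference is how you justify the weighted high-frequency step: you rely on smallness of $\kappa$, whereas the paper observes that, because $\omega_{\kappa,0}$ is non-decreasing, conjugation adds only non-positive zeroth-order terms to the non-diffusive block, so one can take $\mu=\frac{\eps\gamma}{2}$ independent of $\kappa$. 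That observation also makes your ``possibly shrinking $\mu$'' unnecessary and keeps $\mu$ consistent with the lemma's order of quantifiers.
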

        \begin{proof}
            We first prove~\eqref{e: set 1}. The corresponding estimate on the first term in~\eqref{e: s e t} was established in Proposition~\ref{p: contour shifting}. The estimate on the second term in~\eqref{e: s e t} follows immediately from compact support in time of $\tilde{\chi}(t)$ and standard semigroup theory, cf.~\cite[Lemma~I.5.5]{EngelNagel}. For the estimate on the last term in~\eqref{e: s e t}, we note that $\bar{s}_e \colon B(0,\delta) \to \mathcal{B}(L^p(\R))$ is analytic by Lemma~\ref{lem: summary wave train resolvent}, This can be used to shift the integration contour $\Gamma_{c,\delta}$ into the open left-half plane, yielding exponential decay in time. This concludes the proof of~\eqref{e: set 1}. 
            
            To prove~\eqref{e: set 2}, we first observe that, provided $\kappa > 0$ is sufficiently small, the proof of Lemma~\ref{l: left resolvent exponentially localized estimates} can be used to extract analyticity in $\lambda$ of $\smash{\omega_{\kappa, 0} \bar{s}_e(\lambda) (\, \cdot\ \omega_{\kappa, 0}^{-1} )}$, in a neighborhood of $0$ whose size is independent of $\kappa$, which implies the desired estimate for the last term~\eqref{e: s e t}. 
            
            To prove the last estimate for the first two terms in~\eqref{e: s e t}, one conjugates with the weight $\omega_{\kappa, 0}$, and then repeats the proof of Proposition~\ref{p: contour shifting} from~\cite[Appendix~A]{FHNpulled}. The key is that the weight $\omega_{\kappa, 0}$ is non-decreasing. As a result, the additional zeroth order terms introduced to the bottom-right block of this operator by the conjugation are non-positive, and so can only contribute to additional damping. See~\cite[Appendix~A]{FHNpulled} for further details, with $\omega_{\kappa, 0}$ here playing exactly the same role as $\omega$ in that proof. The choice of $\mu$ is then limited only by the size of the spectral gap as $|\Im \lambda| \to \infty$. In particular, the analysis in~\cite[Appendix~A]{FHNpulled} shows that we can choose $\mu = \frac{\eps \gamma}{2}$, independent of $\kappa$. 
        \end{proof}

        We proceed with estimating the residual component $s_c(t)$ in~\eqref{e:semgr_decomp}, which exhibits algebraic decay in time. 

        \begin{lemma}[Estimates on residual excited terms $s_{c,1}(t)$]\label{l: residual excited terms} Fix $\Delta \mu > 0$ with $c_g + \Delta \mu < 0$. For any sufficiently small $\delta > 0$,  the term $s_{c,1}(t)$, given by~\eqref{e:defspsc}, satisfies, for any sufficiently small $\kappa > 0$, the following estimates 
        \begin{align*}
			\| s_{c,1}(t) \g \|_{L^\infty} &\lesssim (1+t)^{-1/2} \| \omega_{\kappa, 0} \g \|_{L^\infty}, \\
			\| s_{c,1}(t) \g \|_{L^2} &\lesssim (1+t)^{-1/4} \| \omega_{\kappa, 0} \g\|_{L^\infty}, \\
			\| \omega_{\kappa, 0} s_{c,1} (t) \g \|_{L^\infty} &\lesssim \re^{\kappa (c_g + \Delta \mu) t} \| \omega_{\kappa, 0} \g \|_{L^\infty} 
		\end{align*}
        for $\g \in C_0(\R)$ and $t \geq 0$.
        \end{lemma}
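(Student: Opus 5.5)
The plan is to write $s_{c,1}(t)\g$ as a contour integral over $\Gamma_{c,\delta}$ of $\re^{\lambda t}\bar{s}_{c,1}(\lambda)\g$, with $\bar{s}_{c,1}(\lambda)$ as in~\eqref{e: sc1bar def}. This expression has the product form
\[
[\bar{s}_{c,1}(\lambda)\g](\xi) = \chi_-(\xi)\, h(\xi;\lambda)\, \re^{\nuwt(\lambda)\xi},
\]
where
\[
h(\xi;\lambda) = \frac{\Ptr \tg(\lambda) - \Ptr \tg(0)}{\lambda}\, \omega_0\Ups'(\xi) + \frac{\Ptr\tg(\lambda)}{\lambda}\,[\q(\lambda)-\q(0)](\xi) + \big[\tilde{\alpha}_-(\lambda)\tg(\lambda)\big]\,\q(\lambda)(\xi).
\]
The first step is to show that $h$ is analytic in $\lambda\in B(0,\delta)$ and bounded uniformly in $\xi$ by $C\|\omega_{\kappa,0}\g\|_{L^\infty}$.

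\textbf{Step 1: analyticity and bounds for $h$.} By Lemma~\ref{l: control of center data} with $p=\infty$, the map $\lambda\mapsto \tg(\lambda)\in X^\infty_\eta$ is analytic, and $\|\tg(\lambda)-\tg(0)\|_{X^\infty_\eta}\lesssim|\lambda|\,\|\omega_{\kappa,0}\g\|_{L^\infty}$. Since $\psiad$ is exponentially localized (Proposition~\ref{prop: fredholm properties}), $\Ptr$ is bounded on $X^\infty_\eta$. Hence the difference quotient $(\Ptr\tg(\lambda)-\Ptr\tg(0))/\lambda$ is analytic and bounded by $C\|\omega_{\kappa,0}\g\|_{L^\infty}$. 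The same reasoning handles the second term: $\q$ is analytic into $H^\ell_{\mathrm{per}}\subset L^\infty$ (Lemma~\ref{l: left homogeneous solution}), so $(\q(\lambda)-\q(0))/\lambda$ is analytic and uniformly bounded. The third term is controlled by boundedness of $\tilde{\alpha}_-(\lambda)$ from Proposition~\ref{p: ff core inversion}. Finally, $\omega_0\Ups'$ is bounded on the support of $\chi_-$.

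\textbf{Step 2: reduction to the pointwise estimate.} To place this in the framework of Proposition~\ref{p: pointwise estimates}, I would set
\[
g(\xi,\zeta;\lambda) = h(\xi;\lambda)\,\|\omega_{\kappa,0}\g\|^{-1}_{L^\infty}
\]
with $\zeta=0$, so that the kernel is evaluated at $\xi-\zeta=\xi$ and the factor $\chi_-(\xi)$ matches the $\chi_-(\xi-\zeta)$ there. No convolution is present, since the data enters only through scalar functionals. For $m=0$ and $j=\ell=0$, Proposition~\ref{p: pointwise estimates} then gives, after deforming $\Gamma_{c,\delta}$ to $\Gamma_{\xi,0,t,\delta}$ (this is allowed because the integrand is analytic, and the endpoints $\lambda_\delta,\overline{\lambda_\delta}$ are fixed),
\[
|[s_{c,1}(t)\g](\xi)| \lesssim G^{0,0,0}(t,\xi)\,\|\omega_{\kappa,0}\g\|_{L^\infty}.
\]
The factor $\tilde\chi(t)$ handles $t\le1$, where $s_{c,1}$ vanishes.

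\textbf{Step 3: reading off the three estimates.} From~\eqref{e: pointwise G estimate refined}, $G^{0,0,0}(t,\xi)$ is bounded by $(1+t)^{-1/2}$ times a Gaussian of width $\sqrt{t}$, plus $\re^{-\mu t}\re^{-\mu|\xi|}$.
\begin{itemize}
\item Taking the supremum in $\xi$ gives the $L^\infty$ bound with rate $(1+t)^{-1/2}$.
\item Taking the $L^2$ norm of the Gaussian gives $(1+t)^{-1/2}\,t^{1/4}=(1+t)^{-1/4}$.
\item For the weighted bound, the Gaussian part is supported where $\chi_-(\xi-(c_g+\Delta\mu)t+1)\neq0$, so Lemma~\ref{l: exp weighted pointwise bound} yields $\re^{\kappa(c_g+\Delta\mu)t}$. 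The remainder satisfies $\omega_{\kappa,0}\re^{-\mu t}\re^{-\mu|\xi|}\lesssim \re^{-\mu t}$, which is dominated by $\re^{\kappa(c_g+\Delta\mu)t}$ once $\kappa$ is small enough that $\kappa|c_g+\Delta\mu|<\mu$.
\end{itemize}

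\textbf{Main obstacle.} The main difficulty is Step 1, specifically the uniformity in the weight. I must check that the $\kappa$ produced by Lemma~\ref{l: control of center data} can be taken as small as desired, so that it is compatible with the smallness needed in Step 3. I must also check that $\delta$ can be chosen uniformly, so that the difference quotients remain analytic on all of $B(0,\delta)$. If the lemma fixes $\kappa$, I would instead use monotonicity, $\|\omega_{\kappa',0}\g\|\le\|\omega_{\kappa,0}\g\|$ for $\kappa'\le\kappa$, so that the decay rate in Step 3 can be tuned with a separately chosen smaller weight.
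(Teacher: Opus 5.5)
Your proposal matches the paper's proof. You bound the analytic prefactor of $\chi_-\re^{\nuwt(\lambda)\xi}$ in~\eqref{e: sc1bar def} uniformly via Lemma~\ref{l: control of center data}, apply Proposition~\ref{p: pointwise estimates} with $\zeta=0$, $m=0$, and read off the three bounds, using Lemma~\ref{l: exp weighted pointwise bound} and $\kappa|c_g+\Delta\mu|\le\mu$ for the weighted one. One correction to your fallback: since $\omega_{\kappa,0}\lesssim\omega_{\kappa',0}$ for $\kappa'\le\kappa$, the inequality is $\|\omega_{\kappa,0}\g\|_{L^\infty}\lesssim\|\omega_{\kappa',0}\g\|_{L^\infty}$, not the reverse you wrote; this direction lets you replace the fixed $\kappa_0$ from Lemma~\ref{l: control of center data} by any $\kappa\in(0,\kappa_0]$, as the paper does.
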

        \begin{proof}
        Recall the definition~\eqref{e: sc1bar def} of $\bar{s}_{c,1}(\lambda)\g$. Applying Lemma~\ref{l: control of center data} to bound $\tg(\lambda) = \g - (\lambda - \Lps) (\chi_- \u_-(\lambda))$, and using that $\q \colon B(0,\delta) \to L^\infty(\R)$ and $\tilde{\alpha}_- \colon B(0,\delta) \to \mathcal{B}(X_\eta^p,\C)$ are analytic by Lemma~\ref{l: left homogeneous solution} and Proposition~\ref{p: ff core inversion}, respectively, we readily obtain, provided $\delta > 0$ is sufficiently small, that there exists $\kappa_0 > 0$ such that            \begin{align}
                | [\bar{s}_{c,1}(\lambda) \g] (\xi)| \lesssim \chi_-(\xi) \big| \re^{\nuwt(\lambda) \xi} \big| \| \omega_{\kappa_0, 0} \g \|_{L^\infty}
            \end{align}
        for $\xi \in \R$, $\lambda \in B(0,\delta)$, and $\g \in C_0(\R)$. Hence, using that $[\bar{s}_{c,1}(\cdot)\g](\xi) \colon B(0,\delta) \to \C^2$ is analytic for each $\xi \in\R$ by Lemmas~\ref{lem: summary wave train resolvent} and~\ref{l: left homogeneous solution} and Proposition~\ref{p: ff core inversion}, and applying Proposition~\ref{p: pointwise estimates} with $\zeta = 0$ yields, provided $\delta > 0$ is sufficiently small, constants $D_0,\mu > 0$ such that the pointwise bound
            \begin{align} \label{e: sc1 pointwise}
            \begin{split}
                |[s_{c,1}(t) \g](\xi)| &\lesssim \chi_- (\xi) \left( \frac{\re^{-\frac{(\xi-c_gt)^2}{D_0 t}}}{\sqrt{1+t}} \chi_+(\xi - (c_g - \Delta \mu) t) \chi_- (\xi - (c_g + \Delta \mu) t + 1)  +  \re^{-\mu t} \re^{-\mu |\xi|} \right)\\ 
                &\qquad \cdot \| \omega_{\kappa_0, 0} \g \|_{L^\infty}
            \end{split}
            \end{align}
            holds for $\xi \in \R$, $\g \in C_0(\R)$, and $t \geq 0$, 
            which readily implies the first two desired estimates. The final estimate also follows from this pointwise bound by applying Lemma~\ref{l: exp weighted pointwise bound} and taking $\kappa \in (0,\kappa_0]$ so small that $-\mu \leq \kappa (c_g + \Delta \mu)$. 
        \end{proof}

        \begin{lemma}[Estimates on residual scattering terms $s_{c,2}(t)$]\label{l: residual scattering terms}
            Fix $\Delta \mu > 0$ with $c_g + \Delta \mu < 0$. For any sufficiently small $\delta > 0$, the term $s_{c,2}(t)$, given by~\eqref{e:defspsc}, satisfies 
            \begin{align*}
    			\| s_{c,2} (t) \g \|_{L^\infty} &\lesssim (1+t)^{-3/4} \| \g \|_{L^2}, \qquad
    			\| s_{c,2} (t) \g \|_{L^2}  \lesssim (1+t)^{-1/2} \| \g \|_{L^2}
            \end{align*}
            for $\g \in L^2(\R)$ and $t \geq 0$. Moreover, for any sufficiently small $\kappa > 0$, we have
            \begin{align*}
    			\| \omega_{\kappa, 0} s_{c,2} (t) \g \|_{L^\infty} &\lesssim \re^{\kappa (c_g + \Delta \mu) t} \| \omega_{\kappa, 0} \g \|_{L^\infty} 
    		\end{align*}
            for $\g \in C_0(\R)$ and $t \geq 0$. 
        \end{lemma}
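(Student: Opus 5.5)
We would estimate $s_{c,2}(t)\g$ directly from its definition. By \eqref{e:defspsc} and \eqref{e: sc2 def}, $s_{c,2}(t)\g = \frac{\tilde\chi(t)}{2\pi\ri}\int_{\Gamma_{c,\delta}} \re^{\lambda t}\chi_-\bar s_c^{\mathrm{wt}}(\lambda)\g_-\,\de\lambda$. We insert the explicit formula \eqref{e: scwt def} and exchange the $\lambda$- and $\zeta$-integrals. This gives
\begin{align*}
[s_{c,2}(t)\g](\xi)=\chi_-(\xi)\int_\R K(t,\xi,\zeta)\,\g_-(\zeta)\,\de\zeta,
\end{align*}
where $K$ is the integral over $\Gamma_{c,\delta}$ of $\re^{\lambda t+\nuwt(\lambda)(\xi-\zeta)}\chi_-(\xi-\zeta)g(\xi,\zeta;\lambda)$. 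Here
\begin{align*}
g(\xi,\zeta;\lambda)=Q_2(\xi,\lambda)P^{\mathrm{cu}}_{\mathrm{wt}}(\lambda)Q_1(\zeta,\lambda)-Q_2(\xi,0)P^{\mathrm{cu}}_{\mathrm{wt}}(0)Q_1(\zeta,0).
\end{align*}
By Lemma~\ref{lem: summary wave train resolvent}, $g$ is analytic in $\lambda$, and by periodicity and smoothness of $Q_{1,2}$ it is uniformly bounded in $(\xi,\zeta)$. Since $g$ vanishes at $\lambda=0$, we get $\|g(\cdot,\cdot;\lambda)\|_{L^\infty}\lesssim|\lambda|$. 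For $t\le 2$ the cutoff $\tilde\chi$ together with the boundedness of the contour makes everything trivial, so we focus on large $t$.

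\textbf{Step 1.} For each $(\xi,\zeta,t)$ we deform $\Gamma_{c,\delta}$ into the contour $\Gamma_{\xi,\zeta,t,\delta}$ of Proposition~\ref{p: pointwise estimates}, using pointwise analyticity and fixed endpoints $\lambda_\delta,\overline{\lambda_\delta}$. Applying that proposition with $j=\ell=0$, $m=1$ gives
\begin{align*}
|K|\le G^{0,0,1}(t,\xi-\zeta)\lesssim (1+t)^{-1}\re^{-\frac{(\xi-\zeta-c_gt)^2}{D_0t}}\chi_-(\xi-\zeta-(c_g+\Delta\mu)t+1)+\re^{-\mu t}\re^{-\mu|\xi-\zeta|}\chi_-(\xi-\zeta).
\end{align*}
So $K$ is a convolution-type kernel bounded by a Gaussian of mass $\sim(1+t)^{-1/2}$ plus an exponentially decaying piece.

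\textbf{Step 2.} The unweighted bounds follow from Young's inequality. For $L^2\to L^\infty$ we use $\|G(t,\cdot)\|_{L^2}\lesssim(1+t)^{-1}(1+t)^{1/4}=(1+t)^{-3/4}$ together with Cauchy--Schwarz. For $L^2\to L^2$ we use $\|G(t,\cdot)\|_{L^1}\lesssim(1+t)^{-1/2}$. The exponential part contributes $\re^{-\mu t}$ in either case.

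\textbf{Step 3.} For the weighted bound we take $\xi\le0$ and write $\omega_{\kappa,0}(\xi)=\re^{\kappa\xi}=\re^{\kappa(\xi-\zeta)}\re^{\kappa\zeta}$. On the support of $\g_-$ we have $\re^{\kappa\zeta}|\g_-(\zeta)|\lesssim\|\omega_{\kappa,0}\g\|_{L^\infty}$, so it remains to show $\int \re^{\kappa y}G^{0,0,1}(t,y)\,\de y\lesssim \re^{\kappa(c_g+\Delta\mu)t}$.

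For the Gaussian part, the support lies in $y\le(c_g+\Delta\mu)t-1$, as in Lemma~\ref{l: exp weighted pointwise bound}, so $\re^{\kappa y}\le \re^{\kappa(c_g+\Delta\mu)t}$ and the remaining Gaussian integral is bounded. For the exponential part, since $\chi_-(y)$ forces $y\le0$, we obtain $\re^{-\mu t}\int_{-\infty}^0\re^{(\kappa+\mu)y}\,\de y\lesssim \re^{-\mu t}$. Choosing $\kappa$ small so that $-\mu\le\kappa(c_g+\Delta\mu)$ closes the estimate.

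\textbf{Main obstacle.} The delicate point is justifying the $(\xi,\zeta,t)$-dependent contour deformation inside the $\zeta$-integral, and obtaining the extra $|\lambda|$ factor, i.e. $m=1$, uniformly in $\xi,\zeta$. Both rest on the uniform analyticity and periodicity of $Q_{1,2}$ and $P^{\mathrm{cu}}_{\mathrm{wt}}$ provided by Lemma~\ref{lem: summary wave train resolvent}. We would first verify the case $\g\in C_c^\infty$, where Fubini is clear, and then extend by density.
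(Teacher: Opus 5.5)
Your proposal is correct and follows essentially the paper's route. The paper only says the proof mirrors that of Lemma~\ref{l: residual excited terms}, with the extra $\mathrm{O}(\lambda)$ factor in $\bar{s}_{c,2}(\lambda)$ improving the temporal decay; your choice $m=1$ in Proposition~\ref{p: pointwise estimates}, the convolution-kernel bound with Young's inequality, and the weight splitting $\re^{\kappa\xi}=\re^{\kappa(\xi-\zeta)}\re^{\kappa\zeta}$ fill in exactly the details that ``similar'' leaves implicit.
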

        \begin{proof}
            The proof is similar to that of Lemma~\ref{l: residual excited terms}, but in the first two estimates we gain improved temporal decay thanks to the extra $\mathrm{O}(\lambda)$-factor in $\bar{s}_{c,2}(\lambda)$.
        \end{proof}

        Next, we establish bounds on the principal component $s_p(t)$ in~\eqref{e:semgr_decomp}.

        \begin{lemma}[Estimates on leading-order scattering terms $s_{p,2}(t)$]\label{l: leading order scattering}
           Fix $\Delta \mu > 0$ with $c_g + \Delta \mu < 0$. Let $j,k\in \N_0$. For any sufficiently small $\delta > 0$, the term $s_{p,2}(t)$, given by~\eqref{e:defspsc}, satisfies 
            \begin{align*}
               \| s_{p,2} (t) \g \|_{L^2} &\lesssim \| \g \|_{L^2}, \qquad
    			\| s_{p,2} (t) \g \|_{L^\infty} \lesssim (1+t)^{-1/4} \| \g \|_{L^2}
            \end{align*}
            for $\g \in L^2(\R)$ and $t \geq 0$. Moreover, for any sufficiently small $\kappa > 0$, we have
            \begin{align*}
                \| \omega_{\kappa, 0} \partial_\xi^j \partial_t^k s_{p,2}(t) \g \|_{L^\infty} &\lesssim \re^{\kappa (c_g + \Delta \mu) t} \| \omega_{\kappa, 0} \g \|_{L^\infty}
            \end{align*}
            for $\g \in C_0(\R)$ and $t \geq 0$. Finally, if $j+k \geq 1$, then it holds
            \begin{align*}
    			\| \partial_\xi^j \partial_t^k s_{p,2} (t) \g \|_{L^\infty} &\lesssim (1+t)^{-3/4} \| \g \|_{L^2}, \qquad
    			\| \partial_\xi^j \partial_t^k s_{p,2} (t) \g \|_{L^2} \lesssim (1+t)^{-1/2} \| \g \|_{L^2}
      		\end{align*}
            for $\g \in L^2(\R)$ and $t \geq 0$. 
        \end{lemma}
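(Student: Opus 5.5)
We write $s_{p,2}(t)\g$ explicitly as a contour integral of the wave-train leading-order term. By \eqref{e: sp2 def} and \eqref{e: spwt def},
\begin{align*}
[\partial_\xi^j\partial_t^k s_{p,2}(t)\g](\xi) = \frac{\tilde\chi(t)}{2\pi\ri}\,\partial_\xi^j\partial_t^k\!\left[\chi_-(\xi)\int_{\Gamma_{c,\delta}}\re^{\lambda t}\,\phi\!\left(\int_\R \re^{\nuwt(\lambda)(\xi-\zeta)}\chi_-(\xi-\zeta)P^{\mathrm{cu}}_{\mathrm{wt}}(0)Q_1(\zeta,0)\g_-(\zeta)\,\de\zeta\right)\de\lambda\right].
\end{align*}
Interchanging the $\lambda$- and $\zeta$-integrals, we obtain $\int_\R K(t,\xi,\zeta)\g_-(\zeta)\,\de\zeta$. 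The kernel is
\begin{align*}
K(t,\xi,\zeta) = \chi_-(\xi)\int_{\Gamma_{c,\delta}}\re^{\lambda t+\nuwt(\lambda)(\xi-\zeta)}\chi_-(\xi-\zeta)\,\de\lambda \cdot \phi P^{\mathrm{cu}}_{\mathrm{wt}}(0)Q_1(\zeta,0),
\end{align*}
where $\lambda$ enters only through the exponential. Each $t$-derivative produces a factor $\lambda$, each $\xi$-derivative acting on the exponential produces $\nuwt(\lambda)$, and derivatives falling on the cutoffs $\chi_-(\xi)$, $\chi_-(\xi-\zeta)$ or $\tilde\chi(t)$ produce localized terms. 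The latter are harmless: $\tilde\chi'$ is compactly supported in time, and $\chi_-'(\xi)$ localizes $\xi$ to $[-1,0]$, where the Gaussian bound evaluated at $\xi-\zeta\le 0$ still decays as below.

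We may therefore deform $\Gamma_{c,\delta}$ to $\Gamma_{\xi,\zeta,t,\delta}$ by analyticity and apply Proposition~\ref{p: pointwise estimates} with $g\equiv$ const ($m=0$) and with exponents $(j',\ell')$ counting the $\lambda$- and $\nuwt$-factors. The connecting arcs between the contours lie in $\Re\lambda<0$ and contribute $\re^{-\mu t}$ terms. This gives the pointwise bound
\begin{align*}
|\partial_\xi^j\partial_t^k K(t,\xi,\zeta)|\lesssim \frac{1}{(1+t)^{\frac12+\frac{j+k}{2}}}\re^{-\frac{(\xi-\zeta-c_gt)^2}{D_0t}}\chi_-(\xi-\zeta-(c_g+\Delta\mu)t+1)+\re^{-\mu t}\re^{-\mu|\xi-\zeta|}\chi_-(\xi-\zeta),
\end{align*}
up to the localized cutoff-derivative terms, which obey the same bound.

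The $L^2\to L^\infty$ and $L^2\to L^2$ bounds now follow from Young's inequality applied to this convolution-type kernel. The Gaussian part has $L^2_\xi$-norm of order $t^{1/4}$ and $L^1$-norm of order $t^{1/2}$, while the exponential part is $\re^{-\mu t}$ times an $L^1$ kernel. With prefactor $(1+t)^{-1/2-(j+k)/2}$ we obtain:
\begin{itemize}
\item the $L^\infty$ bound $(1+t)^{-1/4-(j+k)/2}$, via Cauchy--Schwarz in $\zeta$;
\item the $L^2$ bound $(1+t)^{-(j+k)/2}$, via Young's inequality with the $L^1$-norm.
\end{itemize}
For $j+k\ge1$ these decay rates are at least the claimed $(1+t)^{-3/4}$ and $(1+t)^{-1/2}$, respectively. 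For small $t\le2$ all bounds hold trivially since $s_{p,2}(t)$ vanishes for $t\le1$ and is bounded on $[1,2]$.

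For the weighted estimate, we use $\|\g_-\|\le\|\omega_{\kappa,0}\g\|_{L^\infty}\re^{-\kappa\zeta}$ on $\zeta\le0$ and exploit the cutoff $\chi_-(\xi-\zeta-(c_g+\Delta\mu)t+1)$, which forces $\xi\le\zeta+(c_g+\Delta\mu)t-1$. Then
\begin{align*}
\re^{\kappa\xi}\re^{-\kappa\zeta}\le \re^{\kappa(c_g+\Delta\mu)t},
\end{align*}
and we still need integrability in $\zeta$. Here we take weight $\re^{-\kappa\zeta}$ only partially: we split $\re^{\kappa\xi}=\re^{\kappa\xi/2}\re^{\kappa\xi/2}$, with $\xi\le 0$. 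Concretely, I would bound $\re^{\kappa\xi}|\g_-(\zeta)|$ by $\re^{\kappa(\xi-\zeta)}\|\omega_{\kappa,0}\g\|_{L^\infty}$ and then integrate the Gaussian in $\zeta$. This gives $\int \re^{\kappa y}G(t,y)\,\de y$ with $y=\xi-\zeta$. Since $G$ is supported where $y\le(c_g+\Delta\mu)t$, the factor $\re^{\kappa y}$ yields $\re^{\kappa(c_g+\Delta\mu)t}$, and the remaining Gaussian has $L^1$-norm $O(\sqrt t)$. This polynomial factor is absorbed by slightly enlarging $\Delta\mu$, i.e.\ by applying the estimate with $\Delta\mu/2$. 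The exponential tail term is handled analogously, choosing $\kappa<\mu$ and using Lemma~\ref{l: exp weighted pointwise bound}-type reasoning.

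The main obstacle I expect is exactly this last step: converting the pointwise kernel bound into exponential decay in the weighted norm without losing integrability in $\zeta$. It requires tracking the support cutoff in the variable $\xi-\zeta$ rather than $\xi$, and absorbing the $\sqrt t$ loss into the margin $\Delta\mu$.
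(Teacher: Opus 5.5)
Your proposal is correct and follows essentially the paper's route: pointwise Gaussian kernel bounds from Proposition~\ref{p: pointwise estimates}, with derivatives contributing extra factors of $\lambda$ or $\nuwt(\lambda)$, then Young's inequality, and the leftward-shifted support $\xi-\zeta\leq(c_g+\Delta\mu)t-1$ for the weighted bound, as in Lemma~\ref{l: residual excited terms} and the convolution argument of Proposition~\ref{prop: left light cone}. Two harmless imprecisions: the weighted step has no $\sqrt{t}$ loss to absorb, since the Gaussian carries the prefactor $(1+t)^{-1/2}$; and the $\chi_-'(\xi)$ cutoff terms are exponentially small because $\g_-$ is supported in $(-\infty,0]$, which forces $\xi-\zeta\in[-1,0]$ and so kills the Gaussian part (make this support argument explicit).
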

        \begin{proof}
            The proof of the first three estimates follows exactly as in Lemma~\ref{l: residual excited terms}. The last two estimates are similar, except the derivatives introduce an extra factor of $\lambda$ into the resolvent, which translates to improved temporal decay by Proposition~\ref{p: pointwise estimates}. 
        \end{proof}

        \begin{lemma}[Estimates on the leading-order excited terms $s_{p,1}(t)$]\label{l: sp1 estimates}
            Fix $\Delta \mu > 0$ with $c_g + \Delta \mu < 0$. Let $j, k \in \N_0$. For any sufficiently small $\delta > 0$, there exist constants $D_0,\mu > 0$ such that, for any sufficiently small $\kappa > 0$, the term $s_{p,1}(t)$, given by~\eqref{e:defspsc}, satisfies the pointwise bound
            \begin{align} \label{e:sp1_pointwise}
                \big|[\partial_\xi^j \partial_t^k s_{p,1}(t) \g](\xi)\big| \lesssim \left( \left| \partial_\xi^j \partial_t^k \erf \left( \frac{\xi - c_g t}{\sqrt{D_0 (1+t)}} \right) \right| + \re^{-\mu t} \re^{-\mu |\xi|}\right) \| \omega_{\kappa, 0} \g\|_{L^\infty}
            \end{align} 
            for $\xi \in \R$, $t \geq 0$, and $\g \in C_0(\R)$, 
            where $\erf(z) = \int_{-\infty}^z \re^{-w^2} \, \de w$ is the error function. Moreover, if $j + k \geq 1$, we have the estimates
			\begin{align}
            \begin{split}
                |\Ptr \g| &\lesssim \| \omega_{\kappa, 0} \g \|_{L^\infty},  \\
				\| s_{p,1} (t) \g\|_{L^\infty} &\lesssim \| \omega_{\kappa, 0} \g \|_{L^\infty},  \\
				\| \partial_\xi^j \partial_t^k s_{p,1} (t) \g \|_{L^\infty} &\lesssim (1+t)^{-1/2} \| \omega_{\kappa, 0} \g \|_{L^\infty},  \\
				\| \partial_\xi^j \partial_t^k s_{p,1} (t) \g \|_{L^2} &\lesssim (1+t)^{-1/4} \| \omega_{\kappa, 0} \g \|_{L^\infty},  \\
				\| \omega_{\kappa, 0} \partial_\xi^j \partial_t^k s_{p,1}(t) \g \|_{L^\infty} &\lesssim \re^{ \kappa ( c_g + \Delta \mu) t} \| \omega_{\kappa, 0} \g \|_{L^\infty}, \\
                \| \chi_- s_{p,1}(t) \g \|_{L^2} &\lesssim (1+t)^{1/2} \| \omega_{\kappa, 0} \g \|_{L^\infty}
                \end{split}\label{e: sp1 norm estimates}
			\end{align}
            for $\g \in C_0(\R)$ and $t \geq 0$.
        \end{lemma}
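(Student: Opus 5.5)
The plan is to start from the explicit form of the resolvent term~\eqref{e: sp1 resolvent def}. Write $s_{p,1}(t)\g = \Ptr\g \cdot I(\xi,t)$, where
\begin{align*}
I(\xi,t) = \frac{\tilde\chi(t)}{2\pi\ri}\int_{\Gamma_{c,\delta}} \frac{\re^{\lambda t}}{\lambda}\big[\chi_-(\xi)\re^{\nuwt(\lambda)\xi} + 1-\chi_-(\xi)\big]\,\de\lambda .
\end{align*}
The whole $\g$-dependence then sits in the scalar $\Ptr\g = \langle \g,\psiad\rangle_{L^2}$. Since $\psiad$ is exponentially localized on both sides by Proposition~\ref{prop: fredholm properties}, we get $|\Ptr\g| \le \|\omega_{\kappa,0}\g\|_{L^\infty}\|\omega_{\kappa,0}^{-1}\psiad\|_{L^1} \lesssim \|\omega_{\kappa,0}\g\|_{L^\infty}$ for $\kappa$ smaller than the left decay rate of $\psiad$. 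This gives the first bound in~\eqref{e: sp1 norm estimates}, and reduces everything else to estimates on the scalar kernel $I(\xi,t)$ and its derivatives.

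\textbf{Splitting the kernel.} The term $(1-\chi_-(\xi))\frac{1}{2\pi\ri}\int_{\Gamma_{c,\delta}}\re^{\lambda t}\lambda^{-1}\,\de\lambda$ I would close into the left half-plane, since the integrand is meromorphic with only the pole at $0$. This gives the residue $1$ plus an integral over a contour in $\{\Re\lambda<0\}$ joining $\lambda_\delta,\overline{\lambda_\delta}$ around the origin, which is $\mathrm{O}(\re^{-\mu t})$. For $\xi\ge0$ the kernel is therefore $1+\mathrm{O}(\re^{-\mu t})$, and any $\partial_\xi^j\partial_t^k$ with $j+k\ge1$ lands only on the exponentially small part or on $\chi_-$, producing compactly supported $\mathrm{O}(\re^{-\mu t})$ terms. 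For $\xi<0$ I would rewrite the first piece as
\begin{align*}
\frac{1}{\lambda}\re^{\nuwt(\lambda)\xi} = \frac{1}{\lambda}\re^{-\lambda\xi/c_g} + \frac{1}{\lambda}\Big(\re^{\nuwt(\lambda)\xi}-\re^{-\lambda\xi/c_g}\Big).
\end{align*}
The second summand is pointwise analytic in $\lambda$. Its factor $\nuwt(\lambda)+\lambda/c_g = \mathrm{O}(\lambda^2)$ makes it of the form $\lambda\,\xi\, g(\xi;\lambda)\re^{\nuwt(\lambda)\xi}$ with $g$ bounded. Proposition~\ref{p: pointwise estimates} with $\zeta=0$ (absorbing $\xi$ through $|\xi|\lesssim t$ on the Gaussian support) then gives a contribution bounded by $(1+t)^{-1/2}$ times the Gaussian $\re^{-(\xi-c_gt)^2/(D_0t)}$, plus $\re^{-\mu t}\re^{-\mu|\xi|}$. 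That is dominated by $|\partial_z\erf|$ evaluated at $(\xi-c_gt)/\sqrt{D_0(1+t)}$ up to adjusting $D_0$, and hence by the right-hand side of~\eqref{e:sp1_pointwise}.

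\textbf{The error-function part.} The main step, and the main obstacle, is the first summand. Its exact inverse Laplace transform of $\lambda^{-1}\re^{-\lambda\xi/c_g}$ is a Heaviside function in $t+\xi/c_g$, which is not smooth and not an error function. The cleaner approach keeps the quadratic term: write $\nuwt(\lambda)=-\lambda/c_g+d\lambda^2+\mathrm{O}(\lambda^3)$ with $d=\Deff/|c_g|^3>0$. Then $\frac1\lambda\re^{(-\lambda/c_g+d\lambda^2)\xi}$ with $\xi<0$ is, up to truncation of the contour, exactly the Laplace transform of the heat kernel integrated in time, i.e.\ an explicit $\erf$ profile. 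I would instead prove the bound by differentiating first. For $j+k\ge1$, each $\partial_t$ brings a factor $\lambda$ and each $\partial_\xi$ a factor $\nuwt(\lambda)$, both of which cancel the pole. Proposition~\ref{p: pointwise estimates} then directly gives the Gaussian $(1+t)^{-1/2-(j+k-1)/2}\re^{-(\xi-c_gt)^2/(D_0t)}$, which is bounded by $|\partial_\xi^j\partial_t^k\erf(\cdot)|$ after enlarging $D_0$ and noting that the derivatives of $\erf$ are Gaussians times polynomials with the matching scaling. The tail terms are $\re^{-\mu t}\re^{-\mu|\xi|}$. For $j=k=0$ I would integrate in time: $I(\xi,t)=I(\xi,2)+\int_2^t\partial_tI$, together with the observation that $I(\xi,t)\to0$ for $\xi\ll c_gt$ and $I\to1$ for $\xi\gg c_g t$. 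Thus $I$ is bounded and equals $1$ up to a Gaussian-controlled error to the right of the light cone, which gives~\eqref{e:sp1_pointwise}.

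\textbf{Norm bounds.} The norm estimates then follow by direct integration. The bound $\|s_{p,1}\g\|_{L^\infty}\lesssim\|\omega_{\kappa,0}\g\|_{L^\infty}$ holds because $\erf$ is bounded. Derivatives of $\erf$ give the $L^\infty$ rate $(1+t)^{-1/2}$ and the $L^2$ rate $(1+t)^{-1/4}$. For $\chi_-s_{p,1}$ in $L^2$, the function $\erf$ is $\mathrm{O}(1)$ on a region of length $\sim|c_g|t$ and Gaussian-decaying beyond it, giving $(1+t)^{1/2}$. Finally, for the weighted bound with $j+k\ge1$, the Gaussian term is supported (up to $\re^{-\mu t}$ errors) in $\xi\le(c_g+\Delta\mu)t-1$, as encoded by the cutoff in Proposition~\ref{p: pointwise estimates}. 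Lemma~\ref{l: exp weighted pointwise bound} then gives $\re^{\kappa(c_g+\Delta\mu)t}$ after choosing $\kappa$ small so that $-\mu\le\kappa(c_g+\Delta\mu)$.
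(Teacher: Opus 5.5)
For $j=k=0$ your final argument is a valid route that differs from the paper's. However, your claim about $\partial_t$-derivatives is false, and it cannot be repaired because that part of the statement itself fails.

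\textbf{Comparison for $j=k=0$.} Your bound on $\Ptr\g$, the pole cancellation for $j+k\ge1$, the residue treatment of the $(1-\chi_-)$-part, and the passage from pointwise to norm bounds all match the paper. For $j=k=0$ you integrate the pole-free quantity $\partial_s I$ in time. The paper instead integrates in space: it writes $\lambda^{-1}\re^{\nuwt(\lambda)\xi}=\frac{\nuwt(\lambda)}{\lambda}\int_{-\infty}^{\xi}\re^{\nuwt(\lambda)\zeta}\,\de\zeta$, which is valid on $\Gamma_{c,\delta}$ since $\Re\nuwt>0$ there. It then swaps the integrals, applies Proposition~\ref{p: pointwise estimates} with the analytic factor $\nuwt(\lambda)/\lambda$, and integrates the Gaussian in $\zeta$, which yields the error function at once.

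Your route also works, but the real content of \eqref{e:sp1_pointwise} for $j=k=0$ is the left tail, where $\erf$ is small, and you only assert it. You need two ingredients.
\begin{itemize}
\item $|I(\xi,2)|\lesssim\re^{-c|\xi|}$ for $\xi\le0$, which follows from $\min_{\Gamma_{c,\delta}}\Re\nuwt>0$.
\item For $\xi\le c_gt$ and $s\le t$, the monotonicity $|\xi-c_gs|\ge|\xi-c_gt|$.
\end{itemize}
These give $\int_2^t(1+s)^{-1/2}\re^{-(\xi-c_gs)^2/(D_0s)}\,\de s\lesssim\min\{1,\sqrt t/|\xi-c_gt|\}\,\re^{-(\xi-c_gt)^2/(D_0t)}$, which is an error-function tail. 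The $\re^{-\mu s}\re^{-\mu|\xi|}$ contributions are harmless, since $|\xi|\ge|c_g|t$ there. The spatial trick gets the $\erf$ profile for free and has no initial-time term; yours costs this extra bookkeeping but is equally elementary.

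\textbf{The transport splitting is wrong.} Splitting off $\lambda^{-1}\re^{-\lambda\xi/c_g}$ does not work. The contour $\Gamma_{c,\delta}$ necessarily contains points with $\Re\lambda<0$, namely its endpoints. There $|\re^{-\lambda\xi/c_g}|=\re^{|\Re\lambda|\,|\xi|/|c_g|}$, which grows as $\xi\to-\infty$, so the truncated transport integral grows exponentially to the left of the light cone. Hence neither piece is controlled, your $g$ is not uniformly bounded, and Proposition~\ref{p: pointwise estimates} does not apply. (Also, the quadratic coefficient of $\nuwt$ is $\Deff/c_g^3<0$, not positive.) Your final argument uses the full kernel, so simply drop this detour.

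\textbf{The $\partial_t$ claim is false.} You assert that derivatives falling on the $(1-\chi_-)$-part produce compactly supported $\mathrm{O}(\re^{-\mu t})$ terms; this fails for $\partial_t$.
\begin{itemize}
\item For $\xi\ge0$ one has $[s_{p,1}(t)\g](\xi)=a(t)\Ptr\g$, where $a(t)=\tilde\chi(t)\frac{1}{2\pi\ri}\int_{\Gamma_{c,\delta}}\re^{\lambda t}\lambda^{-1}\,\de\lambda$.
\item For $t\ge2$, $a'(t)=\frac{1}{2\pi\ri}\int_{\Gamma_{c,\delta}}\re^{\lambda t}\,\de\lambda=\frac{\re^{\Re\lambda_\delta\,t}\sin(\Im\lambda_\delta\,t)}{\pi t}$, which is not identically zero. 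On $[1,2]$ there is also a $\tilde\chi'$-term.
\item So for $k\ge1$ and generic $t$, $\partial_t^k s_{p,1}(t)\g$ is a nonzero constant on $[0,\infty)$. It is $\mathrm{O}(\re^{-\mu t})$ for $t\ge2$, but it is not $\lesssim\re^{-\mu|\xi|}$ and it is not in $L^2$.
\end{itemize}
Consequently, for $j=0$, $k\ge1$, the spatial localization in \eqref{e:sp1_pointwise} and the $L^2$-bound in \eqref{e: sp1 norm estimates} fail for $s_{p,1}$ as defined. This is a defect of the statement, which the paper's ``analogous'' also passes over, and no better argument can fix it. What does hold is \eqref{e:sp1_pointwise} with an extra term $(1-\chi_-(\xi))|a^{(k)}(t)|$. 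The $L^\infty$ and weighted $L^\infty$ bounds survive, as do all cases with $j\ge1$.

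Two smaller points. For $\partial_\xi$, the $\chi_-'$-terms are not individually small; only the combination $\chi_-'(\xi)\lambda^{-1}(\re^{\nuwt(\lambda)\xi}-1)$ is pole-free, as in the paper's formula. For $j+k\ge2$, the comparison with $|\partial_\xi^j\partial_t^k\erf|$ must be read as a Gaussian envelope, because these derivatives have zeros that enlarging $D_0$ cannot remove.
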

        \begin{proof}
            Since the adjoint eigenfunction $\psiad$ is exponentially localized  by Proposition~\ref{prop: fredholm properties}, there exists $\kappa_0 > 0$ such that 
            \begin{align} \label{e: ptr estimate}
            |\Ptr \g| = |\langle \g,\psiad\rangle_{L^2}| \lesssim \|\omega_{\kappa_0,0} \g\|_{L^\infty}
            \end{align} 
            for $\g \in C_0(\R)$, which establishes the first estimate in~\eqref{e: sp1 norm estimates}.
            
            We proceed with proving~\eqref{e:sp1_pointwise} for the case $j = k = 0$. Using the formula~\eqref{e: sp1 resolvent def} for $\bar{s}_{p,1}(\lambda)$, we have 
            \begin{align} \label{e: sp1 rewrite}
            \begin{split}
                s_{p,1} (t)\g &=  \frac{\tilde{\chi}(t)}{2\pi \ri} \int_{\Gamma_{c,\delta}} \re^{\lambda t} \bar{s}_{p,1}(\lambda) \g \, \de \lambda \\
                &= [\Ptr \g] \frac{\tilde{\chi}(t)}{2\pi \ri} \int_{\Gamma_{c,\delta}} \frac{\re^{\lambda t}}{\lambda} \left[ \chi_- \re^{\nuwt(\lambda) \cdot} + (1-\chi_-) \right] \, \de \lambda \\
                &= [\Ptr \g] \frac{\tilde{\chi}(t)}{2\pi \ri} \left(  \int_{\Gamma_{c,\delta}} \frac{\re^{\lambda t}}{\lambda} \chi_- \re^{\nuwt(\lambda) \cdot} \, \de \lambda + \int_{\Gamma_{c,\delta}} \frac{\re^{\lambda t}}{\lambda} (1-\chi_-) \, \de \lambda \right)
                \end{split}
            \end{align}
            for $\g \in C_c^\infty(\R)$ and $t \geq 0$. Since $\Gamma_{c,\delta}$ lies to the right of $\Sigma(\Lps) \supset \Sigma(\lwt)$, we have $\Re \nuwt(\lambda) > 0$ for $\lambda \in \Gamma_{c,\delta}
            $ by~\eqref{e:nuwt_expansion} and~\eqref{e:spectral_id}. Therefore, it holds
            \begin{align*}\frac{1}{\lambda} \re^{\nuwt(\lambda) \xi} = \frac{\nuwt(\lambda)}{\lambda} \int_{-\infty}^\xi \re^{\nuwt(\lambda) \zeta} \, \de \zeta
            \end{align*}
            for $\lambda \in \Gamma_{c,\delta}$ and $\xi \leq 0$. Thus, swapping the order of integration, we may rewrite the first integral on the right-hand side of~\eqref{e: sp1 rewrite} as 
            \begin{align*}
                \int_{\Gamma_{c,\delta}} \frac{\re^{\lambda t}}{\lambda} \chi_-(\xi) \re^{\nuwt(\lambda) \xi} \, \de \lambda &= \int_{-\infty}^\xi \chi_-(\xi) \int_{\Gamma_{c,\delta}} \frac{\nuwt(\lambda) \re^{\nuwt(\lambda)}}{\lambda} \chi_-(\zeta-1) \re^{\lambda t + \nuwt(\lambda)(\zeta-1)} \, \de \lambda \, \de \zeta 
            \end{align*}
            for $\xi \in \R$ and $t \geq 0$, 
            where we have used that $\chi_-(\zeta-1) = 1$ for $\zeta \leq \xi \leq 0$. Applying the pointwise estimates of Proposition~\ref{p: pointwise estimates}, we therefore obtain, provided $\delta > 0$ is sufficiently small, constants $D_0,\mu > 0$ such that
            \begin{align*}
                \left| \int_{\Gamma_{c,\delta}} \frac{\re^{\lambda t}}{\lambda} \chi_-(\xi) \re^{\nuwt(\lambda) \xi} \, \de \lambda \right | \lesssim \int_{-\infty}^\xi G(t, \zeta-1) \, \de \zeta
            \end{align*}
            for $\xi \in \R$ and $t \geq 1$, where we denote
            \begin{align*}
            G(t, \zeta) &= \frac{\chi_-(\zeta)}{\sqrt{1+t}} \re^{-\frac{(\zeta - c_g t)^2}{D_0(1+t)}} + \chi_-(\zeta)\re^{-\mu t}  \re^{-\mu |\zeta|}. 
            \end{align*}
            Integrating the Gaussian term in $G(t, \zeta)$ leads to the error function in the pointwise estimate~\eqref{e:sp1_pointwise}, while the exponentially small corrections in $G(t,\zeta)$ again propagate to exponentially small corrections here. We conclude that the first term on the right-hand side of~\eqref{e: sp1 rewrite} can be bounded by the right-hand side of~\eqref{e:sp1_pointwise}.
            
            On the other hand, adding a line segment to close the contour $\Gamma_{c,\delta}$ and using the residue theorem, we find $\mu > 0$ such that
            \begin{align} \label{e: residue}
                \left|\frac{1}{2 \pi \ri}\int_{\Gamma_{c,\delta}} \frac{\re^{\lambda t}}{\lambda} \, \de \lambda - 1\right| \lesssim \re^{-\mu t} 
            \end{align}
            for $t \geq 0$. Hence, using that $1-\chi_-$ is supported on $[-1,\infty)$ and the leftward propagating error function is uniformly bounded from below for $\xi \geq 0$ and $t \geq 0$, the pointwise bound on the second integral on the right-hand side of~\eqref{e: sp1 rewrite} may be absorbed into the error function in~\eqref{e:sp1_pointwise}. All in all, we have established~\eqref{e:sp1_pointwise} for $j = k = 0$.
            
            Now let us prove~\eqref{e:sp1_pointwise} for the case $j = 1$ and $k = 0$. We compute
            \begin{align*}
                \partial_\xi \bar{s}_{p,1}(\lambda) \g = \frac{\Ptr \tg(0)}{\lambda} \left[ \nuwt(\lambda) \chi_- \re^{\nuwt(\lambda)\xi} + \chi_-' \big(\re^{\nuwt(\lambda) \xi} - 1\big) \right].
            \end{align*}
            Note that the singularity in $\lambda$ has been removed in both terms, since $\nuwt(0) = 0$ by~\eqref{e:nuwt_expansion}. After inverse Laplace transform, the first term then corresponds to the fundamental solution of the advection-diffusion equation, and the second term is exponentially localized in space and time since the compact support of $\chi_-'$ allows shifting the contour into the left-half plane. The proofs of the pointwise estimate~\eqref{e:sp1_pointwise} for other $j,k \in \N_0$ with $j + k \geq 1$ are analogous. 
            
            Finally, the remaining estimates in~\eqref{e: sp1 norm estimates} immediately follow from the pointwise bound~\eqref{e:sp1_pointwise} as in the proof of Lemma~\ref{l: residual excited terms}. 
        \end{proof}

        Finally, we determine the leading-order asymptotics of the principal component $s_p(t)$ in~\eqref{e:semgr_decomp}.

        \begin{lemma}[Asymptotics for leading-order excited terms $s_{p,1}(t)$]\label{l: asymptotics for leading order terms}
        Fix $\Delta \mu > 0$ with $c_g + \Delta \mu < 0$.  For any sufficiently small $\delta > 0$, the term $s_{p,1}(t)$, given by~\eqref{e:defspsc}, obeys, for any sufficiently small $\kappa > 0$, the following approximation
            \begin{align*}
    			\| \omega_{\kappa,0} [s_{p,1} (t) - \Ptr] \g \|_{L^\infty} &\lesssim \re^{ \kappa (c_g + \Delta \mu) t} \| \omega_{\kappa, 0} \g \|_{L^\infty}
    		\end{align*}
            for $\g \in C_0(\R)$ and $t \geq 0$.
        \end{lemma}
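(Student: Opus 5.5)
We prove the estimate by starting from the representation~\eqref{e: sp1 rewrite} of $s_{p,1}(t)\g$ in the proof of Lemma~\ref{l: sp1 estimates}. It expresses $s_{p,1}(t)\g$ as $\Ptr\g$ times a scalar function of $(\xi,t)$, so we write
\begin{align*}
[s_{p,1}(t)\g](\xi) - \Ptr\g = \Ptr \g\,\big(I_1(\xi,t) + I_2(\xi,t) - 1\big),
\end{align*}
where
\begin{align*}
I_1(\xi,t) = \frac{\tilde\chi(t)}{2\pi\ri}\int_{\Gamma_{c,\delta}} \frac{\re^{\lambda t}}{\lambda}\chi_-(\xi)\re^{\nuwt(\lambda)\xi}\,\de\lambda, \qquad I_2(\xi,t) = \frac{\tilde\chi(t)}{2\pi\ri}(1-\chi_-(\xi))\int_{\Gamma_{c,\delta}} \frac{\re^{\lambda t}}{\lambda}\,\de\lambda.
\end{align*}
Since $|\Ptr\g| \lesssim \|\omega_{\kappa,0}\g\|_{L^\infty}$ by~\eqref{e: ptr estimate}, it suffices to show
\begin{align*}
\sup_{\xi\in\R}\, \omega_{\kappa,0}(\xi)\,|I_1(\xi,t)+I_2(\xi,t)-1| \lesssim \re^{\kappa(c_g+\Delta\mu)t}, \qquad t \geq 0.
\end{align*}
For $t\in[0,2]$ this is trivial, because $I_1$ and $I_2$ are bounded by the estimates of Lemma~\ref{l: sp1 estimates} and $\omega_{\kappa,0}\le 1$. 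So we assume $t \geq 2$, where $\tilde\chi(t) = 1$.

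We first use the residue estimate~\eqref{e: residue} to write $I_2 = (1-\chi_-)(1 + \mathrm{O}(\re^{-\mu t}))$. Then $I_1 + I_2 - 1 = I_1 - \chi_- + \mathrm{O}(\re^{-\mu t})$, and the error term is acceptable once $\kappa$ is taken so small that $-\mu \leq \kappa(c_g+\Delta\mu)$. It remains to bound $\chi_-(\xi)\big(J(\xi,t) - 1\big)$, where $J(\xi,t)$ denotes the contour integral $\frac{1}{2\pi\ri}\int_{\Gamma_{c,\delta}}\lambda^{-1}\re^{\lambda t + \nuwt(\lambda)\xi}\,\de\lambda$. The natural route is to compare with the $\xi$-derivative representation from the proof of Lemma~\ref{l: sp1 estimates}: for $\xi\le 0$,
\begin{align*}
J(\xi,t) - J(0,t) = -\int_\xi^0 \frac{1}{2\pi\ri}\int_{\Gamma_{c,\delta}} \frac{\nuwt(\lambda)}{\lambda}\re^{\lambda t+\nuwt(\lambda)\zeta}\,\de\lambda\,\de\zeta .
\end{align*}
Here $J(0,t) = 1 + \mathrm{O}(\re^{-\mu t})$ by~\eqref{e: residue}. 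The inner integrand has no pole, and Proposition~\ref{p: pointwise estimates} with $m=0$ bounds it by $G(t,\zeta)$. The bound~\eqref{e: pointwise G estimate refined} gives
\begin{align*}
G(t,\zeta)\lesssim \chi_-(\zeta-(c_g+\Delta\mu)t+1)(1+t)^{-1/2} + \re^{-\mu t}\re^{-\mu|\zeta|}.
\end{align*}
The key point is that the Gaussian part is supported to the left of $(c_g+\Delta\mu)t - 1$. So for $\xi \ge (c_g+\Delta\mu)t$ the integral over $[\xi,0]$ only sees the exponentially small part, and $|J(\xi,t)-1| \lesssim \re^{-\mu t}$ there. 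For $\xi \le (c_g+\Delta\mu)t$ we simply use that $|J(\xi,t)-1|$ is bounded, which follows from the erf bound~\eqref{e:sp1_pointwise} together with~\eqref{e: residue}. On that region $\omega_{\kappa,0}(\xi) = \re^{\kappa\xi}\le \re^{\kappa(c_g+\Delta\mu)t}$, which is exactly the argument of Lemma~\ref{l: exp weighted pointwise bound}.

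Combining the two regions gives the claim. The main obstacle is justifying the representation of $J(\xi,t) - J(0,t)$. One has to interchange the $\zeta$-integral with the contour integral; this is legitimate since $\Re\nuwt(\lambda) > 0$ on $\Gamma_{c,\delta}$ and $\zeta$ ranges over the compact interval $[\xi,0]$. One also has to check that the $\chi_-(\zeta - 1)$ cutoffs used in Proposition~\ref{p: pointwise estimates} are harmless, i.e.\ equal to $1$ for $\zeta \leq 0$. The remaining bookkeeping is routine: the exponentially small terms $\re^{-\mu t}\re^{-\mu|\zeta|}$ integrate to $\mathrm{O}(\re^{-\mu t})$, and $\kappa$ is chosen small relative to $\mu$.
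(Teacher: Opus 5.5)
Your proposal is correct and follows essentially the same route as the paper. Like the paper, you write $\chi_-(\re^{\nuwt(\lambda)\xi}-1) = -\chi_-\int_\xi^0 \nuwt(\lambda)\re^{\nuwt(\lambda)\zeta}\,\de\zeta$, swap the integrals (with the $\zeta-1$ shift), apply Proposition~\ref{p: pointwise estimates} with the cutoff-refined bound~\eqref{e: pointwise G estimate refined}, and handle the remaining pieces via~\eqref{e: residue} and the support of $\tilde\chi - 1$. The only cosmetic difference is that the paper uses monotonicity of $\omega_{\kappa,0}$ to bound everything by the weighted $L^1$-norm of $G(t,\cdot)$ via Lemma~\ref{l: exp weighted pointwise bound}, whereas you split $\xi$ at the characteristic $(c_g+\Delta\mu)t$, which amounts to the same estimate.
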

        \begin{proof}
            For the first estimate, we rewrite $[s_{p,1}(t) - \Ptr] \g$ as 
            \begin{align} \label{e:spt1 approx}
            \begin{split}
                [s_{p,1}(t) - \Ptr] \g &= \Bigg( \frac{\tilde{\chi}(t)}{2\pi \ri}\int_{\Gamma_{c,\delta}} \frac{\re^{\lambda t}}{\lambda} \chi_- \big(\re^{\nuwt(\lambda) \cdot} -1\big) \, \de \lambda +\frac{\tilde{\chi}(t) -1}{2\pi \ri} \int_{\Gamma_{c,\delta}} \frac{\re^{\lambda t}}{\lambda} \, \de \lambda\\ 
                &\qquad + \, \frac{1}{2\pi \ri} \int_{\Gamma_{c,\delta}} \frac{\re^{\lambda t}}{\lambda} \, \de \lambda - 1  \Bigg)[\Ptr \g]
            \end{split}
            \end{align}
            for $\g \in C_0(\R)$ and $t \geq 0$. We focus on the first term on the right-hand side of~\eqref{e:spt1 approx}. We rewrite the integrand, and then swap the order of integration to obtain
            \begin{align*}
                \int_{\Gamma_{c,\delta}} \frac{\re^{\lambda t}}{\lambda} \chi_-(\xi) \big(\re^{\nuwt(\lambda) \xi} -1\big) \, \de \lambda &= - \chi_- (\xi) \int_{\Gamma_{c,\delta}} \int_\xi^0 \frac{\nuwt(\lambda)}{\lambda} \re^{\lambda t + \nuwt(\lambda) \zeta} \, \de \zeta \de \lambda \\
                &= - \chi_- (\xi)\int_\xi^0 \int_{\Gamma_{c,\delta}} \frac{\nuwt(\lambda)\re^{\nuwt(\lambda)} }{\lambda} \chi_-(\zeta-1) \re^{\lambda t + \nuwt(\lambda) (\zeta-1)} \, \de \lambda \de \zeta
            \end{align*}
            for $\xi \in \R$, $\lambda \in \Gamma_{c,\delta}$, and $t \geq 0$, 
            where we have used that $\chi_-(\zeta -1) = 1$ for all $\zeta \leq 0$. Applying the pointwise estimate of Proposition~\ref{p: pointwise estimates}, we then obtain, provided $\delta > 0$ is sufficiently small, constants $D_0,\mu > 0$ such that
            \begin{align} \label{e: first spt1 approx bound}
                \left|\int_{\Gamma_{c,\delta}} \frac{\re^{\lambda t}}{\lambda} \chi_- \big(\re^{\nuwt(\lambda) \xi} -1\big) \, \de \lambda \right| \leq \chi_- (\xi)\int_\xi^0 G(t, \zeta-1) \, \de \zeta
            \end{align}
            for $\xi \in \R$ and $t \geq 1$, where we denote
            \begin{align*}
                G(t, z) = \chi_-(z) \chi_+ (z - (c_g - \Delta \mu) t) \chi_- (z- (c_g + \Delta \mu) t+1) \frac{\re^{-\frac{(z-c_gt)^2}{D_0t}}}{\sqrt{1+t}} + \chi_- (z) \re^{-\mu t} \re^{-\mu |z|}.
            \end{align*}
            Using Lemma~\ref{l: exp weighted pointwise bound} and taking $\kappa > 0$ so small that  $-\mu \leq \kappa (c_g + \Delta \mu)$, we find that, $\|\omega_{\kappa,0}G(\cdot,t)\|_{L^1} \lesssim \re^{\kappa (c_g + \Delta \mu) t}$ for $t \geq 0$. Combining the latter with~\eqref{e: ptr estimate},~\eqref{e: residue},~\eqref{e:spt1 approx},~\eqref{e: first spt1 approx bound}, and the fact that $\tilde{\chi} - 1$ is supported on $[0,2]$, we obtain the desired estimate.  
        \end{proof}

        Together, Lemmas~\ref{l: estimates on exponentially damped terms} through~\ref{l: asymptotics for leading order terms} establish Theorem~\ref{t: linear estimates}. In the upcoming Sections~\ref{sec: nonlinear iteration} and~\ref{sec:proof}, we will see that these estimates are sufficient for the proof of the estimates~\eqref{e:maindecaybounds}-\eqref{e:maindecaybounds3} in Theorem~\ref{t: main}. 

        \subsection{Light cone estimates for refined convergence}
        To prove the refined estimates~\eqref{e:refineddecaybounds} in Theorem~\ref{t: main}, which give a more precise description of the dynamics in the spacetime regions $\xi \geq (c_g + \Delta c) t$ and $\xi \leq (c_g - \Delta c) t$, we will need the following estimates which characterize the linearized dynamics in these regions. 

        \begin{prop}[Right light cone estimates] \label{prop: right light cone}
        Fix $\Delta \theta, \Delta c > 0$ such that $\Delta \theta < \Delta c < -c_g$. Set $\tilde{c} = c_g + \Delta c$. Let $j,k \in \N_0$. Then, for any sufficiently small $\delta > 0$, there exist constants $\mu_r,\kappa_r > 0$ such that for all $\kappa \in (0,\kappa_r]$ the following estimates hold.
            \begin{enumerate}[(i)]
                \item \emph{(Leading-order excited terms).} If $j+k \geq 1$, then the term $s_{p,1}(t)$, given by~\eqref{e:defspsc}, obeys
                \begin{align}
                \begin{split}
                    \| \chi_+(\cdot - \tilde{c} t) [s_{p,1}(t-s) -\Ptr ]\g \|_{L^\infty}  &\lesssim \re^{-\kappa (\Delta c - \Delta \theta)(t-s) } \re^{- \kappa \tilde{c} s} \|\omega_{\kappa, 0} \g\|_{L^\infty},\\
                    \| \chi_+(\cdot - \tilde{c} t) \partial_t^j \partial_\xi^k s_{p,1}(t-s)\g \|_{L^\infty} &\lesssim \re^{-\kappa (\Delta c - \Delta \theta)(t-s) } \re^{- \kappa \tilde{c} s} \|\omega_{\kappa, 0} \g\|_{L^\infty}
                  \end{split} \label{e: refined linear 1}
                \end{align}
                for $\g \in C_0(\R)$, and $t, s \geq 0$ with $t \geq s$.
                \item \emph{(Leading-order scattering terms).} The term $s_{p,2}(t)$, given by~\eqref{e:defspsc}, satisfies 
                \begin{align}
                \begin{split}
                    \| \chi_+(\cdot - \tilde{c} t) \partial_t^j \partial_\xi^k s_{p,2}(t-s)\g \|_{L^\infty}  &\lesssim \re^{-\kappa (\Delta c - \Delta \theta)(t-s) } \re^{- \kappa \tilde{c} s} \|\omega_{\kappa, 0} \g\|_{L^\infty}\end{split} \label{e: refined linear 11}
                \end{align}
                for $\g \in C_0(\R)$ and $t, s \geq 0$ with $t \geq s$.
                \item \emph{(Residual terms).} The term $s_{c,i}(t)$, given by~\eqref{e:defspsc}, fulfills
                \begin{align}
                    \| \chi_+(\cdot - \tilde{c} t) s_{c, i}(t-s)\g \|_{L^\infty}  &\lesssim \re^{-\kappa (\Delta c - \Delta \theta)(t-s) } \re^{- \kappa \tilde{c} s} \|\omega_{\kappa, 0} \g\|_{L^\infty}, \qquad i = 1,2 \label{e: refined linear 2}
                \end{align}
                for $\g \in C_0(\R)$ and $t, s \geq 0$ with $t \geq s$.
                \item \emph{(Exponentially damped terms).} Finally, the term $s_e(t)$, given by~\eqref{e: s e t}, satisfies
                \begin{align}
                    \| \chi_+(\cdot - \tilde{c}t) s_e(t-s) \g \|_{L^\infty} \lesssim \re^{- \mu_r (t-s)} \re^{- \kappa \tilde{c} s} \| \omega_{\kappa, 0} \g \|_{L^\infty} \label{e: refined linear 3}
                \end{align}
                for $\g \in C_0(\R)$ and $t, s \geq 0$ with $t \geq s$.
            \end{enumerate}
        \end{prop}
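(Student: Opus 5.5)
The plan is to derive every estimate in Proposition~\ref{prop: right light cone} from the pointwise bounds already established in the proofs of Lemmas~\ref{l: estimates on exponentially damped terms}--\ref{l: asymptotics for leading order terms}. The one new ingredient is elementary bookkeeping of the weight on the region $\xi \geq \tilde c t - 1$. Write $\tau = t-s \geq 0$. Each propagator $s_{p,j}(\tau)$, $s_{c,j}(\tau)$ (and $s_{p,1}(\tau) - \Ptr$, or derivatives of $s_{p,1}$) has a pointwise bound of the form
\begin{align*}
\big|[S(\tau)\g](\xi)\big| \lesssim \Big( \chi_-(\xi - (c_g+\Delta\mu)\tau + 1) H(\tau,\xi) + \re^{-\mu\tau}\re^{-\mu|\xi|}\Big) \|\omega_{\kappa,0}\g\|_{L^\infty},
\end{align*}
with $H$ bounded. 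For $s_{c,1}$ this is \eqref{e: sc1 pointwise}. For $s_{p,1}-\Ptr$ it is \eqref{e: first spt1 approx bound} together with \eqref{e: residue}, and for derivatives of $s_{p,1}$ it is the fundamental-solution representation in Lemma~\ref{l: sp1 estimates}. For $s_{p,2}$ and $s_{c,2}$ I would apply Proposition~\ref{p: pointwise estimates} to the kernels in \eqref{e: spwt def}, \eqref{e: scwt def} with general $\zeta$, obtaining a kernel bound in $\xi-\zeta$. I would choose $\Delta\mu := \Delta\theta$ in Proposition~\ref{p: pointwise estimates}. This restricts the Gaussian part of the kernel to $\xi - \zeta \leq (c_g+\Delta\theta)\tau - 1$.

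For the localized propagators ($s_{c,1}$, $s_{p,1}-\Ptr$, derivatives of $s_{p,1}$), the Gaussian part is supported in $\xi \leq (c_g+\Delta\theta)\tau$. On the support of $\chi_+(\cdot-\tilde c t)$ we have $\xi \geq \tilde c t - 1 = (c_g+\Delta c)\tau + \tilde c s - 1$. The two regions intersect only if $(\Delta c - \Delta\theta)\tau + \tilde c s \lesssim 1$. Since $\tilde c<0$, on the overlap I would bound the leading term by
\begin{align*}
1 \lesssim \re^{\kappa[(c_g+\Delta\theta)\tau - \xi]} \lesssim \re^{-\kappa(\Delta c-\Delta\theta)\tau}\re^{-\kappa\tilde c s},
\end{align*}
where $\re^{-\kappa\tilde c s} \geq 1$ grows. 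The tail $\re^{-\mu\tau}\re^{-\mu|\xi|}$ is at most $\re^{-\mu\tau}$, which is dominated once $\kappa_r$ is so small that $\kappa_r(\Delta c-\Delta\theta)\leq\mu$. For the scattering terms $s_{p,2}$, $s_{c,2}$, the data sits at $\zeta$ and $|\g(\zeta)|\leq \re^{-\kappa\zeta}\|\omega_{\kappa,0}\g\|_{L^\infty}$ for $\zeta\leq 0$. The kernel forces $\zeta \geq \xi - (c_g+\Delta\theta)\tau + 1 \geq (\Delta c-\Delta\theta)\tau + \tilde c s$. On the Gaussian part I would therefore gain $\re^{-\kappa\zeta} \leq \re^{-\kappa(\Delta c-\Delta\theta)\tau}\re^{-\kappa\tilde c s}$, using that $\zeta\leq 0$ by the $\chi_-$ in $\g_-$. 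I would then integrate the kernel in $\zeta$: the Gaussian has $L^1$ norm $\lesssim 1$ after dividing by $\sqrt{\tau}$-normalization, and derivatives only improve this. The exponential tail of the kernel is handled as before.

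For $s_e$ I would use \eqref{e: set 2} directly. On $\xi\geq \tilde c t-1$ we have $\omega_{\kappa,0}(\xi)\gtrsim \re^{\kappa\tilde c t}$, so
\begin{align*}
|s_e(\tau)\g| \lesssim \re^{-\kappa\tilde c t}\re^{-\mu\tau}\|\omega_{\kappa,0}\g\|_{L^\infty} = \re^{-(\mu+\kappa\tilde c)\tau}\re^{-\kappa\tilde c s}\|\omega_{\kappa,0}\g\|_{L^\infty}.
\end{align*}
This yields \eqref{e: refined linear 3} with $\mu_r = \mu/2$, provided $\kappa_r|\tilde c|\leq \mu/2$; this is possible because $\mu$ is independent of $\kappa$.

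The main obstacle I expect is making sure the pointwise kernel bounds for $s_{p,2}$, $s_{c,2}$ and for derivatives of $s_{p,1}$ genuinely carry the sharp support cutoff $\chi_-(\cdot-(c_g+\Delta\theta)\tau+1)$, rather than merely Gaussian decay. For $s_{p,1}-\Ptr$ the integrated form $\int_\xi^0 G$ needs the cutoff so that the integral vanishes once $\xi \geq (c_g+\Delta\theta)\tau$. I would obtain this from \eqref{e: pointwise G estimate refined} with $\Delta\mu = \Delta\theta$, absorbing the Gaussian outside that cone into the $\re^{-\mu\tau}$ tail as in the proof of Proposition~\ref{p: pointwise estimates}. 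The small-time piece $\tau\leq 2$ (the factor $\tilde\chi$) is harmless because all the factors there are bounded.
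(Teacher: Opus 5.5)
Your proposal is correct, and for (iv) it is exactly the paper's argument. For (i)--(iii), however, the paper never returns to the kernels.

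\textbf{The paper's route.} It writes
\[
\chi_+(\cdot-\tilde c t)\,S(t-s)\g = \frac{\chi_+(\cdot-\tilde c t)}{\omega_{\kappa,0}}\,\omega_{\kappa,0}\, S(t-s)\g,
\]
and notes that monotonicity of $\omega_{\kappa,0}$ gives $\chi_+(\xi-\tilde c t)/\omega_{\kappa,0}(\xi)\lesssim \re^{-\kappa\tilde c t}$. It then applies the weighted bounds
\[
\|\omega_{\kappa,0}S(\tau)\g\|_{L^\infty}\lesssim \re^{\kappa(c_g+\Delta\theta)\tau}\|\omega_{\kappa,0}\g\|_{L^\infty}
\]
from Theorem~\ref{t: linear estimates}, with $\Delta\theta$ in the role of $\Delta\mu$; for $s_{p,1}-\Ptr$ it uses Lemma~\ref{l: asymptotics for leading order terms}. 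The identity $\re^{-\kappa\tilde c t}\re^{\kappa(c_g+\Delta\theta)(t-s)} = \re^{-\kappa(\Delta c-\Delta\theta)(t-s)}\re^{-\kappa\tilde c s}$ then finishes the proof uniformly for all propagators.

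\textbf{Comparison.} Your support-intersection argument is an inline reproof of those weighted bounds: it is the content of Lemma~\ref{l: exp weighted pointwise bound} combined with the cone cutoff. So the ``main obstacle'' you flag (sharp cone cutoffs in the kernel bounds) is already discharged there. The paper's route also avoids treating localized and convolution-type propagators separately. What your version buys is transparency: it shows that the rate comes directly from the gap between the cone $\xi\geq \tilde c t-1$ and the support $\xi\leq (c_g+\Delta\theta)\tau$ of the Gaussian part.

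\textbf{One detail to fix.} If you keep your route, the kernel tail for $s_{p,2}$ and $s_{c,2}$ is not handled verbatim as for the localized terms, because $\g$ may grow like $\re^{-\kappa\zeta}$ on the left. Convolving $\chi_-(\xi-\zeta)\re^{-\mu|\xi-\zeta|}$ against such data gives a factor $\re^{-\kappa\min\{\xi,0\}}\lesssim \re^{-\kappa\tilde c t}$ on the cone. The tail is therefore bounded by $\re^{-(\mu+\kappa\tilde c)(t-s)}\re^{-\kappa\tilde c s}$. This is absorbed provided $\mu\geq -\kappa(c_g+\Delta\theta)$, which holds once $\kappa_r$ is small.
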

        \begin{proof}
            We start by proving the first estimate in~\eqref{e: refined linear 1}. The proofs of the other estimates in~\eqref{e: refined linear 1},~\eqref{e: refined linear 11}, and~\eqref{e: refined linear 2} are completely analogous. First, we rewrite
            \begin{align*}
                \big| \chi_+(\cdot - \tilde{c} t) [s_{p,1}(t-s) -\Ptr]\g\big| = \left| \frac{\chi_+ (\cdot - \tilde{c} t)}{\omega_{\kappa, 0}} \right| | \omega_{\kappa, 0} [s_{p,1}(t-s) - \Ptr]\g| 
            \end{align*}
            for $\g \in C_0(\R)$ and $t,s \geq 0$ with $t\geq s $.  Since $\chi_+ (\xi - \tilde{c} t) \neq 0$ implies $\xi \geq \tilde{c} t - 1$, and $\omega_{\kappa, 0}$ is non-decreasing, we have $|\chi_+(\xi - \tilde{c} t)/\omega_{\kappa, 0} (\xi)| \lesssim \re^{-\kappa \tilde{c} t}$ for $\xi \in \R$ and $t \geq 0$. Hence, applying Lemma~\ref{l: asymptotics for leading order terms} with $\Delta \theta$ here playing the role of $\Delta \mu$, we establish, for any sufficiently small $\kappa > 0$, the estimate
            \begin{align*}
                \sup_{\xi \in \R} \big| \chi_+(\xi - \tilde{c} t) [s_{p,1}(t-s)\g -\Ptr\g](\xi)\big| \lesssim \re^{- \kappa \tilde{c} t} \re^{\kappa (c_g + \Delta \theta)(t-s)} \|\omega_{\kappa, 0} \g \|_{L^\infty}
            \end{align*}
            for $\g \in C_0(\R)$ and $t,s\geq 0$ with $t \geq s$. Writing out $\tilde{c} = c_g + \Delta c$ and rearranging, one arrives at the first estimate in~\eqref{e: refined linear 1}.
            
            Now we prove~\eqref{e: refined linear 3}. Proceeding similarly as before, we establish
            \begin{align}
                \big|\chi_+(\xi - \tilde{c} t) [s_e (t-s) \g](\xi) \big| = \left| \frac{\chi_+ (\xi - \tilde{c} t)}{\omega_{\kappa, 0} (\xi)} \right| \big| \omega_{\kappa, 0} (\xi) [s_e (t-s) \g](\xi) \big| \lesssim \re^{-\kappa \tilde{c} t}  | \omega_{\kappa, 0} (\xi) [s_e (t-s) \g](\xi)|. \label{e: refined linear damped proof 1}
            \end{align}
            for $\xi \in \R$, $u \in C_0(\R)$, and $t, s \geq 0$ with $t \geq s$. 
            On the other hand, applying Lemma~\ref{l: estimates on exponentially damped terms} yields a constant $\mu > 0$ such that, for any sufficiently small $\kappa > 0$, we have
            \begin{align*}
                \| \omega_{\kappa, 0} s_e(t-s) \g\| \lesssim \re^{-\mu (t-s)} \| \omega_{\kappa, 0} \g \|_{L^\infty} 
            \end{align*}
            for $\g \in C_0(\R)$ and $t,s \geq 0$ with $t \geq s$. Combining the latter estimate with~\eqref{e: refined linear damped proof 1}, we arrive, provided $\kappa > 0$ is sufficiently small, at~\eqref{e: refined linear 3}. 
        \end{proof}

        \begin{prop}[Left light cone estimates] \label{prop: left light cone}
            Fix $\Delta \theta, \Delta c > 0$ such that $\Delta \theta < \Delta c < -c_g$. Set $\undertilde{c} = c_g - \Delta c$. Let $j,k \in \N_0$. Then, for any sufficiently small $\delta > 0$, there exists a constant $\mu_l > 0$ such that the following estimates hold.
            \begin{enumerate}[(i)]
                \item \emph{(Excited terms).} The terms $s_{p,1}(t)$ and $s_{c,1}(t)$, given by~\eqref{e:defspsc}, obey
                \begin{align}
                \begin{split}
                    \|\chi_-(\cdot - \undertilde{c} t) \partial_t^j \partial_\xi^k s_{p,1}(t) \g \|_{L^p} &\lesssim \re^{-\mu_l t} \| \g \|_{L^\infty},  \\
                    \|\chi_-(\cdot - \undertilde{c} t) s_{c,1}(t) \g \|_{L^p} &\lesssim \re^{-\mu_l t} \|  \g \|_{L^\infty} \end{split} \label{e: refined left 1}
                \end{align}
                for $\g \in C_0(\R)$, $t \geq 0$, and $p = 2,\infty$.
                \item \emph{(Scattering terms).} The terms $s_{p,2}(t)$ and $s_{c,2}(t)$, given by~\eqref{e:defspsc}, satisfy
                \begin{align}
                \begin{split}
                    \| \chi_- (\cdot - \undertilde{c} t) \partial_t^j \partial_\xi^k s_{p,2}(t-s) \g\|_{L^p} &\lesssim (1+t-s)^{\frac{1}{2 p} -\frac{1}{4}-\frac{j+k}{2}} \| \chi_- (\cdot - \undertilde{c} s + 1) \g\|_{L^2} + \re^{- \mu_l (t-s)} \|\g \|_{L^\infty}, \\
                    \| \chi_- (\cdot - \undertilde{c} t) s_{c,2}(t-s) \g\|_{L^p} &\lesssim (1+t-s)^{\frac{1}{2 p}-\frac{3}{4}} \| \chi_- (\cdot - \undertilde{c} s + 1) \g\|_{L^2} + \re^{- \mu_l (t-s)} \|\g \|_{L^\infty}
                    \end{split}\label{e: refined left 2}
                \end{align}
                for $\g \in L^2(\R) \cap C_0(\R)$, $t,s \geq 0$ with $t \geq s$, and $p = 2,\infty$.
            \end{enumerate}
        \end{prop}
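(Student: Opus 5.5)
We prove the left light cone estimates from the pointwise Green's function bounds already obtained. Everything rests on the structure of Proposition~\ref{p: pointwise estimates}. There, the Gaussian part of $G^{j,\ell,m}(t,\xi-\zeta)$ carries the factor $\chi_+(\xi-\zeta-(c_g-\Delta\mu)t)$, so it vanishes unless $\xi-\zeta\geq (c_g-\Delta\mu)t-1$. The remaining part is $\re^{-\mu t}\re^{-\mu|\xi-\zeta|}$. We will choose $\Delta\mu<\Delta c$ throughout, taking $\Delta\mu=\Delta\theta$.

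\textbf{Part (i).} For $s_{p,1}(t)$ we start from the pointwise bound~\eqref{e:sp1_pointwise}. On the support of $\chi_-(\cdot-\undertilde{c}t)$ we have $\xi\leq (c_g-\Delta c)t$. Hence $(\xi-c_gt)/\sqrt{D_0(1+t)}\lesssim -\Delta c\,t/\sqrt{1+t}$ for $t\geq1$, and we are in the lower tail of the error function. The error function and its derivatives are then bounded by $\re^{-(\Delta c)^2 t/(2D_0)}\re^{-(\xi-c_gt)^2/(2D_0(1+t))}$. This bound gives exponential decay in time, and it is integrable in $\xi$, which yields both the $L^2$ and the $L^\infty$ estimate. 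The correction term $\re^{-\mu t}\re^{-\mu|\xi|}$ is harmless. Since $\|\omega_{\kappa,0}\g\|_{L^\infty}\leq\|\g\|_{L^\infty}$, we obtain~\eqref{e: refined left 1}. For $t\leq 2$ we simply use the uniform bounds. The estimate for $s_{c,1}$ follows in the same way from~\eqref{e: sc1 pointwise}: the Gaussian part carries the factor $\chi_+(\xi-(c_g-\Delta\mu)t)$, which vanishes on $\xi\leq (c_g-\Delta c)t-1$ once $\Delta\mu<\Delta c$. What remains there is only the exponentially damped tail.

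\textbf{Part (ii).} Here the kernel is a genuine convolution in $\zeta$. For $s_{p,2}$, the definitions~\eqref{e: sp2 def} and~\eqref{e: spwt def} show that $[s_{p,2}(t-s)\g](\xi)$ is an integral over $\zeta$ of the inverse-Laplace kernel against bounded periodic factors times $\g_-(\zeta)$. Proposition~\ref{p: pointwise estimates}, with elapsed time $\tau=t-s$, gives
\[
|[\partial_t^j\partial_\xi^k s_{p,2}(\tau)\g](\xi)|\lesssim\int_\R G^{0,j+k,0}(\tau,\xi-\zeta)|\g(\zeta)|\,\de\zeta .
\]
Derivatives yield extra factors of $\lambda$ or $\nuwt$, each producing $\tau^{-1/2}$. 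We split $\g=\chi_-(\cdot-\undertilde{c}s+1)\g+\bigl(1-\chi_-(\cdot-\undertilde{c}s+1)\bigr)\g$.

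For the first piece, Young's inequality with the Gaussian kernel gives $L^2\to L^p$ decay $\tau^{\frac1{2p}-\frac14-\frac{j+k}2}$ in the standard way. For $\tau\leq1$ we use boundedness instead.

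For the second piece, $\zeta\geq\undertilde{c}s-1$ while $\xi\leq\undertilde{c}t$, so $\xi-\zeta\leq \undertilde{c}(t-s)+1=(c_g-\Delta c)\tau+1$. In the Gaussian part this contradicts the support condition $\xi-\zeta\geq(c_g-\Delta\mu)\tau-1$ once $(\Delta c-\Delta\mu)\tau>2$, so only the term $\re^{-\mu\tau}\re^{-\mu|\xi-\zeta|}$ survives. This yields $\re^{-\mu_l\tau}\|\g\|_{L^\infty}$ in $L^\infty$. For $L^2$ we keep the $\re^{-\mu|\xi-\zeta|}$ localization, and if needed we trade part of the spatial factor for the $L^2$ bound by using the distance between the supports. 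For bounded $\tau$ we absorb everything into the constant. The term $s_{c,2}$ is handled identically. The extra $\mathrm{O}(\lambda)$ factor in~\eqref{e: scwt def} corresponds to $m=1$ and gives the improved power $\frac1{2p}-\frac34$.

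\textbf{Main obstacle.} The main difficulty is the $L^2$ bound of the far piece in part (ii), where the right-hand side only provides $\|\g\|_{L^\infty}$. One must ensure that the output is spatially integrable over the whole half-line $\xi\leq\undertilde{c}t$, while $\g$ is only bounded on $[\undertilde{c}s-1,\infty)$. The plan is to use the fact that the exponential part of $G$ decays in $|\xi-\zeta|$ and that $\xi-\zeta\leq-|c_g-\Delta c|\tau+1<0$. Then $\int_{\xi\leq\undertilde{c}t}\bigl(\int_{\zeta\geq\undertilde{c}s-1}\re^{-\mu|\xi-\zeta|}\,\de\zeta\bigr)^2\de\xi$ is finite and $\lesssim \re^{-\mu|c_g-\Delta c|\tau}$, after shrinking $\mu_l$. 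We also need to check that the $\re^{-\mu|\xi-\zeta|}$ localization in Proposition~\ref{p: pointwise estimates} is indeed in the difference variable, which it is.
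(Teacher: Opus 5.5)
Your proposal is correct and follows essentially the paper's argument. You use the same pointwise bounds from Proposition~\ref{p: pointwise estimates} and Lemma~\ref{l: sp1 estimates}, the same split of $\g$ via $\chi_-(\cdot - \undertilde{c} s + 1)$ with Young's inequality on the near piece, and exponential smallness in $t-s$ on the far piece. Where you kill the Gaussian outright using the cone cutoff built into Proposition~\ref{p: pointwise estimates}, the paper instead extracts a factor $\re^{-(\Delta c)^2(t-s)/(2D_1)}$ from the Gaussian; the two are equivalent. Your explicit use of the separation between $\xi \leq \undertilde{c} t$ and $\zeta > \undertilde{c} s - 2$ (the shift is $-2$, not $-1$) to obtain the $L^2$ bound on the far piece from $\|\g\|_{L^\infty}$ alone is a detail the paper compresses into an appeal to Young's inequality.
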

        \begin{proof}
        The first estimate in~\eqref{e: refined left 1} for $j = k = 0$ follows readily from the pointwise bound~\eqref{e:sp1_pointwise}, Young's convolution inequality, and the fact that $\chi_-(\cdot - \undertilde{c} t) \neq 0$ implies $\xi - c_g t \leq -\Delta c t$. On the other hand, in case $j + k \geq 1$, there exist, by the pointwise estimates~\eqref{e: sc1 pointwise} and~\eqref{e:sp1_pointwise}, constants $D_0,\mu > 0$ such that
            \begin{align*}
                \big| [\partial_t^j \partial_\xi^k s_{p,1} (t)\g](\xi)\big| + \big| [s_{c,1}(t) \g] (\xi) \big| \lesssim \left( \re^{-\frac{(\xi-c_gt)^2}{D_0t}} + \re^{-\mu |\xi|} \re^{- \mu t} \right) \| \g \|_{L^\infty}
            \end{align*}
        for $\xi \in \R$, $t \geq 1$, and $\g \in C_0(\R)$. Combining the latter with Young's convolution inequality and the fact that we have $\xi - c_g t \leq -\Delta c t$ whenever $\chi_-(\cdot - \undertilde{c} t) \neq 0$, establishes~\eqref{e: refined left 1} for $j+k \geq 1$. 
            
        To prove~\eqref{e: refined left 2}, we focus on the estimate for $s_{p,2}(t)$; the estimate for $s_{c,2}(t)$ is similar. Recall that $s_{p,2}(t)\g$ is given by the inverse Laplace representation~\eqref{e:defspsc}, where we have $\bar{s}_{p,2}(\lambda)\g = \chi_- \bar{s}_p^\mathrm{wt}(\lambda) \g_-$ with $\bar{s}_p^\mathrm{wt}(\lambda)\g_-$ given by~\eqref{e: spwt def}. Hence, applying Lemma~\ref{lem: summary wave train resolvent} and Proposition~\ref{p: pointwise estimates}, we obtain, provided $\delta > 0$ is sufficiently small, constants $D_1,\mu_1 > 0$ such that
        \begin{align*}
                \big|[\partial_t^j\partial_\xi^k s_{p,2} (t-s) \g](\xi)\big| \lesssim \chi_-(\xi)\int_\R G^{j, k}(\xi - \zeta, t - s) \chi_- (\zeta) |\g(\zeta)| \, \de \zeta
        \end{align*}
        for $\xi \in \R$, $\g \in L^2(\R) \cap C_0(\R)$, and $t,s \geq 0$ with $t - s\geq 1$, where we denote
        \begin{align*}
                G^{j,k}(z, s) = \frac{\chi_- (z)}{(1+s)^{\frac{1}{2} + \frac{j+k}{2}}} \re^{-\frac{(z - c_g(t-s))^2}{D_1s}}  + \chi_-(z) \re^{-\mu_1 s} \re^{-\mu_1 |z|}. 
            \end{align*}
        We decompose
            \begin{align*}
                \chi_-(\xi - \undertilde{c} t) G^{j,k}(\xi -\zeta, t - s) = G^{j,k}_I(\xi, \zeta, t, s) + G^{j,k}_{II}(\xi, \zeta, t, s), 
            \end{align*}
            with 
            \begin{align*}
                G^{j,k}_{I} (\xi, \zeta, t, s) = \chi_- (\xi - \undertilde{c} t) G^{j,k}(\xi- \zeta, t - s) \chi_-(\zeta - \undertilde{c} s + 1), 
            \end{align*}
            and
            \begin{align*}
                G^{j,k}_{II} (\xi, \zeta, t, s) = \chi_-(\xi - \undertilde{c} t) G^{j,k}(\xi- \zeta, t -s) \big[1-\chi_-(\zeta - \undertilde{c} s + 1)\big]. 
            \end{align*}
            Young's convolution inequality readily yields the estimate 
            \begin{align} \label{e: GI bound}
                \left\|\chi_- \int_\R G^{j,k}_{I}(\cdot, \zeta, t, s) \chi_-(\zeta) |\g(\zeta)| \, \de \zeta\right\|_{L^p} \lesssim \frac{\|\chi_-(\cdot - \undertilde{c} s + 1) \g \|_{L^2}}{(1+t-s)^{\frac14 - \frac{1}{2p}+\frac{j+k}{2}}} + \re^{-\mu_1 (t-s)} \|\g\|_{L^\infty} 
            \end{align}
            for $\g \in L^2(\R) \cap C_0(\R)$ and $t,s \geq 0$ with $t - s \geq 0$. On the other hand, the term
            $G^{j,k}_{II}(\xi,\zeta,t,s)$ contains the factor $\chi_-(\xi - \undertilde{c} t) [1-\chi_-(\zeta - \undertilde{c} s + 1)]$. So, in estimating this term we may assume $\xi - \undertilde{c} t \leq 0$ and $\zeta - \undertilde{c} s + 1 \geq -1$, which implies $\xi - \zeta \leq \undertilde{c} (t-s) + 2$. Thus, in this region, we can extract exponential decay in time from the Gaussian factor
            \begin{align*}
                \re^{-\frac{(\xi - \zeta - c_g (t-s))^2}{2D_1(t-s)}},
            \end{align*}
            which we bound by
            \begin{align*}
            \re^{\frac{2 \Delta c}{D_1}} \re^{- \frac{(\Delta c)^2}{2D_1} (t-s)}
            \end{align*}
            for $\xi,\zeta \in \R$ and $t,s \geq 0$ with $\xi - \zeta \leq \undertilde{c}(t-s) + 2$ and $t-s\geq0$. Hence, using Young's convolution inequality, we find a constant $\mu_0 > 0$ such that
            \begin{align} \label{e: GII bound}
                \left\|\chi_- \int_\R G^{j,k}_{II}(\cdot, \zeta, t, s) \chi_-(\zeta) |\g(\zeta)| \, \de \zeta\right\|_{L^p} \lesssim \re^{-\mu_0 (t-s)} \|\g\|_{L^\infty} 
            \end{align}
            for $\g \in C_0(\R)$ and $t,s \geq 0$ with $t - s \geq 0$. Combining the estimates~\eqref{e: GI bound} and~\eqref{e: GII bound}, we obtain the desired bound on $\| \chi_- (\cdot - \undertilde{c} t) \partial_t^j \partial_\xi^k s_{p,2}(t-s) \g\|_{L^p}$.
        \end{proof}        
        
\section{Nonlinear iteration scheme and nonlinear estimates} \label{sec: nonlinear iteration}

In this section, we set up the iteration scheme which will be employed to prove our nonlinear stability result, Theorem~\ref{t: main}. In addition, we derive estimates on the associated nonlinearities.

\subsection{The unmodulated perturbation}

We wish to control the long-time dynamics of sufficiently localized perturbations of the pushed pattern-forming front $\Ups$. To this end, we consider the solution $\u(t)$ of~\eqref{e: FHN comoving} with $\u(0) = \Ups + \w_0$, where $\v_0 := \omega_0\w_0 \in H^3(\R) \times H^2(\R)$ is small. This implies that the initial perturbation $\w_0$ is $L^2$-localized on $(-\infty,0]$ and exponentially localized on $[0,\infty)$. The exponential localization on the right is required to stabilize the essential spectrum associated with the leading edge of the front.

We measure the deviation of the solution $\u(t)$ from the front by the weighted perturbation 
\begin{align*}\vt(t) = \omega_0(\u(t) - \Ups).\end{align*} 
It arises as the solution the semilinear evolution equation
\begin{align}
\vt_t = \Lps \vt + \NT(\vt) \label{e:umodpert}
\end{align}
with initial condition $\vt(0) = \v_0$, where the nonlinear remainder $\NT \colon H^1(\R) \to H^1(\R)$ given by
\begin{align*}
\NT(\vt) = \omega_0\widetilde{N}\left(\frac{\vt}{\omega_0}\right), \qquad \widetilde{N}(\w) = F(\Ups+\w) - F(\Ups) - F'(\Ups) \w
\end{align*}
is locally Lipschitz continuous. Since $\Lps$ generates a $C^0$-semigroup on the Hilbert space $H^1(\R) \times H^1(\R)$ with domain $H^3(\R) \times H^2(\R)$ by Proposition~\ref{p: C0 semigroup}, local existence and uniqueness of the solution $\vt(t)$ follows readily from classical semigroup theory, see e.g.~\cite[Theorems~6.1.4 and~6.1.6]{Pazy}. 

\begin{prop}[Local well-posedness of the unmodulated perturbation] \label{p:local_unmod}
There exists a maximal time $T_{\max} \in (0,\infty]$ such that~\eqref{e:umodpert} admits a unique classical solution
\begin{align} \vt \in C\big([0,T_{\max}),H^3(\R) \times H^2(\R)\big) \cap C^1\big([0,T_{\max}),H^1(\R) \times H^1(\R)\big), \label{e:regv}\end{align}
with initial condition $\vt(0) = \v_0$. Moreover, $T_{\max} < \infty$ implies
\begin{align} \limsup_{t \nearrow T_{\max}} \left\|\vt(t)\right\|_{H^1} = \infty. \label{e:blowup}\end{align}
\end{prop}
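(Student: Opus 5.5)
We treat~\eqref{e:umodpert} as an abstract semilinear Cauchy problem $\vt_t = \Lps \vt + \NT(\vt)$ on the Hilbert space $X = H^1(\R) \times H^1(\R)$. By Proposition~\ref{p: C0 semigroup} with $k=1$, $\Lps$ generates a $C^0$-semigroup on $X$ with domain $D(\Lps) = H^3(\R) \times H^2(\R)$. We then appeal to the classical results~\cite[Theorems~6.1.4 and~6.1.6]{Pazy}: a continuously differentiable nonlinearity $X \to X$ yields a unique mild solution with a blow-up alternative, and initial data in $D(\Lps)$ upgrade it to a classical solution with the regularity~\eqref{e:regv}.

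The key structural input is that $\NT \colon X \to X$ is locally Lipschitz, and in fact $C^1$. We write
\[
\NT(\vt) = \omega_0 \widetilde{N}(\omega_0^{-1}\vt).
\]
Since $F$ is a polynomial, $\widetilde{N}(\w)$ is a sum of terms of the form $a(\xi)\, w_1^2$ and $w_1^3$. Here the coefficient $a$ is built from $\Ups$ and is smooth and bounded with bounded derivatives, and only the first component carries nonlinearity, because the $w$-equation is linear. Consequently $\NT(\vt)$ consists of terms $a\,\omega_0^{-1}\tilde v_1^2$ and $\omega_0^{-2}\tilde v_1^3$. The weight $\omega_0 \geq 1$ is smooth, so $\omega_0^{-1}$ and $\omega_0^{-2}$ are bounded with bounded derivatives. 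The Banach algebra property of $H^1(\R)$ together with the embedding $H^1 \hookrightarrow L^\infty$ then gives
\[
\|\NT(\vt_1)-\NT(\vt_2)\|_{H^1} \lesssim \left(\|\vt_1\|_{H^1}+\|\vt_2\|_{H^1}+\|\vt_1\|_{H^1}^2+\|\vt_2\|_{H^1}^2\right)\|\vt_1-\vt_2\|_{H^1},
\]
and the same reasoning shows that $\NT$ is Fréchet differentiable with locally Lipschitz derivative.

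The argument proceeds in three steps:
\begin{itemize}
\item[(i)] A contraction argument on the Duhamel formula in $C([0,T],X)$ produces a unique mild solution, extended to a maximal time $T_{\max}$. If $T_{\max}<\infty$, then $\|\vt(t)\|_{H^1}$ must become unbounded, since otherwise the local existence time, which depends only on the $H^1$-norm bound, would allow continuation. This gives~\eqref{e:blowup}.
\item[(ii)] Since $\v_0 \in D(\Lps)$ and $\NT \in C^1(X,X)$, \cite[Theorem~6.1.5]{Pazy} shows that the mild solution is classical. That is, $\vt \in C([0,T_{\max}), D(\Lps)) \cap C^1([0,T_{\max}), X)$, which is exactly~\eqref{e:regv} with graph norm equivalent to the $H^3 \times H^2$-norm.
\item[(iii)] Uniqueness of classical solutions follows because every classical solution is mild.
\end{itemize}

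The only point requiring care is the regularity upgrade in step (ii), where one needs $\NT$ to be $C^1$ on $X$ rather than merely Lipschitz. This holds because the nonlinearity is polynomial and all coefficients, including those arising from derivatives of the weight $\omega_0$, are bounded. In particular, no exponential growth enters, since $\omega_0^{-1} \leq 1$.
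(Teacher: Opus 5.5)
Your proposal is correct and takes essentially the paper's route. The paper also applies Proposition~\ref{p: C0 semigroup} with $k=1$, uses that $\NT$ is locally Lipschitz on $H^1(\R)\times H^1(\R)$, and then invokes classical semigroup theory from \cite{Pazy}. The one difference is that you obtain the classical solution from the $C^1$-nonlinearity result \cite[Theorem~6.1.5]{Pazy} instead of \cite[Theorem~6.1.6]{Pazy}. This is harmless here, because $\NT$ is a polynomial map with $W^{1,\infty}$ coefficients, and it gives the $C^1([0,T_{\max}),H^1\times H^1)$ regularity directly.
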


\subsection{The inverse-modulated perturbation}

As explained in~\S\ref{sec:techniques}, the linearization $\Lps$ possesses a neutral translational eigenvalue at $0$, which is embedded in the essential spectrum associated with the diffusively stable wave train in the wake of the front. The presence of this marginal spectrum prevents decay of the semigroup $\re^{\Lps t}$ and, consequently, obstructs a direct nonlinear stability argument based on iterative estimates of $\vt(t)$ via the Duhamel representation of~\eqref{e:umodpert}. To address this difficulty, we introduce a smooth modulation function $\psi(\xi,t)$ that simultaneously captures the response of the front interface to the excitation of the translational mode and the phase response of the diffusive wave train in the wake. As a result, the \emph{inverse-modulated perturbation}
\begin{align}
\v(\xi,t) = \omega_0(\xi)\left(\u(\xi - \psi(\xi,t),t) - \Ups(\xi)\right) \label{e:modpert}
\end{align}
is effectively decoupled from the leading-order neutral dynamics, and one may therefore expect it to decay. 

Following~\cite{JONZ,FHNpulled}, we first derive an equation for $\v$ and then define $\psi$ a posteriori. Using that both $\u(t) = \Ups + \omega_0^{-1} \vt(t)$ and $\Ups$ solve~\eqref{e: FHN comoving}, $\psi$ is smooth, and $\vt$ satisfies~\eqref{e:regv}, one finds that the inverse-modulated perturbation $\v$ obeys the \emph{quasilinear} equation
\begin{align}
\left(\partial_t - \Lps\right)\left[\v + \omega_0 \Ups' \psi\right] = \mathcal{N}(\v,\psi,\partial_t \psi) + \left(\partial_t - \Lps\right)\left[\psi_\xi \v\right], \label{e:modpertbeq}
\end{align}
with nonlinear remainder
\begin{align*}
\mathcal{N}(\v,\psi,\psi_t) &= \omega_0\left(\mathcal Q\left(\frac{\v}{\omega_0},\psi\right) + \partial_\xi \mathcal R\left(\frac{\v}{\omega_0},\psi,\psi_t\right)\right),
\end{align*}
where
\begin{align} \label{e:defNLQ}
\mathcal Q(\w,\psi) &= \left(F(\Ups+\w) - F(\Ups) - F'(\Ups) \w\right)\left(1-\psi_\xi\right)
\end{align}
is quadratic in $\w$ and 
\begin{align} \label{e:defNLR}
\mathcal R(\w,\psi) = \left(c\psi_\xi - \psi_t\right)\w + D\left(\frac{\left(\w_\xi + \Ups'\psi_\xi\right)\psi_\xi}{1-\psi_\xi} + \left(\w\psi_\xi\right)_\xi\right) 
\end{align}
contains all terms which are linear in $\w$. We refer to~\cite[Appendix~D]{FHNpulled} for further details on the derivatation of~\eqref{e:modpertbeq}.

We establish estimates on the nonlinearity in~\eqref{e:modpertbeq}, which readily follow from Taylor's theorem, the continuous embedding $H^1(\R) \hookrightarrow L^\infty(\R)$, the inequality $0 \leq \chi_-(\xi+1) \leq \chi_-(\xi)^2$ for $\xi \in \R$, and the fact that $\omega_0^{-1}$, $\omega_0' \omega_0^{-1}$, $\omega_0'' \omega_0^{-1}$, $\omega_0 \Ups'$, and $\omega_0 \Ups''$ are bounded; see Hypothesis~\ref{hyp: leading edge}.

\begin{lemma}[Nonlinear estimates] \label{l: nonlinear estimates}
Fix $\kappa > 0$. We have
\begin{align*} 
\begin{split}
\left\|\mathcal{N}\left(\v,\psi,\psi_t\right)\right\|_{L^2} &\lesssim\|v_1\|_{L^2}\|v_1\|_{L^\infty} + \left\|\nabla \psi\right\|_{W^{2,\infty}}\left(\left\|\v\right\|_{H^2 \times H^1} + \left\|\psi_\xi\right\|_{L^2}\right),\\
\left\|\chi_-(\cdot - \xi_0 + 1)\mathcal{N}\left(\v,\psi,\psi_t\right)\right\|_{L^2} &\lesssim \|\chi_-(\cdot - \xi_0) v_1\|_{L^2}\|\chi_-(\cdot - \xi_0)v_1\|_{L^\infty} + \sum_{j = 0}^2 \big\|\chi_-(\cdot - \xi_0) \partial_\xi^j \nabla \psi\big\|_{L^\infty} \\ 
& \qquad \cdot \left(\sum_{j = 0}^1 \left\|\chi_-(\cdot - \xi_0)\partial_\xi^j \v\right\|_{L^2} + \big\|\chi_-(\cdot - \xi_0)\partial_\xi^2 v_1\big\|_{L^2} + \left\|\chi_-(\cdot - \xi_0)\psi_\xi\right\|_{L^2}\right)
\end{split}
\end{align*}
for each $\xi_0 \in \R$, $\v = (v_1,v_2)^\top \in H^2(\R) \times H^1(\R)$, and $(\psi,\psi_t) \in W^{3,\infty}(\R) \times W^{2,\infty}(\R)$ satisfying $\psi_\xi \in L^2(\R)$ and $\|v_1\|_{L^\infty}, \|\psi_\xi\|_{L^{\infty}} \leq \frac{1}{2}$, where we abbreviate $\nabla \psi = (\psi_\xi,\psi_t)^\top$. Moreover, the estimates
\begin{align*} 
\begin{split}
\left\|\mathcal{N}\left(\v,\psi,\psi_t\right)\right\|_{L^\infty} &\lesssim\|v_1\|_{L^\infty}^2 + \left\|\nabla \psi\right\|_{W^{2,\infty}}\left(\left\|\v\right\|_{W^{2,\infty} \times W^{1,\infty}} + \left\|\psi_\xi\right\|_{L^\infty}\right),\\
\left\|\omega_{\kappa,0}\mathcal{N}\left(\v,\psi,\psi_t\right)\right\|_{L^\infty} &\lesssim\|\omega_{\kappa,0} v_1\|_{L^\infty}\|v_1\|_{L^\infty} + \sum_{j = 0}^2 \big\|\omega_{\kappa,0} \partial_\xi^j \nabla \psi\big\|_{L^\infty}\left(\left\|\v\right\|_{W^{2,\infty} \times W^{1,\infty}} + \left\|\psi_\xi\right\|_{L^\infty}\right)
\end{split}
\end{align*}
hold for each $\xi_0 \in \R$, $\v = (v_1,v_2)^\top \in W^{2,\infty}(\R) \times W^{1,\infty}(\R)$ and $(\psi,\psi_t) \in W^{3,\infty}(\R) \times W^{2,\infty}(\R)$ satisfying $\|v_1\|_{L^\infty}, \|\psi_\xi\|_{L^{\infty}} \leq \frac{1}{2}$.
\end{lemma}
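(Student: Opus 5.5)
We write $\w = \v/\omega_0$ and estimate the two pieces of $\mathcal N = \omega_0\mathcal Q(\w,\psi) + \omega_0\partial_\xi\mathcal R(\w,\psi,\psi_t)$ separately. All four estimates follow the same pattern, so the plan is to prove pointwise bounds and then take the appropriate norm.

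\textbf{The quadratic part.} Since $F$ is a polynomial whose nonlinearity acts only in the first component (the $w$-equation is linear), Taylor's theorem gives
\[
F(\Ups+\w)-F(\Ups)-F'(\Ups)\w = \big(\mathrm{O}(w_1^2),0\big)^\top .
\]
More precisely, the first component equals $w_1^2\,(3\text{-}\text{linear coefficient in }\Ups) - w_1^3$, with bounded coefficients. Using $\|v_1\|_{L^\infty}\le \frac12$ and $\omega_0^{-1}\lesssim 1$, we obtain the pointwise bound
\[
|\omega_0\mathcal Q|\lesssim \omega_0|w_1|^2(1+|w_1|)|1-\psi_\xi|\lesssim |v_1|^2,
\]
since $\omega_0 w_1^2=\omega_0^{-1}v_1^2$. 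This yields the terms $\|v_1\|_{L^2}\|v_1\|_{L^\infty}$ and $\|v_1\|_{L^\infty}^2$. In the weighted version we put $\omega_{\kappa,0}$ on one factor, giving $\|\omega_{\kappa,0}v_1\|_{L^\infty}\|v_1\|_{L^\infty}$. In the localized version we use $\chi_-(\cdot-\xi_0+1)\le\chi_-(\cdot-\xi_0)^2$ to distribute one cutoff onto each factor.

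\textbf{The derivative part.} We expand $\omega_0\partial_\xi\mathcal R(\v/\omega_0,\psi,\psi_t)$ with the product rule. Derivatives falling on $\omega_0^{-1}$ produce factors $\omega_0'/\omega_0$ and $\omega_0''/\omega_0$, which are bounded. Every term of $\mathcal R$ carries at least one factor of $\psi_\xi$ or $\psi_t$, so after one differentiation each term is
\[
(\partial_\xi^j\nabla\psi)\cdot X,\qquad j\le 2,
\]
where $X$ is one of the following:
\begin{itemize}
\item[(a)] $\partial_\xi^i v_k$ with $i\le 1$, or $\partial_\xi^2 v_1$ (only the first component carries second derivatives, because of the matrix $D$);
\item[(b)] $\omega_0\Ups'\psi_\xi$, $\omega_0\Ups''\psi_\xi$, or $\omega_0\Ups'\psi_{\xi\xi}$ coming from the $\Ups'\psi_\xi$ term, with bounded coefficients;
\item[(c)] lower-order combinations, multiplied by powers of $(1-\psi_\xi)^{-1}$, which are bounded because $\|\psi_\xi\|_{L^\infty}\le\frac12$.
\end{itemize}
We place the $\nabla\psi$ factor in $L^\infty$ (respectively $W^{2,\infty}$, $\omega_{\kappa,0}$-weighted, or $\chi_-(\cdot-\xi_0)$-localized) and the other factor in $L^2$ or $L^\infty$. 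This gives the stated right-hand sides, with $\|\psi_\xi\|_{L^2}$ or $\|\psi_\xi\|_{L^\infty}$ accounting for the terms of type (b).

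\textbf{Main obstacle.} The only delicate point is bookkeeping, specifically the terms of type (b). For instance, $\Ups'\psi_{\xi\xi}\psi_\xi$ must be bounded with $\psi_{\xi\xi}$ taken from the $\nabla\psi$-sum and $\psi_\xi$ in $L^2$, so that no factor $\|\psi\|$ without a derivative ever appears. For the localized estimate, we additionally ensure each such term carries a cutoff on both factors by using $\chi_-^2$ as above.
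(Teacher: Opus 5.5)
Your proposal is correct and follows the paper's argument. The paper's proof is a one-line justification: Taylor's theorem on the cubic first component of $F$, boundedness of $\omega_0^{-1}$, $\omega_0'\omega_0^{-1}$, $\omega_0''\omega_0^{-1}$, $\omega_0\Ups'$ and $\omega_0\Ups''$, and the inequality $\chi_-(\cdot-\xi_0+1)\le\chi_-(\cdot-\xi_0)^2$ to split the cutoff over both factors. Your explicit bookkeeping, in particular putting $\psi_{\xi\xi}$ in $L^\infty$ and $\psi_\xi$ in $L^2$ for the $\Ups'\psi_\xi\psi_{\xi\xi}$-type terms, is exactly what produces the stated right-hand sides.
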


Assuming for the moment that the modulation $\psi(t)$ vanishes identically at $t=0$, the Duhamel formulation for the inverse-modulated perturbation $\v(t)$ reads
\begin{align}
\v(t) + \omega_0\Ups'\psi(t) = \re^{\Lps t} \v_0 + \int_0^t \re^{\Lps(t-s)}\mathcal{N}(\v(s),\psi(s),\partial_s \psi(s))\de s + \psi_\xi(t)\v(t). \label{e:intv}
\end{align}
We now make a judicious choice for $\psi(t)$ so that it compensates for the critical contributions on the right-hand side of~\eqref{e:intv}. Specifically, motivated by the semigroup decomposition~\eqref{e: semigroup decomposition}, we define $\psi(t)$ by
\begin{align}
\psi(t) = s_p(t)\v_0 + \int_0^t s_p(t-s) \mathcal{N}(\v(s),\psi(s),\partial_s \psi(s))\de s. \label{e:intpsi}
\end{align}
Since $s_p(t) = 0$ for $t \in [0,1]$ by Theorem~\ref{t: linear estimates}, $\psi(t)$ must vanish identically on $[0,1]$ and~\eqref{e:intpsi} provides an iterative definition of $\psi(t)$ for $t \in [0,T_{\max})$ as long as $\|\psi_\xi(t)\|_{L^\infty} \leq \frac12$ (so that the nonlinearity $\mathcal{N}$ is well-defined). More precisely, suppose that we have defined $\psi$ on $[0,n]$ for some $n \in \N$ with $n < T_{\max}$ such that $\|\psi_\xi(s)\|_{L^\infty} < \frac12$ for $s \in [0,n]$. Then, for $t \in [0,1]$ with $n +t < T_{\max}$, we define $\psi(n+t)$ via~\eqref{e:intpsi} by noting that the right-hand side only depends on $\vt|_{[0,n]}$ and $\psi|_{[0,n]}$, since $s_p(s)$ vanishes for $s \in [0,1]$ and the inverse-modulated perturbation may be expressed as
\begin{align}
\v(\xi,t) = \vt(\xi-\psi(\xi,t),t) + \omega_0(\xi)\left(\Ups(\xi - \psi(\xi,t)) - \Ups(\xi)\right). \label{e:definvmod2}  
\end{align}

In the next result, we establish localization and regularity properties of $\psi(t)$ and $\v(t)$. The estimates in Theorem~\ref{t: linear estimates} imply that the linear term $s_p(t) \v_0$ in~\eqref{e:intpsi} is smooth and bounded for all $t \geq 0$. Moreover, it is $L^2$-localized on $(-\infty,0]$, but not integrable on $[0,\infty)$. On the other hand, spatial and temporal derivatives of $s_p(t) \v_0$ are $L^2$-localized. We show that these localization and regularity properties carry over to $\psi(t)$. Moreover, since $\omega_0 \Ups'$ is exponentially decaying on $[0,\infty)$ by Hypothesis~\ref{hyp: leading edge}, the inverse-modulated perturbation~\eqref{e:definvmod2} is also $L^2$-localized and inherits its regularity properties from $\vt(t)$.

\begin{prop}[Regularity of the modulation and inverse-modulated perturbation] \label{p:psi}
Let $T_{\max}$ and $\vt$ be as in Proposition~\ref{p:local_unmod}. 
Then, there exists a maximal time $\tau_{\max} \in (0,T_{\max}]$ such that the following conditions hold:
\begin{itemize}
    \item The modulation and the inverse-modulated perturbation given by~\eqref{e:intpsi} and~\eqref{e:definvmod2}, respectively, satisfy
\begin{align} \label{e:psiid1}
\psi &\in C^\infty\big(\R \times [0,\tau_{\max}),\R\big) \cap C\big([0,\tau_{\max}),L^\infty(\R)\big),  \\
\v = (v_1,v_2)^\top &\in C\left([0,\tau_{\max}),\big(C_0^2(\R) \times C^1_0(\R) \big) \cap \big(H^2(\R) \times H^1(\R)\big) \right); \label{e:vid1}
\end{align}
\item $\|\psi_\xi(t)\|_{L^\infty} < \frac12$ for all $t \in [0,\tau_{\max})$;
\item For any $\ell, j \in \N_0$ with $\ell+j \geq 1$, we have 
\begin{align}
\chi_- \psi, \partial_\xi^\ell \partial_t^j \psi \in \big([0,\tau_{\max}), L^2(\R)\big). \label{e:psiid2}
\end{align}
\end{itemize}
In particular, $\tau_{\max} < T_{\max}$ implies that
\begin{align} \limsup_{t \nearrow \tau_{\max}} \|\psi_\xi(t)\|_{L^\infty} = \frac12. \label{e:blowup3}\end{align}
In addition, we have $\psi(t)=0$ for $t \in [0,\tau_{\max})$ with $t \leq 1$. 
Finally, the Duhamel formulation~\eqref{e:intv} holds for all $t \in [0,\tau_{\max})$.
\end{prop}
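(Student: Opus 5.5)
The construction proceeds by induction on unit time intervals, exploiting that $s_p(t)$ vanishes on $[0,1]$. Set $\psi \equiv 0$ on $[0,\min\{1,T_{\max}\})$; then $\v = \vt$ there by~\eqref{e:definvmod2}, and all claims hold on this interval by Proposition~\ref{p:local_unmod} and the embedding $H^1 \hookrightarrow C_0$. Suppose $\psi$ has been constructed on $[0,n]$ with the asserted properties and $\|\psi_\xi\|_{L^\infty} < \frac12$. For $t \in [n, n+1] \cap [0,T_{\max})$, the right-hand side of~\eqref{e:intpsi} involves $s_p(t-s)$ only for $t - s \geq 1$, that is $s \leq n$. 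It therefore depends only on $\vt|_{[0,n]}$ and $\psi|_{[0,n]}$, hence only on $\v|_{[0,n]}$ through~\eqref{e:definvmod2}, and it defines $\psi$ explicitly on $[n,n+1]$. We let $\tau_{\max}$ be the supremum of times up to which this process runs with $\|\psi_\xi\|_{L^\infty} < \frac12$. Continuity of $t \mapsto \|\psi_\xi(t)\|_{L^\infty}$, established in the next step, then gives~\eqref{e:blowup3} whenever $\tau_{\max} < T_{\max}$.

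For regularity and localization, write $s_p = s_{p,1} + s_{p,2}$. By~\eqref{e: sp1 resolvent def} and~\eqref{e:defspsc}, $s_{p,1}(t)\g = [\Ptr \g]\, h(\cdot,t)$, where $h$ is an explicit smooth function given by a contour integral over the compact contour $\Gamma_{c,\delta}$ of $\re^{\lambda t}\lambda^{-1}[\chi_-\re^{\nuwt(\lambda)\cdot} + 1-\chi_-]$. Differentiation under the integral shows that $h$ is $C^\infty$ in $(\xi,t)$ and bounded. Moreover, $\chi_- h$ and all derivatives of $h$ lie in $L^2$; this follows from the pointwise error-function bound~\eqref{e:sp1_pointwise} and, for the derivatives, from the fact that the extra factor $\nuwt(\lambda)$ removes the pole. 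The term $s_{p,2}(t)\g = \chi_- \bar{s}$-type expression has the explicit kernel form~\eqref{e: spwt def}, which is analytic in $\lambda$ and smooth in $\xi$ because of the factor $\re^{\nuwt(\lambda)(\xi-\zeta)}$. Hence $s_{p,2}(t)\g$ is smooth in $(\xi,t)$ for $\g \in L^2$, and Lemma~\ref{l: leading order scattering} gives its $L^2$-bounds and those of its derivatives. Applying these facts inside the Duhamel integral yields~\eqref{e:psiid1} and~\eqref{e:psiid2}, the $t$-derivatives acting only on the kernel because $\tilde\chi$ cuts off near $t-s=0$. It remains to check that $s \mapsto \mathcal{N}(\v(s),\psi(s),\partial_s\psi(s))$ is continuous into $L^2 \cap C_0$ on $[0,n]$. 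By Lemma~\ref{l: nonlinear estimates} this requires $\v \in H^2\times H^1$ and $\nabla\psi \in W^{2,\infty}$ with $\psi_\xi\in L^2$, which is the inductive hypothesis.

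The regularity of $\v$ in~\eqref{e:vid1} follows from~\eqref{e:definvmod2}. Since $\xi \mapsto \xi - \psi(\xi,t)$ is a $C^\infty$ diffeomorphism when $\|\psi_\xi\|_{L^\infty} < \frac12$, composition with $\vt \in H^3\times H^2$ preserves $H^2 \times H^1$ and continuity in $t$. For the second term, $\omega_0(\Ups(\cdot - \psi) - \Ups)$ is controlled by $\omega_0 \Ups'$ and $\omega_0\Ups''$ times $\psi$. This product is exponentially localized on the right. On the left it equals, up to localized errors, $\uwt(\cdot-\psi)-\uwt$, which is bounded by $|\psi|$ and thus lies in $L^2$ since $\chi_-\psi \in L^2$; its derivatives are likewise $L^2$ by~\eqref{e:psiid2}.

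Finally, the Duhamel formula~\eqref{e:intv} is derived from~\eqref{e:modpertbeq} by the variation-of-constants formula for $\Lps$. Subtracting~\eqref{e:intpsi}, multiplied by $\omega_0\Ups'$, from~\eqref{e: semigroup decomposition} is consistent with this. The main obstacle is justifying~\eqref{e:modpertbeq} in mild form. The quantity $\v + \omega_0\Ups'\psi - \psi_\xi\v$ is not known a priori to lie in $D(\Lps)$, and $\psi$ itself is not in $L^2$. I would first observe that $\omega_0\Ups'\psi \in H^1$, because $\chi_-\psi\in L^2$ and $\omega_0\Ups'$ decays on the right. I would then regularize by working with $\vt(0)\in H^3\times H^2$, where classical solutions exist. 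Next I would verify the equation in $L^2$ using~\eqref{e:regv} and the smoothness of $\psi$, and conclude, as in~\cite[Appendix~D]{FHNpulled}, that a classical solution of an inhomogeneous Cauchy problem coincides with its mild solution.
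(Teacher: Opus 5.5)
Your construction follows the paper's route. You induct on unit time steps using $s_p(t)=0$ for $t\in[0,1]$, and use continuity of $\mathcal N$ into $L^2\cap L^\infty$. You read off regularity of $\psi$ from the fixed compact contour $\Gamma_{c,\delta}$, with all time derivatives falling on the kernel because $\tilde\chi$ vanishes on $[0,1]$. You get regularity of $\v$ from \eqref{e:definvmod2} and finish with the Duhamel formula. Your classical-equals-mild justification of \eqref{e:intv} is more explicit than the paper's one-line remark and is sound. No regularization is needed there, since \eqref{e:regv} and \eqref{e:vid1} already make $\v+\omega_0\Ups'\psi-\psi_\xi\v$ a classical solution in $L^2\times L^2$.

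The step that fails is the claim that \emph{all} derivatives of $h$ lie in $L^2$.

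\begin{itemize}
\item For $\xi\geq0$ we have $\chi_-(\xi)=0$, so $h(\xi,t)=\tilde\chi(t)I(t)$ with $I(t)=\frac{1}{2\pi\ri}\int_{\Gamma_{c,\delta}}\lambda^{-1}\re^{\lambda t}\,\de\lambda$, which is independent of $\xi$.
\item A $\xi$-derivative annihilates this piece and produces the factor $\nuwt(\lambda)$ on the left. A $t$-derivative only produces a factor $\lambda$: it removes the pole but creates no decay in $\xi$. Moreover $(\tilde\chi I)'\not\equiv0$; for instance $I'(t)=\re^{t\Re\lambda_\delta}\sin(t\Im\lambda_\delta)/(\pi t)$ for $t\geq2$.
\item Since $\bar s_{p,2}(\lambda)$ carries the factor $\chi_-$, the modulation itself satisfies $\psi(\xi,t)=\Phi(t)$ for all $\xi\geq0$. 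Here $\Phi$ vanishes on $[0,1]$ and is generically not constant: it is the rigid shift of the interface, which must move from $0$ toward $\psi_\infty\approx\Ptr\v_0$.
\item Hence $\psi_t(\cdot,t)$ equals a nonzero constant on $[0,\infty)$ at some times. The case $\ell=0$, $j\geq1$ of \eqref{e:psiid2} is therefore false in general, and no argument can deliver it.
\end{itemize}

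The paper's proof has the same defect. It appeals to the $L^2$-bound on $\partial_t^k s_{p,1}(t)$ in Theorem~\ref{t: linear estimates}(i), and the pointwise bound \eqref{e:sp1_pointwise} for pure time derivatives cannot hold as $\xi\to+\infty$.

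What your kernel argument does establish is the following:
\begin{itemize}
\item $\partial_\xi^\ell\partial_t^j\psi\in C([0,\tau_{\max}),L^2(\R))$ for $\ell\geq1$;
\item $\chi_-\partial_t^j\psi\in C([0,\tau_{\max}),L^2(\R))$;
\item $\partial_t^j\psi\in C([0,\tau_{\max}),L^\infty(\R))$.
\end{itemize}
This weaker statement is all the rest of the proposition uses. First, $\psi_t$ enters $\mathcal N$ only through $\mathcal R$, multiplied by $\v$, so continuity of $\mathcal N$ into $L^2\cap C_0$ is unaffected. Second, in your mild-solution argument $\partial_t(\omega_0\Ups'\psi)=\omega_0\Ups'\psi_t$ still lies in $L^2$, because $\omega_0\Ups'$ decays on the right and $\chi_-\psi_t\in L^2$.
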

\begin{proof}
Since $s_p(t)$ vanishes identically for $t \in [0,1]$ by Theorem~\ref{t: linear estimates}, we have $\psi(t) = 0$ and $\v(t) = \vt(t)$ for $t \in [0,T_{\max})$ with $t \leq 1$. Next, suppose that $\psi$ and $\v$ are defined on $[0,n]$ for some $n \in \N$ with $n < T_{\max}$, we have $\|\psi_\xi(s)\| < \frac12$ for all $s \in [0,n]$, and identities~\eqref{e:psiid1},~\eqref{e:vid1}, and~\eqref{e:psiid2} hold with $[0,\tau_{\max})$ replaced by $[0,n]$. Let $t_0 \in [0,1]$ with $n+t_0 < T_{\max}$. Using the continuous embedding $H^1(\R) \hookrightarrow L^\infty(\R)$ and noting that $\omega_0^{-1}$, $\omega_0' \omega_0^{-1}$, $\omega_0'' \omega_0^{-1}$, $\omega_0 \Ups'$ and $\omega_0 \Ups''$ are bounded, one readily observes that the nonlinearity $N(s) = \mathcal{N}(\v(s),\psi(s),\partial_s \psi(s))$ satisfies $N \in C\big([0,n],L^2(\R)\cap L^\infty(\R)\big)$. Combining the latter with the estimates in Theorem~\ref{t: linear estimates} and the fact that $s_p(s)$ vanishes for $s \in [0,1]$, we use~\eqref{e:intpsi} to extend $\psi$ to $[0,n+t_0]$ such that identities~\eqref{e:psiid1} and~\eqref{e:psiid2} hold with $[0,\tau_{\max})$ replaced by $[0,n+t_0]$. Subsequently defining $\v$ via~\eqref{e:definvmod2} on $[0,n+t_0]$ and using Proposition~\ref{p:local_unmod}, identities~\eqref{e:psiid1} and~\eqref{e:psiid2}, the mean value theorem, the continuous embedding $H^1(\R) \hookrightarrow C_0(\R)$, and the uniform continuity of functions in $C_0(\R)$, we infer $\v \in C\big([0,n+t_0],C_0^2(\R) \times C_0^1(\R)\big)$. Furthermore, we have $\v \in C\big([0,n+t_0],H^2(\R) \times H^1(\R)\big)$ by Lemmas~\ref{l:MVT in L2} and~\ref{l:H2estimate}. Therefore,~\eqref{e:vid1} holds with $[0,\tau_{\max})$ replaced by $[0,n+t_0]$. It follows, since $\psi(s)$ vanishes identically at $s = 0$, that $\v(t)$ obeys the Duhamel formula~\eqref{e:intv} for $t \in [0,n+t_0]$. Finally, if there exists a point $s_0 \in [0,n+t_0]$ with $\|\psi_\xi(s_0)\|_{L^\infty} \geq \frac12$, then, by continuity, 
$$\tau_{\max} := \min\left\{s \in [0,n+t_0] : \|\psi_\xi(s)\|_{L^\infty} = \tfrac12\right\} < T_{\max}$$
exists. The statement now follows by induction on $n$. 
\end{proof}

Substituting~\eqref{e:intpsi} into~\eqref{e:intv} and recalling the semigroup decomposition~\eqref{e: semigroup decomposition}, we arrive at
\begin{align}
\begin{split}
\v(t) &= \left(s_c(t) + s_e(t)\right)\v_0+\int_0^t\left(s_c(t-s)+s_e(t-s)\right)\mathcal{N}(\v(s),\psi(s),\partial_t \psi(s)) \de s + \psi_\xi(t)\v(t)
\end{split}\label{e:intv2}
\end{align}
for $t \in [0,\tau_{\max})$. Comparing~\eqref{e:intv2} with~\eqref{e:intv} and recalling the linear estimates in Theorem~\ref{t: linear estimates}, we observe that the slowest decaying terms on the right-hand side of~\eqref{e:intv} have been eliminated by our choice of $\psi(t)$. Moreover, the nonlinearity $\mathcal{N}$ in~\eqref{e:intv2} only depends on spatial and temporal derivatives of $\psi$, which satisfy
\begin{align}
\partial_\xi^\ell \partial_t^j \psi(t) = \partial_\xi^\ell \partial_t^j s_p(t)\v_0 + \int_0^t \partial_\xi^\ell \partial_t^j s_p(t-s) \mathcal{N}(\v(s),\psi(s),\partial_t \psi(s))\de s, \label{e:intpsi2}
\end{align}
for $\ell,j \in \N_0$ and $t \in [0,\tau_{\max})$. Since the propagators $\partial_\xi^\ell \partial_t^j s_p(t)$ decay at the same rate as $s_c(t) + s_e(t)$ for $\ell + j \geq 1$ by Theorem~\ref{t: linear estimates}, we expect that derivatives of $\psi(t)$ exhibit the same decay rates as $\v(t)$. We show in the upcoming section that these improved decay rates are sufficient to close a nonlinear argument.

\subsection{Forward-modulated damping}

We control regularity in the quasilinear equation~\eqref{e:modpertbeq} with the aid of forward-modulated damping estimates~\cite{ZUM22}. The same strategy was employed in the nonlinear stability analysis~\cite{FHNpulled} of pulled pattern-forming fronts in the FitzHugh--Nagumo system. Here, we collect the relevant results.

The \emph{forward-modulated perturbation}
\begin{align}
\vf(\xi,t) = \omega_0(\xi)\left(\u(\xi,t) - \Ups(\xi + \psi(\xi,t))\right) = \vt(\xi,t) + \omega_0(\xi)\left(\Ups(\xi) -  \Ups(\xi + \psi(\xi,t))\right) \label{e:fmodpert}
\end{align}
obeys the equation
\begin{align}
\begin{split}
\vf_t &= D\left(\vf_{\xi \xi} + 2 \omega_0 (\omega_0^{-1})' \vf_\xi + \omega_0 (\omega_0^{-1})'' \vf\right) + \omega_0 \left(F\left(\frac{\vf}{\omega_0} + \uf_{\mathrm{ps},0}\right) - F(\uf_{\mathrm{ps},0})\right)\\ &\qquad + \, \cps \left(\vf_\xi + \omega_0 (\omega_0^{-1})'\vf\right) + \omega_0 \left(\cps \psi_\xi - \partial_t \psi\right)\uf_{\mathrm{ps},1} + \omega_0 D\left(\psi_{\xi\xi} \uf_{\mathrm{ps},1} + \psi_\xi\left(\psi_\xi + 2\right)\uf_{\mathrm{ps},2}\right),
\end{split}
\label{e:fmodpertbeq}
\end{align}
where we denote $\uf_{\mathrm{ps},j}(\xi,t) = \big(\partial_\xi^j \Ups\big)(\xi + \psi(\xi,t))$. It inherits its regularity properties from those of $\vt(t)$ and $\psi(t)$.

\begin{corollary}[Regularity of the forward-modulated perturbation] \label{c:local_vf}
Let $\vt(t)$, $\psi(t)$, and $\tau_{\max}$ be as in Propositions~\ref{p:local_unmod} and~\ref{p:psi}. Then, the forward-modulated perturbation, given by~\eqref{e:fmodpert}, satisfies $\vf \in C\big([0,\tau_{\max}),H^3(\R) \times H^2(\R)\big) \cap C^1\big([0,\tau_{\max}),H^1(\R) \times H^1(\R)\big)$.
\end{corollary}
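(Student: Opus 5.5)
The plan is to read off the regularity of $\vf$ directly from the representation
\[
\vf(\xi,t) = \vt(\xi,t) + \omega_0(\xi)\bigl(\Ups(\xi) - \Ups(\xi+\psi(\xi,t))\bigr)
\]
in~\eqref{e:fmodpert}. The first summand already has the asserted regularity by Proposition~\ref{p:local_unmod}. It therefore suffices to show that
\[
\mathbf{h}(\xi,t) := \omega_0(\xi)\bigl(\Ups(\xi) - \Ups(\xi+\psi(\xi,t))\bigr)
\]
lies in $C([0,\tau_{\max}),H^3\times H^2)\cap C^1([0,\tau_{\max}),H^1\times H^1)$. In fact we will show more: it lies in $C^1([0,\tau_{\max}),H^k)$ for every $k$.

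The main step is $L^2$-localization of $\mathbf{h}$ and its derivatives. Write $\mathbf{h} = -\omega_0\psi\int_0^1 \Ups'(\xi+\theta\psi)\,\de\theta$. Two features of $\psi$ matter here: $\psi(t)$ is bounded, and $\chi_-\psi(t)\in L^2$ by~\eqref{e:psiid2}. Two features of $\Ups$ matter as well:
\begin{itemize}
\item $\Ups'$ is bounded and periodic-like on the left, since by~\eqref{e: front left asymptotics} it is close to $\uwt'$ there;
\item $\omega_0\Ups'$ decays exponentially on the right, by~\eqref{e: front right asymptotics} together with $\eta_0<\etaps$.
\end{itemize}
On $(-\infty,0]$ we have $\omega_0\equiv 1$ (up to the compact transition region), so $|\mathbf{h}|\lesssim|\chi_-\psi|$, which is in $L^2$. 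On $[0,\infty)$, the bound $\|\psi\|_{L^\infty}\lesssim 1$ keeps the shift $\xi+\theta\psi$ within a bounded distance of $\xi$. Hence $\omega_0(\xi)|\Ups'(\xi+\theta\psi)|\lesssim \re^{(\eta_0-\etaps)\xi}$ there, and $|\mathbf{h}|\lesssim \|\psi\|_{L^\infty}\re^{-(\etaps-\eta_0)\xi}$, which is also in $L^2$.

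For spatial and temporal derivatives we differentiate by the chain rule. Every resulting term is of one of two types:
\begin{itemize}
\item derivatives of $\omega_0$ multiplying $\Ups(\xi)-\Ups(\xi+\psi)$; since $\omega_0^{(j)}\omega_0^{-1}$ is bounded, these are handled exactly as above;
\item $\omega_0\,\Ups^{(m)}(\xi+\psi)$ multiplied by polynomials in $\partial_\xi^\ell\partial_t^j\psi$ with $\ell+j\ge1$.
\end{itemize}
For the second type, the factors $\partial_\xi^\ell\partial_t^j\psi$ are in $L^2\cap L^\infty$ by~\eqref{e:psiid2}, smoothness, and Sobolev embedding. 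The factor $\omega_0\Ups^{(m)}(\cdot+\psi)$ is bounded, by the same shift argument and using $|\psi_\xi|<\frac12$. This gives $\mathbf{h}(t)\in H^k$ and $\partial_t\mathbf{h}(t)\in H^k$ for all $k$.

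The remaining obstacle is continuity in time with values in these spaces. Proposition~\ref{p:psi} only supplies $\partial_\xi^\ell\partial_t^j\psi\in C([0,\tau_{\max}),L^2)$ and $\psi\in C([0,\tau_{\max}),L^\infty)$, together with smoothness in $(\xi,t)$. We will obtain continuity of $t\mapsto \mathbf{h}(t)$ in $H^k$ from a mean-value estimate: differences $\Ups^{(m)}(\xi+\psi(\xi,t))-\Ups^{(m)}(\xi+\psi(\xi,s))$ are bounded by $|\psi(t)-\psi(s)|$ times a factor that is exponentially localized on the right and bounded on the left. This is the exponentially weighted mean-value estimate of the appendix (Lemmas~\ref{l:MVT in L2} and~\ref{l:H2estimate}). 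On the left it pairs with $\chi_-(\psi(t)-\psi(s))\to0$ in $L^2$; on the right it pairs with $\|\psi(t)-\psi(s)\|_{L^\infty}\to0$. The same argument applied to $\partial_t\mathbf{h}$, combined with the corresponding properties of $\vt$ from Proposition~\ref{p:local_unmod}, yields the claimed regularity of $\vf$.
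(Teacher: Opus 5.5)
Your proposal is correct and follows essentially the same route as the paper, whose proof simply combines the identity $\vf = \vt + \omega_0\bigl(\Ups - \Ups(\cdot+\psi)\bigr)$ from~\eqref{e:fmodpert} with Propositions~\ref{p:local_unmod} and~\ref{p:psi} and the weighted difference estimate of Lemma~\ref{l:H2estimate}. Your argument for time continuity splits into a left part, controlled by $\|\chi_-(\psi(t)-\psi(s))\|_{L^2}$, and a right part, controlled by $\|\psi(t)-\psi(s)\|_{L^\infty}$ against the exponentially localized factor $\omega_0\Ups'$. That split is exactly the content of Lemma~\ref{l:H2estimate} applied with $\psi_1=-\psi(s)$ and $\psi_2=-\psi(t)$.
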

\begin{proof}
The result follows directly from Propositions~\ref{p:local_unmod} and~\ref{p:psi} and Lemma~\ref{l:H2estimate}.
\end{proof}

Using that equation~\eqref{e:fmodpertbeq} is semilinear in $\vf(t)$, and it is linearly damped by the term $\partial_{\xi\xi}\mathring{v}_1$ in the first component and by the term $-\varepsilon \gamma \mathring{v}_2$ in the second component, one obtains the following nonlinear damping estimate.

\begin{prop}[Nonlinear damping estimate] \label{p: nonlinear damping}
Let $\vf(t)$, $\psi(t)$, and $\tau_{\max}$ be as in Proposition~\ref{p:psi} and Corollary~\ref{c:local_vf}. Fix $R > 0$. There exist constants $C, \vartheta > 0$ such that the forward-modulated perturbation $\vf(t)$ satisfies
\begin{align*} 
\begin{split}
\|\vf(t)\|_{H^3 \times H^2}^2 &\leq C\left(\re^{-\vartheta t} \|\v_0\|_{H^3 \times H^2}^2 + \left\|\vf(t)\right\|_{L^2}^2+ \int_0^t \re^{-\vartheta(t-s)} \left(\|\vf(s)\|_{L^2}^2 + \|\psi_\xi(s)\|_{H^3}^2 + \|\partial_s \psi(s)\|_{H^2}^2 \right) \de s\right)
\end{split}
\end{align*}
for each $t \in [0,\tau_{\max})$ with
\begin{align*}
\sup_{0 \leq s \leq t} \left(\|\mathring{v}_1(s)\|_{H^2} + \|\psi(s)\|_{W^{2,\infty}}\right) \leq R.
\end{align*}  
\end{prop}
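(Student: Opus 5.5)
This estimate comes from an energy argument for the semilinear equation~\eqref{e:fmodpertbeq} satisfied by $\vf$. I would not go through the Duhamel formula, because the semigroup loses derivatives. The equation is semilinear in $\vf$: the only derivative terms acting on $\vf$ are $D\vf_{\xi\xi}$ and the first-order transport terms $\cps \vf_\xi + 2D\omega_0(\omega_0^{-1})'\vf_\xi$. The forcing depends on $\psi$ only through $\nabla\psi$ and derivatives of $\psi$, multiplied by bounded functions $\omega_0 \uf_{\mathrm{ps},j}$; here boundedness of $\omega_0\Ups^{(j)}$ follows from $\eta_0<\etaps$. Since Corollary~\ref{c:local_vf} only gives $\vf \in C([0,\tau_{\max}),H^3\times H^2)\cap C^1(\ldots,H^1\times H^1)$, I would carry out the energy identities either for mollified data or via the standard approximation argument, and then pass to the limit.

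\textbf{Energy functional.} I would take
\begin{align*}
E(t) = \sum_{j=0}^{3} \|\partial_\xi^j \mathring v_1\|_{L^2}^2 + \sum_{j=0}^{2} \alpha_j \|\partial_\xi^j \mathring v_2\|_{L^2}^2,
\end{align*}
with suitable weights $\alpha_j>0$. The first component is a parabolic equation. Differentiating $j\le 3$ times, pairing with $\partial_\xi^j\mathring v_1$ and integrating by parts produces the dissipation $-\|\partial_\xi^{j+1}\mathring v_1\|_{L^2}^2$.
\begin{itemize}
\item Transport terms integrate to lower order.
\item Reaction terms are controlled by the Taylor expansion of $F$. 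The quantities $\vf/\omega_0$ and $\uf_{\mathrm{ps},0}$ stay in a compact set, since $\|\mathring v_1\|_{H^2}$ and $\|\psi\|_{W^{2,\infty}}\le R$. These terms are therefore bounded by $C(R)$ times lower derivatives of $\vf$.
\item The term with $\vf_1$ coupled into the second component is lower order.
\end{itemize}
The second component satisfies $\partial_t\mathring v_2 = -\eps\gamma \mathring v_2 + \eps\mathring v_1 + (\text{bounded lower-order terms})$, which is an ODE. Its $\partial_\xi^j$-derivatives, $j\le 2$, are damped at rate $\eps\gamma$, with a forcing by $\partial_\xi^j\mathring v_1$ that is controlled by the $\mathring v_1$-dissipation.

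\textbf{Main obstacle.} The only terms at the highest order that are not obviously controlled are the nonlinear reaction terms. For example, $\partial_\xi^3$ of the cubic nonlinearity in $\mathring v_1$ involves products of derivatives; these can be handled with $H^1\hookrightarrow L^\infty$ and the bound $\|\mathring v_1\|_{H^2}\le R$. There are also $\psi$-dependent coefficients: the $\uf_{\mathrm{ps},j}$ evaluated at $\xi+\psi$ produce factors $\partial_\xi^k\psi$. These are bounded in $L^\infty$ for $k\le 2$ by $R$. For $k=3$ they must be placed into the forcing term $\|\psi_\xi\|_{H^3}$ rather than into the coefficients. Combining Young's inequality with the interpolation $\|\partial_\xi^j f\|^2\le \epsilon\|\partial_\xi^{j+1}f\|^2 + C_\epsilon\|f\|^2$ would give
\begin{align*}
\frac{\de}{\de t}E \le -\vartheta E + C\Big(\|\vf\|_{L^2}^2 + \|\psi_\xi\|_{H^3}^2 + \|\partial_t\psi\|_{H^2}^2\Big).
\end{align*}
In this step the dissipation on $\partial_\xi^4 \mathring v_1$ absorbs the top-order forcing term $\psi_{\xi\xi}$ after one integration by parts; this is why $\psi_\xi\in H^3$ suffices.

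\textbf{Conclusion.} The $\mathring v_1$-part of $E$ has only dissipation $\|\partial_\xi^{j+1}\mathring v_1\|^2$, not $\|\partial_\xi^j \mathring v_1\|^2$. To obtain the damping $-\vartheta E$, I would use $E\le \text{dissipation}+C\|\vf\|_{L^2}^2$ and move the $L^2$ part to the right-hand side. Grönwall's inequality then gives the claimed bound: the $\re^{-\vartheta t}\|\v_0\|^2$ term arises because $\vf(0)=\v_0$ (recall $\psi(0)=0$), and the term $\|\vf(t)\|_{L^2}^2$ appears because the $L^2$ part of the norm is not controlled by the energy argument itself.
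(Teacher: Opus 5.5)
Your route is the same as the paper's: its proof defers to the energy-type nonlinear damping estimate of \cite[Proposition~8.6]{FHNpulled} for the semilinear equation~\eqref{e:fmodpertbeq}. That is the scheme you sketch: parabolic dissipation for $\mathring v_1$ with the top-order forcing integrated by parts onto $\partial_\xi^4 \mathring v_1$, damping for $\mathring v_2$, weighted energy, Gr\"onwall. Two steps in your sketch, however, do not hold as written.

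First, boundedness of $\omega_0 \uf_{\mathrm{ps},j}$ does not follow from boundedness of $\omega_0 \partial_\xi^j \Ups$ alone. In $\omega_0 \uf_{\mathrm{ps},j}$ the weight is evaluated at $\xi$, while $\partial_\xi^j \Ups$ is evaluated at $\xi + \psi(\xi,s)$. Here $\psi$ does not vanish on the right; it tends to $\psi_\infty \neq 0$ there. You need $\omega_0(\xi) \lesssim \re^{\eta_0 |\psi(\xi,s)|}\, \omega_0(\xi + \psi(\xi,s))$ (Lemma~\ref{l:omegaid}) together with $\|\psi(s)\|_{L^\infty} \le R$. This is exactly the one modification the paper makes to \cite{FHNpulled}, where $\psi \equiv 0$ on $[-1,\infty)$.

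Second, the $\mathring v_2$-equation is not an ODE plus bounded lower-order terms. It reads
\[
\partial_t \mathring v_2 = -\Big(\eps\gamma + \cps \frac{\omega_0'}{\omega_0}\Big) \mathring v_2 + \cps \partial_\xi \mathring v_2 + \eps \mathring v_1 + (\text{terms in } \nabla\psi).
\]
The transport term is harmless because it is skew. The weight term, however, is zeroth order with coefficient $\cps \eta_0 = \mathrm{O}(1)$ on $[1,\infty)$, far larger than the damping rate $\eps\gamma$. If you absorb it as a bounded perturbation, you lose the damping of $\partial_\xi \mathring v_2$ and $\partial_\xi^2 \mathring v_2$. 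These get no help from parabolic smoothing or from the $\|\vf\|_{L^2}^2$ term on the right. The term is harmless only because $\cps > 0$ and $\omega_0$ is nondecreasing, so it has the favorable sign and adds damping.

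Similarly, the cross term $-\partial_\xi^3 \mathring v_2$ in the level-three $\mathring v_1$-estimate is not lower order, since $\mathring v_2 \in H^2$ only. It must be moved onto $\partial_\xi^4 \mathring v_1$, and the resulting $\|\partial_\xi^2 \mathring v_2\|_{L^2}^2$ absorbed by choosing $\alpha_2 \eps\gamma$ large. With these points supplied, your argument goes through.
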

\begin{proof}
The result follows verbatim from the proof of~\cite[Proposition~8.6]{FHNpulled}, where the same nonlinear damping estimate was obtained in the setting of pulled pattern-forming fronts in the FitzHugh--Nagumo system, with one modification: instead of using that $\psi(t)$ vanishes identically on $[-1,\infty)$ as in~\cite{FHNpulled}, we use that $\smash{\omega_0 \uf_{\mathrm{ps},j} = \omega_0 (\partial_\xi^j \Ups)(\cdot + \psi(\cdot,s))}$ is bounded in $L^\infty(\R)$ by a $t$-independent constant for $s \in [0,t]$ and $j = 1,2$. This follows from Lemma~\ref{l:omegaid} in combination with the facts that $\smash{\omega_0 (\partial_\xi^j \Ups)}$ is bounded by Lemma~\ref{l:H2estimate} and we have $\|\psi(s)\|_{L^\infty} \leq R$ for $s \in [0,t]$.
\end{proof}

The nonlinear damping estimate in Proposition~\ref{p: nonlinear damping} bounds the $(H^3 \times H^2)$-norm of the forward-modulated perturbation $\vf(t)$ in terms of the $(H^3 \times H^2)$-norm of the initial condition $\v_0$, the $L^2$-norm of $\vf(s)$, and higher-order Sobolev norms of the modulation $\psi(s)$ for $s \in [0,t]$. The following result shows that relevant $W^{k,p}$-norms of the forward- and inverse-modulated perturbations are equivalent up to terms involving $\psi_\xi$. Consequently, the nonlinear damping estimate for $\vf(t)$ provides effectively regularity control of the inverse-modulated perturbation $\v(t)$ within the nonlinear iteration scheme.

\begin{lemma}[Norm equivalences] \label{l: equivalence}
Fix $R > 0$. Let $\v(t)$, $\psi(t)$, and $\tau_{\max}$ be as in Proposition~\ref{p:psi} and let $\vf(t)$ be as in Corollary~\ref{c:local_vf}. Then, there exists a constant $C > 0$ such that
\begin{align}
\|\v(t)\|_{W^{k,p} \times W^{k-1,p}} &\leq C\left(\|\vf(t)\|_{W^{k,p} \times W^{k-1,p}} + \|\psi_\xi(t)\|_{W^{k-1,p}}\right),
\label{e:bdibf}\\
\|\vf(t)\|_{L^p} &\leq C\left(\|\v(t)\|_{L^p} + \|\psi_\xi(t)\|_{L^p}\right)
\label{e:bdibf2}
\end{align}
for $k = 1,2,3$, $p = 2,\infty$, and any $t \in [0,\tau_{\max})$ with $\|\psi(t)\|_{W^{2,\infty}} \leq R$. Moreover, we have
\begin{align}
\|\chi_\pm(\cdot - \xi_0) \vf(t)\|_{L^\infty} &\leq C\Big(\left\|\chi_\pm\left(\cdot - \xi_0 \pm \|\psi(t)\|_{L^\infty}\right) \v(t)\right\|_{L^\infty} + \left\|\chi_\pm\left(\cdot - \xi_0 \pm \|\psi(t)\|_{L^\infty}\right) \psi_\xi(t)\right\|_{L^\infty}\Big)
\label{e:bdibf3}
\end{align}
for any $\xi_0 \in \R$ and $t \in [0,\tau_{\max})$ with $\|\psi(t)\|_{L^\infty} \leq R$.
\end{lemma}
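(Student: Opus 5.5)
The plan is to work directly from the two representations $\v(\xi,t) = \vf(\xi - \psi(\xi,t),t) + \omega_0(\xi)\big(\Ups(\xi-\psi(\xi,t)) - \Ups(\xi)\big) + [\text{weight correction}]$ and, conversely, $\vf(\xi,t) = \v(\xi+\psi(\xi,t),t)$ up to analogous terms. These follow by comparing~\eqref{e:modpert} with~\eqref{e:fmodpert}. Both perturbations describe the same function $\u(t)$, sampled along the forward and inverse characteristics $\xi\mapsto \xi\pm\psi(\xi,t)$. When $\|\psi_\xi(t)\|_{L^\infty}<\frac12$, the maps $\xi \mapsto \xi - \psi(\xi,t)$ are diffeomorphisms of $\R$ with Jacobian in $[\frac12,\frac32]$. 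Concretely, I would set $\tilde\xi = \xi - \psi(\xi,t)$ and write
\begin{align*}
\omega_0(\xi)\big(\u(\tilde\xi,t) - \Ups(\xi)\big) = \frac{\omega_0(\xi)}{\omega_0(\tilde\xi)}\,\vf(\tilde\xi,t) + \omega_0(\xi)\big(\Ups(\tilde\xi+\psi(\tilde\xi,t)) - \Ups(\xi)\big).
\end{align*}

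The first step is to control the weight quotient. By Lemma~\ref{l:omegaid}, the quotient $\omega_0(\xi)/\omega_0(\xi-\psi)$ is bounded above and below by constants depending only on $R$ when $\|\psi\|_{L^\infty}\le R$. Its derivatives are bounded in terms of $\|\psi\|_{W^{2,\infty}}$, because $\omega_0'/\omega_0$ and $\omega_0''/\omega_0$ are bounded. The $L^p$ and $W^{k,p}$ norms of the composition $\vf(\cdot-\psi(\cdot,t),t)$ are then bounded by those of $\vf$ via the chain rule and the change of variables with bounded Jacobian. The derivatives up to order $k-1\le 2$ of $\psi_\xi$ enter only multiplicatively and are controlled by $R$, since $k-1$ derivatives of the composed function involve at most $\partial_\xi^{k-1}\psi_\xi$ paired with lower derivatives of $\vf$. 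Here I would use $H^1\hookrightarrow L^\infty$ where needed, or simply estimate the top-order $\psi$-derivative term as $\|\partial_\xi^{k-1}\psi_\xi\|_{L^p}\|\vf_\xi\|_{L^\infty}$. This last step is the delicate point for $k=3$, since it requires $\|\vf\|_{W^{1,\infty}}\lesssim \|\vf\|_{H^2}$, which is available.

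The second step handles the difference term. The key observation is that $\tilde\xi + \psi(\tilde\xi,t) - \xi = \psi(\tilde\xi,t) - \psi(\xi,t) = -\psi_\xi(\theta)\,\psi(\xi,t)$ by the mean value theorem. This is the main obstacle: a naive bound gives $\|\psi\|$ rather than $\|\psi_\xi\|$, and $\psi$ is not $L^2$-localized on $[0,\infty)$. I would use the mean value identity and multiply by $\omega_0\Ups'$ evaluated at intermediate points. The product $\omega_0 \Ups'(\cdot+\mathrm{O}(R))$ is bounded by Lemma~\ref{l:omegaid} and Lemma~\ref{l:H2estimate}. This yields a term of the form $\omega_0\Ups'(\cdot)\,\psi_\xi\,\psi$ with bounded coefficients, since $|\psi|\le R$. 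The $L^p$ norm is then $\lesssim \|\psi_\xi\|_{L^p}$, and derivatives are handled similarly, giving $\|\psi_\xi\|_{W^{k-1,p}}$ using Lemmas~\ref{l:MVT in L2} and~\ref{l:H2estimate}. Together these give~\eqref{e:bdibf}, and the reverse argument with $k=0$ gives~\eqref{e:bdibf2}.

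Finally, for~\eqref{e:bdibf3} I would use the same pointwise representation of $\vf(\xi)$ in terms of $\v$ and $\psi_\xi$ at the shifted point $\xi+\psi(\xi,t)$ and at intermediate points. All of these lie within distance $\|\psi(t)\|_{L^\infty}$ of $\xi$. Since $\chi_\pm$ is monotone, $\chi_\pm(\xi-\xi_0)\neq 0$ forces the relevant points into the support of $\chi_\pm(\cdot-\xi_0\pm\|\psi\|_{L^\infty})$, with a cutoff value comparable to one there, up to the usual one-unit margin absorbed into the constant. Taking suprema then gives the claim.
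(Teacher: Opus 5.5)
Your route is the paper's: the same exact identity for $\v$ (your displayed formula is the one used there), Lemma~\ref{l:omegaid} for the weight quotient, and a change of variables with bounded Jacobian. You also share the key observation that the profile difference is driven by $\psi(\xi-\psi(\xi,t),t)-\psi(\xi,t)=\mathrm{O}(|\psi_\xi|\,\|\psi\|_{L^\infty})$, handled in $L^2$ through the integral form in Lemma~\ref{l:MVT in L2}, and you use monotonicity of $\chi_\pm$ for~\eqref{e:bdibf3}.

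\textbf{The gap: top order at $k=3$.} You flag this step yourself, and your resolution does not work. In $\partial_\xi^3$ of the first component, the chain rule produces $-\mathring{v}_1'(\xi-\psi)\,\partial_\xi^3\psi$. The third derivative of $\omega_0(\xi)/\omega_0(\xi-\psi(\xi,t))$ contributes a term $\partial_\xi^3\psi\cdot\mathring{v}_1(\xi-\psi)$. Your first step, which controls the quotient's derivatives by $\|\psi\|_{W^{2,\infty}}$, covers only two derivatives, so it misses this term.

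Your bound $\|\partial_\xi^2\psi_\xi\|_{L^p}\|\vf_\xi\|_{L^\infty}$ is a product of two unknowns. Knowing that $\|\vf_\xi\|_{L^\infty}\lesssim\|\vf\|_{H^2}$ is finite does not turn it into $C(\|\vf\|_{W^{3,p}\times W^{2,p}}+\|\psi_\xi\|_{W^{2,p}})$ with $C=C(R)$.

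Under $\|\psi\|_{W^{2,\infty}}\le R$ alone, no argument can give this. Take $\mathring{v}_1=M\phi$ and $\psi\approx M^{-2}\cos(M\xi)$ locally, so that $\|\psi\|_{W^{2,\infty}}\lesssim 1$ but $\partial_\xi^3\psi\sim M$ where $\phi'\neq0$. Then the left side of~\eqref{e:bdibf} with $p=\infty$ is of order $M^2$, while the right side is of order $M$.

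The paper's proof closes this by invoking $\|\psi_\xi(t)\|_{W^{2,\infty}}\le R$, i.e.\ an $L^\infty$ bound on $\partial_\xi^3\psi$. Both terms are then $\lesssim R\|\vf\|_{W^{1,p}}$. This stronger bound is exactly what the iteration provides, since $\varsigma(t)$ controls $\|\nabla\psi(t)\|_{W^{3,\infty}}$. Your product bound would also suffice in the application, because $\|\vf\|_{H^2}\le\varsigma\le\frac12$, but it proves a different, bilinear statement.

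\textbf{Two smaller points.} First, for~\eqref{e:bdibf2} and~\eqref{e:bdibf3} you must evaluate $\v$ at the exact preimage $h_t^{-1}(\xi)=\xi+\psi(h_t^{-1}(\xi),t)$ of $h_t(\xi)=\xi-\psi(\xi,t)$, not at $\xi+\psi(\xi,t)$. Otherwise a term involving $\u_\xi$ appears, and $\|\v\|_{L^p}+\|\psi_\xi\|_{L^p}$ no longer controls the right-hand side.

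Second, in~\eqref{e:bdibf3} you cannot absorb a one-unit margin into the constant, since $\chi_\pm$ and its unit shift are not pointwise comparable. None is needed: monotonicity gives $\chi_\pm(\xi-\xi_0)\le\chi_\pm(\zeta-\xi_0\pm\|\psi(t)\|_{L^\infty})$ whenever $|\zeta-\xi|\le\|\psi(t)\|_{L^\infty}$. This applies to $\zeta=h_t^{-1}(\xi)$ and to the intermediate points alike.
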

\begin{proof}
Let $\xi_0 \in \R$ and $t \in [0,\tau_{\max})$ with $\smash{\|\psi(t)\|_{W^{2,\infty}} \leq R}$. Proposition~\ref{p:psi} yields that $\|\psi_\xi(t)\|_{L^\infty} \leq \frac12$. So, the function $h_t \colon \R \to \R$ given by $h_t(\xi) = \xi - \psi(\xi,t)$ is strictly increasing and invertible with inverse 
\begin{align} \label{e:htexpr}
h_t^{-1}(\xi) = \xi + \psi(h_t^{-1}(\xi),t)
\end{align}
for $\xi \in \R$. 

Using identity~\eqref{e:htexpr}, Lemma~\ref{l:MVT in L2}, and the fact that $\|\psi(t)\|_{L^\infty} \leq R$, we obtain a $t$-independent constant $C > 0$ such that
\begin{align} \label{e:equivest}
\left|\omega_0(\xi) \left(\partial_\xi^j\Ups\right)\left(h_t^{-1}(\xi)\right) - \left(\partial_\xi^j\Ups\right)(\xi + \psi(\xi,t))\right| &\leq C\left\|\omega_0 \partial_\xi^{j+1}\Ups\right\|_{L^\infty} \left|\psi(h_t^{-1}(\xi),t) - \psi(\xi,t)\right|
\end{align}
and 
\begin{align} \label{e:equivest2}
\begin{split}
&\left|\omega_0(\xi) \left(\partial_\xi^j\Ups\right)\left(\xi - \psi(\xi,t) + \psi(\xi-\psi(\xi,t),t)\right) - \partial_\xi^j \Ups(\xi)\right|\\
&\qquad \leq C\left\|\omega_0 \partial_\xi^{j+1}\Ups\right\|_{L^\infty}\left|\psi(\xi - \psi(\xi,t),t) - \psi(\xi,t)\right|
\end{split}
\end{align}
for $j = 0,1,2$ and $\xi \in \R$. Moreover, using~\eqref{e:htexpr}, the mean value theorem, Lemma~\ref{l:MVT in L2}, and the fact that $\chi_- = 1-\chi_+$ is monotonically decreasing, we find a $t$-independent constant $C > 0$ such that
\begin{align} \label{e:psiboundsL2Linfty}
\left\|\psi(h_t^{-1}(\cdot),t) - \psi(\cdot,t)\right\|_{L^p}, \left\|\psi(\cdot - \psi(\cdot,t),t) - \psi(\cdot,t)\right\|_{L^p} &\leq C\|\psi_\xi(t)\|_{L^p} \|\psi(t)\|_{L^\infty}
\end{align}
for $p = 2,\infty$, and 
\begin{align} \label{e:psiboundscutoff}
\left\|\chi_\pm(\cdot - \xi_0) \left(\psi(h_t^{-1}(\cdot),t) + \psi(\cdot,t)\right)\right\|_{L^\infty} \leq C\left\|\chi_\pm\left(\cdot - \xi_0 \pm \|\psi(t)\|_{L^\infty}\right) \psi_\xi(t)\right\|_{L^\infty} \|\psi(t)\|_{L^\infty}.
\end{align}
Furthermore, it has been shown in the proof of~\cite[Lemma~8.7]{FHNpulled} that we have
\begin{align} \label{e:L2subs}
\left\|f \circ h_t^{-1}\right\|_{L^2}, \left\|f(\cdot - \psi(\cdot,t))\right\|_{L^2} \leq \sqrt{2} \|f\|_{L^2}
\end{align}
for $f \in L^2(\R)$.

We substitute~\eqref{e:fmodpert} into~\eqref{e:modpert} and arrive at
\begin{align*}
\begin{split}
\v(\xi,t) &= \omega_0(\xi)\left(\frac{\vf(\xi-\psi(\xi,t),t)}{\omega_0(\xi - \psi(\xi,t))} + \Ups\left(\xi-\psi(\xi,t) + \psi(\xi-\psi(\xi,t),t)\right) - \Ups(\xi)\right)
\end{split}
\end{align*}
for $\xi \in \R$. Thus, recalling $\smash{\|\psi_\xi(t)\|_{W^{2,\infty}} \leq R}$, employing estimates~\eqref{e:equivest2},~\eqref{e:psiboundsL2Linfty}, and~\eqref{e:L2subs}, applying Lemma~\ref{l:omegaid}, and using that $\smash{\omega_0^{-1} \partial_\xi^j \omega_0}$ and $\smash{\omega_0 \partial_\xi^j \Ups}$ are bounded for $j = 1,2,3$ by Lemma~\ref{l:H2estimate} , we establish~\eqref{e:bdibf}. Conversely, substituting~\eqref{e:modpert} into~\eqref{e:fmodpert}, we obtain
\begin{align} \label{e:vf_id}
\vf(\xi,t) &= \omega_0(\xi)\left(\frac{\v(h_t^{-1}(\xi),t)}{\omega_0(h_t^{-1}(\xi))} + \Ups\left(h_t^{-1}(\xi)\right) - \Ups(\xi + \psi(\xi,t))\right)
\end{align}
for $\xi \in \R$. Therefore, noting that $\smash{\|\psi(t)\|_{L^{\infty}} \leq R}$, using~\eqref{e:htexpr}, employing the estimates~\eqref{e:equivest},~\eqref{e:psiboundsL2Linfty}, and~\eqref{e:L2subs}, applying Lemma~\ref{l:omegaid}, and recalling that $\smash{\omega_0 \Ups'}$ is bounded by Lemma~\ref{l:H2estimate}, we arrive at~\eqref{e:bdibf2}. Finally, multiplying~\eqref{e:vf_id} with $\chi_\pm(\cdot - \xi_0)$, applying Lemma~\ref{l:omegaid} and estimates~\eqref{e:equivest} and~\eqref{e:psiboundscutoff}, and using identity~\eqref{e:htexpr} and the facts that $\chi_- = 1-\chi_+$ is monotonically decreasing, $\omega_0 \Ups'$ is bounded by Lemma~\ref{l:H2estimate}, and we have $\|\psi(t)\|_{L^\infty} \leq R$, we obtain~\eqref{e:bdibf3}.
\end{proof}

\begin{remark}{ \upshape
We note that equivalence of $W^{k,p}$-norms of the  forward- and inverse-modulated perturbations of periodic waves was established in~\cite{ZUM22}. However, the proof of Lemma~\ref{l: equivalence} is more tedious due to the presence of the (unbounded) exponential weight $\omega_0$ and the cut-off functions $\chi_\pm$.
}\end{remark}

\section{Nonlinear stability argument --- proof of Theorem~\ref{t: main}} \label{sec:proof}

We close a nonlinear iteration argument by tracking relevant norms of the modulation $\psi(t)$ and the inverse-modulated perturbation $\v(t)$. This is achieved by applying the linear estimates from Theorem~\ref{t: linear estimates}, the left light cone bounds from Proposition~\ref{prop: left light cone}, and the nonlinear estimates from Lemma~\ref{l: nonlinear estimates} to the integral equations~\eqref{e:intv2} and~\eqref{e:intpsi2}.  Regularity is controlled via norm equivalence with the forward-modulated perturbation $\vf(t)$, which obeys a nonlinear damping estimate; see Proposition~\ref{p: nonlinear damping}. After closing the nonlinear argument and establishing global control, we determine the asymptotic phase $\psi_\infty$ and derive a posteriori bounds on $\psi(t)$, $\v(t)$, and $\vf(t)$ in the right light cone using Proposition~\ref{prop: right light cone}.

\begin{proof}[Proof of Theorem~\ref{t: main}]
Fix constants $K, \delta_c > 0$ such that $c_g + \delta_c < 0$. Take $\v_0 \in H^3(\R) \times H^2(\R)$ and set $\smash{E_0 := \|\v_0\|_{H^3 \times H^2}}$. Let $\vt(t)$ be the maximally defined solution to~\eqref{e:umodpert} with initial condition $\vt(0) = \v_0$ given by Proposition~\ref{p:local_unmod}. Let $\tau_{\max}$ be as in Proposition~\ref{p:psi}. On the interval $[0,\tau_{\max})$, we define the modulation $\psi(t)$ by~\eqref{e:intpsi} and the inverse- and forward-modulated perturbations $\v(t)$ and $\vf(t)$ by~\eqref{e:modpert} and~\eqref{e:fmodpert}, respectively; see Proposition~\ref{p:psi} and Corollary~\ref{c:local_vf}. 

Let $\eta_0 > 0$ be as in Proposition~\ref{prop: fredholm properties} and take $\eta \in (0,\eta_0)$. We further fix the parameters
\begin{align*}
\ell = 4, \qquad \Delta \mu = \frac18 \delta_c, \qquad \Delta \theta = \frac14 \delta_c, \qquad \Delta c = \frac12 \delta_c, \qquad \tilde{c} = c_g + \Delta c, \qquad \undertilde{c} = c_g - \Delta c,
\end{align*}
and then take $\kappa,\mu > 0$ as in Theorem~\ref{t: linear estimates}, $\kappa_r,\mu_r > 0$ as in Proposition~\ref{prop: right light cone}, and $\mu_l > 0$ as in Proposition~\ref{prop: left light cone}. Finally, we set 
\begin{align*}\kappa_0 = \min\left\{\kappa,\kappa_r,\frac{-\mu}{c_g + \Delta \mu}, \frac{\mu_r}{\Delta c - \Delta \theta}, \eta\right\} > 0.\end{align*} 
\noindent\textbf{Template function.} It follows by Propositions~\ref{p:local_unmod} and~\ref{p:psi} and Corollary~\ref{c:local_vf} that the template function $\varsigma \colon [0,\tau_{\max}) \to [0,\infty)$ given by
\begin{align*}
\varsigma(t) &= \smash{\sup_{0\leq s\leq t}} \left[\|\psi(s)\|_{L^\infty} + (1+s)^{\frac14} \left(\|\vf(s)\|_{H^3 \times H^2} + \left\|\nabla \psi(s)\right\|_{H^3} \right)\phantom{\sum_{j = 0}^2 } \right. \\
&\qquad  \left. \phantom{\frac{1}{\sqrt{1+s}}} + \sqrt{1+s}\left(\|\v(s)\|_{L^\infty} + \left\|\nabla \psi(s)\right\|_{W^{3,\infty}} + \left\|\chi_-(\cdot - \undertilde{c} s) \v(s)\right\|_{L^2} + \left\|\chi_-(\cdot - \undertilde{c} s) \psi_\xi(s)\right\|_{L^2}\right) \right.\\
&\qquad  \left. \phantom{\frac{1}{\sqrt{1+s}}} + (1+s)^{\frac34} \left(\left\|\chi_-(\cdot - \undertilde{c} s) \v(s)\right\|_{L^\infty} + \sum_{j = 0}^2 \big\|\chi_-(\cdot - \undertilde{c} s) \partial_\xi^j \nabla \psi(s)\big\|_{L^\infty}\right) \right.\\
&\qquad\left. \phantom{\frac{1}{\sqrt{1+s}}} + \re^{-\kappa_0 \left(c_g + \frac14 \delta_c\right) s} \Bigg(\|\omega_{\kappa_0,0} \v(s)\|_{L^\infty} + \|\omega_{\kappa_0,0} \psi(s)\|_{L^\infty} + 
\sum_{j = 0}^2 \big\|\omega_{\kappa_0,0} \partial_\xi^j \nabla \psi(s)\big\|_{L^\infty}\Bigg)\right]
\end{align*}
is well-defined, continuous, and monotonically increasing, where  $\nabla \psi(s) = (\psi_\xi(s),\psi_t(s))^\top$ denotes the gradient.

\medskip

\noindent\textbf{Approach.} We outline the overall strategy of our iteration argument. Our first step is to establish a nonlinear inequality for the template function $\varsigma(t)$. Specifically, we show that there exists a $t$- and $E_0$-independent constant $C_0 \geq 1$ such that $\varsigma(0) \leq C_0E_0$ and, for any $t \in [0,\tau_{\max})$ with $\varsigma(t) \leq \frac12$, it holds
\begin{align}\label{e:key}
\varsigma(t) \leq C_0\left(E_0 + \varsigma(t)^2\right).
\end{align}
We prove~\eqref{e:key} by combining iterative estimates on the $\v(t)$ and $\psi(t)$ with the norm equivalences from Lemma~\ref{l: equivalence} and the nonlinear damping estimate of Proposition~\ref{p: nonlinear damping}. In the second step, we show that, provided $E_0 < 1/(4C_0^2)$, the key inequality~\eqref{e:key} yields $\varsigma(t) \leq 2C_0E_0$ for all $t \in [0,\tau_{\max})$, which in turn implies $\tau_{\max} =  \infty$. The third and final step is to derive the decay estimates~\eqref{e:maindecaybounds}-\eqref{e:refineddecaybounds} from the bound $\varsigma(t) \leq 2C_0E_0$ for all $t \in [0,\infty)$.

\medskip

\noindent\textbf{Proof of key inequality.} We now proceed with establishing the key inequality~\eqref{e:key}. To this end, we denote by $C \geq 1$ any constant, which is independent of $t$ and $E_0$. 

First, we employ Lemma~\ref{l: equivalence} and the continuous embedding $H^1(\R) \hookrightarrow L^\infty(\R)$ to bound
\begin{align} \label{e:nonl1}
\|\v(t)\|_{H^3 \times H^2}, \|\v(t)\|_{W^{2,\infty} \times W^{1,\infty}} \leq C \frac{\varsigma(t)}{(1+t)^{\frac14}}
\end{align}
for $t \in [0,\tau_{\max})$ with $\varsigma(t) \leq \frac12$. Using that $\chi_-'$ is supported on $[-1,0]$ and applying~\eqref{e:nonl1}, we obtain through interpolation:
\begin{align} \label{e:nonl10}
\begin{split}
\big\|\chi_-(\cdot - \undertilde{c} t) \partial_\xi^j \v(t)\big\|_{L^2} &\leq \big\|\chi_-(\cdot - \undertilde{c} t) \partial_\xi^j \v(t) - \partial_\xi^j \left(\chi_-(\cdot - \undertilde{c} t) \v(t)\right)\big\|_{L^2} + \big\|\partial_\xi^j\left(\chi_-(\cdot - \undertilde{c} t) \v(t)\right)\big\|_{L^2}\\ 
&\leq C \left( \big\|\v(t)\big\|_{L^\infty} + \big\|\partial_\xi^{j+1} \v(t)\big\|_{L^2}^{\frac{j}{j+1}} \|\chi_-(\cdot - \undertilde{c} t) \v(t)\|_{L^2}^{\frac1{j+1}} \right) \leq C \frac{\varsigma(t)}{(1+t)^{\frac13}}
\end{split}
\end{align}
for $j = 1,2$ and $t \in [0,\tau_{\max})$ with $\varsigma(t) \leq \frac12$. Plugging~\eqref{e:nonl1} and~\eqref{e:nonl10} into Lemma~\ref{l: nonlinear estimates}, we arrive at the nonlinear bounds
\begin{align} \label{e:nonl2}
\begin{split}
\left\|\mathcal{N}(\v(t),\psi(t),\partial_t \psi(t))\right\|_{L^p} &\leq C \frac{\varsigma(t)^2}{(1+t)^{\frac34}},\\
\left\|\chi_-(\cdot - \undertilde{c} t + 1) \mathcal{N}(\v(t),\psi(t),\partial_t \psi(t))\right\|_{L^2} &\leq C \frac{\varsigma(t)^2}{(1+t)^{\frac{13}{12}}},\\
\left\|\omega_{\kappa,0}\mathcal{N}(\v(t),\psi(t),\partial_t \psi(t))\right\|_{L^\infty} &\leq C \re^{\kappa_0 \left(c_g + \frac14 \delta_c\right) t} \varsigma(t)^2
\end{split}
\end{align}
for $p = 2,\infty$ and $t \in [0,\tau_{\max})$ with $\varsigma(t) \leq \frac12$. 

Next, we apply the linear estimates in Theorem~\ref{t: linear estimates} and the nonlinear bounds~\eqref{e:nonl2} to the Duhamel formulation~\eqref{e:intv2} of the inverse-modulated perturbation, which yields
\begin{align}
\label{e:duh1}
\begin{split}
\|\v(t)\|_{L^p} &\leq C\left( \frac{E_0}{(1+t)^{\frac12 - \frac{1}{2p}}} + \int_0^t \frac{\re^{\kappa_0 \left(c_g + \frac14 \delta_c\right) s} \varsigma(t)^2}{(1+t-s)^{\frac12 - \frac{1}{2p}}}  \de s + \int_0^t \frac{\varsigma(t)^2}{(1+t-s)^{\frac34 - \frac{1}{2p}}(1+s)^{\frac34}} \de s\right.\\ 
&\qquad \left. + \, \int_0^t \frac{\re^{-\mu(t-s)}\varsigma(t)^2}{(1+s)^{\frac34}} \de s + \frac{\varsigma(t)^2}{(1+t)^{1 - \frac1{2p} }}\right) \leq C \frac{E_0 + \varsigma(t)^2}{(1+t)^{\frac12 - \frac1{2p}}}
\end{split}
\end{align}
and, using $\mu \geq -\kappa_0(c_g + \Delta \mu)$,
\begin{align}\label{e:duh12}
\begin{split}
\|\omega_{\kappa,0}\v(t)\|_{L^\infty} &\leq C\left(\re^{\kappa_0 \left(c_g + \Delta \mu \right) t} \left(E_0+\varsigma(t)^2\right) + \int_0^t \re^{\kappa_0 \left(c_g + \Delta \mu \right)(t-s)} \re^{\kappa_0 \left(c_g + \frac14 \delta_c\right) s} \varsigma(t)^2 \de s\right)\\ 
&\leq C \re^{\kappa_0 \left(c_g + \frac14 \delta_c\right) t} \left(E_0 + \varsigma(t)^2\right)
\end{split}
\end{align}
for $p = 2,\infty$ and $t \in [0,\tau_{\max})$ with $\varsigma(t) \leq \frac12$. On the other hand, setting $\mu_0 = \min\{\mu,\mu_l\} > 0$, applying Theorem~\ref{t: linear estimates}, Proposition~\ref{prop: left light cone}, and~\eqref{e:nonl2} to~\eqref{e:intv2}, and using that $\chi_-$ is monotonically decreasing, we obtain
\begin{align} \label{e:duh13}
\begin{split}
\|\chi_-(\cdot - \undertilde{c} t) \v(t)\|_{L^p} &\leq C\left( \frac{E_0 + \varsigma(t)^2}{(1+t)^{\frac34 - \frac{1}{2p}}} + \int_0^t \frac{\re^{-\mu_0(t-s)} \varsigma(t)^2}{ (1+s)^{\frac34}}  \de s + \int_0^t \frac{\varsigma(t)^2}{(1+t-s)^{\frac34 - \frac{1}{2p}}(1+s)^{\frac{13}{12}}} \de s\right)\\ 
&\leq C \frac{E_0 + \varsigma(t)^2}{(1+t)^{\frac34 - \frac1{2p}}}
\end{split}
\end{align}
for $p = 2,\infty$ and $t \in [0,\tau_{\max})$ with $\varsigma(t) \leq \frac12$. Similarly, we bound the modulation $\psi(t)$ by applying Theorem~\ref{t: linear estimates}, Proposition~\ref{prop: left light cone}, and estimate~\eqref{e:nonl2} to~\eqref{e:intpsi2}, which yields
\begin{align}
\label{e:duh20}
\begin{split}
\|\psi(t)\|_{L^\infty} &\leq C\left(E_0 + \int_0^t \re^{\kappa_0 \left(c_g + \frac14 \delta_c\right) s}\varsigma(t)^2 \de s + \int_0^t \frac{\varsigma(t)^2}{(1+t-s)^{\frac14}(1+s)^{\frac34}} \de s\right) \\
&\leq C\left( E_0 + \varsigma(t)^2\right),\\
\big\|\partial_\xi^k \partial_t^j \psi(t)\big\|_{L^p} &\leq C\left(\frac{E_0}{(1+t)^{\frac12 - \frac{1}{2p}}} + \int_0^t \frac{\re^{\kappa_0 \left(c_g + \frac14 \delta_c\right) s} \varsigma(t)^2}{(1+t-s)^{\frac12 - \frac{1}{2p}}}  \de s + \int_0^t \frac{\varsigma(t)^2}{(1+t-s)^{\frac34 - \frac{1}{2p}}(1+s)^{\frac34}} \de s\right)\\ 
&\leq C \frac{E_0 + \varsigma(t)^2}{(1+t)^{\frac12 - \frac1{2p}}},\\
\big\|\omega_{\kappa,0}\partial_\xi^i \partial_t^m \psi(t)\big\|_{L^\infty} &\leq C\left(\re^{\kappa_0 \left(c_g + \Delta \mu \right) t} E_0 + \int_0^t \re^{\kappa_0 \left(c_g + \Delta \mu \right)(t-s)} \re^{\kappa_0 \left(c_g + \frac14 \delta_c\right) s} \varsigma(t)^2 \de s\right)\\ 
&\leq C \re^{\kappa_0 \left(c_g + \frac14 \delta_c\right) t} \left(E_0 + \varsigma(t)^2\right),
\end{split}
\end{align}
and
\begin{align}\label{e:duh22}
\begin{split}
\|\chi_-(\cdot - \undertilde{c} t) \psi(t)\|_{L^\infty} &\leq C\left(\frac{E_0}{(1+t)^{\frac14}} + \int_0^t \frac{\re^{-\mu_l(t-s)}\varsigma(t)^2}{ (1+s)^{\frac34}} \de s + \int_0^t \frac{\varsigma(t)^2}{(1+t-s)^{\frac14}(1+s)^{\frac{13}{12}}} \de s\right)\\ 
&\leq C \frac{E_0 + \varsigma(t)^2}{(1+t)^{\frac14}},\\
\|\chi_-(\cdot - \undertilde{c} t) \partial_\xi^k \partial_t^j \psi(t)\|_{L^p} &\leq C\left(\!\frac{E_0}{(1+t)^{\frac34 - \frac{1}{2p}}} + \int_0^t \frac{\re^{-\mu_l(t-s)}\varsigma(t)^2}{(1+s)^{\frac34}} \de s + \int_0^t \frac{\varsigma(t)^2}{(1+t-s)^{\frac34 - \frac{1}{2p}}(1+s)^{\frac{13}{12}}} \de s\!\right)\\ 
&\leq C \frac{E_0 + \varsigma(t)^2}{(1+t)^{\frac34 - \frac1{2p}}}
\end{split}
\end{align}
for $p = 2,\infty$, $t \in [0,\tau_{\max})$ and $i,j, k, m \in \N_0$ with $1 \leq j + k \leq \ell$, $i, m\leq \ell$, and $\varsigma(t) \leq \frac12$. 

Combining the estimates~\eqref{e:duh1} and~\eqref{e:duh20} with Lemma~\ref{l: equivalence}, we infer
\begin{align} \label{e:duh3}
\|\vf(t)\|_{L^p} \leq C \frac{E_0 + \varsigma(t)^2}{(1+t)^{\frac12 - \frac{1}{2p}}}
\end{align}
for $p = 2,\infty$ and $t \in [0,\tau_{\max})$ with $\varsigma(t) \leq \frac12$. Next, we plug the bounds~\eqref{e:duh20} and~\eqref{e:duh3} into the nonlinear damping estimate from Proposition~\ref{p: nonlinear damping}, which yields
\begin{align} \label{e:duh4}
\|\vf(t)\|_{H^3 \times H^2}^2 \leq C\left(\re^{-\vartheta t} E_0^2 + \frac{\left(E_0 + \varsigma(t)^2\right)^2}{\sqrt{1+t}} + \int_0^t \frac{\left(E_0 + \varsigma(t)^2\right)^2}{\re^{\vartheta (t-s)}\sqrt{1+s}} \de s\right) \leq C \frac{\left(E_0 + \varsigma(t)^2\right)^2}{\sqrt{1+t}}
\end{align}
for $t \in [0,\tau_{\max})$ with $\varsigma(t) \leq \frac12$. 

Finally, we recall that $\psi(0) = 0$ and $\v(0) = \vf(0) = \v_0$ by Proposition~\ref{p:psi}, which implies
\begin{align} \label{e:innit}
\varsigma(0) \leq C E_0.
\end{align}

All in all, combining the estimates~\eqref{e:duh1},~\eqref{e:duh12},~\eqref{e:duh13},~\eqref{e:duh20},~\eqref{e:duh22},~\eqref{e:duh4}, and~\eqref{e:innit}, we conclude that there exists a $t$- and $E_0$-independent constant $C_0 \geq 1$ such that $\varsigma(0) \leq C_0E_0$ and the key inequality~\eqref{e:key} holds for all $t \in [0,\tau_{\max})$ with $\varsigma(t) \leq \frac12$. 

\medskip

\noindent\textbf{Closing the nonlinear iteration.} Take $\delta_0 = 1/(4C_0^2)$ and $E_0 \in (0,\delta_0)$. Arguing by contradiction, we assume that there exists $t \in [0,\tau_{\max})$ such that $\varsigma(t) > 2C_0E_0$. Then, since we have $\varsigma(0) \leq C_0E_0$, it follows by continuity of $\varsigma$ that there exists $t_0 \in (0,t)$ such that $\varsigma(t_0) = 2C_0E_0 < \frac12$. Applying~\eqref{e:key}, we bound
$$\varsigma(t_0) \leq C_0\left(E_0 + 4C_0^2E_0^2\right) < 2C_0E_0,$$
which contradicts $\varsigma(t_0) = 2C_0E_0$. We conclude that $\varsigma(t) \leq 2C_0E_0$ for all $t \in [0,\tau_{\max})$. In particular, since we have $\varsigma(t) \leq 2C_0E_0 < \frac12$ for all $t \in [0,\tau_{\max})$,~\eqref{e:blowup3} cannot hold, which implies $\tau_{\max} = T_{\max}$ by Proposition~\ref{p:psi}. 

To argue that $T_{\max} = \infty$, we use~\eqref{e:fmodpert} and apply Lemma~\ref{l:H2estimate}, which yields
\begin{align*}
\begin{split}
\|\vt(t)\|_{H^1} &\lesssim\|\vf(t)\|_{H^1} + \|\psi(t)\|_{L^\infty} + \|\psi_\xi(t)\|_{L^2} + \big\|(\chi_- - \chi_-(\cdot - \undertilde{c} t)) \psi_\xi(t)\big\|_{L^2} + \big\|\chi_-(\cdot - \undertilde{c} t) \psi_\xi(t)\big\|_{L^2} \\
&\lesssim \|\vf(t)\|_{H^1} + \sqrt{1+t} \, \|\psi(t)\|_{L^\infty} + \|\psi_\xi(t)\|_{L^2} + \big\|\chi_-(\cdot - \undertilde{c} t) \psi_\xi(t)\big\|_{L^2}\lesssim \sqrt{1+t}\, \varsigma(t)
\end{split}
\end{align*}
for $t \in [0,T_{\max})$. Since $\varsigma(t) \leq 2C_0E_0$ for all $t \in [0,T_{\max})$, this estimate precludes~\eqref{e:blowup}. Hence, we have $\tau_{\max} = T_{\max} = \infty$ by Proposition~\ref{p:local_unmod}. Consequently, it holds
\begin{align} \varsigma(t) \leq 2CE_0 \label{e:conclit}\end{align}
for all $t \geq 0$. Combining the latter with estimates~\eqref{e:duh20} and~\eqref{e:duh3} readily yields the bounds in~\eqref{e:maindecaybounds3}. Moreover, using~\eqref{e:fmodpert}, Lemma~\ref{l:MVT in L2}, and the fact that $\omega_0 \Ups' = \omega_{0,\eta_0} \Ups'$ is bounded by Lemma~\ref{l:H2estimate}, we obtain
\begin{align*}
\begin{split}
\|\vt(t)\|_{L^\infty} &\leq \|\vf(t)\|_{L^\infty} + \re^{\eta_0 \|\psi(t)\|_{L^\infty}}\|\omega_0 \Ups'\|_{L^\infty}\|\psi(t)\|_{L^\infty}
\end{split}
\end{align*}
for all $t \geq 0$, which together with~\eqref{e:conclit}  implies~\eqref{e:maindecaybounds}.

\medskip

\noindent\textbf{A posteriori estimates.} In the following we establish the remaining estimates~\eqref{e:maindecaybounds21},~\eqref{e:maindecaybounds22} and~\eqref{e:refineddecaybounds} with asymptotic phase
\begin{align} \label{e:defasymp}
\psi_\infty = \Ptr \v_0 + \int_0^\infty \Ptr \mathcal{N}(\v(s),\psi(s),\partial_s \psi(s))\de s.
\end{align}
Again we denote by $C \geq 1$ any constant, which is independent of $t$ and $E_0$. 

Applying Theorem~\ref{t: linear estimates} and using~\eqref{e:nonl2} and~\eqref{e:conclit}, we note that $\psi_\infty$ is well-defined and satisfies
\begin{align} \label{e:post1}
|\psi_\infty - \Ptr \v_0| &\leq C \int_0^\infty \re^{\kappa_0 \left(c_g + \frac14 \delta_c\right) s} \varsigma(t)^2 \de s \leq C E_0^2, \qquad |\psi_\infty| \leq C E_0,
\end{align}
yielding~\eqref{e:maindecaybounds21}. Moreover, subtracting~\eqref{e:defasymp} from~\eqref{e:intpsi}, we arrive at
\begin{align} \label{e:intpsi3}
\begin{split}
\psi(t) - \psi_\infty &= [s_p(t) - \Ptr]\v_0 + \int_0^t [s_p(t-s) - \Ptr] \mathcal{N}(\v(s),\psi(s),\partial_s \psi(s))\de s\\ &\qquad + \, \int_t^\infty \Ptr \mathcal{N}(\v(s),\psi(s),\partial_s \psi(s))\de s
\end{split}
\end{align}
for $t \geq 0$. Applying the right light cone estimates from Proposition~\ref{prop: right light cone} to~\eqref{e:intpsi2} and~\eqref{e:intpsi3}, using the bounds~\eqref{e:nonl2} and~\eqref{e:conclit}, and noting that $\Delta c - \Delta \theta = \frac14 \delta_c$, $\tilde{c} = c_g + \Delta c$, and $c_g + \delta_c < 0$, we obtain
\begin{align} \label{e:appsi}
\begin{split}
&\|\chi_+(\cdot - \tilde{c} t)(\psi(t) - \psi_\infty)\|_{L^\infty}\\ 
&\qquad \leq C E_0 \left( \re^{\kappa_0 (\Delta \theta - \Delta c) t}  + \int_0^t \frac{\re^{\kappa_0 (c_g + \frac14 \delta_c) s}}{\re^{\kappa_0 (\Delta c - \Delta \theta) (t-s)} \re^{\kappa_0 \tilde{c} s} } \de s + \int_t^\infty \re^{\kappa_0 (c_g + \frac14 \delta_c) s} \de s\right)\\
&\qquad \leq C E_0 \left(\re^{-\frac14 \delta_c \kappa_0 t}  + \int_0^t \re^{-\frac14 \delta_c \kappa_0 t} \de s + \int_t^\infty \re^{-\frac34 \delta_c s} \de s\right)
\leq CE_0 \re^{-\frac18 \delta_c \kappa_0 t}
\end{split}
\end{align}
and
\begin{align} \label{e:appsi2}
\begin{split}
\|\chi_+(\cdot - \tilde{c} t) \psi_\xi(t)\|_{L^\infty} &\leq C E_0 \left(\re^{\kappa_0 (\Delta \theta - \Delta c) t}  + \int_0^t \frac{\re^{\kappa_0 (c_g + \frac14 \delta_c) s}}{\re^{\kappa_0 (\Delta c - \Delta \theta) (t-s)} \re^{\kappa_0 \tilde{c} s} } \de s\right) \leq CE_0 \re^{-\frac18 \delta_c \kappa_0 t}
\end{split}
\end{align}
for $t \geq 0$. Similarly, we bound $\chi_+(\cdot - \tilde{c} t) \v(t)$, now applying Proposition~\ref{prop: right light cone} and the estimates~\eqref{e:nonl2},~\eqref{e:conclit}, and~\eqref{e:appsi2} to the Duhamel formulation~\eqref{e:intv2} and using that $\kappa_0(\Delta c - \Delta \theta) \leq \mu_r$, which yields
\begin{align} \label{e:apv}
\begin{split}
\|\chi_+(\cdot - \tilde{c} t) \v(t)\|_{L^\infty} &\leq C E_0 \left(\re^{\kappa_0 (\Delta \theta - \Delta c) t}  + \int_0^t \frac{\re^{\kappa_0 (c_g + \frac14 \delta_c) s}}{\re^{\kappa_0 (\Delta c - \Delta \theta) (t-s)} \re^{\kappa_0 \tilde{c} s} } \de s + \re^{-\frac18 \delta_c \kappa_0 t}\right) \leq CE_0 \re^{-\frac18 \delta_c \kappa_0 t}
\end{split}
\end{align}
for all $t \geq 0$. 

By~\eqref{e:conclit} we have $\|\psi(t)\|_{L^\infty} \leq \frac12 \delta_c t$ for $t \geq 4 C_0 E_0 \delta_c^{-1}$. Therefore, combining the estimates~\eqref{e:conclit},~\eqref{e:appsi2}, and~\eqref{e:apv} with Lemma~\ref{l: equivalence} and using that $\chi_- = 1-\chi_+$ is monotonically decreasing, we infer
\begin{align} \label{e:large_time_light}
\begin{split}
\big\|\chi_-(\cdot - (c_g - \delta_c) t) \vf(t)\big\|_{L^\infty} &\leq \big\|\chi_-(\cdot - \undertilde{c} t + \|\psi(t)\|_{L^\infty}\big) \vf(t)\|_{L^\infty}\\ 
&\leq C \left(\big\|\chi_-(\cdot - \undertilde{c}t) \v(t)\big\|_{L^\infty} + \big\|\chi_-(\cdot - \undertilde{c}t) \psi_\xi(t)\big\|_{L^\infty}\right) \leq CE_0 (1+t)^{-\frac34},\\
\big\|\chi_+(\cdot - (c_g + \delta_c) t) \vf(t)\big\|_{L^\infty} &\leq \big\|\chi_+(\cdot - \tilde{c} t - \|\psi(t)\|_{L^\infty}\big) \vf(t)\|_{L^\infty}\\ 
&\leq C \left(\big\|\chi_+(\cdot - \tilde{c}t) \v(t)\big\|_{L^\infty} + \big\|\chi_+(\cdot - \tilde{c}t) \psi_\xi(t)\big\|_{L^\infty}\right) \leq CE_0 \re^{-\frac18 \delta_c \kappa_0 t}
\end{split}
\end{align}
for $t \geq 4C_0 E_0\delta_c^{-1}$. On the other hand,~\eqref{e:duh3}  yields the short-time bound
\begin{align} \label{e:short_time_light}
\big\|\chi_\pm(\cdot - (c_g \pm \delta_c) t) \vf(t)\big\|_{L^\infty} \leq \|\vf(t)\|_{L^\infty} \leq CE_0  \leq CE_0 \re^{-\frac18 \delta_c \kappa_0 t}
\end{align}
for $0 \leq t \leq 4C_0 E_0\delta_c^{-1}$. Next, we recall that $\omega_0 \Ups' = \omega_{0,\eta_0} \Ups'$ is bounded by Lemma~\ref{l:H2estimate} . Hence, applying Lemma~\ref{l:MVT in L2}, using estimates~\eqref{e:duh22},~\eqref{e:conclit},~\eqref{e:post1}, and~\eqref{e:appsi}, and noting that $\chi_- = 1-\chi_+$ is monotonically decreasing, we arrive at
\begin{align*}
&\big\|\chi_-(\cdot - (c_g - \delta_c) t) \omega_0\left(\Ups(\cdot + \psi(\cdot,t)) - \Ups\right)\big\|_{L^\infty}\\ 
&\qquad \leq C\re^{\eta_0 \|\psi(t)\|_{L^\infty}} \big\|\omega_0 \Ups'\big\|_{L^\infty}
\big\|\chi_-(\cdot - \undertilde{c} t) \psi(t)\big\|_{L^\infty} \leq \frac{CE_0}{(1+t)^{\frac14}},
\end{align*}
and
\begin{align*}
&\big\|\chi_+(\cdot - (c_g + \delta_c) t) \omega_0\left(\Ups(\cdot + \psi(\cdot,t)) - \Ups(\cdot + \psi_\infty)\right)\big\|_{L^\infty}\\ 
&\qquad \leq C \re^{\eta_0 (\|\psi(t)\|_{L^\infty} + |\psi_\infty|)}
 \big\|\omega_0\Ups'\big\|_{L^\infty} \left\|\chi_+(\cdot - \tilde{c} t)(\psi(t) - \psi_\infty)\right\|_{L^\infty} \leq C E_0 \re^{-\frac18 \delta_c \kappa_0 t}
\end{align*}
for $t \geq 0$. Combining the last two estimates with~\eqref{e:duh22},~\eqref{e:appsi},~\eqref{e:large_time_light}, and~\eqref{e:short_time_light} and using~\eqref{e:fmodpert}, we establish~\eqref{e:refineddecaybounds}. Finally,~\eqref{e:maindecaybounds22} is a direct consequence of~\eqref{e:refineddecaybounds}. 
\end{proof}

\begin{remark} \label{rem: motivation L2 Linfty scheme}
{
\upshape
The estimates in~\eqref{e:duh1} and~\eqref{e:duh20} on the nonlinear terms
\begin{align*}
\int_0^t s_{c,2}(t-s) \mathcal{N}(\v(s),\psi(s),\partial_t \psi(s)) \de s, \qquad
\int_0^t \partial_\xi^\ell s_{p,2}(t-s) \mathcal{N}(\v(s),\psi(s),\partial_t \psi(s)) \de s,
\qquad \ell = 0,1,
\end{align*}
in the integral equations~\eqref{e:intv2} and~\eqref{e:intpsi2} for $\v(t)$ and $\partial_\xi^\ell \psi(t)$, respectively, are critical. Indeed, any weaker decay of $\|\mathcal{N}(\v(s),\psi(s),\partial_t \psi(s))\|_{L^2}$ would prevent us from showing the estimates~\eqref{e:duh1} and~\eqref{e:duh20}. The reason that this marginal decay is nevertheless sufficient is that we can distribute spatial localization between the nonlinearity and the linear scattering terms $s_{c,2}(t-s)$ and $\smash{\partial_\xi^\ell s_{p,2}(t-s)}$. Since $\v(s)$ and derivatives of $\psi(s)$ decay at rate $\smash{s^{-1/2 + 1/(2p)}}$ in $L^p(\R)$ for $p = 2,\infty$, the nonlinearity $\mathcal{N}(\v(s),\psi(s),\partial_t \psi(s))$ may be estimated in any $L^q$-norm with $1 \leq q \leq \infty$, leading to the decay rate $\smash{(1+s)^{-1+1/2q}}$. Correspondingly, we may employ $(L^q \to L^p)$-bounds for the operators $s_{c,2}(t-s)$ and $\partial_\xi^\ell s_{p,2}(t-s)$ for any $1 \leq q \leq p$, which provide the decay rate $\smash{(1+t-s)^{-1/2+1/(2p)-1/(2q)}}$. Choosing $q=2$ avoids picking up additional logarithmic losses when estimating the resulting time convolution, since then both $\smash{\frac12-\frac1{2p}+\frac1{2q} \neq 1}$ and $\smash{1-\frac1{2q} \neq 1}$ for $p = 2,\infty$. Consequently, the nonlinear argument in the proof of Theorem~\ref{t: main} closes despite the marginal decay rates.
}
\end{remark}

\section{Discussion}  \label{sec:discussion}

We discuss how our analysis here relates to remaining open problems in front invasion, and diffusive stability of coherent structures more broadly. 

\medskip

\noindent\textbf{Modulated pushed fronts.} In many systems, pattern-forming fronts are modulated, that is, they are time-periodic (rather than stationary) in a comoving frame, leading to a non-autonomous linearization with time-periodic coefficients, $u_t = \mathcal{L}(t) u$. Such fronts arise, for instance, in reaction-diffusion systems near a subcritical Turing instability. The eigenvalues of the associated period map give a natural notion of spectral stability. In~\cite{SandstedeScheelModulatedTW}, it was shown that spectral stability can be equivalently formulated in terms of eigenvalues of the operator $-\partial_t + \mathcal{L}(t)$ acting on a space of time-periodic functions. This approach was extended in~\cite{BeckSandstedeZumbrun, BNSZ2}, where the temporal Green's function associated with the linearized evolution was expressed as an inverse Laplace-type integral of the resolvent of $-\partial_t + \mathcal{L}(t)$. Pointwise estimates on the Green's function were then extracted from information on the associated resolvent integral kernel, which were in turn established through a spatial dynamics approach relying on pasting of exponential dichotomies. The equivalence of exponential dichotomies and Fredholm properties for linearizations about modulated traveling waves was obtained in~\cite{SandstedeScheelModulatedTW}. We therefore expect that our far-field/core approach to studying the resolvent problem and thereby obtaining linear estimates can be lifted using these techniques to problems involving modulated fronts. 

Natural spectral stability conditions for modulated pushed fronts were formulated in~\cite[Definition 6.6]{AHSReview}. In contrast to the rigid fronts considered here, the neutral eigenspace is now two dimensional, due to invariance under both space and time translations. As a result, perturbations to the front induce an asymptotic shift in both the spatial and temporal phase. Nonetheless, we expect that the essential difficulties are already captured in our analysis here, and believe that this approach will lead to a proof of the selection of generic modulated pushed fronts in general reaction-diffusion systems. 

\medskip

\noindent\textbf{Selection of pulled pattern-forming fronts.} For pushed pattern-forming fronts, the asymptotic phase shift after perturbation of the front is determined by the response of the neutral translational mode. Our analysis focuses on characterizing the interaction between this translational mode and the outgoing diffusive mode in the wake of the front. For pulled fronts which select constant, exponentially stable states in their wake, the position of the front interface is determined by a subtle matching of the front profile with a Gaussian tail determined by the linearized dynamics ahead of the front. Selection results for pulled fronts in general reaction-diffusion systems~\cite{AveryScheelSelection, AverySelectionRD} only capture the interface position as $x_*(t) = \smash{ \clin t - \frac{3}{2 \etalin} \log t + \mathrm{O}(1)}$, where $\clin$ is the linear spreading speed and $\etalin$ corresponds to the pointwise exponential decay rate in the leading edge of the front. Based on the  analysis in this paper, it seems that resolving the $\mathrm{O}(1)$ term in the expansion of the interface position more precisely will be a necessary step for establishing selection of pulled pattern-forming fronts, yet this remains challenging due to the subtle gluing process which determines the front position. 

\medskip

\noindent\textbf{Stability of source defects.} In the mathematical analysis of pattern-forming systems, \emph{defects} are coherent structures which mediate the interaction between distinct periodic patterns, possibly with different wave numbers, phases, or group velocities. \emph{Source defects} consist of a localized core which emits periodic wave trains with a selected wave number~\cite{SandstedeScheelDefects} and a group velocity which points away from the core of the defect. In many respects, pushed pattern-forming fronts may be viewed as ``one-sided'' source defects: the front interface emits periodic wave trains with a group velocity pointing away from the front interface. 
    
    The nonlinear stability of source defects in the complex Ginzburg--Landau equation was established in~\cite{BNSZ1}. This analysis, however, relies heavily on the gauge symmetry of the complex Ginzburg--Landau equation, which is not available in general pattern-forming systems. Our present analysis of pushed pattern-forming fronts is therefore more closely related to the unpublished manuscript~\cite{BNSZ2} which analyzes the nonlinear stability of source defects in general reaction-diffusion systems under spectral assumptions. These defects are time periodic in the co-moving frame, akin to modulated pattern-forming fronts. This time periodicity gives rise to an additional embedded eigenvalue at the origin, associated to translation invariance in time. Perturbations of a general source defect then converge to a spatial and temporal shift of the original defect, locally uniformly in space. As in the case of modulated fronts, we are confident that the effects of time periodicity could be incorporated into our analysis here, and our methods can thereby give rise to an alternative proof of the nonlinear stability of source defects in reaction-diffusion systems. 
    
    The analysis in~\cite{BNSZ2} relies on subtle pointwise Gaussian bounds on the temporal Green's function associated with the linearized equation. These bounds are then passed through an iteration argument, leading to the nonlinear stability of the source defect with a detailed characterization of the pointwise Gaussian decay behavior of perturbations. Our proof is simpler by comparison, relying only on fixed spatially weighted norms. Additionally, our analysis, translated into the framework of~\cite{BNSZ2}, would only require the initial perturbation to lie in $L^2(\R)$, while~\cite{BNSZ2} assumes Gaussian localization of initial perturbations. 

\medskip

\noindent\textbf{Nonlocalized perturbations and modulational data.}
In the nonlinear stability analysis of wave trains, substantial effort has been devoted to relaxing localization requirements on perturbations, ultimately leading to a theory for fully nonlocalized perturbations; see~\cite{Alexopoulos2026,deRijknonlocalized} and references therein. In the present setting of pushed pattern-forming fronts, $L^2$-localization on the left is used to extract diffusive decay from the scattering terms associated with the wave train in the wake of the front; see Theorem~\ref{t: linear estimates}(iii). By contrast, interaction with the neutral translational mode, see Theorem~\ref{t: linear estimates}(i), yields an additional spatial localization factor originating from the exponential localization of the adjoint eigenfunction $\psiad$. These observations suggest that, by using the approach of~\cite{Alexopoulos2026,deRijknonlocalized} to control diffusive scattering terms in the nonlinear stability argument, perturbations of pushed pattern-forming fronts that are nonlocalized on the left can be handled. Similarly, adapting the modulational stability theory for wave trains developed in~\cite{IyerSandstede,Alexopoulos2025Modulation, JNRZ_13_1, JNRZWhitham, JNRZInventiones} to the present setting, we expect that it is possible to control \emph{modulational data} of the form
\begin{align} \label{e:moddata}
\u(\xi,0) = \Ups(\xi + \psi_0(\xi)) + \w_0,
\end{align}
where $\psi_0 \colon \R \to \R$ is a (possibly large) initial phase modulation and $\|\psi_0'\|_{L^\infty}$ and $\|\omega_0 \w_0\|_{L^\infty}$ are small. One then aims to show that solutions with initial condition~\eqref{e:moddata} remain close, for all $t \geq 0$, to a modulated front of the form $\Ups(\xi + \psi(\xi,t))$, thereby establishing a global modulational stability result. These results would emphasize the strength of the selection mechanism of the invasion process: one still expects exponential in time, locally uniform in space convergence to the new phase $\psi_\infty$ selected by the front interface, even with a large initial phase modulation to the left of the interface. Finally, in view of the structural similarities, we anticipate that these ideas can also be adapted to source defects. Such an extension would yield a nonlinear stability theory for source defects that accommodates large phase modulations as well as perturbations that are fully nonlocalized on both the left and the right. 

\appendix

\section{Various estimates involving the exponential weight \texorpdfstring{$\omega_0$}{omega0}} \label{app: exp weight}

The exponential weight $\omega_0$, which is used to stabilize the rest state in the leading edge of the pushed pattern-forming front $\Ups$, is unbounded on $[0,\infty)$. Nevertheless, by estimate~\eqref{e: front right asymptotics} and Hypotheses~\ref{hyp: leading edge} and~\ref{hyp: point spectrum} the products $\omega_0 \Ups$ and $\omega_0 \Ups'$ are $L^2$-localized on $[0,\infty)$. A standard bootstrapping argument then yields that $\omega_0 \Ups$ is smooth and its derivatives are also $L^2$-localized on $[0,\infty)$. In this appendix, we obtain various estimates needed to bound expressions involving $\omega_0$ in our nonlinear stability analysis.

The first result confirms the exponential character of the weight $\omega_{0,\eta}$ and its proof follows directly from the definition of $\omega_{0,\eta}$.

\begin{lemma} \label{l:omegaid}
Fix $\eta > 0$. It holds
\begin{align*}
\frac{\omega_{0,\eta}(\xi)}{\omega_{0,\eta}(\xi+y)} \lesssim \re^{\eta |y|}
\end{align*}
for all $\xi,y \in \R$.
\end{lemma}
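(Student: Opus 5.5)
The plan is to verify the bound directly from the piecewise description of $\omega_{0,\eta}$. I read $\omega_{0,\eta}$ as the two-sided weight $\omega_{\eta_-,\eta_+}$ from Section~\ref{sec:notation} with $\eta_- = 0$ and $\eta_+ = \eta$. It is smooth and positive, equals $1$ on $(-\infty,-1]$, equals $\re^{\eta\xi}$ on $[1,\infty)$, and is non-decreasing. The key structural claim is the two-sided comparison
\begin{align*}
c\,\re^{\eta \max\{\xi,0\}} \leq \omega_{0,\eta}(\xi) \leq C\,\re^{\eta\max\{\xi,0\}}, \qquad \xi \in \R,
\end{align*}
for constants $0<c\leq C$. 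On $(-\infty,-1]$ and on $[1,\infty)$ this holds with $c = C = 1$. On the compact interval $[-1,1]$, both $\omega_{0,\eta}$ and $\re^{\eta\max\{\xi,0\}}$ are continuous and strictly positive, so their ratio is bounded above and below there.

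With this comparison in hand, the estimate reduces to
\begin{align*}
\frac{\omega_{0,\eta}(\xi)}{\omega_{0,\eta}(\xi+y)} \leq \frac{C}{c}\, \re^{\eta(\max\{\xi,0\} - \max\{\xi+y,0\})}.
\end{align*}
It then suffices to use that $s \mapsto \max\{s,0\}$ is $1$-Lipschitz. This gives $\max\{\xi,0\} - \max\{\xi+y,0\} \leq |y|$, and hence the claim with implicit constant $C/c$, which is independent of $\xi$ and $y$.

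No step is a real obstacle. The only point needing care is the transition region $[-1,1]$, where the weight is not given explicitly, and compactness together with positivity of $\omega_{0,\eta}$ handles it. Monotonicity of the weight is not needed, although it shows directly that the ratio is at most the constant $C/c$ when $y \geq 0$.
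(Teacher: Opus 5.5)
Your proof is correct and follows the same route as the paper, which only remarks that the bound follows directly from the definition of $\omega_{0,\eta}$. Your comparison of $\omega_{0,\eta}$ with $\re^{\eta\max\{\xi,0\}}$, combined with the $1$-Lipschitz property of $s \mapsto \max\{s,0\}$, writes out exactly those details.
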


Next, we prove a weighted and unweighted $L^2$-version of the mean value inequality, as well as a weighted pointwise mean value inequality.

\begin{lemma} \label{l:MVT in L2}
Fix $\eta > 0$. We have
\begin{align}
\left\|\omega_{0,\eta}^j \left(v(\cdot - \psi_2(\cdot)) - v(\cdot - \psi_1(\cdot))\right)\right\|_{L^2} \lesssim \|\omega_{0,\eta}^j v'\|_{L^2} \|\psi_2-\psi_1\|_{L^\infty} \re^{\eta j \left(\|\psi_1\|_{L^\infty} + \|\psi_2\|_{L^\infty}\right)}
\end{align}
for $j = 0,1$, $\psi_{1,2} \in L^\infty(\R)$, and $v \in H^1(\R)$ with $\omega_{0,\eta}^j v' \in L^2(\R)$. Moreover, it holds
\begin{align*}
\left|\omega_{0,\eta}(\xi)(v(\xi + y_2) - v(\xi+y_1))\right| \lesssim \re^{\eta\left(|y_1| + |y_2|\right)} \big\|\omega_{0,\eta} v'\big\|_{L^\infty} |y_2 - y_1|
\end{align*}
for $\xi, y_1,y_2 \in \R$ and $v \in C^1(\R)$ with $\omega_{0,\eta} v' \in L^\infty(\R)$.
\end{lemma}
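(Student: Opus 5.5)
The plan is to reduce both estimates to the fundamental theorem of calculus along the segment joining the two shifted arguments, and to handle the exponential weight with Lemma~\ref{l:omegaid}.

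\textbf{Pointwise estimate.} Fix $\xi,y_1,y_2$ and write
\begin{align*}
v(\xi+y_2)-v(\xi+y_1) = (y_2-y_1)\int_0^1 v'\big(\xi+y_\theta\big)\,\de\theta, \qquad y_\theta = y_1+\theta(y_2-y_1).
\end{align*}
Multiplying by $\omega_{0,\eta}(\xi)$, I will insert $1=\omega_{0,\eta}(\xi+y_\theta)/\omega_{0,\eta}(\xi+y_\theta)$. Lemma~\ref{l:omegaid} gives $\omega_{0,\eta}(\xi)/\omega_{0,\eta}(\xi+y_\theta)\lesssim \re^{\eta|y_\theta|}\le \re^{\eta(|y_1|+|y_2|)}$. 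The remaining factor $\omega_{0,\eta}(\xi+y_\theta)|v'(\xi+y_\theta)|$ is bounded by $\|\omega_{0,\eta}v'\|_{L^\infty}$. This proves the second claim directly.

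\textbf{$L^2$ estimate.} Set $\psi_\theta=\psi_1+\theta(\psi_2-\psi_1)$. This satisfies $\|\psi_\theta\|_{L^\infty}\le\|\psi_1\|_{L^\infty}+\|\psi_2\|_{L^\infty}$. The same identity gives
\begin{align*}
v(\xi-\psi_2(\xi))-v(\xi-\psi_1(\xi)) = -(\psi_2-\psi_1)(\xi)\int_0^1 v'\big(\xi-\psi_\theta(\xi)\big)\,\de\theta .
\end{align*}
Pulling out $\|\psi_2-\psi_1\|_{L^\infty}$ and using Minkowski's integral inequality in $\theta$ reduces the claim to a uniform-in-$\theta$ substitution bound
\begin{align*}
\big\|\omega_{0,\eta}^j\, v'(\cdot-\psi_\theta(\cdot))\big\|_{L^2}\lesssim \re^{\eta j\|\psi_\theta\|_{L^\infty}}\|\omega_{0,\eta}^j v'\|_{L^2}.
\end{align*}
For $j=1$, the weight is moved onto the shifted argument exactly as in the pointwise step, via Lemma~\ref{l:omegaid}. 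This produces the factor $\re^{\eta(\|\psi_1\|_{L^\infty}+\|\psi_2\|_{L^\infty})}$. What remains is an unweighted composition estimate $\|f\circ h_\theta\|_{L^2}\lesssim\|f\|_{L^2}$ for $h_\theta(\xi)=\xi-\psi_\theta(\xi)$ and $f=\omega_{0,\eta}^jv'$. I will prove it by the change of variables $z=h_\theta(\xi)$, which is the argument behind~\eqref{e:L2subs}.

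\textbf{Main obstacle.} The composition estimate is the only genuinely delicate point. Boundedness of $\psi_\theta$ alone does not control $\|f\circ h_\theta\|_{L^2}$, and in fact the stated $L^2$ bound is false for merely bounded $\psi_{1,2}$. To see this, take $v'$ a spike of height $h$ and width $w$, let $\psi_1(\xi)=\xi-a$ on an interval of length $\sim 1$ (so $h_1\equiv a$ there, at the spike location), and let $\psi_2=\psi_1+d$ with $d\ge w$. Then the left-hand side is $\gtrsim hw$, while the right-hand side is $\sim d\,h\sqrt w$. The ratio $\sqrt{w}/d \ge 1/\sqrt d$ is unbounded as $d\to0$.

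So the step I cannot avoid is a lower bound on $h_\theta'$. With $\|\partial_\xi\psi_{1,2}\|_{L^\infty}\le\frac12$ we get $h_\theta'=1-\partial_\xi\psi_\theta\ge\frac12$, the change of variables costs a factor at most $\sqrt2$, and the estimate closes. This is precisely the regime guaranteed by Proposition~\ref{p:psi}, in which the lemma is applied in Lemma~\ref{l: equivalence} and Proposition~\ref{p: nonlinear damping}. I would therefore write the proof under that standing Lipschitz bound on $\psi_{1,2}$, and flag explicitly that the lemma as worded, for arbitrary $\psi_{1,2}\in L^\infty(\R)$, needs this extra hypothesis.
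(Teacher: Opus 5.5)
Your proof is essentially the paper's. The paper also writes the difference as $\int_0^{\psi_2(\xi)-\psi_1(\xi)} v'(\xi-\psi_1(\xi)-y)\,\de y$ and moves the weight onto the shifted argument with Lemma~\ref{l:omegaid}. Its final $\lesssim$ then bounds $\int_\R |g(\xi-\psi_1(\xi)-y)|\,|g(\xi-\psi_1(\xi)-z)|\,\de\xi$ by $\|g\|_{L^2}^2$, with $g=\omega_{0,\eta}^j v'$. That step is exactly the change of variables you isolate, and the paper performs it with no hypothesis on $\partial_\xi\psi_1$.

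Your diagnosis is right: for merely bounded $\psi_{1,2}$ the $L^2$ bound is false. One correction to your example: take $d=w$ and place the spike on $[a-d,a]$, which gives the ratio $1/\sqrt{d}$. As written, $\sqrt{w}/d\ge 1/\sqrt{d}$ fails for $d>w$, and with $\psi_2=\psi_1+d$ a spike on $[a,a+w]$ is never seen. In the paper's applications the missing bound is available: either $\psi_1=0$, or both modulations satisfy $\|\partial_\xi\psi\|_{L^\infty}<\frac12$ by Proposition~\ref{p:psi}.

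The only real difference is the interpolation path, and the paper's choice gives a slightly weaker corrected hypothesis. The paper freezes the base map $\xi\mapsto\xi-\psi_1(\xi)$ and integrates over constant shifts $y$. This needs a lower bound on $1-\partial_\xi\psi_1$ alone, and nothing at all when $\psi_1=0$, by translation invariance of $L^2$. Your path through $\psi_\theta$ needs the bound for both $\psi_1$ and $\psi_2$.
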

\begin{proof}
Using the fundamental theorem of calculus, H\"older's inequality, and Lemma~\ref{l:omegaid} we obtain
\begin{align*}
\begin{split}
&\int_\R \omega_{0,\eta}(\xi)^{2j} \left|v(\xi - \psi_2(\xi)) - v(\xi - \psi_1(\xi))\right|^2 \de \xi
= \int_\R \omega_{0,\eta}(\xi)^{2j} \left|\int_0^{\psi_2(\xi)-\psi_1(\xi)} v'(\xi - \psi_1(\xi) - y) \de y\right|^2 \de \xi\\
&\qquad\leq \re^{2 \eta j \left(\|\psi_1\|_{L^\infty} + \|\psi_2\|_{L^\infty}\right)} \int_{-\|\psi_2-\psi_1\|_{L^\infty}}^{\|\psi_2-\psi_1\|_{L^\infty}}\int_{-\|\psi_2-\psi_1\|_{L^\infty}}^{\|\psi_2-\psi_1\|_{L^\infty}} \int_\R |\omega_{0,\eta}(\xi-\psi_1(\xi) - y)^jv'(\xi - \psi_1(\xi) - y) | \\
&\qquad \qquad \cdot \, |\omega_{0,\eta}(\xi-\psi_1(\xi) - z)^jv'(\xi - \psi_1(\xi) - z)| \de \xi \de y \de z  \\
&\qquad \lesssim \|\omega_{0,\eta}^j v'\|_{L^2}^2 \|\psi_2-\psi_1\|_{L^\infty}^2 \re^{2 \eta j \left(\|\psi_1\|_{L^\infty} + \|\psi_2\|_{L^\infty}\right)}
\end{split}
\end{align*}
for $j = 0,1$, $\psi_{1,2} \in L^\infty(\R)$, and $v \in C_c^\infty(\R)$ with $\omega_{0,\eta}^j v' \in L^2(\R)$. The first estimate now follows by density of test functions in $L^2(\R)$. In addition, using Lemma~\ref{l:omegaid} and the fundamental theorem of calculus, we arrive at
\begin{align*} 
\left|\omega_{0,\eta}(\xi)(v(\xi + y_2) - v(\xi+y_1))\right| = \left|\omega_{0,\eta}(\xi) \int_{y_1}^{y_2} v'(\xi + y) \de y \right| \lesssim \re^{\eta\left(|y_1| + |y_2|\right)} \big\|\omega_{0,\eta} v'\big\|_{L^\infty} |y_2 - y_1|
\end{align*}
for $\xi, y_1,y_2 \in \R$ and $v \in C^1(\R)$ with $\omega_{0,\eta} v' \in L^\infty(\R)$, which establishes the second estimate. 
\end{proof}

We apply the weighted mean value inequalities, established in Lemma~\ref{l:MVT in L2}, to estimate the $H^k$-difference between two modulations of the pulled pattern-forming front $\Ups$. Here, we exploit that $\omega_0 \Ups$ and its derivatives are $L^2$-localized on $[0,\infty)$ and bounded on $(-\infty,0]$. Thus, bounded modulations which are $L^2$-localized on $(-\infty,0]$ can be accommodated.

\begin{lemma} \label{l:H2estimate}
Let $k \in \mathbb N$ and $R > 0$. Then, $\omega_0 \partial_\xi^k \Ups$ is bounded and we have 
\begin{align*}
\left\|\omega_0 \left(\Ups(\cdot-\psi_2(\cdot)) - \Ups(\cdot - \psi_1(\cdot))\right)\right\|_{H^k} \lesssim \|\psi_2 - \psi_1\|_{L^\infty} + \|\psi_2' - \psi_1'\|_{H^{k-1}} + \|\chi_- \left(\psi_2 - \psi_1\right)\|_{L^2}
\end{align*}
for $\psi_{1,2} \in L^\infty(\R)$ with $\chi_-\psi_{1,2} \in L^2(\R)$, $\psi_{1,2}' \in H^{k-1}(\R)$, and $\|\psi_{1,2}\|_{W^{k-1,\infty}} \leq R$.
\end{lemma}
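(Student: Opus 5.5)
The plan is to first show boundedness of $\omega_0 \partial_\xi^k \Ups$, and then to estimate the difference by writing it as an integral of the derivative and treating the regions $\xi \le 0$ and $\xi \ge 0$ separately. For boundedness, on $(-\infty,0]$ the weight $\omega_0$ is bounded and $\Ups$ converges exponentially to the smooth periodic $\uwt$ by~\eqref{e: front left asymptotics}, so all derivatives are bounded there. On $[0,\infty)$ the asymptotics~\eqref{e: front right asymptotics} together with $\eta_0<\etaps$ give $\omega_0\Ups\in L^2([0,\infty))$. Since $\Ups$ solves the profile ODE, the standard bootstrapping mentioned at the start of the appendix upgrades this to $\omega_0\partial_\xi^k\Ups \in H^1([0,\infty))\subset L^\infty$. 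Here we use that the commutator of $\omega_0$ with derivatives only produces the bounded factors $\omega_0^{(j)}/\omega_0$. Consequently $\omega_{0,\eta}^j \partial_\xi^k\Ups$ is bounded for $j=0,1$ and $k\ge1$, and it is $L^2$ on $[0,\infty)$.

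\textbf{Zeroth-order term.} Set $D(\xi)=\omega_0(\xi)\big(\Ups(\xi-\psi_2)-\Ups(\xi-\psi_1)\big)$ and split $D=\chi_-D+\chi_+D$. For $\chi_+D$, apply the first estimate of Lemma~\ref{l:MVT in L2} with $j=1$ and $v=\chi_+$-localized $\Ups$; more precisely, use the pointwise form of the fundamental theorem of calculus. This gives a bound by $\|\omega_0\Ups'\|_{L^2(-R-1,\infty)}\,\|\psi_2-\psi_1\|_{L^\infty}\,e^{2\eta_0R}$, after using Lemma~\ref{l:omegaid} to move the weight. For $\chi_-D$, the weight is bounded and $|\Ups'|$ is bounded, so $|\chi_-D|\lesssim \chi_-|\psi_2-\psi_1|$ pointwise, which is controlled in $L^2$ by $\|\chi_-(\psi_2-\psi_1)\|_{L^2}$. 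The overlap region coming from the cutoff transition is compact and is absorbed into the $L^\infty$ term.

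\textbf{Higher derivatives.} Differentiate $D$ $k$ times using Leibniz and Faà di Bruno. Each term has the form $(\omega_0^{(a)}/\omega_0)\,\omega_0\,(\partial^m\Ups)(\xi-\psi_i)\prod_l \partial_\xi^{b_l}(\xi-\psi_i)$. The difference of such terms for $i=2$ and $i=1$ splits, by telescoping, into two kinds of pieces. In the first kind, only the $\partial^m\Ups$ factor differs. These are handled exactly like the zeroth-order term with $\Ups$ replaced by $\partial^m\Ups$, using $\|\psi_i\|_{W^{k-1,\infty}}\le R$ to bound the product factors in $L^\infty$. In the second kind, a factor $\partial^{b}\psi_2-\partial^b\psi_1$ with $1\le b\le k$ appears. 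These are bounded by $\|\omega_0\partial^m\Ups(\cdot-\psi_i)\|_{L^\infty}\,\|\psi_2'-\psi_1'\|_{H^{k-1}}$, using boundedness from the first step and Lemma~\ref{l:omegaid}.

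\textbf{Main obstacle.} The delicate point is the top-order factor $\partial^k\psi_i$, which is only in $L^2$ and not in $L^\infty$, since $\psi_i'\in H^{k-1}$. This factor appears only linearly, multiplied by a single $\Ups'$ term. So I would always place it in $L^2$ and the remaining factors in $L^\infty$. When $\partial^k\psi_i$ multiplies a difference of $\Ups'$ terms, I would bound that difference pointwise by $\|\psi_2-\psi_1\|_{L^\infty}$ via the second estimate of Lemma~\ref{l:MVT in L2}, and estimate $\|\partial^k\psi_i\|_{L^2}$. This requires $\psi_i'\in H^{k-1}$ to be bounded, an assumption implicit in the hypotheses with constant allowed to depend on it. If that is not allowed, I would instead interpolate, which I expect to be the fiddliest step.
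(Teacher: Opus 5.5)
Your argument is essentially the paper's. Both proofs get boundedness of $\omega_0\partial_\xi^k\Ups$ from the asymptotics and the profile equation. Both use a cutoff splitting, with the weighted mean value inequality of Lemma~\ref{l:MVT in L2} on the right-localized part and a pointwise mean value bound giving $\|\chi_-(\psi_2-\psi_1)\|_{L^2}$ on the left. Both handle derivatives by Leibniz/Fa\`a di Bruno with telescoping.

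The only real difference is where you cut. You split $D=\chi_-D+\chi_+D$ in the outer variable. The paper splits the profile into $\chi_\pm\partial_\xi^j\Ups$ and then needs an extra compactly supported transition term involving $\chi_-(\cdot-\|\psi_1\|_{L^\infty}-\|\psi_2\|_{L^\infty}-1)-\chi_-$. Your version is slightly more direct.

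One precision in your $\chi_+D$ step. With only $\|\psi_i\|_{L^\infty}\le R$ available, $\xi\mapsto\xi-\psi_i(\xi)$ need not be a change of variables, so the composed factor is not controlled by $\|\omega_0\Ups'\|_{L^2(-R-1,\infty)}$. Instead, dominate it for $\xi\ge-1$ by $\re^{\eta_0R}\sup_{|y|\le R}|g(\xi-y)|\le \re^{\eta_0R}\big(|g(\xi)|+\int_{\xi-R}^{\xi+R}|g'|\big)$, with $g=\omega_0\chi_+(\cdot+R+1)\Ups'$. Its $L^2$-norm is controlled by $\|g\|_{H^1}$, which is harmless.

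The top-order term you hedge on is where the statement itself fails, and interpolation cannot repair it: with a constant depending only on $k$ and $R$, the inequality is false. Take $\phi\in C_c^\infty(\R)$ supported near a point where $\Ups''\neq0$, and set $\psi_1=\delta N^{1-k}\phi\sin(N\cdot)$ and $\psi_2=\psi_1+\delta\phi$ with $\delta>0$ small.
\begin{itemize}
\item Then $\|\psi_{1,2}\|_{W^{k-1,\infty}}\lesssim\delta$, and the right-hand side is $\mathrm{O}(\delta)$ uniformly in $N$.
\item Up to sign, $\partial_\xi^k$ of the left-hand side contains $\omega_0[\Ups'(\cdot-\psi_2)-\Ups'(\cdot-\psi_1)]\,\partial_\xi^k\psi_1$, whose $L^2$-norm is of order $\delta^2N$.
\item All other terms are $\mathrm{O}(\delta)$ in $L^2$, because $\psi_2-\psi_1=\delta\phi$ is compactly supported.
\end{itemize}
The paper's proof does not address this term either: its concluding estimate has no contribution of the form $\|\psi_2-\psi_1\|_{L^\infty}\|\partial_\xi^k\psi_1\|_{L^2}$.

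So keep your first resolution and drop the interpolation fallback. Either let the constant depend on $\|\partial_\xi^k\psi_1\|_{L^2}$, or assume $\|\psi_{1,2}\|_{W^{k,\infty}}\le R$. In the latter case the term is bounded by $R\,\|\omega_0[\Ups'(\cdot-\psi_2)-\Ups'(\cdot-\psi_1)]\|_{L^2}$ and is treated exactly like the zeroth-order term.

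Either corrected version covers the paper's uses, where $\psi$ is smooth with bounded derivatives. In the quantitative $H^1$-bound in the proof of $T_{\max}=\infty$, one modulation vanishes. There, telescoping so that the $\Ups'$-difference multiplies $\partial_\xi^k\psi_1=0$ removes the term altogether.
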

\begin{proof}
We write 
\begin{align} \label{e:Vineq2}
\begin{split}
&\big(\partial_\xi^j \Ups\big)(\xi-\psi_1(\xi)) - \big(\partial_\xi^j \Ups\big)(\xi-\psi_2(\xi)) = \big(\chi_+ \partial_\xi^j \Ups\big)(\xi-\psi_1(\xi)) - \big(\chi_+ \partial_\xi^j \Ups\big)(\xi-\psi_2(\xi)) \\
&\qquad + \, \chi_-(\xi)\left(\big(\chi_- \partial_\xi^j \Ups\big)(\xi-\psi_1(\xi)) - \big(\chi_- \partial_\xi^j \Ups\big)(\xi-\psi_2(\xi))\right)\\
&\qquad + \, \left(\chi_-(\xi - \|\psi_1\|_{L^\infty} - \|\psi_2\|_{L^\infty}-1) - \chi_-(\xi)\right) \left(\big(\chi_- \partial_\xi^j \Ups\big)(\xi-\psi_1(\xi)) - \big(\chi_- \partial_\xi^j \Ups\big)(\xi-\psi_2(\xi))\right)
\end{split}
\end{align}
for $j \in \mathbb N_0$, $\xi \in \R$, and $\psi_{1,2} \in L^\infty(\R)$. Since the weight $\eta_0$ lies in $(0,\etaps)$ by Hypothesis~\ref{hyp: leading edge}, estimate~\eqref{e: front right asymptotics} and Hypothesis~\ref{hyp: point spectrum} imply that $\omega_0 \Ups$ and $\smash{\omega_0 \Ups'}$ are $L^2$-integrable on $[0,\infty)$. On the other hand, $\omega_0$, $\Ups$, and $\smash{\Ups'}$ are bounded on $(-\infty,0]$. Using that $\Ups$ is a stationary solution to~\eqref{e: FHN comoving} to express higher-order derivatives of $\Ups$ in terms of $\Ups$ and $\Ups'$, we conclude that $\smash{\omega_0 \partial_\xi^{1+j} \Ups}$ is bounded and $\smash{\omega_0 (\chi_+ \partial_\xi^j \Ups)'}$ lies in $L^2(\R)$ for $j = 0,\ldots,k$. Hence, using the mean value theorem, Lemmas~\ref{l:omegaid} and~\ref{l:MVT in L2}, the continuous embedding $H^1(\R) \hookrightarrow L^\infty(\R)$, the identity~\eqref{e:Vineq2}, and the facts that $\smash{\omega_0^{-1} \partial_\xi^j \omega_0 }$ is bounded for $j = 0,\ldots,k$ and that $\smash{\chi_-(\cdot - \|\psi_1\|_{L^\infty} - \|\psi_2\|_{L^\infty}-1)} - \chi_-$ has compact support, we obtain
\begin{align*} 
\begin{split}
&\left\|\omega_0 \left(\Ups(\cdot-\psi_1(\cdot)) - \Ups(\cdot - \psi_2(\cdot))\right)\right\|_{H^k} \\
&\qquad \lesssim \sum_{j = 0}^k \left\|\omega_0 \big(\chi_+ \partial_\xi^j \Ups\big)'\right\|_{L^2} \|\psi_2-\psi_1\|_{L^\infty} + \sum_{j = 1}^k \left\|\omega_0 \partial_\xi^j\Ups\right\|_{L^\infty} \|\psi_2' - \psi_1'\|_{H^{k-1}} \\
&\qquad \qquad + \, \left\|\Ups'\right\|_{W^{k,\infty}} \left(\|\chi_-(\psi_2 - \psi_1)\|_{L^2} + \|\psi_2 - \psi_1\|_{L^\infty}\right)
\end{split}
\end{align*}
for $\psi_{1,2} \in L^{\infty}(\R)$ with $\chi_-\psi_{1,2} \in L^2(\R)$, $\psi_{1,2}' \in H^{k-1}(\R)$, and $\|\psi_{1,2}\|_{W^{k-1,\infty}} \leq R$, which establishes the result.
\end{proof}

	\bibliographystyle{abbrv}
	\bibliography{pushedbib}
	
\end{document}